\documentclass{article}
\usepackage{a4wide}
\usepackage{graphicx} 
\usepackage{comment}
\usepackage{subcaption}
\usepackage{float}
\usepackage{amsmath}
\usepackage{amsthm}
\usepackage{amsfonts}
\usepackage{thmtools}
\usepackage{thm-restate} 
\newtheorem{theorem}{Theorem}[section]
\newtheorem{lemma}[theorem]{Lemma}
\newtheorem{definition}[theorem]{Definition}

\newtheorem{corollary}[theorem]{Corollary}

\newtheorem{remark}[theorem]{Remark}
\usepackage[numbers,sort]{natbib}

\newtheorem*{question*}{Question}
\newtheorem*{theorem*}{Theorem}
\newtheorem{proposition}[theorem]{Proposition}
\usepackage{hyperref}
\usepackage[dvipsnames]{xcolor}
\usepackage{cleveref}
\usepackage[colorinlistoftodos]{todonotes}
\usepackage{amsmath, amssymb, amsfonts} 

\usepackage{ifthen}

\usepackage{tikz}
\usetikzlibrary{arrows.meta}
\usetikzlibrary{calc}
\usetikzlibrary{positioning}
\usetikzlibrary{shapes}
\usetikzlibrary{patterns}
\usepackage{circuitikz}

\makeatletter
\edef\texforht{TT\noexpand\fi
  \@ifpackageloaded{tex4ht}
    {\noexpand\iftrue}
    {\noexpand\iffalse}}
\makeatother

\makeatletter
\newif\iftikz@node@phantom
\tikzset{
  phantom/.is if=tikz@node@phantom,
  text/.code=%
    \edef\tikz@temp{#1}%
    \ifx\tikz@temp\tikz@nonetext
      \tikz@node@phantomtrue
    \else
      \tikz@node@phantomfalse
      \let\tikz@textcolor\tikz@temp
    \fi
}
\usepackage{etoolbox}
\patchcmd\tikz@fig@continue{\tikz@node@transformations}{%
  \iftikz@node@phantom
    \setbox\pgfnodeparttextbox\hbox{}
  \fi\tikz@node@transformations}{}{}
\makeatother

\newcommand{\tikzAngleOfLine}{\tikz@AngleOfLine}
\def\tikz@AngleOfLine(#1)(#2)#3{%
  \pgfmathanglebetweenpoints{%
    \pgfpointanchor{#1}{center}}{%
    \pgfpointanchor{#2}{center}}
  \pgfmathsetmacro{#3}{\pgfmathresult}%
}

\tikzset{ 
    vertexNodePlain/.style = {fill=none, shape=circle, inner sep=0pt, minimum size=2pt, text=none},
    vertexNodePlain/.default=white,
    vertexPlain/labels/.style = {
        vertexNode/.style={vertexNodePlain=##1},
        vertexLabel/.style={gray}
    },
    vertexPlain/nolabels/.style = {
        vertexNode/.style={vertexNodePlain=##1},
        vertexLabel/.style={text=none}
    },
    vertexPlain/.style = vertexPlain/#1,
    vertexPlain/.default=labels
}
\tikzset{
    vertexNodeNormal/.style = {fill=none, shape=circle, inner sep=0pt, minimum size=4pt, text=none},
    vertexNodeNormal/.default = blue,
    vertexNormal/labels/.style = {
        vertexNode/.style={vertexNodeNormal=##1},
        vertexLabel/.style={blue}
    },
    vertexNormal/nolabels/.style = {
        vertexNode/.style={vertexNodeNormal=##1},
        vertexLabel/.style={text=none}
    },
    vertexNormal/.style = vertexNormal/#1,
    vertexNormal/.default=labels
}
\tikzset{
    vertexNodeBallShading/pdf/.style = {ball color=#1},
    vertexNodeBallShading/svg/.style = {fill=#1},
    vertexNodeBallShading/.code = {
        \if\texforht
            \tikzset{vertexNodeBallShading/svg=white}
        \else
            \tikzset{vertexNodeBallShading/pdf=white}
        \fi
    },
    vertexNodeBall/.style = {shape=circle, vertexNodeBallShading=#1, inner sep=2pt, outer sep=0pt, minimum size=3pt, font=\tiny},
    vertexNodeBall/.default = white,
    vertexBall/labels/.style = {
        vertexNode/.style={vertexNodeBall=##1, text=black},
        vertexLabel/.style={text=none}
    },
    vertexBall/nolabels/.style = {
        vertexNode/.style={vertexNodeBall=##1, text=none},
        vertexLabel/.style={text=none}
    },
    vertexBall/.style = vertexBall/#1,
    vertexBall/.default=labels
}
\tikzset{ 
    vertexStyle/.style={vertexNormal=#1},
    vertexStyle/.default = labels
}

\newcommand{\vertexLabelR}[4][]{
    \ifthenelse{ \equal{#1}{} }
        { \node[vertexNode] at (#2) {#4}; }
        { \node[vertexNode=#1] at (#2) {#4}; }
    \node[vertexLabel, #3] at (#2) {#4};
}
\newcommand{\vertexLabelA}[4][]{
    \ifthenelse{ \equal{#1}{} }
        { \node[vertexNode] at (#2) {#4}; }
        { \node[vertexNode=#1] at (#2) {#4}; }
    \node[vertexLabel] at (#3) {#4};
}

\newcommand{\edgeLabelColor}{blue!20!white}
\tikzset{
    edgeLineNone/.style = {draw=none},
    edgeLineNone/.default=black,
    edgeNone/labels/.style = {
        edge/.style = {edgeLineNone=##1},
        edgeLabel/.style = {fill=\edgeLabelColor,font=\small}
    },
    edgeNone/nolabels/.style = {
        edge/.style = {edgeLineNone=##1},
        edgeLabel/.style = {text=none}
    },
    edgeNone/.style = edgeNone/#1,
    edgeNone/.default = labels
}
\tikzset{
    edgeLinePlain/.style={line join=round, draw=#1},
    edgeLinePlain/.default=black,
    edgePlain/labels/.style = {
        edge/.style={edgeLinePlain=##1},
        edgeLabel/.style={fill=\edgeLabelColor,font=\small}
    },
    edgePlain/nolabels/.style = {
        edge/.style={edgeLinePlain=##1},
        edgeLabel/.style={text=none}
    },
    edgePlain/.style = edgePlain/#1,
    edgePlain/.default = labels
}
\tikzset{
    edgeLineDouble/.style = {very thin, double=#1, double distance=.8pt, line join=round},
    edgeLineDouble/.default=gray!90!white,
    edgeDouble/labels/.style = {
        edge/.style = {edgeLineDouble=##1},
        edgeLabel/.style = {fill=\edgeLabelColor,font=\small}
    },
    edgeDouble/nolabels/.style = {
        edge/.style = {edgeLineDouble=##1},
        edgeLabel/.style = {text=none}
    },
    edgeDouble/.style = edgeDouble/#1,
    edgeDouble/.default = labels
}
\tikzset{
    edgeStyle/.style = {edgePlain=#1},
    edgeStyle/.default = labels
}

\newcommand{\faceColorY}{yellow!60!white}   
\newcommand{\faceColorB}{blue!60!white}     
\newcommand{\faceColorC}{cyan!60}           
\newcommand{\faceColorR}{red!60!white}      
\newcommand{\faceColorG}{green!60!white}    
\newcommand{\faceColorO}{orange!50!yellow!70!white} 

\newcommand{\faceColor}{\faceColorY}
\newcommand{\faceColorSwap}{\faceColorC}

\tikzset{
    face/.style = {fill=#1},
    face/.default = \faceColor,
    faceY/.style = {face=\faceColorY},
    faceB/.style = {face=\faceColorB},
    faceC/.style = {face=\faceColorC},
    faceR/.style = {face=\faceColorR},
    faceG/.style = {face=\faceColorG},
    faceO/.style = {face=\faceColorO}
}
\tikzset{
    faceStyle/labels/.style = {
        faceLabel/.style = {}
    },
    faceStyle/nolabels/.style = {
        faceLabel/.style = {text=none}
    },
    faceStyle/.style = faceStyle/#1,
    faceStyle/.default = labels
}
\tikzset{ face/.style={fill=#1} }
\tikzset{ faceSwap/.code=
    \ifdefined\swapColors
        \tikzset{face=\faceColorSwap}
    \else
        \tikzset{face=\faceColor}
    \fi
}

\DeclareMathOperator{\Aut}{Aut}
\DeclareMathOperator{\Pow}{Pow}

\title{Strong Embeddings of Regular Graphs with Prescribed Automorphism Groups}
\author{Reymond Akpanya, Tom Goertzen, Meike Weiß}
\date{}

\begin{document}
\maketitle
\begin{abstract}
A classical theorem of Frucht states that every finite group occurs as the automorphism group of a finite graph. We prove an embedded analogue for regular
graphs of arbitrary degree. In particular, we show that for every $d\geq 3$ and every finite group $G$, there exists a $d$-regular graph $\Gamma$ with a strong embedding $\beta$ such that
$
\mathrm{Aut}(\Gamma) \cong \mathrm{Aut}(\beta(\Gamma)) \cong G.
$
Further, we prove that for every such $d$ and $G$ there exists a sequence of $d$-regular graphs with corresponding strong embeddings whose genera form an unbounded sequence and whose automorphism groups are isomorphic to $G$. Along the way, we identify an oversight in Sabidussi's classical construction of regular graphs with prescribed automorphism group. We give an alternative construction that corrects this issue and strengthens Sabidussi's result by producing an automorphism group-invariant proper $d$-edge-colouring.
\end{abstract}

\section{Introduction}

    A classical realisation problem asks whether every finite group can be realised as the automorphism group of a mathematical structure of a given class. We focus on this question in the context of graphs that admit embeddings on closed surfaces, i.e., compact 2-dimensional manifolds without boundary.
In particular, we are interested in graph embeddings that are strong, namely embeddings in which the given graph is drawn on a surface so that every induced cell is bounded by a cycle in the graph.
In order to provide a precise framework for our main investigation, we follow the standard terminology of (topological) graph theory as presented in \cite{TopologicalGraphTheory,GraphsOnSurfaces}. Unless stated otherwise, all graphs in this study are assumed to be undirected, connected, simple, and finite.

The construction of graphs and graph embeddings with prescribed automorphism groups has gained a lot of interest in graph theoretical studies. A fundamental observation in this context is that, given a group $G$ together with a generating set $S$, the corresponding directed edge-coloured Cayley graph $\mathrm{Cay}(G,S)$ has a colour-preserving automorphism group that is isomorphic to $G$. This observation has been widely used as a general construction principle: by suitably choosing $S$ and modifying the Cayley graph $\mathrm{Cay}(G,S)$, one can obtain graphs with desired properties while preserving $G$ as the automorphism group. In \cite{MR1557026}, Frucht exploits this fact to construct a graph whose automorphism group is a prescribed finite group. 
Frucht later extended this result by providing a construction of cubic graphs with prescribed automorphism groups \cite{Frucht}. The original construction contained an oversight which Frucht noticed in \cite{FruchtHow}. In recent work of the first two authors, this issue is corrected by modifying Frucht's construction, see \cite{SurfacesWithAuto}.
An alternative construction of a cubic graph with a given finite group as automorphism group introduced by Babai is provided in \cite{babaiBook}. Further, \v{S}ir\'{a}\v{n} and \v{S}koviera show that for every finite group $G$ there exists a cubic graph that admits an embedding on a surface such that the automorphism group of the resulting embedded graph is isomorphic to $G,$ see \cite{MR1211265}.
For more studies on the construction of combinatorial objects with prescribed automorphism group, we refer the reader to \cite{Cori, MR316298,MR406855,MR4866595}.

In \cite{Sabidussi}, Sabidussi studies several generalisations of Frucht’s theorem. As one of many results, Sabidussi proposes a construction of a $d$-regular graph, where $d\geq 3$ is a natural number, whose automorphism group is isomorphic to an arbitrary finite group $G$.
The present work concentrates on this construction of $d$-regular graphs with prescribed automorphism group.
While the overall strategy presented by Sabidussi is very elegant, a closer look suggests that certain aspects of the presented arguments need additional clarification, see Remark~\ref{sabidussi:construction}.
 Here, we correct this oversight by extending Sabidussi's result in the following way.

\begin{theorem*}[\ref{theorem:dregular}]
        For every finite group $G$ and every $d\geq 3$ there exists a $d$-regular graph $\Gamma$ and a proper $d$-edge-colouring $\kappa:E(\Gamma)\to \{1,\ldots,d\}$ of $\Gamma$ such that $\Aut(\Gamma)\cong G$ and $\kappa$ is $\Aut(\Gamma)$-invariant, i.e.,  $\kappa(e)=\kappa(\phi(e))$ for all $e\in E(\Gamma)$ and $\phi\in \Aut(\Gamma).$ 
\end{theorem*}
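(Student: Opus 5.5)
We build $\Gamma$ from a Cayley graph of $G$, replacing each edge by a rigid gadget. Let $G$ be a finite group and fix a generating set $S=\{s_1,\dots,s_k\}$ of non-identity elements, with $k\geq 1$. The trivial group is handled separately by an explicit asymmetric $d$-regular graph whose colouring is automatically invariant, since the automorphism group is trivial. Consider the directed, edge-coloured Cayley graph $\mathrm{Cay}(G,S)$, whose colour- and orientation-preserving automorphisms are exactly the left multiplications, giving a group isomorphic to $G$. For each colour $i$ we design a gadget $H_i$ with two distinguished attachment vertices $a_i,b_i$. The gadget is not symmetric under swapping $a_i$ and $b_i$, and different indices give non-isomorphic gadgets. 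Each arc $g\to gs_i$ is replaced by a copy of $H_i$ with $a_i$ glued to (or joined by an edge to) $g$ and $b_i$ glued to $gs_i$. Each group vertex then has degree $2k$ before any adjustment. To reach degree exactly $d$, we attach to each group vertex, or to a vertex inserted near it, suitable pendant $d$-regular-completing blocks. We also let each gadget contain internal vertices of degree $d$ only.

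The edge-colouring is built into the gadgets. Every gadget $H_i$ is chosen together with a proper $d$-edge-colouring in which the edges at the attachment points receive prescribed colours. These colours are arranged so that at every group vertex the $d$ incident edges get distinct colours, depending only on the role of the edge (outgoing $s_i$, incoming $s_i$, or padding). Since $G$ acts by left multiplication on all copies simultaneously and preserves roles, this colouring $\kappa$ is invariant under the induced copy of $G$ inside $\Aut(\Gamma)$. Once we show $\Aut(\Gamma)\cong G$ via this action, $\Aut(\Gamma)$-invariance follows. A convenient way to get regular gadgets with colourings is to use bipartite pieces, for example $K_{d,d}$ minus a perfect matching, or $K_{d,d}$ with an edge subdivided by an attachment path. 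By König's theorem, bipartite $d$-regular pieces are properly $d$-edge-colourable, and we can permute colours within a piece to match the prescribed colours at the attachment edges.

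The main step, and the expected obstacle (this is where Sabidussi's argument needs care), is showing that every automorphism of $\Gamma$ maps group vertices to group vertices and each gadget copy $H_i$ to a copy of $H_i$ with the same orientation. The plan is to make group vertices recognisable by a local invariant. For instance, they could be the only vertices not lying in a triangle, or the only cut vertices whose removal separates certain components. Alternatively, the gadgets could contain short cycles of lengths specific to $i$ (say a distinct girth-type signature for each $i$ and for the head/tail end), so that the cycle structure through each vertex determines its type. Given this, any automorphism $\phi$ induces a colour- and orientation-preserving automorphism of $\mathrm{Cay}(G,S)$, hence equals left multiplication by some $g$ on group vertices. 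The remaining step is to check that the gadgets are rigid relative to their attachment vertices, i.e.\ no nontrivial automorphism of $H_i$ fixes $a_i$ and $b_i$, and likewise for the padding blocks. Then $\phi$ coincides with the action of $g$ everywhere, which gives $\Aut(\Gamma)\cong G$.

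Finally, some bookkeeping is needed for small $d$ relative to $k$. If $2k>d$, we replace each group vertex $g$ by a rigid "hub" gadget: a tree-like $d$-regular structure with $2k$ ports, coloured invariantly. The small case $d=3$ can alternatively follow the corrected Frucht construction cited from \cite{SurfacesWithAuto}. Generally I would first try $S=G\setminus\{1\}$ or a minimal generating set and keep the hub construction uniform, so that the casework is confined to verifying the rigidity of finitely many explicit gadget templates for each $d$.
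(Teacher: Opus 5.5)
Your proposal has a genuine gap in the colouring design, before you even reach the rigidity step you flag as the main obstacle. In a $d$-regular graph with a proper $d$-edge-colouring every colour class is a perfect matching. Hence for every vertex set $X$ and every colour $c$, the number of colour-$c$ edges leaving $X$ is congruent to $|X|$ modulo $2$.

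Apply this to the interior $X$ of the copy of $H_i$ that replaces the arc $g\to gs_i$. As you describe it (``degree $2k$ before any adjustment''), $X$ is joined to the rest of $\Gamma$ by one edge at $g$ and one at $gs_i$. Since $d\geq 3$, $|X|$ cannot be odd, so $|X|$ is even and the two attachment edges have the same colour. All copies of $H_i$ carry the same template colouring; invariance under the transitive action of $G$ on these copies forces this anyway. So at every group vertex the outgoing-$s_i$ edge and the incoming-$s_i$ edge get the same colour. Since $s_i\neq 1$ these are two distinct edges at $g$, and $\kappa$ is not proper.

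Your own sample gadget shows this: in $K_{d,d}$ minus an edge, both free ports miss the same colour, and no permutation of colours inside the piece separates them. More generally, suppose the interior of a gadget is joined to the rest only through edges at $g$ and at $gs_i$. Then parity forces the colour sets at the two ends to be either equal, giving a clash at $g$ as above, or complementary, in which case the two copies of $H_i$ at $g$ already use all $d$ colours. That leaves only $k=1$ with no padding.

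The same lemma rules out your other devices. A $d$-regular graph with a proper $d$-edge-colouring has no bridges and no cut vertices. Indeed, the edges from a cut vertex $v$ into a component $C$ of $\Gamma-v$ form all of $\delta(C)$, have distinct colours, and number between $1$ and $d-1$, so some colour occurs once and another not at all. So pendant padding blocks, tree-like hubs, and recognising group vertices as cut vertices are all unavailable.

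Beyond this, your argument for $\Aut(\Gamma)\cong G$ rests entirely on two things you only offer as options: recognisability of group vertices, gadget types and orientations, and rigidity of gadgets relative to their ports. You would also need an explicit, parity-compatible, rigid gadget family for every $d\geq 3$, and likewise an asymmetric $d$-regular class-one graph for trivial $G$ for every $d$.

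The paper avoids this infinite design problem. It builds explicit graphs only for $d\in\{3,4,5\}$:
\begin{itemize}
\item for $d=3$, Frucht-type cubic graphs with invariant $3$-colourings;
\item for $d=4$, their complete truncation joined by a perfect matching to an auxiliary cubic graph $Y$;
\item for $d=5$, the truncation of the $4$-regular graph along its bicoloured embedding, with a new vertex joined to each truncation $4$-cycle, then matched to an auxiliary $4$-regular graph $Z$.
\end{itemize}
In the last two cases, triangles separate the two halves. For $d=d'+3\ell$ the paper takes $\Gamma'\times\prod_{i=1}^{\ell}\mathcal{T}_i(\hat\Gamma)$, where $\hat\Gamma$ is an asymmetric cubic prime graph. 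Sabidussi's theorem on products of relatively prime graphs then gives $\Aut(\Gamma)\cong G$, and the product colouring is automatically invariant.

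A repaired version of your approach would need multi-edge attachments into multi-vertex hubs at every group vertex, with all parity conditions verified. It would also need an actual rigidity proof for the gadgets.
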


We establish this extension of \cite[Theorem~3.7]{Sabidussi} by combining the proof strategies of Sabidussi in \cite{Sabidussi} and Frucht in \cite{Frucht}. Following Frucht’s approach, we distinguish three cases: (1) $|G| \leq 2$, (2) $G \cong C_\ell$ for $\ell \geq 3$, where $C_\ell$ denotes the cyclic group of order $\ell$, and (3) $G$ is $n$-generated with $n\geq 2$. 
In particular, we adopt Sabidussi’s use of graph products and replace Frucht’s cubic graph construction in Sabidussi's original argument with the construction from \cite{SurfacesWithAuto}, refining the arguments where necessary.

Additionally, in \cite{SurfacesWithAuto}, the first two authors of this work exploit the concept of automorphism group–invariant edge-colourings to show that, for every finite group $ G $, there exists a cubic graph $ \Gamma $ together with a strong embedding $ \beta $ such that both $ \Aut(\Gamma) $ and the automorphism group of the embedded graph $ \beta(\Gamma) $ are isomorphic to $ G $. This naturally leads to the question whether the ideas introduced by Sabidussi in \cite{Sabidussi,graphmult} can be adapted to extend this result. In this work, we address this question by establishing the following main results.

\begin{theorem*}[\ref{theorem:autom_strongEmbedding}]
    For every finite group $G$ and every $d\geq 3$ there exists a $d$-regular graph $\Gamma$ and a strong embedding $\beta$ of $\Gamma$ on some surface such that $\Aut(\Gamma)\cong \Aut(\beta(\Gamma))\cong G.$
\end{theorem*}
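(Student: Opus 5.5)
We will derive the statement from Theorem~\ref{theorem:dregular}, using the $\Aut(\Gamma)$-invariant proper $d$-edge-colouring $\kappa$ to build a rotation system whose faces are cycles and which every graph automorphism respects. This is the same mechanism used in \cite{SurfacesWithAuto} for the cubic case. Let $\Gamma$ and $\kappa\colon E(\Gamma)\to\{1,\dots,d\}$ be as in Theorem~\ref{theorem:dregular}. At every vertex $v$, the $d$ incident edges carry the $d$ distinct colours, since $\kappa$ is proper. We define the rotation at $v$ to be the cyclic order $1\to 2\to\cdots\to d\to 1$ of the colours. This defines an orientable embedding $\beta$, which we record as a rotation system.

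\textbf{Faces.} The first step is to describe the faces of $\beta$. Tracing a face, we arrive at a vertex along an edge of colour $i$ and leave along the edge of colour $i+1$ (indices mod $d$). Hence every face boundary is a closed walk whose edge colours alternate $i,i+1,i,i+1,\dots$, that is, a component of the $2$-factor $\kappa^{-1}(\{i,i+1\})$. Each such component is a cycle of even length at least $4$, and it is traversed once, because its two sides lie in the faces of the colour pairs $(i,i+1)$ and $(i-1,i)$ or $(i+1,i+2)$. For $d\ge 3$ these pairs differ, so a bicoloured cycle cannot bound a face on both sides. We should check the case $d=3$ with care: the pairs $\{1,2\},\{2,3\},\{3,1\}$ are then all distinct. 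We must also handle the orientation issue: the two sides of an $\{i,i+1\}$-cycle correspond to the faces of the pairs $(i,i+1)$ and $(i+1,i)$ in the rotation. These are the same unordered pair, so each such cycle may appear twice as a face. We would verify, by tracking the directed darts, that a face boundary meets every vertex at most once. This gives a closed $2$-cell embedding in which every face is bounded by a cycle, so $\beta$ is strong.

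\textbf{Automorphisms.} Next we show that $\Aut(\beta(\Gamma))\cong\Aut(\Gamma)$. Every $\phi\in\Aut(\Gamma)$ preserves $\kappa$, so it maps the rotation at $v$ onto the rotation at $\phi(v)$ with the same cyclic colour order. Hence $\phi$ preserves the rotation system, and therefore the facial walks, and so induces an automorphism of the embedded graph. Conversely, an automorphism of $\beta(\Gamma)$ restricts to a graph automorphism. This restriction is injective because $\Gamma$ is connected and $d\ge 3$, so a map automorphism fixing all vertices and edges is trivial; one orientation-reversing map automorphism cannot fix everything either. It follows that $\Aut(\beta(\Gamma))\to\Aut(\Gamma)$ is an isomorphism, and both groups are isomorphic to $G$.

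\textbf{Main obstacle.} The delicate step is strongness, in particular excluding a face that passes a vertex twice or traverses an edge in both directions. Here the doubling issue for the pair $\{i,i+1\}$ (faces $(i,i+1)$ versus $(i+1,i)$) must be checked carefully. If the cyclic rotation yields degenerate faces, the fallback is to choose the rotation as a fixed cyclic colour order while using a bipartite double-cover or a local orientation twist, as in \cite{SurfacesWithAuto}. Each face is then a single bicoloured cycle, and invariance under $\Aut(\Gamma)$ still follows purely from the invariance of $\kappa$.
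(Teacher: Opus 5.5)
Your overall strategy matches the paper's: take the invariant colouring $\kappa$ from Theorem~\ref{theorem:dregular}, use the bicoloured cycles of consecutive colour pairs as faces, and observe that colour-preserving automorphisms permute them (Lemma~\ref{lemma:colouring_CDC}). The gap is in how you realise this embedding.

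You take an \emph{orientable} rotation system with the same cyclic colour order $1\to 2\to\cdots\to d$ at every vertex, and infer that its faces alternate between colours $i$ and $i+1$. That inference is wrong. If a face enters $v$ along colour $i$ and leaves along colour $i+1$, it enters the next vertex along colour $i+1$. By the same rule it then leaves along colour $i+2$. So the facial walks of your rotation system have colour sequence $i,i+1,i+2,\dots$ (mod $d$). They are not bicoloured and can revisit vertices.

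For a concrete failure, take the prism $K_3\times K_2$ with triangles $a_1a_2a_3$, $b_1b_2b_3$ and rungs $a_mb_m$, coloured by $a_1a_2,\,b_1b_2,\,a_3b_3\mapsto 1$, $a_2a_3,\,b_2b_3,\,a_1b_1\mapsto 2$, $a_3a_1,\,b_3b_1,\,a_2b_2\mapsto 3$. Your rotation system then has the face $(a_1,b_1,b_3,a_3,a_2,b_2,b_1,a_1,a_3,b_3,b_2,a_2)$ of length $12$. It runs through every rung in both directions, so the embedding is not strong. The invariance half of your argument is unaffected, since any embedding defined purely by colours is preserved by colour-preserving automorphisms. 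It is strongness that fails.

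In fact no orientable rotation system can have the consecutive-pair bicoloured cycles as faces unless the rotations at adjacent vertices are mutually inverse. That forces $\Gamma$ to be bipartite, and the graphs from Theorem~\ref{theorem:dregular} generally are not (e.g.\ $\Gamma_{C_\ell}$ contains triangles). So the required embedding is typically non-orientable.

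The missing step is to build the surface directly from the cycle double cover, as in Lemma~\ref{lemma:colouring_CDC}. Glue a disc to every component of every $2$-factor $\kappa^{-1}(\{i,i+1\})$, indices mod $d$. An edge of colour $i$ lies on exactly two discs, from the pairs $\{i-1,i\}$ and $\{i,i+1\}$, which are distinct since $d\ge 3$. Around each vertex the discs fit together in the colour order of the incident edges, so every vertex link is a single $d$-cycle. This gives a closed surface whose faces are cycles.

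Equivalently, keep the rotation $(1,2,\dots,d)$ at every vertex but declare every edge twisted. Face tracing then switches between $\rho_v$ and $\rho_v^{-1}$ at each step and produces exactly the alternating $\{i,i+1\}$-cycles.

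Your ``local orientation twist'' fallback points in this direction, but you do not carry it out. The ``bipartite double cover'' alternative replaces $\Gamma$ by a different graph, on which the covering involution is an extra automorphism, so it does not prove the statement for a graph with automorphism group $G$.
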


We obtain this result by exploiting the fact that $ d $-regular graphs admitting proper automorphism group–invariant edge-colourings admit symmetric strong embeddings. This observation then enables us to establish the following statement.

\begin{theorem*}[\ref{thm:infinite}]
    For every finite group $G$ and every $d\geq 3$ there exists a sequence of $d$-regular graphs $(\Gamma_\ell)_{\ell \in \mathbb{N}}$ with corresponding strong embeddings $(\beta_\ell)_{\ell\in \mathbb{N}}$ such that 
\begin{enumerate}
\item $\Aut(\Gamma_\ell)\cong\Aut(\beta_\ell(\Gamma_\ell))\cong G$ for all $\ell\in \mathbb{N},$
    \item $\lim_{\ell\to \infty} g(\beta_\ell(\Gamma_\ell)) =\infty.$
\end{enumerate}
\end{theorem*}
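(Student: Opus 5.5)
The plan is to derive Theorem~\ref{thm:infinite} from Theorem~\ref{theorem:dregular} and the mechanism behind Theorem~\ref{theorem:autom_strongEmbedding}. That mechanism turns a $d$-regular graph with an $\Aut$-invariant proper $d$-edge-colouring into a strong embedding. The embedding is the one whose faces are the two-coloured cycles, alternating colours $i$ and $i+1$ modulo $d$. Its rotation at each vertex is the cyclic colour order $1,2,\ldots,d$. Since the colouring is proper, each face boundary is a genuine cycle, so the embedding is strong. Because every automorphism of $\Gamma$ preserves colours, it preserves the rotation system. Hence $\Aut(\beta(\Gamma))=\Aut(\Gamma)\cong G$, since every automorphism of the embedded graph is in particular a graph automorphism.

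What remains is to produce infinitely many such coloured graphs, all with automorphism group $G$, whose genera are unbounded. By Euler's formula,
\[ 2-2g = |V| - \tfrac{d}{2}|V| + F = |V|\bigl(1-\tfrac{d}{2}\bigr) + F , \]
where $F$ is the number of faces. Every face has length at least $4$, since bicoloured cycles are even and the graph is simple. Each edge lies on exactly two faces, so $F \le \tfrac{2|E|}{4}=\tfrac{d|V|}{4}$. This gives
\[ 2-2g \le |V|\bigl(1-\tfrac{d}{4}\bigr). \]
For $d\ge 5$, the genus therefore grows linearly in $|V|$, and it suffices to take $|V(\Gamma_\ell)|\to\infty$. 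For $d=3,4$ this estimate is useless, and I need the faces to be long. The fix is to force bicoloured cycles of length at least $2\ell$ in $\Gamma_\ell$. Then $F\le d|V|/(2\ell)$, and the genus grows once $\ell> d/(d-2)$ and $|V|\to\infty$. For uniformity I would adopt this long-face strategy for all $d$.

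To build the family, I would go back to the construction in Theorem~\ref{theorem:dregular}, starting from the Frucht-type Cayley-graph construction of \cite{SurfacesWithAuto}. There I would replace each edge, or each gadget attached to the Cayley graph, by a larger rigid gadget depending on $\ell$, for example a longer path with $\ell$-dependent decorations. The gadgets are chosen so that the vertex count grows with $\ell$ and the bicoloured cycles become long. Alternatively, for $\ell\ge 3$ one could realise $G$ with a generating set containing a redundant element whose order exceeds $\ell$, which would yield longer colour cycles. The cleaner route is to parametrise the gadgets. In either version I would recheck the three cases of the earlier proof ($|G|\le 2$, cyclic $G$, and $n$-generated $G$): the automorphism group must still be exactly $G$, and the $\Aut$-invariant proper $d$-edge-colouring must survive.

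The main obstacle is this last rigidity check. Enlarging the gadgets must not create new automorphisms, and it must still allow an invariant proper edge-colouring. I would try first to keep the local distinguishing structures (the unique short cycles or the asymmetric markers used to pin down the generators) unchanged, and only lengthen the connecting paths, filling in degrees with a fixed rigid $d$-regular padding pattern. The original automorphism argument then transfers: automorphisms must map markers to markers, hence induce colour-preserving automorphisms of the Cayley graph. The length and colour-cycle bounds can then be read off directly from the construction.
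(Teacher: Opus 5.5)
Your reduction is sound and uses the same tools as the paper: the colouring-induced embedding is Lemma~\ref{lemma:colouring_CDC}, and your estimate $2-2g\le |V|(1-\tfrac d4)$ is Lemma~\ref{lemma:chi_bound_colour_embedding}. The gap is everything after that. The actual content of the theorem is producing, for each $d$, infinitely many $d$-regular graphs with $\Aut\cong G$ and an $\Aut$-invariant proper $d$-edge-colouring, with either $|V|\to\infty$ (for $d\ge 5$) or long bicoloured faces (for $d=3,4$). You only state this as an intention (``larger rigid gadgets'', ``a fixed rigid padding pattern'') and explicitly leave the rigidity check open. Theorem~\ref{theorem:dregular} gives one graph per pair $(G,d)$, not a family, so no sequence has been produced yet. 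Your concrete alternative cannot work: element orders are bounded by $|G|$ and generating sets are subsets of $G$, so varying the generating set yields only finitely many graphs. Also, padding with bounded-size gadgets keeps the bicoloured cycles inside those gadgets short, so a uniform lower bound $2\ell$ on all face lengths does not come for free.

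The missing ideas are the following.

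For $d\ge 6$ you do not need long faces at all. Fix a $(d-3)$-regular $\Gamma'$ from Theorem~\ref{theorem:dregular} and take $\Gamma'\times\mathcal{T}_\ell(\hat\Gamma)$ for an asymmetric cubic $\hat\Gamma$ with an invariant $3$-colouring. Iterated truncation keeps $\hat\Gamma$ prime, rigid and invariantly coloured (Corollary~\ref{prop:autiso2}, Proposition~\ref{prop:truncproper}), and eventually makes it larger than $\Gamma'$, hence relatively prime to it. Sabidussi's product theorem \cite{graphmult} then gives $\Aut\cong G$, Lemma~\ref{lemma:edgecolouring} gives the invariant colouring, and your bound applies since $|V|\to\infty$.

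For $d=3$, complete truncation is exactly the long-face mechanism you were after. With the induced colouring of Proposition~\ref{prop:truncproper}, each $(a,b)$-face of length $m$ becomes a single $(a,b)$-face of length $3m$. The face count is unchanged and $\chi$ drops by $|V|$ at each step (Proposition~\ref{prop:infinite_3}).

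For $d=4,5$ the paper does not use colouring-induced embeddings. In Proposition~\ref{prop:infinite_45} it truncates, blows each original edge up into a $4$-cycle and inserts an explicit rigid gadget. It then pins down $\Aut$ by counting $3$- and $4$-cycles, and each step lowers $\chi$ by $2|E(\Gamma)|$.
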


The paper is organised as follows: In \Cref{sec:prel}, we introduce the necessary notions on graphs and their embeddings that are used throughout this work. In \Cref{sec:trunc}, we study truncations of graphs with particular emphasis on their role in the existence of strong embeddings and proper edge-colourings. 
In \Cref{sec:regular}, we revisit Sabidussi’s theorem and establish our extension (see Theorem~\ref{theorem:dregular}). Moreover, we present one of our main results, namely Theorem~\ref{theorem:autom_strongEmbedding}, showing that for every finite group $ G $, there exists a $ d $-regular graph together with a corresponding strong embedding such that both the automorphism group of the graph and that of the embedded graph are isomorphic to $ G $. 
Finally, we show that for every finite group $ G$, there exists an infinite sequence of $d$-regular graphs and corresponding strong embeddings whose automorphism groups are isomorphic to $ G $, and whose genera tend to infinity, see Theorem \ref{thm:infinite}.

Implementations of the construction in Magma~\cite{Bosma} and GAP~\cite{GAP4}, together with scripts for verifying the resulting automorphism groups on examples from the SmallGroups Library~\cite{SmallGroupPaper,MR1935567}, are available in~\cite{AkpanyaKregularSymmetricGraphs}.

\section{Preliminaries}\label{sec:prel}

We begin our investigation by introducing some preliminary notions on graphs and graph embeddings. As mentioned above, we refer the reader to \cite{TopologicalGraphTheory, GraphsOnSurfaces, graphmult}, for more details on graph embeddings and graph products.

Let $\Gamma$ be a graph with vertices and edges denoted by $V(\Gamma)$ and $E(\Gamma)$, respectively. Since we assume $\Gamma$ to be simple, we define $E(\Gamma)$ to be a subset of $ \Pow_2(V(\Gamma)).$ For a vertex $v$, we denote its \emph{vertex degree} by $\deg_\Gamma(v).$ If $\Gamma$ is clear from the context, we simply write $\deg(v).$ We say that $\Gamma$ is $d$-\emph{regular}, if all vertex degrees are equal to $d.$ In the case that $\Gamma$ is $3$-regular, we say that $\Gamma$ is \emph{cubic}. Furthermore, we denote the \emph{automorphism group} of $\Gamma$ by $\Aut(\Gamma)$.
A \emph{cycle} in $\Gamma$ is a sequence $
(v_1,e_1,v_2,e_2,v_3, \dots, v_\ell, e_\ell)$ of distinct vertices and edges in $\Gamma$ such that $v_i \in e_{i-1}\cap e_i$ holds for all $1\leq i \leq \ell$ where we identify the edges $e_0$ and $e_\ell$.
Since $\Gamma$ is simple, we can identify the above cycle by the sequence of its vertices, namely $(v_1,\ldots,v_\ell).$ 
 A map $\kappa:E(\Gamma)\to \{1,\ldots,d\}$ is said to be a \emph{proper $d$-edge-colouring} of $\Gamma$, 
if the restriction of $\kappa$ to the edges that are incident to any vertex $v\in V(\Gamma)$ is injective. In the above case, we say that $\Gamma$ admits a proper $d$-edge-colouring. From now on we always assume an edge-colouring to be proper and thus only say edge-colouring meaning proper edge-colouring. We refer to the edge-colouring $\kappa$ as $\Aut(\Gamma)$-invariant, if $\kappa(e)= \kappa(\phi(e))$ for all $e\in E(\Gamma)$ and $\phi\in \Aut(\Gamma).$

Next, we recall the notion of the (Cartesian) product of graphs from \cite{graphmult}. For this, let $\Gamma_1$ and $\Gamma_2$ be two graphs. We define the (Cartesian) product of $\Gamma_1$ and $\Gamma_2$ as the graph $\Gamma_1\times \Gamma_2$ whose vertex set is given by $V(\Gamma_1)\times V(\Gamma_2)$ and two vertices $(x_1,y_1),(x_2,y_2)\in V(\Gamma_1\times \Gamma_2)$ satisfy $\{(x_1,y_1),(x_2,y_2)\}\in E(\Gamma_1\times \Gamma_2)$ if and only if (1) $x_1 = x_2$ and $\{y_1,y_2\}\in E(\Gamma_2)$ or (2) $y_1 = y_2$ and $\{x_1,x_2\}\in E(\Gamma_1).$ 
We observe that several properties of $\Gamma_1$ and $\Gamma_2$ extend to properties of $\Gamma_1\times \Gamma_2$. For instance, $\Gamma_1\times \Gamma_2$ is connected, if and only if $\Gamma_1$ and $\Gamma_2$ are connected. Further, if $\Gamma_1$ is $d_1$-regular and $\Gamma_2$ is $d_2$-regular, then $\Gamma_1 \times \Gamma_2$ is $(d_1+d_2)$-regular. Additionally, if $\Gamma_1\times \Gamma_2$ is regular, then it follows that $\Gamma_1$ and $\Gamma_2$ have to be regular.
Graph products allows us to introduce the notion of a prime graph.
For this, we define the \emph{trivial graph} $U$ as the graph consisting of a single vertex with no edges. We say that a graph is \emph{non-trivial}, if it is not isomorphic to the trivial graph $U$. Furthermore, a graph $\Gamma$ is called \emph{prime} if $\Gamma$ is non-trivial and if $\Gamma \cong \Gamma_1 \times \Gamma_2$ implies directly $\Gamma_1 \cong U \text{ or } \Gamma_2 \cong U $.
Two graphs $\Gamma_1$ and $\Gamma_2$ are called \emph{relatively prime} if and only if for all graphs $\Gamma_1', \Gamma_2', Z$ such that $\Gamma_1 \cong \Gamma_1' \times Z$ and $\Gamma_2 \cong \Gamma_2' \times Z$, it follows that $Z \cong U$.

In \cite[Lemma 2.8]{Sabidussi} Sabidussi proved that every graph $\Gamma$ containing a vertex or an edge which is not contained in a cycle of length four of $\Gamma$ is prime. By using a similar proof, we give the following result.

\begin{corollary}\label{cor:prime}
Let $\Gamma$ be a $d$-regular graph. If $\Gamma$ contains a vertex that is contained in fewer than $d-1$ cycles of length four, then $\Gamma$ is prime. 
\end{corollary}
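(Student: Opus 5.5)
We argue by contraposition: we assume $\Gamma\cong\Gamma_1\times\Gamma_2$ with both factors non-trivial, and we show that every vertex of $\Gamma$ lies in at least $d-1$ cycles of length four. The first step is to pin down the factors. Since $\Gamma$ is connected and regular, the remarks preceding the corollary show that $\Gamma_1$ and $\Gamma_2$ are connected and regular, say of degrees $d_1$ and $d_2$ with $d_1+d_2=d$. Non-triviality together with connectedness gives $d_1,d_2\geq 1$.

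Next we fix an arbitrary vertex $(x,y)$. Its $d$ incident edges split into $d_1$ edges $\{(x,y),(x',y)\}$ with $\{x,x'\}\in E(\Gamma_1)$ (the \emph{first-coordinate} edges) and $d_2$ edges $\{(x,y),(x,y')\}$ with $\{y,y'\}\in E(\Gamma_2)$ (the \emph{second-coordinate} edges). For each such pair of neighbours $x'$ of $x$ and $y'$ of $y$, the sequence $((x,y),(x',y),(x',y'),(x,y'))$ is a cycle of length four in $\Gamma_1\times\Gamma_2$. Its vertices are distinct because $x\neq x'$ and $y\neq y'$, and all four consecutive pairs are edges by the definition of the product. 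We take these as the \emph{product squares} at $(x,y)$.

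Distinct pairs $(x',y')$ give distinct cycles, since the cycle determines its two neighbours of $(x,y)$, namely $(x',y)$ and $(x,y')$, and hence determines the pair. So $(x,y)$ lies in at least $d_1d_2$ cycles of length four. For positive integers with $d_1+d_2=d$ we have $d_1d_2\geq d-1$, because $(d_1-1)(d_2-1)\geq 0$. Hence every vertex is contained in at least $d-1$ four-cycles, which contradicts the hypothesis. Therefore $\Gamma$ is prime.

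The only step that needs some care is the count. We must treat cycles as unoriented and count each product square once; the injectivity argument above does exactly this, since a cycle with its reversal or rotation still has the same two neighbours of $(x,y)$. We must also use that both factors have positive degree. This relies on connectedness of the factors, which follows from the remark that $\Gamma_1\times\Gamma_2$ is connected if and only if both factors are connected. Additional four-cycles (for instance inside $\Gamma_1$) only increase the count, so they cause no problem.
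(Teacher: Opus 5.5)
Your proof is correct and follows the same route as the paper: assume a factorisation $\Gamma\cong\Gamma_1\times\Gamma_2$, exhibit the $d_1d_2$ product squares through a vertex, and conclude from $d_1d_2\geq d-1$. You also supply steps the paper leaves implicit, namely that $d_1,d_2\geq 1$ because non-trivial factors of a connected product are connected, that distinct neighbour pairs give distinct squares, and that $d_1d_2\geq d-1$ follows from $(d_1-1)(d_2-1)\geq 0$.
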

\begin{proof}
    Let $\Gamma$ be a graph that is not prime. That means that there exist graphs $\Gamma_1$ and $\Gamma_2$ such that $\Gamma= \Gamma_1\times \Gamma_2.$ Since $\Gamma$ is regular, there exists $d_1,d_2\in \mathbb{N}$ such that $\Gamma_1$ is $d_1$-regular and $\Gamma_2$ is $d_2$-regular. Furthermore, let $\{x_1,x_2\}$ be an edge in $E(\Gamma_1)$ and $\{y_1,y_2\}$ an edge in $E(\Gamma_2)$. Thus, we see that $$\gamma=((x_1,y_1),(x_1,y_2),(x_2,y_2),(x_2,y_1))$$ forms a cycle of length four in $\Gamma$. Since every choice of neighbours of the vertices $x_1\in V(\Gamma_1)$ and $y_1\in V(\Gamma_2)$ in the above argument gives us a cycle of length four in $\Gamma=\Gamma_1\times \Gamma_2,$ we obtain that the number of cycles of length four that are incident to the vertex $(x_1,y_1)$ is greater or equal to $d_1\cdot d_2.$ Since 
    ${d_1\cdot d_2}\geq d-1$, the result follows.
\end{proof}

In the case that $\Gamma_1$ and $\Gamma_2$ are two arbitrary graphs, we know that  $\Aut(\Gamma_1)\times \Aut(\Gamma_2)$ can be embedded into $\Aut(\Gamma_1\times \Gamma_2).$ In \cite[Theorem 3.1]{graphmult}, Sabidussi proves that $$\Aut(\Gamma_1)\times \Aut(\Gamma_2)\cong \Aut(\Gamma_1\times \Gamma_2)$$ holds, if $\Gamma_1$ and $\Gamma_2$ are relatively prime. Note that this result can also be found in \cite[Corollary 4.17]{MR1788124}. However, for graphs $\Gamma_1$ and $\Gamma_2$ that are not relatively prime, these two groups are not necessarily isomorphic.

Lastly, we introduce embeddings of graphs on topological surfaces. An \emph{embedding} of $\Gamma$ on a surface $S$ is an injective and continuous map $\beta :\Gamma\rightarrow S$.  The \emph{cells} of $\beta$ are the connected components of $S\setminus\beta(\Gamma)$. 
We say that $\beta$ is a \emph{$2$-cell} embedding, if the cells of $\beta$ are all homeomorphic to open discs. The boundaries of the cells can be described by closed walks in $\Gamma$ and are called \emph{facial walks}.
The \emph{Euler characteristic} of an embedded graph $\beta(\Gamma)$ is defined as $\chi(\beta(\Gamma))=\vert V(\Gamma)\vert-\vert E(\Gamma)\vert+\vert F(\beta(\Gamma))\vert$, where $F(\beta(\Gamma))$ denotes the set of facial walks of the embedding $\beta$. The genus of $\beta(\Gamma)$ is defined in the usual way.
We refer to the embedding $\beta$ as \emph{strong}, if $\beta$ is a 2-cell embedding and all facial walks are cycles in $\Gamma$. A strong embedding of $\Gamma$ gives rise to a cycle double cover of $\Gamma$, see \cite{szekeres, seymour}.
A \emph{cycle double cover} $\mathcal{C}$ of $\Gamma$ is a set of cycles in $\Gamma$ such that each edge $e\in E(\Gamma)$ is contained in exactly two cycles in $\mathcal{C}$.
A cycle double cover is called \emph{facial} if the contained cycles describe the boundaries of the 2-cells of a strong embedding. Note that if $\Gamma$ is a cubic graph, every cycle double cover of $\Gamma$ is facial; however, this does not hold for arbitrary graphs. 
A strong embedding and therefore also its corresponding facial cycle double cover gives rise to a \emph{local rotation} at $v\in V(\Gamma)$, i.e., a cyclic ordering $\rho_v:=(e_1,\ldots,e_\ell)$ of the edges incident to $v$, see \cite{TopologicalGraphTheory,GraphsOnSurfaces}. Lastly, the \emph{automorphism group} of the embedded graph $\beta(\Gamma)$ is the maximal subgroup of $\Aut(\Gamma)$ that preserves the cells of $\beta(\Gamma)$, i.e., 
$$\Aut(\beta (\Gamma)) =\{\phi \in \Aut(\Gamma)\mid \{\phi(f)\mid f \in F(\beta( \Gamma))\} =F(\beta(\Gamma))\}.$$

\section{Truncation of regular embedded graphs}\label{sec:trunc}

In order to establish our main results in Section~\ref{sec:regular} and Section~\ref{sec:genus}, we make use of truncations of regular graphs that are embedded on surfaces via strong embeddings. Here, we define truncations of a given graph by employing the local rotations of its corresponding vertices. Additionally, we make several preliminary remarks that will be useful in this work.

Let therefore $\Gamma$ be a $d$-regular graph with $d\geq 3,$ and $\beta$ a strong embedding of $\Gamma$. Moreover, let $v\in V(\Gamma)$ be a vertex and $e_1=\{v,v_1\},\ldots,e_d=\{v,v_d\}\in E(\Gamma)$ the edges incident to $v$ such that $\rho_v=(e_1,\ldots,e_d)$ is the local rotation at $v$ defined by $\beta$. 
With this, we define the graph $\Gamma^{(v,\beta)}$ via the vertices 
    \begin{align*}
        V(\Gamma^{(v,\beta)}):=&(V(\Gamma)\setminus \{v\})\,\cup\, \{w_{(v,e_1)},\ldots,w_{(v,e_d)}\},
        \end{align*}
with $w_{(v,e_i)}\notin V(\Gamma)$ for $1\leq i\leq d$, and edges 
    \begin{align*}
        E(\Gamma^{(v,\beta)}):=&(E(\Gamma)\setminus \{e_1,\ldots,e_d\})\,\cup\, \bigcup_{i=1}^{d}\{\{w_{(v,e_i)},w_{(v,e_{i+1})}\},\{v_{i},w_{(v,e_{i})}\}\}
    \end{align*}
with $w_{(v,e_{d+1})}:=w_{(v,e_1)}$.
We say that $\Gamma^{(v,\beta)}$ is obtained from applying a \emph{truncation} to the vertex $v$ in $\Gamma$. Note that $\Gamma^{(v,\beta)}$ is no longer a $d$-regular graph if $d>3$. In the proposition below, we establish that the strong embedding $\beta$ of $\Gamma$ induces a strong embedding of $\Gamma^{(v,\beta)}$.

\begin{proposition}\label{prop:embedding_trunc}
Let $\Gamma$ be a $d$-regular graph, where $d\geq 3$, $\beta$ a strong embedding of $\Gamma$ and $v\in V(\Gamma)$ a vertex. Then $\Gamma':=\Gamma^{(v,\beta)}$ has a strong embedding $\beta'$ satisfying 
    $\chi(\beta(\Gamma))=\chi(\beta'(\Gamma'))$.
\end{proposition}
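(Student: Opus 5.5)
We describe $\beta'$ combinatorially through its facial cycles and then verify three things: that it is a cycle double cover, that it is facial (i.e.\ comes from a 2-cell embedding), and that all facial walks are cycles. Write $F(\beta(\Gamma))$ for the facial cycles of $\beta$. Since $\rho_v=(e_1,\ldots,e_d)$ is the local rotation at $v$, for each $i$ (indices mod $d$) there is exactly one face $f_i\in F(\beta(\Gamma))$ containing the consecutive edges $e_i,e_{i+1}$. Because $\beta$ is strong, $f_i$ is a cycle, so it passes through $v$ exactly once and has the form $(v_i,v,v_{i+1},P_i)$, where $P_i$ is a path from $v_{i+1}$ back to $v_i$ avoiding $v$. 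The faces $f_1,\ldots,f_d$ are pairwise distinct: a cycle meets $v$ in only two edges, so it determines a unique consecutive pair in $\rho_v$. Every other face of $\beta$ avoids $v$.

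Define $F'$ to consist of three kinds of cycles:
\begin{itemize}
\item every face of $\beta$ not containing $v$, unchanged;
\item for each $i$, the modified cycle $f_i'=(v_i,w_{(v,e_i)},w_{(v,e_{i+1})},v_{i+1},P_i)$;
\item the new cycle $c_v=(w_{(v,e_1)},\ldots,w_{(v,e_d)})$.
\end{itemize}
Each $f_i'$ is a cycle, because the new vertices are distinct and do not occur on $P_i$. The cycle $c_v$ has length $d\ge 3$.

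Next we check the double cover property edge by edge. An old edge not incident to $v$ lies in the same two faces as before; if one of them was some $f_i$, it now lies in $f_i'$. The edge $\{v_i,w_{(v,e_i)}\}$ replaces $e_i$, which lay in $f_{i-1}$ and $f_i$, and it now lies in $f_{i-1}'$ and $f_i'$. The edge $\{w_{(v,e_i)},w_{(v,e_{i+1})}\}$ lies in $f_i'$ and in $c_v$.

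For the embedding itself, it is cleanest to argue topologically. Take a small closed disc $D$ around $\beta(v)$ in $S$ that meets $\beta(\Gamma)$ only in $\beta(v)$ and initial segments of the edges $e_i$. Delete the part of the graph in the interior of $D$ and place the vertices $w_{(v,e_i)}$ on $\partial D$ where the edges $e_i$ cross it. The arcs of $\partial D$ between consecutive crossing points, taken in the cyclic order $\rho_v$, realise the edges $\{w_{(v,e_i)},w_{(v,e_{i+1})}\}$. This gives an embedding $\beta'$ of $\Gamma'$ on the same surface $S$. The interior of $D$ is a new open disc face with boundary $c_v$. Each face $f_i$ loses the sector of $D$ lying between $e_i$ and $e_{i+1}$, which is still an open disc, and its boundary becomes $f_i'$. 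All other faces are unchanged. Hence $\beta'$ is a 2-cell embedding whose faces are exactly $F'$, all of which are cycles, so $\beta'$ is strong.

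Finally, the Euler characteristic is preserved, since the surface is the same; it can also be checked by counting. We have $|V'|=|V|-1+d$ and $|E'|=|E|-d+2d=|E|+d$, while the faces are replaced bijectively except for the one new face $c_v$, so $|F'|=|F|+1$. This gives $\chi'=\chi+(d-1)-d+1=\chi$.

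The main point requiring care is the distinctness of the $f_i$ and the fact that each passes through $v$ only once. Both rely on strongness of $\beta$, and together they guarantee that the modified walks $f_i'$ are genuine cycles.
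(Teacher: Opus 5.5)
Your proof is correct and is essentially the paper's argument. You use the same modified cycle double cover: each face $\gamma_i$ through $v$ is replaced by $\gamma_i'$, the new face $(w_{(v,e_1)},\ldots,w_{(v,e_d)})$ is added, and all other faces are kept, giving the same count $\chi'=\chi+(d-1)-d+1$. The only difference is how you certify facialness: you cut out a small disc around $\beta(v)$, whereas the paper reads off the facial walks of the induced rotation system. Your version, with its explicit check that the $d$ faces through $v$ are distinct and each meet $v$ once, is slightly more complete and sidesteps edge signatures in the non-orientable case, at the cost of the standard tameness assumption on the embedding.
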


\begin{proof}
Let $\mathcal{C}$ be the facial cycle double cover of $\Gamma$ that corresponds to the strong embedding $\beta$.
Moreover, let $e_1=\{v,v_1\},\ldots,e_d=\{v,v_d\}\in E(\Gamma)$ be exactly the edges that are incident to $v$ such that $(e_1,\ldots,e_d)$ forms the local rotation at $v$ induced by $\beta$. Thus, for every $1\leq i \leq d$ the cycle double cover $\mathcal{C}$ contains a cycle $\gamma_i$ of the form $(\ldots,v_i,e_i,v, e_{i+1},v_{i+1},\ldots)$ with $v_{d+1}:=v_1$. 
Since a cycle in $\Gamma$ is determined by the ordered sequence of its vertices, there exist suitable vertices in $\Gamma$ such that $\gamma_i$ can be written as $\gamma_i=(a_i,\ldots,\bar{a}_i, v_i,v,v_{i+1},\bar{b}_i,\ldots,b_i)$.

We use this information to construct a cycle double cover of $\Gamma'$ as follows: First, we translate the vertex $v$ in $V(\Gamma)$ into the cycle $\gamma_v:=(w_{(v,e_1)},\ldots,w_{(v,e_d)})$ in $\Gamma'$.
Moreover, for $1\leq i \leq d$ the cycle $\gamma_i\in \mathcal{C}$ can be translated into the cycle $\gamma'_i$ in $\Gamma'$ defined by $$(a_i,\ldots,\bar{a}_i,v_i,w_{(v,e_i)},w_{(v,e_{i+1})},v_{i+1},\bar{b}_i,\ldots,b_i).$$ 
So, the set $$\mathcal{C}':=\{\gamma_v\}\cup \{\gamma_i'\mid 1\leq i \leq d\} \cup (\mathcal{C}\setminus \{\gamma_i\mid 1\leq i \leq d\})$$ forms a cycle double cover of $\Gamma',$ see Figure~\ref{fig:trunc} for an illustration. Note that cycles that do not pass through the truncated vertex $v$ are kept unchanged.
Since $\mathcal{C}$ is a facial cycle double cover, we deduce that $\mathcal{C}'$ is facial. This follows as described below:
If we consider a vertex $v'\in V(\Gamma)$ with $v'\neq v$ as a vertex in $V(\Gamma'),$ then we see that the rotation system at $v'$ with respect to $\beta$ corresponds to the rotation system at $v'$ with respect to $\beta'.$
Furthermore, at each new vertex $w_{(v,e_i)}$ the cyclic order is induced by the rotation system at $v$ with respect to $\beta$. Consequently, the cycles in $\mathcal{C}'$ are precisely the facial walks of the induced rotation system, and hence $\mathcal{C}'$ is a facial cycle double cover.
Hence, the cycle double cover $\mathcal{C}'$ of $\Gamma'$ constructed above defines a strong embedding $\beta'$ of $\Gamma'.$
Finally, we obtain:
\begin{align*}
    \chi(\beta'(\Gamma'))=\vert V(\Gamma')\vert-\vert E(\Gamma')\vert + \vert \mathcal{C}' \vert
= (\vert V(\Gamma)\vert +d-1 )- (\vert E(\Gamma)\vert +d) +(\vert \mathcal{C}\vert +1)= \chi(\beta(\Gamma)).
\end{align*}
\end{proof}

\begin{figure}[H]
    \centering
    \begin{tikzpicture}[vertexBall, edgeDouble=nolabels, scale=2]

\coordinate (V1) at (0 , 0);
\coordinate (V2) at (1 , 0);
\coordinate (V3) at (0 , 1);
\coordinate (V4) at (-1 , 0);
\coordinate (V5) at (0, -1);

\draw[-,very thick] (V1) to (V2);
\draw[-,very thick] (V1) to (V3);
\draw[-,very thick] (V1) to (V4);
\draw[-,very thick] (V1) to (V5);

\vertexLabelR[]{V1}{left}{$ $}

\draw[-{To[scale=2]}] (1.3,0) to (1.8,0);

\coordinate (V12) at (0.5+3 , 0);
\coordinate (V22) at (0 +3, 0.5);
\coordinate (V32) at (-0.5+3 , 0);
\coordinate (V42) at (3, -0.5);

\coordinate (V13) at (1 +3, 0);
\coordinate (V23) at (3, 1);
\coordinate (V33) at (-1 +3, 0);
\coordinate (V43) at (3, -1);

\draw[-,very thick,red] (V12) to (V13);
\draw[-,very thick,red] (V22) to (V23);
\draw[-,very thick,red] (V32) to (V33);
\draw[-,very thick,red] (V42) to (V43);

\draw[-,very thick] (V12) to (V22);
\draw[-,very thick] (V22) to (V32);
\draw[-,very thick] (V32) to (V42);
\draw[-,very thick] (V42) to (V12);

 \draw[very thick, blue, smooth ]
  plot coordinates {
    (0.1,1)
    (0.2,0.2)
    (1,0.1)
  };

\draw[very thick, green!50!black, smooth ]
  plot coordinates {
    (-0.1,1)
    (-0.2,0.2)
    (-1,0.1)
  };

  \draw[very thick, orange, smooth ]
  plot coordinates {
    (-0.1,-1)
    (-0.2,-0.2)
    (-1,-0.1)
  };

  \draw[very thick, Fuchsia, smooth ]
  plot coordinates {
    (0.1,-1)
    (0.2,-0.2)
    (1,-0.1)
  };

\vertexLabelR[]{V12}{left}{$ $}
\vertexLabelR[]{V22}{left}{$ $}
\vertexLabelR[]{V32}{left}{$ $}
\vertexLabelR[]{V42}{left}{$ $}

\draw[very thick, black!50!white, smooth cycle ]
  plot coordinates {
    (0.35+3,0)
    (3,-0.35)
    (-0.35+3,0)
    (3,0.35)
  };

 \draw[very thick, blue, smooth ]
  plot coordinates {
    (0.1+3,1)
    (0.2+3,0.5)
        (0.5+3,0.1+0.1)
    (1+3,0.1)
  };

\draw[very thick, green!50!black, smooth ]
  plot coordinates {
    (-0.1+3,1)
    (-0.2+3,0.5)
    (-0.5+3,0.1+0.1)
    (-1+3,0.1)
  };

  \draw[very thick, orange, smooth ]
  plot coordinates {
    (-0.1+3,-1)
        (-0.2+3,-0.5)
    (-0.5+3,-0.2)
    (-1+3,-0.1)
  };

    \draw[very thick, Fuchsia, smooth ]
  plot coordinates {
    (0.1+3,-1)
    (0.2+3,-0.5)
    (0.5+3,-0.2)
    (1+3,-0.1)
  };

\node[vertexBall,shift={(-0.25,0.25)}] at (V1) {$v$};

\node[vertexBall,shift={(-1.5,0.4)}] at (V1) {$e_4$};

\node[vertexBall,shift={(1.5,-0.4)}] at (V1) {$e_2$};

\node[vertexBall,shift={(0.4,1.5)}] at (V1) {$e_1$};

\node[vertexBall,shift={(-0.4,-1.5)}] at (V1) {$e_3$};

\node[vertexBall,shift={(0.3,-.75)}] at (V12) {$w_{(v,e_2)}$};

\node[vertexBall,shift={(1.,0)}] at (V22) {$w_{(v,e_1)}$};

\node[vertexBall,shift={(-.4,0.65)}] at (V32) {$w_{(v,e_4)}$};

\node[vertexBall,shift={(-1.,0)}] at (V42) {$w_{(v,e_3)}$};

\end{tikzpicture} 
    \caption{Truncating a vertex $v$ of degree 4 with its incident edges $e_1,\ldots,e_4$ in $\Gamma$ resulting in four vertices $w_{(v,e_1)},\ldots,w_{(v,e_4)}$ in $\Gamma'$ and the corresponding facial cycle double cover that induces a strong embedding with the same Euler characteristic.}
    \label{fig:trunc}
\end{figure}
In the setting of the above proof, we see that $\beta$ and $\beta'$ describe strong embeddings of $\Gamma$ and $\Gamma'$ onto the same surface.

Next, we exploit the truncation of a vertex of $\Gamma$ and the above proposition to define the complete truncation of $\Gamma$.
For this, we assume that the vertices of $\Gamma$ are given by $V(\Gamma)=\{v_1,\ldots,v_k\}$. We first set $\Gamma_0:=\Gamma$ and furthermore assume that $\Gamma_0$ has a strong embedding $\beta_0:=\beta$. With this, we construct graphs $\Gamma_1,\ldots,\Gamma_k$ recursively as follows: For $0\leq i\leq k-1$ let the graph $\Gamma_i$  together with a strong embedding $\beta_i$ already be constructed as described in the proof of Proposition~\ref{prop:embedding_trunc}.
Now, the vertex $v_{i+1}$ can be interpreted as a vertex of the graph $\Gamma_i$.
Thus, we define the graph $\Gamma_{i+1}$ via  $\Gamma_{i+1}:={\Gamma_i}^{(v_{i+1},\beta_i)}$ with corresponding strong embedding $\beta_{i+1}$ obtained from $\beta_i$ with Proposition~\ref{prop:embedding_trunc}.
We call $\mathcal{T}(\Gamma, \beta):= \Gamma_k$ the \emph{complete truncation} of $\Gamma.$
In \Cref{subfig:completetrunc2} we illustrate the graph resulting from a complete truncation of the cubical graph induced by its planar embedding illustrated in \Cref{subfig:completetrunc1}.
\begin{figure}[H]
    \begin{subfigure}{.45\textwidth}
        \centering
        \begin{tikzpicture}[vertexBall, edgeDouble=nolabels, faceStyle=nolabels, scale=3]

\coordinate (V1) at (0.,0);
\coordinate (V2) at (0 , 1.);
\coordinate (V3) at (1,1);
\coordinate (V4) at (1,0);
\coordinate (V5) at (1/3,1/3);
\coordinate (V6) at (1/3,2/3);
\coordinate (V7) at (2/3,2/3);
\coordinate (V8) at (2/3,1/3);
\draw[thick] (V1) -- (V2);
\draw[thick] (V1) -- (V5);
\draw[thick] (V1) -- (V4);
\draw[thick] (V2) -- (V3);
\draw[thick] (V2) -- (V6);
\draw[thick] (V3) -- (V4);
\draw[thick] (V3) -- (V7);
\draw[thick] (V4) -- (V8);
\draw[thick] (V5) -- (V6);
\draw[thick] (V5) -- (V8);
\draw[thick] (V6) -- (V7);
\draw[thick] (V7) -- (V8);

\vertexLabelR[]{V1}{left}{$ $}
\vertexLabelR[]{V2}{left}{$ $}
\vertexLabelR[]{V3}{left}{$ $}
\vertexLabelR[]{V4}{left}{$ $}
\vertexLabelR[]{V5}{left}{$ $}
\vertexLabelR[]{V6}{left}{$ $}
\vertexLabelR[]{V7}{left}{$ $}
\vertexLabelR[]{V8}{left}{$ $}

\end{tikzpicture}
        \subcaption{}
        \label{subfig:completetrunc1}
    \end{subfigure}
    \begin{subfigure}{.45\textwidth}
        \centering
        \begin{tikzpicture}[vertexBall, edgeDouble=nolabels, faceStyle=nolabels, scale=3.]

\coordinate (V1) at (0.,0);
\coordinate (V11) at (0.15,0);
\coordinate (V12) at (0,0.15);
\coordinate (V13) at (0.15,0.15);

\coordinate (V2) at (0 , 1.);
\coordinate (V21) at (0.15 , 1.);
\coordinate (V22) at (0 , 0.85);
\coordinate (V23) at (0.15 , 0.85);

\coordinate (V3) at (1,1);
\coordinate (V31) at (0.85,1);
\coordinate (V32) at (1,0.85);
\coordinate (V33) at (0.85,0.85);

\coordinate (V4) at (1,0);
\coordinate (V41) at (0.85,0);
\coordinate (V42) at (1,0.15);
\coordinate (V43) at (0.85,0.15);

\coordinate (V51) at (1/3,1/3);
\coordinate (V52) at (1/3+0.1,1/3);
\coordinate (V53) at (1/3,1/3+0.1);

\coordinate (V61) at (1/3,2/3);
\coordinate (V62) at (1/3+0.1,2/3);
\coordinate (V63) at (1/3,2/3-0.1);

\coordinate (V71) at (2/3,2/3);
\coordinate (V72) at (2/3-0.1,2/3);
\coordinate (V73) at (2/3,2/3-0.1);

\coordinate (V81) at (2/3,1/3);
\coordinate (V82) at (2/3-0.1,1/3);
\coordinate (V83) at (2/3,1/3+0.1);

\draw[thick] (V12) -- (V22);
\draw[thick] (V13) -- (V51);
\draw[thick] (V11) -- (V41);
\draw[thick] (V21) -- (V31);
\draw[thick] (V23) -- (V61);
\draw[thick] (V32) -- (V42);
\draw[thick] (V33) -- (V71);
\draw[thick] (V43) -- (V81);
\draw[thick] (V73) -- (V83);
\draw[thick] (V72) -- (V62);
\draw[thick] (V53) -- (V63);
\draw[thick] (V52) -- (V82);
\draw[thick] (V12) -- (V11);
\draw[thick] (V13) -- (V12);
\draw[thick] (V13) -- (V11);

\draw[thick] (V22) -- (V21);
\draw[thick] (V23) -- (V22);
\draw[thick] (V23) -- (V21);

\draw[thick] (V32) -- (V31);
\draw[thick] (V33) -- (V32);
\draw[thick] (V33) -- (V31);

\draw[thick] (V42) -- (V41);
\draw[thick] (V43) -- (V42);
\draw[thick] (V43) -- (V41);

\draw[thick] (V52) -- (V51);
\draw[thick] (V53) -- (V52);
\draw[thick] (V53) -- (V51);

\draw[thick] (V62) -- (V61);
\draw[thick] (V63) -- (V62);
\draw[thick] (V63) -- (V61);

\draw[thick] (V72) -- (V71);
\draw[thick] (V73) -- (V72);
\draw[thick] (V73) -- (V71);

\draw[thick] (V82) -- (V81);
\draw[thick] (V83) -- (V82);
\draw[thick] (V83) -- (V81);

\vertexLabelR[]{V51}{left}{$ $}
\vertexLabelR[]{V61}{left}{$ $}
\vertexLabelR[]{V71}{left}{$ $}
\vertexLabelR[]{V81}{left}{$ $}

\vertexLabelR[]{V11}{left}{$ $}
\vertexLabelR[]{V12}{left}{$ $}
\vertexLabelR[]{V13}{left}{$ $}

\vertexLabelR[]{V21}{left}{$ $}
\vertexLabelR[]{V22}{left}{$ $}
\vertexLabelR[]{V23}{left}{$ $}

\vertexLabelR[]{V31}{left}{$ $}
\vertexLabelR[]{V32}{left}{$ $}
\vertexLabelR[]{V33}{left}{$ $}

\vertexLabelR[]{V41}{left}{$ $}
\vertexLabelR[]{V42}{left}{$ $}
\vertexLabelR[]{V43}{left}{$ $}

\vertexLabelR[]{V51}{left}{$ $}
\vertexLabelR[]{V52}{left}{$ $}
\vertexLabelR[]{V53}{left}{$ $}

\vertexLabelR[]{V61}{left}{$ $}
\vertexLabelR[]{V62}{left}{$ $}
\vertexLabelR[]{V63}{left}{$ $}

\vertexLabelR[]{V71}{left}{$ $}
\vertexLabelR[]{V72}{left}{$ $}
\vertexLabelR[]{V73}{left}{$ $}

\vertexLabelR[]{V81}{left}{$ $}
\vertexLabelR[]{V82}{left}{$ $}
\vertexLabelR[]{V83}{left}{$ $}

\end{tikzpicture}
        \subcaption{}
        \label{subfig:completetrunc2}
    \end{subfigure}
    \caption{(a) The planar embedding of the cubical graph and (b) its corresponding complete truncation.}
    \label{fig:completetrunc}
\end{figure}
Note that the complete truncation $\Gamma':=\mathcal{T}(\Gamma,\beta)$ of $\Gamma$ is a cubic graph and has vertex set $$V(\Gamma') =\{w_{(v,e)} \mid v\in V(\Gamma),e\in E(\Gamma)\text{ with }v\in e\}.$$
Furthermore, we observe that $\Gamma'$ has two types of edges, namely
\begin{enumerate}
    \item $\vert E(\Gamma)\vert$ edges of the form $\{w_{(v_1,e)}, w_{(v_2 ,e)}\}$, where $e\in E(\Gamma)$ is an edge with $e= \{v_1,v_2\}$, and
\item  $d\cdot \vert V(\Gamma)\vert$ edges of the form $\{w_{(v,e_1)},w_{(v,e_2)}\}$, where $e_1,e_2\in E(\Gamma)$ are edges satisfying $v\in e_1\cap e_2$ and $(e_1,v,e_2)$ is a subsequence that is contained in a cycle of the facial cycle double cover corresponding to $\beta.$
\end{enumerate}
We notice that since $\Gamma$ is simple, every cycle in $\Gamma'$ that does not correspond to a vertex in $\Gamma$ has length at least 6. For the complete truncation of cubic graphs, we give the following remark.
\begin{remark}
Let $\Gamma$ be a cubic graph and $\beta,\beta'$ two strong embeddings of $\Gamma$. Then we know that the graphs $\mathcal{T}(\Gamma,\beta)$ and $\mathcal{T}(\Gamma,\beta')$ are isomorphic. This follows from the fact that truncating a vertex of degree $3$ always produces a subgraph isomorphic to the complete graph $K_3$. Hence, we obtain the same incidences in $\mathcal{T}(\Gamma,\beta)$ and $\mathcal{T}(\Gamma,\beta')$ independently of the chosen strong embedding. However, this is no longer true for the corresponding graph embeddings induced by $\beta$ and $\beta'$ as described in Proposition~\ref{prop:embedding_trunc}. Therefore, when we are only concerned with the underlying graph and not with the induced embedding, we write $\mathcal{T}(\Gamma)$ instead of $\mathcal{T}(\Gamma,\beta)$ for cubic graphs. From the observation above, we can also apply the complete truncation to a cubic graph without any knowledge of a strong embedding. 
\end{remark}

In the case where $\Gamma$ is $d$-regular with $d\in\{3,4,5\}$ and admits a strong embedding $\beta$, we show that the automorphism group of $\mathcal{T}(\Gamma,\beta)$ can be recovered from the automorphism group of $\beta(\Gamma)$.

\begin{proposition}\label{prop:autiso}
Let $\Gamma$ be a $d$-regular graph with $3\le d\le 5$, and let $\beta$ be a strong embedding of $\Gamma$. Then $\Aut(\beta(\Gamma))\cong \Aut(\mathcal{T}(\Gamma,\beta)).$
\end{proposition}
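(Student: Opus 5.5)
We construct an explicit isomorphism $\Phi:\Aut(\beta(\Gamma))\to\Aut(\mathcal{T}(\Gamma,\beta))$ and show it is bijective. Write $\Gamma':=\mathcal{T}(\Gamma,\beta)$. Call the cycles $\gamma_v=(w_{(v,e_1)},\ldots,w_{(v,e_d)})$ the \emph{vertex cycles*}, the type-2 edges the \emph{cycle edges*}, and the type-1 edges $\{w_{(v_1,e)},w_{(v_2,e)}\}$ the \emph{connecting edges*}.

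\textbf{Defining $\Phi$.} Let $\phi\in\Aut(\beta(\Gamma))$. Set $\Phi(\phi)(w_{(v,e)}):=w_{(\phi(v),\phi(e))}$. Connecting edges go to connecting edges trivially. Since $\phi$ permutes the facial cycles, it maps every subsequence $(e_1,v,e_2)$ of a facial cycle to such a subsequence. Hence cycle edges go to cycle edges, and $\Phi(\phi)$ is a graph automorphism. Clearly $\Phi$ is a homomorphism. It is injective because $\phi$ is determined by its action on the pairs $(v,e)$.

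\textbf{Surjectivity, step 1: vertex cycles form an invariant partition.} Let $\psi\in\Aut(\Gamma')$. We want $\psi$ to map vertex cycles to vertex cycles, and this is where $d\le 5$ enters; it is the main obstacle.

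By the remark preceding the proposition, every cycle of $\Gamma'$ that does not correspond to a vertex of $\Gamma$ has length at least $6$. The vertex cycles have length $d\in\{3,4,5\}$, and there is no short cycle mixing them. The remaining task is to show that the vertex cycles are \emph{exactly} the cycles of length at most $5$ in $\Gamma'$.

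First, a cycle of length at most $5$ cannot use a connecting edge. Using one would make it leave a vertex cycle and return. Since $\Gamma$ is simple, any such closed walk traverses at least three connecting edges together with cycle-edge segments, giving length at least $6$. (Using exactly two connecting edges would require two parallel edges in $\Gamma$.) So a cycle of length at most $5$ lies entirely in cycle edges, and therefore inside a single vertex cycle $\gamma_v$, which is itself a cycle. Hence it equals $\gamma_v$.

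Consequently $\psi$ permutes the set of cycles of length $d$, which is the set of vertex cycles. Since every vertex of $\Gamma'$ lies in exactly one vertex cycle, $\psi$ preserves the set of cycle edges, and hence the set of connecting edges.

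For $d\ge 6$ this argument fails, because hexagons arising from faces can compete with the vertex cycles. This explains the hypothesis $d\le 5$.

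\textbf{Surjectivity, step 2: recovering $\phi$.} Define $\phi(v)$ as the vertex $u$ with $\psi(\gamma_v)=\gamma_u$. Define $\phi(e)$ via the connecting edge: $\psi$ maps the connecting edge of $e$ to the connecting edge of some $e'$, and we set $\phi(e):=e'$.

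Then $\phi$ is a bijection on vertices preserving adjacency, since $v_1,v_2$ are adjacent exactly when a connecting edge joins $\gamma_{v_1}$ and $\gamma_{v_2}$, and this is unique by simplicity. So $\phi\in\Aut(\Gamma)$.

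\textbf{$\phi$ preserves faces.} The cycle edges encode consecutive pairs $(e_1,v,e_2)$ in facial cycles, and $\psi$ preserves these. So $\phi$ maps each face-corner to a face-corner. A facial cycle is determined by following its corners: it is the unique cycle obtained by alternating connecting edges and single cycle edges in the faces of the truncated embedding from Proposition~\ref{prop:embedding_trunc}. Hence $\phi$ permutes the facial cycles, and $\phi\in\Aut(\beta(\Gamma))$.

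\textbf{Conclusion.} By construction $\Phi(\phi)=\psi$, so $\Phi$ is surjective. Together with injectivity, this gives $\Aut(\beta(\Gamma))\cong\Aut(\mathcal{T}(\Gamma,\beta))$.
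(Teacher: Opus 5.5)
Everything up to the construction of $\phi$ is sound and matches the paper's proof step for step. $\Phi$ is a well-defined injective homomorphism. Your count (at least three connecting edges, consecutive ones separated by at least one cycle edge) correctly shows that the vertex cycles are exactly the cycles of length at most $5$. So $\psi$ induces $\phi\in\Aut(\Gamma)$ that maps the corners $\{e_1,e_2\}$ at $v$ (consecutive pairs of $\rho_v$) onto the corners at $\phi(v)$; that is, $\phi$ preserves every local rotation up to reversal.

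The gap is the final step, that this makes $\phi$ permute the faces. A face is not determined by its corners. A cycle alternating connecting edges and single cycle edges that enters $C_v$ at $w_{(v,e)}$ may leave towards either neighbour of $w_{(v,e)}$ on $C_v$. The faces $\gamma_i'$ of the truncated embedding are just one family of such cycles. You only know that $\psi$ preserves the faces $\gamma_v$, because they are the short cycles. Nothing forces an arbitrary $\psi\in\Aut(\Gamma')$ to preserve the $\gamma_i'$, so appealing to ``the faces of the truncated embedding'' is circular. In embedding terms, $\beta$ is determined by the local rotations \emph{together with}, for each edge $e=\{u,v\}$, which of the two corners at $u$ containing $e$ shares a face with which of the two corners at $v$ containing $e$. $\mathcal{T}(\Gamma,\beta)$ records the former and forgets the latter.

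This cannot be patched, because the statement is false as written. The paper's proof makes the identical leap (``preserves the rotation system \ldots\ and hence preserves the facial structure''). For $d=3$, every permutation of three edges preserves their cyclic order up to reversal, so $\Aut(\mathcal{T}(\Gamma,\beta))\cong\Aut(\Gamma)$ for every $\beta$ (Corollary~\ref{prop:autiso2}). Take $\Gamma=K_{3,3}$ with rotation $(b_1,b_2,b_3)$ at each $a_i$ and $(a_1,a_2,a_3)$ at each $b_j$. This is a strong toroidal embedding whose three faces are the Hamiltonian $6$-cycles complementary to the perfect matchings $\{a_ib_{\pi(i)}\}$ with $\pi\in A_3$. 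Let $(\sigma,\tau)\in S_3\times S_3$ act by $a_i\mapsto a_{\sigma(i)}$, $b_j\mapsto b_{\tau(j)}$; it sends the matching of $\pi$ to that of $\tau\pi\sigma^{-1}$. Hence the face-preserving automorphisms are the $(\sigma,\tau)$ with $\operatorname{sgn}\sigma=\operatorname{sgn}\tau$ and their composites with the side swap $a_i\leftrightarrow b_i$. This group has order $36$, while $|\Aut(K_{3,3})|=72$.

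The failure is not special to $d=3$. In $Q_4$, the $8$-cycles whose edge directions read $2,1,4,1,2,1,4,1$ or $2,3,4,3,2,3,4,3$ form a facial cycle double cover with local rotation $(1,2,3,4)$ at every vertex. The cyclic coordinate shift preserves all these rotations, but it maps a $(2,1,4,1)$-face to a $(3,2,1,2)$-cycle, which is not a face.

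What your argument does prove is that $\Aut(\mathcal{T}(\Gamma,\beta))$ is isomorphic to the group of automorphisms of $\Gamma$ preserving every local rotation up to reversal. This group contains $\Aut(\beta(\Gamma))$, possibly strictly. It coincides with it, for example, when $\Aut(\beta(\Gamma))=\Aut(\Gamma)$, which is the situation in which the paper later applies the proposition. The statement also becomes true if $\Aut(\mathcal{T}(\Gamma,\beta))$ is replaced by the automorphism group of the truncated \emph{embedded} graph.
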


\begin{proof}
First, let $\Gamma':=\mathcal{T}(\Gamma,\beta)$. We begin by defining a homomorphism $\Phi:\Aut(\beta(\Gamma))\longrightarrow \Aut(\Gamma').$
Let $\phi\in \Aut(\beta(\Gamma))$ be an automorphism. Since $\phi$ preserves the embedding, it maps vertices, edges, and faces of $\beta(\Gamma)$ to vertices, edges, and faces, respectively, and preserves the cyclic order of the
incident edges at each vertex, possibly reversing its orientation. For every vertex $w_{(v,e)}\in V(\Gamma')$, define
\[
\Phi(\phi)(w_{(v,e)})
   = w_{(\phi(v),\phi(e))}.
\]
The preservation of the local rotation (up to orientation) at each vertex implies that adjacent vertices of $\Gamma'$ are mapped to adjacent vertices of $\Gamma'$, so $\Phi(\phi)\in\Aut(\Gamma')$. If $\Phi(\phi_1)=\Phi(\phi_2)$, then $\phi_1$ and $\phi_2$ agree on all vertices and edges of $\Gamma$, and hence $\phi_1=\phi_2$. Therefore, $\Phi$ is injective.

It remains to show that $\Phi$ is surjective. Let $\psi\in\Aut(\Gamma')$. For each vertex $v\in V(\Gamma)$, the
truncation construction produces a cycle
\[
C_v=(w_{(v,e_1)},\ldots,w_{(v,e_d)})
\]
of length $d$ in $\Gamma'$.
Since $3\le d\le5$, every cycle in $\Gamma'$ that does not arise from a vertex of $\Gamma$ has length at least $6$. Hence, the cycles $C_v$ are precisely the cycles of length $d$ in $\Gamma'$. It follows that $\psi$ permutes the set
\[
\mathcal C=\{C_v\mid v\in V(\Gamma)\}.
\]
Consequently, for every $v\in V(\Gamma)$ there exists a unique vertex $\phi(v)\in V(\Gamma)$ such that
\[
\psi(C_v)=C_{\phi(v)}.
\]
This defines a permutation $\phi$ of $V(\Gamma)$.
The edges of $\Gamma$ can also be recovered from $\Gamma'$. Indeed, if $e=\{u,v\}\in E(\Gamma)$, then the truncation contains a unique edge of $\Gamma'$ joining a vertex of $C_u$ to a vertex of $C_v$. These are precisely the edges of $\Gamma'$ that do not lie in any cycle $C_w$. Since $\psi$ preserves adjacency and maps vertex-cycles to vertex-cycles, it maps such an edge to the unique edge joining $C_{\phi(u)}$ and $C_{\phi(v)}$. Therefore, $\phi$ preserves adjacency and induces an automorphism of $\Gamma$.
Finally, the cyclic order of the edges around a vertex $v$ is encoded by the cycle $C_v$. Since $\psi$ maps $C_v$ onto $C_{\phi(v)}$ as a cycle, it preserves the cyclic arrangement of the incident edges. Thus, the induced automorphism $\phi$ preserves the rotation system determined by the embedding $\beta$, and hence preserves the facial structure of
$\beta(\Gamma)$.
Therefore, $\phi\in\Aut(\beta(\Gamma))$, and by construction $\Phi(\phi)=\psi$. Thus, $\Phi$ is surjective and so the statement follows.
\end{proof}

For cubic graphs, the above result can be strengthened by an analogous argument.
\begin{corollary}
    \label{prop:autiso2}
If $\Gamma$ is a cubic graph, then $\Aut(\Gamma)\cong \Aut(\mathcal{T}(\Gamma)).$
\end{corollary}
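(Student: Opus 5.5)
The plan is to mirror the proof of Proposition~\ref{prop:autiso}, replacing $\Aut(\beta(\Gamma))$ by $\Aut(\Gamma)$. For cubic $\Gamma$ every automorphism automatically respects the truncation structure, because each truncated vertex is a triangle.

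\textbf{Injective homomorphism.} Let $\Gamma':=\mathcal{T}(\Gamma)$, with vertex set $\{w_{(v,e)}\mid v\in e\in E(\Gamma)\}$. For $\phi\in\Aut(\Gamma)$ define
\[
\Phi(\phi)(w_{(v,e)}):=w_{(\phi(v),\phi(e))}.
\]
We check that this is an automorphism by examining the two edge types listed above.
\begin{enumerate}
\item Edges $\{w_{(v_1,e)},w_{(v_2,e)}\}$ with $e=\{v_1,v_2\}$ go to $\{w_{(\phi(v_1),\phi(e))},w_{(\phi(v_2),\phi(e))}\}$, which is again of this type.
\item Edges $\{w_{(v,e_1)},w_{(v,e_2)}\}$ go to edges of the same kind. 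This uses the key cubic feature: at a degree-$3$ vertex any two incident edges are consecutive in the local rotation, so all three $w_{(v,e_i)}$ are pairwise adjacent. This holds regardless of whether $\phi$ respects any embedding, consistent with the preceding remark.
\end{enumerate}
Clearly $\Phi$ is a homomorphism. It is injective because $\phi$ is determined by its action on the vertices $w_{(v,e)}$, hence on the vertices $v$.

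\textbf{Surjectivity.} Let $\psi\in\Aut(\Gamma')$. The triangles $C_v=(w_{(v,e_1)},w_{(v,e_2)},w_{(v,e_3)})$ are exactly the $3$-cycles of $\Gamma'$, since every cycle not coming from a vertex of $\Gamma$ has length at least $6$. Hence $\psi$ permutes $\{C_v\}$, and this defines a permutation $\phi$ of $V(\Gamma)$. The edges of $\Gamma'$ lying in no triangle are precisely the type-1 edges. They correspond bijectively to $E(\Gamma)$, one edge joining $C_u$ and $C_v$ for each $\{u,v\}\in E(\Gamma)$; here the simplicity of $\Gamma$ gives at most one such edge. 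Since $\psi$ preserves these edges, $\phi$ preserves adjacency, so $\phi\in\Aut(\Gamma)$.

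It remains to verify $\Phi(\phi)=\psi$. The vertex $w_{(v,e)}$ is the unique vertex of $C_v$ incident to the type-1 edge corresponding to $e$. So $\psi(w_{(v,e)})$ lies in $C_{\phi(v)}$ and is incident to the type-1 edge joining $C_{\phi(v)}$ and $C_{\phi(u)}$, where $e=\{u,v\}$. That vertex is $w_{(\phi(v),\phi(e))}$.

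\textbf{Expected obstacle.} The only subtle point is confirming that the triangles are exactly the $C_v$. A triangle in $\Gamma'$ using a type-1 edge would have to return to $C_v$ via another type-1 edge. Each vertex of $\Gamma'$ is incident to exactly one type-1 edge, and distinct type-1 edges leaving $C_v$ start at distinct vertices. So such a triangle would need a third vertex adjacent to two vertices lying in different $C$'s, i.e. an edge between two vertices of the same $C_u$ being type-1, or a multiple edge in $\Gamma$. Both are impossible because $\Gamma$ is simple. This is the same girth observation (length $\ge 6$) already used in Proposition~\ref{prop:autiso}, so I expect it to go through directly.
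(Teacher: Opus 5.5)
Your proposal is correct and follows the route the paper intends: the paper proves this corollary only by saying the argument of Proposition~\ref{prop:autiso} carries over, and you carry that argument out, using that in the cubic case every pair $w_{(v,e_1)},w_{(v,e_2)}$ is adjacent, so all of $\Aut(\Gamma)$, and not only $\Aut(\beta(\Gamma))$, lifts to $\mathcal{T}(\Gamma)$. Your triangle-exclusion paragraph is a little muddled but closes at once: if $(a,b,c)$ were a triangle with $\{a,b\}$ of type 1, then $c$ lies in a different $C$ from $a$ or from $b$, so some vertex of the triangle would be incident to two type-1 edges, which is impossible.
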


We conclude this section by showing that an $\Aut(\Gamma)$-invariant 3-edge-colouring of a cubic graph $\Gamma$ can be translated into an automorphism group-invariant 3-edge-colouring of its complete truncation.
\begin{proposition}\label{prop:truncproper}
    Let $\Gamma$ be a cubic graph. If $\Gamma$ has an $\Aut(\Gamma)$-invariant $3$-edge-colouring, then $\mathcal{T}(\Gamma)$ has also a $3$-edge-colouring which is $\Aut(\mathcal{T}(\Gamma))$-invariant.
\end{proposition}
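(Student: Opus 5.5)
Let $\kappa:E(\Gamma)\to\{1,2,3\}$ be the given $\Aut(\Gamma)$-invariant $3$-edge-colouring and $\Gamma':=\mathcal{T}(\Gamma)$. We define $\kappa'$ on $E(\Gamma')$ using only $\kappa$ and the two edge types of the complete truncation.

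\emph{Definition of $\kappa'$.} For an edge of the first type, $\{w_{(v_1,e)},w_{(v_2,e)}\}$ with $e=\{v_1,v_2\}$, set $\kappa'=\kappa(e)$. For an edge of the second type, $\{w_{(v,e_1)},w_{(v,e_2)}\}$ inside the triangle $C_v$, the three edges at $v$ carry the three distinct colours; let $e_3$ be the third edge at $v$ and set $\kappa'(\{w_{(v,e_1)},w_{(v,e_2)}\})=\kappa(e_3)$.

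\emph{Properness.} Each vertex $w_{(v,e_1)}$ of $\Gamma'$ is incident to exactly three edges: the type-one edge, coloured $\kappa(e_1)$; the triangle edge to $w_{(v,e_2)}$, coloured $\kappa(e_3)$; and the triangle edge to $w_{(v,e_3)}$, coloured $\kappa(e_2)$. These colours are distinct because $\kappa$ is proper at $v$. Hence $\kappa'$ is a proper $3$-edge-colouring.

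\emph{Invariance.} By Corollary~\ref{prop:autiso2}, $\Aut(\Gamma')\cong\Aut(\Gamma)$, and the proof of Proposition~\ref{prop:autiso} (with the rotation condition dropped, since every cyclic order of three edges is determined) shows more. Every $\psi\in\Aut(\Gamma')$ is of the form $w_{(v,e)}\mapsto w_{(\phi(v),\phi(e))}$ for some $\phi\in\Aut(\Gamma)$. The reason is that the triangles $C_v$ are exactly the $3$-cycles of $\Gamma'$, since all other cycles have length at least $6$. So $\psi$ permutes the triangles, maps type-one edges to type-one edges, and thereby induces $\phi$. It still has to be checked that $\psi$ sends $w_{(v,e)}$ to $w_{(\phi(v),\phi(e))}$, i.e.\ that $\psi$ is determined by $\phi$. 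This holds because $w_{(v,e)}$ is the unique vertex of $C_v$ incident to the type-one edge corresponding to $e$.

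Given this description, invariance is immediate. A type-one edge for $e$ is mapped to the type-one edge for $\phi(e)$, and $\kappa(\phi(e))=\kappa(e)$. A triangle edge in $C_v$ coloured $\kappa(e_3)$ is mapped to the triangle edge in $C_{\phi(v)}$ between $w_{(\phi(v),\phi(e_1))}$ and $w_{(\phi(v),\phi(e_2))}$. Its colour is $\kappa(\phi(e_3))=\kappa(e_3)$.

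The main obstacle is making the identification of $\Aut(\Gamma')$ with automorphisms induced from $\Aut(\Gamma)$ explicit, rather than only an abstract isomorphism. This requires the cycle-length argument above: triangles are exactly the $3$-cycles because $\Gamma$ is simple and every non-triangle cycle of $\Gamma'$ has length at least $6$.
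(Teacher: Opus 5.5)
Your proposal is correct and follows essentially the same route as the paper: your colour $\kappa(e_3)$ on a triangle edge is exactly the paper's $\kappa[e_1,e_2]$, and invariance is checked separately on the two edge types using automorphisms of the form $w_{(v,e)}\mapsto w_{(\phi(v),\phi(e))}$. Your one addition is justifying that every $\psi\in\Aut(\mathcal{T}(\Gamma))$ has this form, via the fact that the triangles are exactly the $3$-cycles. The paper obtains this from Corollary~\ref{prop:autiso2}, whose bare statement is only an abstract isomorphism, so your argument usefully makes that step explicit.
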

\begin{proof}
First, let $\kappa:E(\Gamma)\to \{1,2,3\}$ be the $\Aut(\Gamma)$-invariant 3-edge-colouring of $\Gamma$. 
Recall that the complete truncation $\Gamma':=\mathcal{T}(\Gamma)$
has the following structure: The vertices of $\Gamma'$ are given by
\[
V(\Gamma')=\{w_{(v,e)}\mid v\in V(\Gamma),\ e\in E(\Gamma)\ \text{with}\ v\in e\}.
\]

Furthermore, there are two different types of edges in $E(\Gamma')$, namely edges of the form
\begin{itemize}
    \item[1.] $\{w_{(v_1,e)},w_{(v_2,e)}\}$, where $e=\{v_1,v_2\}\in E(\Gamma)$, and 
    \item[2.] $\{w_{(v,e_1)},w_{(v,e_2)}\}$, where $e_1,e_2\in E(\Gamma)$ are edges with $v\in e_1\cap e_2$.
\end{itemize}

For $e_1,e_2\in E(\Gamma)$ with $e_1\cap e_2\neq \emptyset$ we define the number $\kappa[e_1,e_2]=\kappa[e_2,e_1]$ to be the unique integer $i\in \{1,2,3\}\setminus \{\kappa(e_1),\kappa(e_2)\}$.
Using this, we define a 3-edge-colouring $\kappa'$ of $\Gamma'$ by
\[
\kappa'(e')=
\begin{cases}
\kappa(e) & \text{if } e'=\{w_{(v_1,e)},w_{(v_2,e)}\} \text{ for an edge $e=\{v_1,v_2\}\in E(\Gamma)$} \\
\kappa[e_1,e_2]& \text{if } e'=\{w_{(v,e_1)},w_{(v,e_2)}\} \text{ else}
\end{cases}
\]
where $e'\in E(\Gamma')$ is an edge. This edge-colouring of $\Gamma'$ is shown in Figure~\ref{fig:inducedthreecolouring}, where we illustrate the colours $1,2,3$ of the different edge-colourings with the colours red, green and blue, respectively.
\begin{figure}[H]
    \centering
    \begin{tikzpicture}[vertexBall, faceStyle=nolabels, scale=1.5]

\coordinate (V1) at (0+9 , 0);
\coordinate (V2) at (1+9 , 0);
\coordinate (V3) at (0.5+9 , 0.866025);
\coordinate (V4) at (1/2+9 , 1.866025);
\coordinate (V5) at (1.86603 +9, -0.5);
\coordinate (V6) at (-0.86603+9 , -0.5);

\draw[very thick,blue] (V1) -- (V3);
\draw[very thick,green] (V2) -- (V3);
\draw[very thick,red] (V1) -- (V2);
\draw[very thick,green] (V1) -- (V6);
\draw[very thick,red] (V3) -- (V4);
\draw[very thick,blue] (V2) -- (V5);

\vertexLabelR[]{V1}{left}{$ $}
\vertexLabelR[]{V2}{left}{$ $}
\vertexLabelR[]{V3}{left}{$ $}

\draw[-{To[scale=2]}] (7,0.433013) to (8,0.433013);

\coordinate (V7) at (0.5+5 , 0.433013);
\coordinate (V8) at (1/2+5 , 1.433013);
\coordinate (V9) at (1.36603+5 , -0.0669873);
\coordinate (V10) at (-0.36603+5 , -0.0669873);

\draw[very thick,red] (V7) -- (V8);
\draw[very thick,blue] (V7) -- (V9);
\draw[very thick,green] (V7) -- (V10);

\vertexLabelR[]{V7}{left}{$ $}

\node at (5.5, 0.2) {$v$};

\node at (5.7, 0.9) {$e_1$};

\node at (4.9, 0.3) {$e_2$};
\node at (6.1, 0.3) {$e_3$};

\node at (10., 0.8) {$w_{(v,e_1)}$};
\node at (10.5, 0.05) {$w_{(v,e_3)}$};
\node at (8.5, 0.05) {$w_{(v,e_2)}$};

\end{tikzpicture}
    \caption{A 3-edge-colouring of a cubic graph and the induced 3-edge-colouring of its complete truncation}
    \label{fig:inducedthreecolouring}
\end{figure}
It is straightforward to verify that $\kappa'$ is a valid 3-edge-colouring of $\Gamma'$.
It remains to prove that this colouring is $\Aut(\Gamma')$-invariant. For this, let $\psi\in \Aut(\Gamma')$ be an automorphism of $\Gamma'$.
Since $\Aut(\Gamma)\cong \Aut(\Gamma')$ by Corollary~\ref{prop:autiso2}, there exists $\phi\in \Aut(\Gamma)$ such that
\[
\psi(w_{(v,e)})=w_{(\phi(v),\phi(e))}
\]
for all $v\in V(\Gamma)$ and $e\in E(\Gamma)$.
First, we assume that $e'$ is an edge of the form $e'=\{w_{(v_1,e)},w_{(v_2,e)}\}$ with $e=\{v_1,v_2\}\in E(\Gamma)$. Then we conclude that
\[
\psi(e')=\{w_{(\phi(v_1),\phi(e))},w_{(\phi(v_2),\phi(e))}\}.
\]
Since the colouring $\kappa$ is $\Aut(\Gamma)$-invariant, we have $\kappa(e)=\kappa(\phi(e))$, and therefore $\kappa'(e')=\kappa'(\psi(e'))$.
Now, we assume that $e'=\{w_{(v,e_1)},w_{(v,e_2)}\}$, where $e_1,e_2$ are edges in $E(\Gamma)$ with $v\in e_1\cap e_2$. Then we know that
\[
\psi(e')=\{w_{(\phi(v),\phi(e_1))},w_{(\phi(v),\phi(e_2))}\}.
\]
Since $\kappa$ is $\Aut(\Gamma)$-invariant, we have $\kappa(e_1)=\kappa(\phi(e_1))$ and $\kappa(e_2)=\kappa(\phi(e_2))$.
Hence, the third colour different from $\kappa(e_1)$ and $\kappa(e_2)$ coincides with the third colour different from 
$\kappa(\phi(e_1))$ and $\kappa(\phi(e_2))$, and therefore
\[
\kappa[e_1,e_2]=\kappa[\phi(e_1),\phi(e_2)].
\]
It follows that $\kappa'(e')=\kappa'(\psi(e'))$ for all $\psi \in \Aut(\Gamma')$. Thus, $\kappa'$ is $\Aut(\Gamma')$-invariant.
\end{proof}

\section{Regular graphs with prescribed automorphism groups via Cartesian products}\label{sec:regular}

In this section, we show that for any finite group $G$ and every $d\geq 3$, there exists a $d$-regular graph $\Gamma$ satisfying the following properties:
\begin{enumerate}
    \item $\Aut(\Gamma)\cong G$ and
    \item $\Gamma$ has an $\Aut(\Gamma)$-invariant $d$-edge-colouring.
\end{enumerate}
By establishing this result, we extend Sabidussi's classical result. Before proceeding further, we revisit Sabidussi's original construction.
\begin{remark}
\label{sabidussi:construction}
Let $ G $ be an arbitrary finite group, $ S $ a generating set of $ G $ with $n:=\vert S\vert $, and $ d \geq 3 $ a natural number. In \cite[Theorem 3.7]{Sabidussi}, Sabidussi describes a construction of a $ d $-regular graph with prescribed automorphism group $ G $ as follows:

For the cases $ d\in\{3,4,5\} $, Sabidussi constructs $ d $-regular prime graphs $ \Gamma^d $ such that $ \Aut(\Gamma^d) \cong G $. In the cubic case ($ d=3 $), the graph $\Gamma^3$ is the graph arising from Frucht's cubic graph construction in \cite[Theorem 4.1]{Frucht}. 
In particular, Sabidussi chooses $\Gamma^3$ to be given with incidences as described in \cite[p.~374]{Frucht}.
In this construction, Frucht defines a cubic graph associated with a group generated by $n\geq 2$ elements, with vertex set $\{x_{(i,g)} \mid i = 1,\dots,2n+4\}$ and adjacencies as illustrated in Figure~\ref{frucht1}.
\begin{figure}[H]
    \centering
    \resizebox{!}{5cm}{\begin{tikzpicture}[vertexBall, faceStyle=nolabels, scale=0.5]

	\coordinate (V1) at (3 , 3);
	\coordinate (V2) at (3 , 0);
	\coordinate (V3) at (8 , 0);
	\coordinate (V4) at (0 , 0);
	\coordinate (V5) at (8 , 3);
	\coordinate (V6) at (8 , -2);
	\coordinate (V7) at (8 , -4);
	\coordinate (V8) at (8 , -6);
	\coordinate (V9) at (10 , 3);
	\coordinate (V10) at (10 , -2);
	\coordinate (V11) at (10, -4);
	\coordinate (V12) at (10 , -6);

	\draw (V1) -- (V4);
	\draw (V2) -- (V1);
	\draw (V2) -- (V4);

	\draw (V4) -- (V8);
	\draw (V8) -- (V7);
	\draw (V2) -- (V3);

	\draw (V5) -- (V3);
	\draw (V1) -- (V5);
	\draw (V5) -- (V9);

	\draw (V6) -- (V7);
	\draw (V3) -- (V6);
	\draw (V7) -- (V11);
	\draw (V8) -- (V12);
	\draw (V6) -- (V10);

	\vertexLabelR[]{V1}{left}{$ $}
	\vertexLabelR[]{V2}{left}{$ $}
	\vertexLabelR[]{V3}{left}{$ $}
	\vertexLabelR[]{V4}{left}{$ $}
	\vertexLabelR[]{V5}{left}{$ $}
	\vertexLabelR[]{V6}{left}{$ $}
	\vertexLabelR[]{V7}{left}{$ $}
	\vertexLabelR[]{V8}{left}{$ $}
	\vertexLabelR[]{V9}{left}{$ $}
	\vertexLabelR[]{V10}{left}{$ $}
	\vertexLabelR[]{V11}{left}{$ $}
	\vertexLabelR[]{V12}{left}{$ $}

	\def\x{0.6}
	\node at (3 , 3+\x) {$x_{(1,g)}$};
	\node at (3 , 0-\x) {$x_{(2,g)}$};
	\node at (8-2*\x , 0+\x) {$x_{(3,g)}$};
	\node at (0-2*\x , 0) {$x_{(4,g)}$};
	\node at (8 , 3+\x) {$x_{(5,g)}$};
	\node at (8-2*\x , -2) {$x_{(6,g)}$};
	\node at (8-3*\x , -4+\x) {$x_{(2n+3,g)}$};
	\node at (8 , -6-\x) {$x_{(2n+4,g)}$};
	\node at (10+3*\x , 3) {$x_{(6,g\cdot g_1)}$};
	\node at (10+3*\x , -2) {$x_{(5,g\cdot g_1^{-1})}$};
	\node at (10+4*\x, -4) {$x_{(2n+4,g\cdot g_n)}$};
	\node at (10+4*\x , -6) {$x_{(2n+3,g\cdot g_n^{-1})}$};

	\end{tikzpicture}}

\caption{Frucht's original cubic graph construction for a group $G$ generated by $n\geq 2$ elements}
\label{frucht1}
\end{figure}

For $ d=4 $ and $ d=5 $, the construction builds upon the cubic graph $ \Gamma^3 $ together with auxiliary graphs illustrated in \Cref{fig:auxSabidussi1,fig:auxSabidussi2}. 
Loosely speaking, for $d=4$, Sabidussi chooses the graph $Y$ illustrated in Figure~\ref{fig:auxSabidussi1} to consist of $2n+4$ vertices. Moreover, the vertices of $Y$ are labelled with $y_{(i,g)}$, where $g\in G$ and $i =1,\ldots, 2n+4$. Thus, for each $g\in G$ we obtain a graph that is illustrated in \Cref{fig:auxSabidussi1}. Sabidussi then constructs a 4-regular graph $\Gamma^4$ with vertex set $\{x_{(i,g)}, y_{(i,g)}\mid g\in G , i=1,\ldots,2n+4\}$. The incidences of $\Gamma^4$ are then given by the incidences of the graphs $\Gamma^3$ and $Y$ together with the edges $$\{\{x_{(i,g)},y_{(i,g)}\}\mid g\in G, i=1,\ldots,2n+4 \}.$$
For further details and the construction of the 5-regular graph with the desired properties, we refer the reader to Sabidussi’s original paper.

\begin{figure}[H]
    \centering
    \begin{subfigure}{.48\textwidth}
        \centering
        \begin{tikzpicture}[vertexBall, edgeDouble=nolabels, faceStyle=nolabels, scale=1.]
    
\coordinate (V11)  at ( 0.0000-2,  1.0000);
\coordinate (V11c)  at ( 0.0000-1.75,  1.0000+0.03);
\coordinate (V2)  at (-0.5878-2,  0.8090);
\coordinate (V3)  at (-0.9511-2,  0.3090);
\coordinate (V4)  at (-0.9511-2, -0.3090);
\coordinate (V5)  at (-0.5878-2, -0.8090);
\coordinate (V61)  at ( 0.0000-2, -1.0000);
\coordinate (V61c)  at ( 0.0000-1.75, -1.0000-0.03);

\coordinate (V62)  at ( 0.0000, -1.0000);
\coordinate (V62c)  at ( 0.0000-0.25, -1.0000-0.03);
\coordinate (V7)  at ( 0.5878, -0.8090);
\coordinate (V8)  at ( 0.9511, -0.3090);
\coordinate (V9)  at ( 0.9511,  0.3090);
\coordinate (V10) at ( 0.5878,  0.8090);
\coordinate (V12)  at ( 0.0000,  1.0000);
\coordinate (V12c)  at ( 0.0000-0.25,  1.0000+0.03);

\draw[thick] (V11) -- (V61);
\draw[thick] (V12) -- (V62);

\draw[thick] (V2) -- (V5);
\draw[thick] (V10) -- (V7);
\draw[thick] (V9) -- (V3);
\draw[thick] (V8) -- (V4);

\draw[thick] (V11) -- (V2);
\draw[thick] (V3) -- (V2);
\draw[thick] (V3) -- (V4);
\draw[thick] (V5) -- (V4);
\draw[thick] (V61) -- (V5);

\draw[thick] (V62) -- (V7);
\draw[thick] (V8) -- (V7);
\draw[thick] (V8) -- (V9);
\draw[thick] (V10) -- (V9);
\draw[thick] (V10) -- (V12);

\draw[thick] (V11) -- (V11c);
\draw[thick] (V12) -- (V12c);
\draw[thick] (V61) -- (V61c);
\draw[thick] (V62) -- (V62c);

\node at (-1, 1.) {$\ldots$};
\node at (-1, -1.) {$\ldots$};

\vertexLabelR[]{V11}{left}{$ $}
\vertexLabelR[]{V12}{left}{$ $}
\vertexLabelR[]{V2}{left}{$ $}
\vertexLabelR[]{V3}{left}{$ $}
\vertexLabelR[]{V4}{left}{$ $}
\vertexLabelR[]{V5}{left}{$ $}
\vertexLabelR[]{V61}{left}{$ $}
\vertexLabelR[]{V62}{left}{$ $}
\vertexLabelR[]{V7}{left}{$ $}
\vertexLabelR[]{V8}{left}{$ $}
\vertexLabelR[]{V9}{left}{$ $}
\vertexLabelR[]{V10}{left}{$ $}
\end{tikzpicture}
        \subcaption{}
        \label{fig:auxSabidussi1}
    \end{subfigure}
    \begin{subfigure}{.48\textwidth}
        \centering
        \begin{tikzpicture}[vertexBall, edgeDouble=nolabels, faceStyle=nolabels, scale=1]

\coordinate (V11)  at ( 0.0000-2,  1.0000);
\coordinate (V11c)  at ( 0.0000-1.75,  1.0000+0.03);
\coordinate (V2)  at (-0.5878-2,  0.8090);
\coordinate (V3)  at (-0.9511-2,  0.3090);
\coordinate (V4)  at (-0.9511-2, -0.3090);
\coordinate (V5)  at (-0.5878-2, -0.8090);
\coordinate (V61)  at ( 0.0000-2, -1.0000);
\coordinate (V61c)  at ( 0.0000-1.75, -1.0000-0.03);

\coordinate (V62)  at ( 0.0000, -1.0000);
\coordinate (V62c)  at ( 0.0000-0.25, -1.0000-0.03);
\coordinate (V7)  at ( 0.5878, -0.8090);
\coordinate (V8)  at ( 0.9511, -0.3090);
\coordinate (V9)  at ( 0.9511,  0.3090);
\coordinate (V10) at ( 0.5878,  0.8090);
\coordinate (V12)  at ( 0.0000,  1.0000);
\coordinate (V12c)  at ( 0.0000-0.25,  1.0000+0.03);

\draw[thick ] (V11) -- (V61);
\draw[thick ] (V12) -- (V62);

\draw[thick ] (V2) -- (V5);
\draw[thick ] (V10) -- (V7);
\draw[thick ] (V9) -- (V4);
\draw[thick ] (V8) -- (V3);

\draw[thick ] (V11) -- (V2);
\draw[thick ] (V3) -- (V2);
\draw[thick ] (V3) -- (V4);
\draw[thick ] (V5) -- (V4);
\draw[thick ] (V61) -- (V5);

\draw[thick ] (V62) -- (V7);
\draw[thick ] (V8) -- (V7);
\draw[thick ] (V8) -- (V9);
\draw[thick ] (V10) -- (V9);
\draw[thick ] (V10) -- (V12);

\draw[thick ] (V11) -- (V11c);
\draw[thick ] (V12) -- (V12c);
\draw[thick ] (V61) -- (V61c);
\draw[thick] (V62) -- (V62c);

\node at (-1, 1.) {$\ldots$};
\node at (-1, -1.) {$\ldots$};

\vertexLabelR[]{V11}{left}{$ $}
\vertexLabelR[]{V12}{left}{$ $}
\vertexLabelR[]{V2}{left}{$ $}
\vertexLabelR[]{V3}{left}{$ $}
\vertexLabelR[]{V4}{left}{$ $}
\vertexLabelR[]{V5}{left}{$ $}
\vertexLabelR[]{V61}{left}{$ $}
\vertexLabelR[]{V62}{left}{$ $}
\vertexLabelR[]{V7}{left}{$ $}
\vertexLabelR[]{V8}{left}{$ $}
\vertexLabelR[]{V9}{left}{$ $}
\vertexLabelR[]{V10}{left}{$ $}
\end{tikzpicture}
        \subcaption{}
        \label{fig:auxSabidussi2}
    \end{subfigure}
    \caption{Sabidussi’s auxiliary graphs for constructing 4-regular (a) and 5-regular (b) graphs with prescribed automorphism group.}
    \label{fig:auxSabidussi}
\end{figure}

In the general case $ d > 5 $, Sabidussi employs graph products. Writing $ d = d' + 3\ell $, where $d  \equiv d' \pmod{3}$ and $ \ell \in \mathbb{N}_0 $, the construction begins with a $ d' $-regular graph $ \Gamma^{d'} $ satisfying $ \Aut(\Gamma^{d'}) \cong G $. In addition, pairwise non-isomorphic cubic prime graphs $ \Gamma_1, \dots, \Gamma_\ell $ with trivial automorphism group are used. These are chosen such that $\Gamma^{d'},\Gamma_1,\ldots,\Gamma_{\ell}$ are relatively prime. The desired graph is then defined as
\[
\Gamma' := \Gamma^{d'} \times \prod_{i=1}^\ell \Gamma_i.
\]
Sabidussi asserts that this construction yields a $ d$-regular graph satisfying $ \Aut(\Gamma') \cong G $.
While the overall strategy is elegant and highly influential, a closer inspection suggests that certain aspects of the argument merit additional clarification.

First, the cubic construction of Frucht, as used by Sabidussi, does not explicitly address the case where $(n=1)$ and misses the case $\vert G\vert \leq 2$. Consequently, the construction is most naturally interpreted in the case $n\geq 2.$

Secondly, there is a subtle point in Frucht’s construction concerning the choice of generating set. For groups generated by two elements, the construction behaves as intended. However, for generating sets of size at least three, the resulting graph need not have automorphism group isomorphic to $ G $ (see \cite{FruchtHow}). For example, for $ G = A_5 $ with generating set
\[
S = \{ (1,5)(2,4),\ (1,2,4,3,5),\ (2,5,3) \},
\]
the resulting cubic graph admits an automorphism group isomorphic to $ C_2 \times A_5 $. This phenomenon and more precisely this example has been addressed in \cite{SurfacesWithAuto}, where suitable modifications are proposed. Hence, for $d\equiv 0 \pmod{3}$, there exist finite groups $G$ and $d$-regular graphs $\Gamma'$ arising from Sabidussi's construction satisfying $\Aut(\Gamma')\ncong G$. This indicates that additional care is required when applying the construction in full generality.

Finally, Sabidussi indicates that the identification $ \Aut(\Gamma') \cong G $ follows by an argument analogous to Frucht’s cycle-counting method. In light of the considerations above, this step may benefit from a more detailed justification. 
\end{remark}

To prove Theorem~\ref{theorem:dregular}, namely that for every finite group $G$ and every degree $d\geq 3$, there exists a $d$-regular graph that has an automorphism group-invariant $d$-edge-colouring and whose automorphism group realises $G$, we follow the general strategy of Sabidussi with suitable modifications to ensure that the argument applies in full generality. In particular, inspired by Frucht’s approach in \cite{Frucht}, we distinguish three cases: (1) $\lvert G \rvert \leq 2$, (2) $G \cong C_\ell$ for $\ell \geq 3$, and (3) $G$ is $n$-generated with $n\geq 2$.

In Section~\ref{section:atmost2}, we treat the first case. We then address the second case in Section~\ref{subsection:cyclic} for $d=3$ and hence construct cubic graphs with the desired properties. The result for $d=3$ in the case (3) is covered by the modification presented in \cite{SurfacesWithAuto}. This approach is justified by the fact that, once cases (2) and (3) are established in the cubic setting, the general cases can be handled uniformly. This is described in more detail in \Cref{section:regular}.

Before proceeding further, we establish a result that allows us to verify that all cubic graphs constructed in this section are prime. In particular, we show that the only cubic graphs that are not prime are exactly the circular ladder graphs which are defined as follows: For an $
\ell\geq 3$ the circular ladder graph on $k:=2\cdot \ell$ vertices is the graph with vertex-set $\{1,\ldots,k\}$ and edges 
$$\{\{i,i+1\},\{\ell+i,\ell+i+1\}\mid i=1,\ldots,\ell-1\}\cup \{\{i,\ell+i\}\mid i=1,\ldots,\ell\}\cup \{\{1,\ell\},\{\ell+1,k\}\} .$$
\begin{proposition}
\label{prop:cubicprime}
Let $\Gamma$ be a cubic graph. Then $\Gamma$ is prime if and only if $\Gamma$ is not isomorphic to a circular ladder graph.
\end{proposition}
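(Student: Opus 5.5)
We prove both directions. Throughout, connectedness is assumed as in the rest of the paper.

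\emph{Circular ladders are not prime.} The circular ladder graph on $2\ell$ vertices is isomorphic to $C_\ell\times K_2$. Here $C_\ell$ is the cycle of length $\ell$ (a graph, not the group) and $K_2$ is a single edge. The isomorphism sends $i\mapsto(i,0)$ and $\ell+i\mapsto(i,1)$ for $1\le i\le\ell$. Checking the defining edges, $\{i,i+1\}$, $\{\ell+i,\ell+i+1\}$ and $\{1,\ell\}$, $\{\ell+1,2\ell\}$ are the cycle edges in each layer, and $\{i,\ell+i\}$ are the $K_2$-edges. Both factors are non-trivial, so the circular ladder is not prime.

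\emph{A non-prime cubic graph is a circular ladder.} Suppose $\Gamma\cong\Gamma_1\times\Gamma_2$ with both factors non-trivial. As noted in Section~\ref{sec:prel}, regularity of the product forces $\Gamma_1$ to be $d_1$-regular and $\Gamma_2$ to be $d_2$-regular with $d_1+d_2=3$, and both factors are connected.
\begin{itemize}
\item If some $d_i=0$, that factor is a connected $0$-regular graph, namely $U$, which contradicts non-triviality.
\item Hence, up to swapping, $d_1=1$ and $d_2=2$.
\item A connected $1$-regular simple graph is $K_2$.
\item A connected $2$-regular simple finite graph is a cycle $C_\ell$ with $\ell\ge3$.
\end{itemize}
Thus $\Gamma\cong K_2\times C_\ell$, which by the identification above is the circular ladder graph on $2\ell$ vertices.

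The only step needing care is the explicit isomorphism between $C_\ell\times K_2$ and the edge list in the definition. This is the main place where an indexing slip could occur, particularly the closing edges $\{1,\ell\}$ and $\{\ell+1,k\}$, which correspond to the wrap-around edge of $C_\ell$ in each layer. Everything else reduces to the classification of connected regular graphs of degree at most $2$.
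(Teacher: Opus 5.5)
Your proof is correct and follows the same route as the paper. You identify the circular ladder as $K_2$ times a cycle, and conversely you use that the factors of a cubic product are connected and regular with degrees summing to $3$, which forces a $K_2$ factor and a cycle factor. The only difference is that you work with a single two-factor splitting, as the definition of prime permits; this absorbs the paper's separate case of three $1$-regular factors, since $K_2\times K_2$ is already the $4$-cycle.
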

\begin{proof}
It is easy to see that a circular ladder graph on $2\cdot \ell$ vertices can be written as the product of the graph $\Gamma_1$ consisting of a single edge and a graph $\Gamma_2$ forming a cycle graph on $\tfrac{\vert V(\Gamma)\vert}{2}$ vertices.  Hence, if $\Gamma$ is isomorphic to a circular ladder graph, then it is not prime.

Now, let us assume that $\Gamma$ is not prime. Thus, there exist finitely many non-trivial $\Gamma_1,\ldots,\Gamma_\ell$ such that $\Gamma=\prod_{i=1}^\ell \Gamma_i.$ Since $\Gamma$ is connected, the graphs $\Gamma_1,\ldots,\Gamma_\ell$ have to be connected. Furthermore, if $v:=(v_1,\ldots,v_\ell)\in V(\Gamma)$ is a vertex, then 
    \[
    \sum_{i=1}^\ell\deg(v_i)=\deg(v)=3
    \]
follows. Hence, we obtain exactly two cases, namely:
\begin{itemize}
    \item[(1)] $\ell=2$ and $\Gamma_i$ is $i$-regular for $i=1,2$ or
    \item[(2)] $\ell=3$ and $\Gamma_1,\Gamma_2,\Gamma_3$ are all $1$-regular.
\end{itemize}

If (1) is the case, then $\Gamma$ is isomorphic to the circular ladder graph on $\vert  V(\Gamma_1)\vert\cdot\vert  V(\Gamma_2)\vert=2\cdot \vert  V(\Gamma_2)\vert$ vertices. In the other case, $\Gamma$ is isomorphic to the circular ladder graph on $8$ vertices. Hence, we conclude the proof.
\end{proof}
\subsection{Regular graphs with automorphism groups of order at most 2}\label{section:atmost2}
In the case that $G$ is a group of order at most $2$, we provide examples of regular graphs of vertex degree at most $5$ satisfying the desired properties in the appendix  of this paper. Using these examples, we obtain the following result:

\begin{proposition}\label{prop:atmost2}
        For every finite group $G$ with $\vert G\vert\leq 2$ and every $d=3,4,5$ there exists a $d$-regular graph $\Gamma$ with $\Aut(\Gamma)\cong G$ that has an $\Aut(\Gamma)$-invariant $d$-edge-colouring. Note that in the case $d=3$, $\Gamma$ can be chosen to be prime.
\end{proposition}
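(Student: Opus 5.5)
The plan is to prove the statement by exhibiting explicit small graphs, six cases in all: $|G|\in\{1,2\}$ and $d\in\{3,4,5\}$. For each case I would give a concrete $d$-regular graph $\Gamma$ (these are the graphs listed in the appendix), together with a proper $d$-edge-colouring $\kappa$. For each pair I then verify two things. First, $\Aut(\Gamma)\cong G$. Second, $\kappa$ is $\Aut(\Gamma)$-invariant.

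For the trivial group, invariance is automatic, since the only automorphism is the identity. In that case it suffices to take a $d$-regular asymmetric graph that is $d$-edge-colourable, i.e.\ of class one, and to write down an explicit colouring. For $d=3$ I would choose a small asymmetric cubic graph, for instance one on $12$ vertices, with a Hamiltonian cycle of even length. Colouring the cycle alternately with $1,2$ and the remaining perfect matching with $3$ then gives a proper $3$-edge-colouring. For $d=4,5$ I would likewise pick asymmetric regular graphs containing a Hamiltonian cycle (or a $2$-factor of even cycles) plus further perfect matchings, so that $\kappa$ comes from a $1$-factorisation.

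For $G\cong C_2$ the invariance condition is a real constraint. Let $\tau$ be the unique non-trivial automorphism. I would build $\Gamma$ so that its $1$-factorisation is $\tau$-stable, meaning each colour class is mapped to itself by $\tau$. A convenient way to guarantee this is to start from two copies $A$ and $A'$ of a rigid, nearly regular gadget, joined by a perfect matching of colour $d$ between corresponding vertices. Here $\tau$ swaps $A$ and $A'$, and the colouring of $A'$ copies that of $A$. Each matching edge $\{a,a'\}$ is fixed by $\tau$ as a set, so invariance holds provided $\Aut(\Gamma)$ really is $\{1,\tau\}$. In a concrete example one can instead simply list $\tau$ and check $\kappa(\tau(e))=\kappa(e)$ edge by edge.

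The main obstacle is certifying that $\Aut(\Gamma)$ is exactly $G$, and not larger, for each of the six explicit graphs. I would argue this by a local-invariant, cycle-counting argument in the style of Frucht. For each vertex, count the triangles and $4$-cycles through it. This partitions the vertices into classes, and every automorphism must preserve these classes. Then propagate along the distinguished vertices until every vertex is fixed, or is fixed up to $\tau$. As a safeguard, the computation can be confirmed with GAP/Magma, following the implementations cited earlier.

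Finally, for primality when $d=3$, I would invoke Proposition~\ref{prop:cubicprime}. A circular ladder graph has a vertex-transitive automorphism group of order at least $4\ell\geq 12$. Since our cubic graphs have automorphism group of order at most $2$, they cannot be circular ladders, and hence they are prime.
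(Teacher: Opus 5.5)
Your plan is essentially the paper's: the proposition is established by the explicit graphs $\Gamma_1,\ldots,\Gamma_6$ listed in the appendix, with automorphism groups certified in Magma/GAP and primality of the cubic ones obtained from Proposition~\ref{prop:cubicprime}. Your two-copies idea is exactly how $\Gamma_4=K_2\times\Gamma_1$ and $\Gamma_6=K_2\times\Gamma_3$ arise, and there $\Aut(K_2\times A)\cong C_2$ even follows without computation from Sabidussi's product theorem, because a rigid factor $A$ is relatively prime to $K_2$. The only caveat is that this trick cannot handle the cubic $C_2$ case: a $2$-regular gadget is a union of cycles, and $K_2\times C_\ell$ is a circular ladder. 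So for $d=3$ you really do need an ad hoc example such as $\Gamma_2$.
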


We now aim to prove the general case ($d \geq 3$) by making use of graph products of regular graphs. For this, we first show that an edge-colouring of a graph obtained via a graph product can be constructed from edge-colourings of the factor graphs.

\begin{lemma}
\label{lemma:edgecolouring}
Let $\Gamma_1$ and $\Gamma_2$ be regular graphs with vertices of degree $d_1$ and $d_2$, respectively. If $\Gamma_1$ admits a $d_1$-edge-colouring and $\Gamma_2$ admits a $d_2$-edge-colouring, then $\Gamma_1 \times \Gamma_2$ admits a $d$-edge-colouring with $d := d_1 + d_2$. 
Furthermore, if $\Gamma_1$ and $\Gamma_2$ are relatively prime and both $d_i$-edge-colourings are automorphism group-invariant, then $\Gamma_1 \times \Gamma_2$ admits an $\mathrm{Aut}(\Gamma_1 \times \Gamma_2)$-invariant $d$-edge-colouring.
\end{lemma}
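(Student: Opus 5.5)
Let $\kappa_1:E(\Gamma_1)\to\{1,\ldots,d_1\}$ and $\kappa_2:E(\Gamma_2)\to\{1,\ldots,d_2\}$ be the given edge-colourings. We define $\kappa$ on $E(\Gamma_1\times\Gamma_2)$ by separating the two edge types of the product and shifting the second palette. For an edge $\{(x_1,y),(x_2,y)\}$ with $\{x_1,x_2\}\in E(\Gamma_1)$ we set $\kappa=\kappa_1(\{x_1,x_2\})$. For an edge $\{(x,y_1),(x,y_2)\}$ with $\{y_1,y_2\}\in E(\Gamma_2)$ we set $\kappa=d_1+\kappa_2(\{y_1,y_2\})$.

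Properness is checked at a vertex $(x,y)$. Its $d_1$ edges of the first type correspond bijectively to the edges of $\Gamma_1$ at $x$, so they receive distinct colours in $\{1,\ldots,d_1\}$. Likewise, its $d_2$ edges of the second type receive distinct colours in $\{d_1+1,\ldots,d\}$. The two ranges are disjoint, so the colouring is injective at every vertex. This gives the first claim.

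For invariance, assume in addition that $\Gamma_1$ and $\Gamma_2$ are relatively prime and that $\kappa_i$ is $\Aut(\Gamma_i)$-invariant. By Sabidussi's theorem \cite[Theorem 3.1]{graphmult}, $\Aut(\Gamma_1\times\Gamma_2)\cong\Aut(\Gamma_1)\times\Aut(\Gamma_2)$. We need the stronger concrete statement that every automorphism $\psi$ has the form $\psi(x,y)=(\phi_1(x),\phi_2(y))$ with $\phi_i\in\Aut(\Gamma_i)$. This holds because the natural embedding $(\phi_1,\phi_2)\mapsto\phi_1\times\phi_2$ is injective and the two groups are finite of equal order, so the embedding is onto. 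Granting this, $\psi$ sends a first-type edge $\{(x_1,y),(x_2,y)\}$ to $\{(\phi_1(x_1),\phi_2(y)),(\phi_1(x_2),\phi_2(y))\}$. This image is again a first-type edge, and its colour is $\kappa_1(\phi_1(\{x_1,x_2\}))=\kappa_1(\{x_1,x_2\})$. The second type is handled symmetrically, using the invariance of $\kappa_2$ and the fact that the shift by $d_1$ is preserved. Hence $\kappa$ is $\Aut(\Gamma_1\times\Gamma_2)$-invariant.

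The main obstacle is the step that turns an abstract isomorphism into the concrete product form of every automorphism. The cardinality argument above handles it, since the injective map $\Aut(\Gamma_1)\times\Aut(\Gamma_2)\to\Aut(\Gamma_1\times\Gamma_2)$ has a finite image of full size. Alternatively, one can cite the form of Sabidussi's theorem (or \cite[Corollary 4.17]{MR1788124}), which states exactly that every automorphism of the product of relatively prime graphs acts coordinatewise. Either way, automorphisms then preserve the two edge types, and the rest is routine.
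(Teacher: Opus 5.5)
Your proof is correct and follows the paper's argument. You use the same colouring, $\kappa_1$ on first-type edges and $d_1+\kappa_2$ on second-type edges, and the same appeal to Sabidussi's theorem for invariance; your cardinality argument, which upgrades the abstract isomorphism $\mathrm{Aut}(\Gamma_1\times\Gamma_2)\cong\mathrm{Aut}(\Gamma_1)\times\mathrm{Aut}(\Gamma_2)$ to the statement that every automorphism acts coordinatewise, spells out a step the paper leaves implicit.
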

\begin{proof}
Let the edge-colourings of $\Gamma_1$ and $\Gamma_2$ be given by $\kappa_{1}\colon E(\Gamma_1)\to \{1,\ldots,d_1\}$ and $\kappa_{2}\colon E(\Gamma_2)\to \{1,\ldots,d_2\}$. Here, we use these colourings to construct a valid $d$-edge-colouring $\kappa\colon E(\Gamma_1\times\Gamma_2)\to\{1,\ldots,d\}$.
In particular, for an edge $e = \{(x_1,y_1),(x_2,y_2)\} \in E(\Gamma_1 \times \Gamma_2)$, we define $\kappa $ by
\[
\kappa(e)=
\begin{cases}
\kappa_{1}(\{x_1,x_2\}), & \text{if } y_1 = y_2, \\
d_1 + \kappa_{2}(\{y_1,y_2\}), & \text{if } x_1 = x_2.
\end{cases}
\]
It is straightforward to verify that this defines a valid edge-colouring of $\Gamma_1 \times \Gamma_2$.
Now suppose that $\Gamma_1$ and $\Gamma_2$ are relatively prime and that the colourings $\kappa_1$ and $\kappa_2$ are both automorphism-group-invariant. By \cite[Theorem 3.1]{graphmult} we know that 
\[
\mathrm{Aut}(\Gamma_1 \times \Gamma_2) \cong \mathrm{Aut}(\Gamma_1) \times \mathrm{Aut}(\Gamma_2).
\]
Hence, for all $e \in E(\Gamma_1 \times \Gamma_2)$ and all $\phi \in \mathrm{Aut}(\Gamma_1 \times \Gamma_2)$, we have $\kappa(e) = \kappa(\phi(e))$ which concludes the proof.
\end{proof}

 Before concluding the section, we introduce the \emph{$\ell$-fold truncation operator} of a regular graph.
\begin{definition}
Let $\Gamma$ be a $d$-regular graph, where $d\geq 3$. For $\ell\geq 1$ we define the $\ell$-fold truncation operator recursively by $\mathcal{T}_\ell(\Gamma,\beta):=\mathcal{T}(\Gamma,\beta)$ if $\ell=1$ and 
$\mathcal{T}_\ell(\Gamma,\beta):=\mathcal{T}(\mathcal{T}_{\ell-1}(\Gamma,\beta))$ if $\ell >1.$
\end{definition}

This leads directly to the following proposition.

\begin{proposition}
\label{prop:atmost22}
        For a group $G$ with $\vert G\vert \leq 2$ and every $d\geq 3$ there exists a $d$-regular graph $\Gamma$ with $\Aut(\Gamma)\cong G$ and an $\Aut(\Gamma)$-invariant $d$-edge-colouring of $\Gamma$.
\end{proposition}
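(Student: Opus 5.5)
We reduce to the cases $d\in\{3,4,5\}$ of Proposition~\ref{prop:atmost2} by Cartesian products with cubic prime graphs that have trivial automorphism group. Write $d=d'+3\ell$ with $d'\in\{3,4,5\}$ and $\ell\ge 0$. If $\ell=0$, Proposition~\ref{prop:atmost2} already gives the claim. Otherwise let $\Gamma^{d'}$ be the $d'$-regular graph from Proposition~\ref{prop:atmost2} with $\Aut(\Gamma^{d'})\cong G$ and an $\Aut(\Gamma^{d'})$-invariant $d'$-edge-colouring.

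Next we need $\ell$ pairwise non-isomorphic cubic prime graphs $\Delta_1,\ldots,\Delta_\ell$, each with trivial automorphism group and an invariant $3$-edge-colouring. For this we start from the prime cubic graph $\Delta$ with $\Aut(\Delta)=1$ from Proposition~\ref{prop:atmost2} (case $|G|=1$, $d=3$), which carries a $3$-edge-colouring. Since the group is trivial, this colouring is invariant. We then set $\Delta_i:=\mathcal{T}_i(\Delta)$:
\begin{itemize}
\item By Corollary~\ref{prop:autiso2} and induction, $\Aut(\Delta_i)\cong\Aut(\Delta)=1$.
\item By Proposition~\ref{prop:truncproper}, each $\Delta_i$ has an invariant $3$-edge-colouring; with trivial automorphism group any proper colouring is invariant anyway.
\item $|V(\Delta_i)|=3^i|V(\Delta)|$, so the $\Delta_i$ are pairwise non-isomorphic.
\item Each $\Delta_i$ contains triangles, so it is not a circular ladder (these contain no triangles when $\ell\ge 4$, and $K_3\times K_2$ has $\Aut$ of order $12$). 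Hence $\Delta_i$ is prime by Proposition~\ref{prop:cubicprime}.
\end{itemize}

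We define $\Gamma:=\Gamma^{d'}\times\Delta_1\times\cdots\times\Delta_\ell$, which is $d$-regular and connected. Using Lemma~\ref{lemma:edgecolouring} repeatedly, together with \cite[Theorem 3.1]{graphmult}, we obtain $\Aut(\Gamma)\cong G\times 1\times\cdots\times 1\cong G$ and an $\Aut(\Gamma)$-invariant $d$-edge-colouring. This needs relative primality at each step: $\Gamma^{d'}\times\Delta_1\times\cdots\times\Delta_{j-1}$ must be relatively prime to $\Delta_j$. Since $\Delta_j$ is prime, a common nontrivial factor $Z$ would force $Z\cong\Delta_j$. By unique prime factorisation of connected graphs (Sabidussi / \cite{MR1788124}), $\Delta_j$ would then have to be a prime factor of $\Gamma^{d'}$ or equal to some $\Delta_i$ with $i<j$. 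The latter is excluded by vertex counts.

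The main obstacle is excluding $\Delta_j$ as a prime factor of $\Gamma^{d'}$. When $d'=3$, it suffices to choose $\Gamma^{d'}$ prime, as Proposition~\ref{prop:atmost2} allows, and different from every $\Delta_j$. If $|G|=2$ this is automatic from the automorphism groups. If $|G|=1$ we instead use $\Gamma^{d'}:=\Delta$ and start the sequence at $\mathcal{T}_1$.

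For $d'\in\{4,5\}$ we argue via vertex counts and degrees instead. A cubic factor $\Delta_j$ of a $d'$-regular graph leaves a cofactor of degree $1$ or $2$ (a $K_2$ or a cycle), and $|V(\Delta_j)|$ must divide $|V(\Gamma^{d'})|$. Choosing the truncation index large, i.e.\ using $\Delta_i:=\mathcal{T}_{i+m}(\Delta)$ with $3^{m}|V(\Delta)|>|V(\Gamma^{d'})|$, makes such a factor impossible. This settles relative primality.
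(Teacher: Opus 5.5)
Your proof is correct. Its skeleton matches the paper's: write $d=d'+3\ell$, take the base graph from Proposition~\ref{prop:atmost2}, multiply by iterated truncations $\mathcal{T}_i$ of a cubic prime graph with trivial automorphism group, and apply \cite[Theorem 3.1]{graphmult} together with Lemma~\ref{lemma:edgecolouring}. The genuine difference is in how you certify relative primality.

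\textbf{The paper's route.} It uses a local structural invariant. Every vertex of a complete truncation of a cubic graph lies on a triangle, so the same holds for every vertex of any product $Z\times\Gamma_i$. The appendix base graph $\Gamma'$, however, has a vertex on no triangle, so $\Gamma'\not\cong Z\times\Gamma_i$. Combined with primality and the distinct orders of the $\Gamma_i$, this gives pairwise relative primality in one line, with no reindexing and no explicit appeal to unique factorisation.

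\textbf{Your route.} You invoke unique prime factorisation of connected graphs. You then exclude $\Delta_j$ as a prime factor of $\Gamma^{d'}$ either by vertex counts (after shifting to $\mathcal{T}_{i+m}$) or, for $d'=3$, by comparing automorphism groups and orders.

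\textbf{What each buys.} Your argument uses a heavier theorem and a slightly awkward reindexing. In exchange, it is independent of the particular shape of the appendix graphs: you never need to check for a triangle-free vertex. It also verifies exactly the hypothesis that the two-factor Lemma~\ref{lemma:edgecolouring} and the two-factor form of Sabidussi's theorem need at each step, namely that the partial product is relatively prime to the next factor. The paper asserts only pairwise relative primality and leaves implicit the passage to the iterated product, which itself rests on unique factorisation.

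You also supply a justification the paper omits: the $\mathcal{T}_i(\hat\Gamma)$ are not circular ladders, because they contain triangles and are not the prism. One small point: you reuse $\ell$ for the circular-ladder parameter, which clashes with $d=d'+3\ell$.
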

\begin{proof}
Let $d' \in \{3,4,5\}$ be a natural number such that $d' \equiv d \pmod{3}$. Hence, there exists a natural number $\ell \in \mathbb{N}_0$ such that $d = d' + 3\ell$. By Proposition~\ref{prop:atmost2}, there exists a $d'$-regular graph $\Gamma'$ such that $\mathrm{Aut}(\Gamma') \cong G$ and $\Gamma'$ admits an $\mathrm{Aut}(\Gamma')$-invariant $d'$-edge-colouring. 
Furthermore, by the same proposition there exists a cubic prime graph $\hat{\Gamma}$ with $|\mathrm{Aut}(\hat{\Gamma})| = 1$ such that $\hat{\Gamma}$ admits an $\mathrm{Aut}(\hat{\Gamma})$-invariant 3-edge-colouring.
Thus, we define the graphs $\Gamma_i := \mathcal{T}_i(\hat{\Gamma})$ for $1 \leq i \leq \ell $. 
We know that for all $1 \leq i \leq \ell$ the graph $\Gamma_i$ is prime (see Proposition~\ref{prop:cubicprime}), admits an  $\mathrm{Aut}(\Gamma_i)$-invariant 3-edge-colouring (see Proposition~\ref{prop:truncproper}), and satisfies $|\mathrm{Aut}(\Gamma_i)| = 1$ (see Corollary~\ref{prop:autiso2}). Thus, the graph
\[
\Gamma = \Gamma' \times \prod_{i=1}^\ell \Gamma_i
\]
is a $d$-regular graph. Since the graphs $\Gamma_1,\ldots,\Gamma_\ell$ are prime and $\Gamma'$ contains vertices not contained in cycles of length $3$, the graph $\Gamma'$ cannot be written as the product $\Gamma'=Z\times \Gamma_i$ for a graph $Z$ and a $1\leq i\leq \ell$. Consequently, $\Gamma',\Gamma_1,\ldots,\Gamma_\ell$ are relatively prime. Hence, $\Gamma$ satisfies $\mathrm{Aut}(\Gamma) \cong G$ by \cite[Theorem 3.1]{graphmult}. Furthermore, by Lemma~\ref{lemma:edgecolouring}, this graph admits an $\mathrm{Aut}(\Gamma)$-invariant $d$-edge-colouring.
\end{proof}

\subsection{Cubic graphs with cyclic automorphism groups}
\label{subsection:cyclic}
In this section, we construct cubic prime graphs whose automorphism groups are cyclic and that additionally have automorphism group-invariant 3-edge-colourings.
Note that in \cite{Frucht}, Frucht presents a construction of cubic graphs whose automorphism groups are cyclic. Here, we further modify this construction to obtain cubic prime graphs $\Gamma_{C_\ell}$ with the desired properties for $\ell\geq 3$. Before proceeding further, we introduce the notion of a quadratic form. In \cite{Frucht}, these  polynomials or more precisely the corresponding monomials have been used to define the incidence structure of various graphs. 
\begin{definition}
    Let $\Gamma$ be a graph with vertices $V(\Gamma)=\{v_1,\ldots,v_k\}$ and $x_1,\ldots,x_k$ formal indeterminates such that $x_i$ corresponds to $v_i$ for all $1\leq i\leq k.$ Furthermore, for $1\leq i,j \leq k$ we define $a_{i,j}$ as $\frac{1}{2}$ if $\{v_i,v_j\}\in E$ and $0$ otherwise. Thus, we define the \emph{quadratic form} $Q_{\Gamma}$ of $\Gamma$ as
    \[
    Q_{\Gamma}:=\sum_{i,j=1}^k a_{i,j}x_ix_j.
    \]
\end{definition}
It can be observed that the quadratic form of a given graph can be related to the adjacency matrix of the graph as described below.
\begin{remark}\label{rem:indeterverts}
 Let $\Gamma$ be a graph with vertices $V(\Gamma)=\{v_1,\ldots,v_k\}.$ Furthermore, let $x=(x_1,\dots,x_k)$ be formal indeterminates such that $x_i$ corresponds to the vertex $v_i$ for all $1\leq i\leq k$, and $Q:=Q_{\Gamma}$ the quadratic form of $\Gamma$. If $A\in \{0,1\}^{k\times k}$ is the adjacency matrix of $\Gamma$, then 
 $$ Q=\frac{1}{2}xAx^t .$$
 Each monomial $x_ix_j=x_jx_i$ that is a summand of $Q$ corresponds to an edge $\{v_i,v_j\}\in E$ and thus the adjacency matrix $A$ can be recovered from $Q$. 
Additionally, we know that $\Aut(\Gamma)$ is isomorphic to the group consisting of the permutations of the columns and rows of $A$ that leave the adjacency matrix $A$ invariant. Thus, the automorphism group of $\Gamma$ is isomorphic to the group of permutations of $x=(x_1,\ldots,x_k)$
that preserve the quadratic form $Q$.
\end{remark}
With Remark~\ref{rem:indeterverts} it suffices to determine a quadratic form in order to define a graph.
\begin{remark}
    Let $\Gamma$ be a graph and $Q$ its quadratic form with corresponding indeterminates $x_1,\ldots,x_k$. Since $Q$ determines $\Gamma$ and also $\Aut(\Gamma)$, up to isomorphism, we identify $V(\Gamma)$ with the set of formal indeterminates. Hence, we write $V(\Gamma)=\{x_1,\ldots,x_k\}.$
\end{remark}

With the notion of a quadratic form in place, we are able to present our proposed cubic graph $\Gamma_{C_\ell}.$
Let therefore $\ell\geq3$ be a natural number and $\sigma\in S_\ell$ be an element of order $\ell.$ Thus, $G:=\langle \sigma\rangle\cong C_\ell$. We define the cubic graph $\Gamma_{C_\ell}$ via
$$V(\Gamma_{C_\ell})=\{x_{(g,i)}\mid g\in C_\ell,\; i=1,\ldots,8\}$$ and the quadratic form $Q_{C_\ell}$ given by
\begin{align*}
    & \sum_{g\in G} (x_{(g,1)}x_{(g,2)}+ x_{(g,1)}x_{(g,3)}+ x_{(g,2)}x_{(g,4)} +x_{(g,3)}x_{(g,4)}+ x_{(g,3)}x_{(g,6)} ) \\
    +&\sum_{g\in G} (x_{(g,4)}x_{(g,5)}+ x_{(g,5)}x_{(g,7)}+ x_{(g,5)}x_{(g,8)} +x_{(g,6)}x_{(g,7)}+ x_{(g,7)}x_{(g,8)} )\\
    +& \sum_{g\in G} (x_{(g,2)}x_{(g\cdot\sigma,1)}+ x_{(g,8)}x_{(g\cdot\sigma,6)}).
\end{align*}
It is easy to see that the graph $\Gamma_{C_\ell}$ is indeed cubic.
A component of this graph is illustrated in Figure~\ref{fig:cn}. 
Now, let us examine the automorphism group of $\Gamma_{C_\ell}$.

\begin{proposition}
\label{prop:cyclicauto}
For $\ell\geq 3$ the cubic graph $\Gamma_{C_\ell}$ is a prime graph that satisfies $\Aut(\Gamma_{C_\ell})\cong C_\ell$ and that has an $\Aut(\Gamma_{C_\ell})$-invariant $3$-edge-colouring.
\end{proposition}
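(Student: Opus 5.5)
The plan is to argue directly from the local structure of $\Gamma_{C_\ell}$. I would first read off from $Q_{C_\ell}$ the neighbourhoods inside a fixed block $B_g:=\{x_{(g,i)}\mid i=1,\ldots,8\}$:
\begin{itemize}
\item $x_{(g,1)}\sim x_{(g,2)},x_{(g,3)},x_{(g\sigma^{-1},2)}$;
\item $x_{(g,2)}\sim x_{(g,1)},x_{(g,4)},x_{(g\sigma,1)}$;
\item $x_{(g,3)}\sim x_{(g,1)},x_{(g,4)},x_{(g,6)}$;
\item $x_{(g,4)}\sim x_{(g,2)},x_{(g,3)},x_{(g,5)}$;
\item $x_{(g,5)}\sim x_{(g,4)},x_{(g,7)},x_{(g,8)}$;
\item $x_{(g,6)}\sim x_{(g,3)},x_{(g,7)},x_{(g\sigma^{-1},8)}$;
\item $x_{(g,7)}\sim x_{(g,5)},x_{(g,6)},x_{(g,8)}$;
\item $x_{(g,8)}\sim x_{(g,5)},x_{(g,7)},x_{(g\sigma,6)}$.
\end{itemize}
The inter-block edges are exactly those of the form $x_{(g,2)}x_{(g\sigma,1)}$ and $x_{(g,8)}x_{(g\sigma,6)}$. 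Any cycle using such an edge must wind around all $\ell$ blocks, so it has length well above $5$ once $\ell\geq 3$. Consequently all cycles of length at most $5$ lie inside single blocks, and I would list them explicitly:
\begin{itemize}
\item exactly one triangle $T_g=(x_{(g,5)},x_{(g,7)},x_{(g,8)})$;
\item exactly one $4$-cycle $(x_{(g,1)},x_{(g,2)},x_{(g,4)},x_{(g,3)})$;
\item the $5$-cycle $(x_{(g,3)},x_{(g,4)},x_{(g,5)},x_{(g,7)},x_{(g,6)})$, together with any further $5$-cycles, which I would check by hand.
\end{itemize}

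Next I would show that every automorphism $\psi$ preserves the labels $i$. Since $\psi$ permutes the triangles $T_g$, it suffices to identify each vertex of $T_g$ intrinsically:
\begin{itemize}
\item $x_{(g,5)}$ is the unique vertex of $T_g$ whose outside neighbour lies on a $4$-cycle;
\item $x_{(g,7)}$ is the unique remaining vertex of $T_g$ lying on a $5$-cycle through $x_{(g,5)}$ that meets $T_g$ in an edge. The corresponding path from $x_{(g,8)}$ leaves the block through $x_{(g\sigma,6)}$ and cannot close up within length $5$.
\end{itemize}
Once $x_{(g,5)},x_{(g,7)},x_{(g,8)}$ are fixed, the remaining labels are determined by adjacency:
\begin{itemize}
\item $x_{(g,4)}$ is the outside neighbour of $x_{(g,5)}$, and $x_{(g,6)}$ that of $x_{(g,7)}$;
\item $x_{(g,3)}$ is the common neighbour of $x_{(g,4)}$ and $x_{(g,6)}$;
\item $x_{(g,2)}$ is the other $4$-cycle neighbour of $x_{(g,4)}$;
\item $x_{(g,1)}$ is the last vertex of the $4$-cycle.
\end{itemize}
Hence there is a permutation $\pi$ of $G$ with $\psi(x_{(g,i)})=x_{(\pi(g),i)}$ for all $g,i$. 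The edge $x_{(g,2)}x_{(g\sigma,1)}$ must map to an edge between label-$2$ and label-$1$ vertices, which forces $\pi(g\sigma)=\pi(g)\sigma$. Thus $\pi(g)=hg$ with $h:=\pi(1)$, i.e.\ $\pi$ is left multiplication by some $h\in G$. Conversely, each such left multiplication preserves $Q_{C_\ell}$, so by Remark~\ref{rem:indeterverts} we get $\Aut(\Gamma_{C_\ell})\cong G\cong C_\ell$.

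Primality follows from Proposition~\ref{prop:cubicprime}. Circular ladders on at least $8$ vertices are triangle-free, while $\Gamma_{C_\ell}$ contains triangles and has $8\ell\geq 24$ vertices, so it cannot be the $6$-vertex prism either. For the colouring, I would assign colours depending only on the edge type (the pair of labels involved), with colours $a,b,c$:
\begin{itemize}
\item $\{1,2\}\mapsto a$, $\{1,3\}\mapsto b$, $\{2,1'\}\mapsto c$;
\item $\{2,4\}\mapsto b$, $\{3,4\}\mapsto c$, $\{3,6\}\mapsto a$, $\{4,5\}\mapsto a$;
\item $\{5,7\}\mapsto b$, $\{5,8\}\mapsto c$, $\{7,8\}\mapsto a$, $\{6,7\}\mapsto c$, $\{8,6'\}\mapsto b$.
\end{itemize}
Here primes denote the next block. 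A check at each of the eight vertex types shows that this colouring is proper. Because every automorphism preserves labels, it preserves edge types, so the colouring is $\Aut(\Gamma_{C_\ell})$-invariant.

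The main obstacle is the exhaustive enumeration of short cycles, and in particular confirming that $x_{(g,7)}$ and $x_{(g,8)}$ are genuinely distinguished. I would settle this by a careful case check of all paths of length at most $4$ starting from $x_{(g,5)}$.
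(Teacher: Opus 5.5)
Your argument is essentially the paper's proof. Triangles, $4$-cycles and $5$-cycles fix the labels inside a block; the paper uses the same two facts, namely that the $4$--$5$ edges are the only edges joining a triangle to a $4$-cycle, and that $x_{(g,7)}$ but not $x_{(g,8)}$ lies on a $5$-cycle. Adjacency then propagates the labels, the inter-block edges force the block permutation to be a translation, and the colouring is defined by edge type. Your one-edge argument for the translation step is more direct than the paper's use of the $2\ell$-cycle through the labels $1,2$. Your colouring is proper at all eight vertex types, and your triangle argument makes the paper's appeal to Proposition~\ref{prop:cubicprime} explicit.

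One step is false as written: a cycle using an inter-block edge need not wind around all $\ell$ blocks. There are two edges between $B_g$ and $B_{g\sigma}$, and a cycle can leave through one and return through the other. An example is the $7$-cycle
\[
(x_{(g,2)},x_{(g\sigma,1)},x_{(g\sigma,3)},x_{(g\sigma,6)},x_{(g,8)},x_{(g,5)},x_{(g,4)}).
\]
The conclusion you need (no cycle of length at most $5$ leaves a block) is still true, but it needs a different justification. Suppose a cycle uses an inter-block edge but no edge between some other pair of consecutive blocks.
\begin{itemize}
\item The edges between these two pairs of blocks form an edge cut, which a cycle crosses an even number of times.
\item Hence the cycle contains both $x_{(g,2)}x_{(g\sigma,1)}$ and $x_{(g,8)}x_{(g\sigma,6)}$ for some $g$.
\item It also contains a path from $x_{(g\sigma,1)}$ to $x_{(g\sigma,6)}$ and a path from $x_{(g,2)}$ to $x_{(g,8)}$, one on each side of the cut.
\item So its length is at least $2+2+3=7$.
\end{itemize}
Otherwise the cycle contains at least $\ell$ inter-block edges. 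No vertex lies on two inter-block edges, so each of these is followed by an intra-block edge, and the length is at least $2\ell\ge 6$.

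With this in place, the enumeration you deferred comes out as required. In each block the only cycles of length at most $5$ are the triangle, the $4$-cycle $(x_{(g,1)},x_{(g,2)},x_{(g,4)},x_{(g,3)})$, and the single $5$-cycle $(x_{(g,3)},x_{(g,4)},x_{(g,5)},x_{(g,7)},x_{(g,6)})$, which avoids $x_{(g,8)}$.

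Finally, when deriving $\pi(g\sigma)=\pi(g)\sigma$, state explicitly that $x_{(\pi(g),2)}$ has two label-$1$ neighbours, $x_{(\pi(g),1)}$ and $x_{(\pi(g)\sigma,1)}$. The first is excluded because $\pi$ is injective.
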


\begin{proof}
Since the graph $\Gamma_{C_\ell}$ is not isomorphic to a circular ladder graph, it is prime by Proposition~\ref{prop:cubicprime}.
Now, let $\phi\in \Aut(\Gamma_{C_\ell})$ be an automorphism.  In order to prove the above statement, we have to show that there exists an $h\in G$ such that $\phi(x_{(g,i)})=x_{(h\cdot g,i)}$ for all $g\in G$ and $1\leq i \leq 8.$ Here, this is achieved by examining lengths of cycles in $\Gamma_{C_\ell}$.  First, we see that $\Gamma_{C_\ell}$ has exactly $\ell$ cycles of length 3, 
namely $(x_{(g,5)},x_{(g,7)},x_{(g,8)})$ and exactly $\ell$ cycles of length 4, namely $(x_{(g,1)},x_{(g,2)},x_{(g,4)},x_{(g,3)})$ with $g\in G.$ 
 Now, $\phi$ has to map a cycle onto a cycle of the same length and has to respect the incidences of $\Gamma_{C_\ell}$. Thus, if $g\in G$ is an arbitrary element, then there exists $h_g \in G$ with 
 \begin{align*}
&\phi((x_{(g,5)},x_{(g,7)},x_{(g,8)}))=     (x_{(h_g\cdot g,5)},x_{(h_g\cdot g,7)},x_{(h_g\cdot g,8)}),\\
&\phi((x_{(g,1)},x_{(g,2)},x_{(g,4)},x_{(g,3)}))=(x_{(h_g\cdot g,1)},x_{(h_g\cdot g,2)},x_{(h_g\cdot g,4)},x_{(h_g\cdot g,3)}).
 \end{align*}

 As a first step, we show $\phi(x_{(g,i)})=x_{(h_g\cdot g,i)}$ for all $i=1,\ldots,8$.
 Since the only edges in $E(\Gamma_{C_\ell})$ with one incident vertex being contained in a 3-cycle and the other vertex being contained in a 4-cycle are edges of the form $e=\{x_{(g',4)},x_{(g',5)}\},$ where $g'\in G$, we deduce 
 $\phi(x_{(g,4)})=x_{(h_g \cdot g,4)}$ and $\phi(x_{(g,5)})=x_{(h_g \cdot g,5)}.$ Furthermore, we observe that $x_{(g,7)}$ is contained in a cycle of length $5$, whereas there is no cycle of length 5 that contains $x_{(g,8)}$. Hence, $\phi(x_{(g,7)})=x_{(h_g \cdot g,7)}$ and $\phi(x_{(g,8)})=x_{(h_g \cdot g,8)}.$ 
 This directly implies $\phi(x_{(g,6)})=x_{(h_g \cdot g,6)}$. Finally, $\phi(x_{(g,3)})$ has to be incident to $\phi(x_{(g,4)})=x_{(h_g \cdot g,4)}$ and $\phi(x_{(g,6)})=x_{(h_g \cdot g,6)}$. Thus, $\phi(x_{(g,3)})=x_{(h_g \cdot g,3)}$. This leads to $\phi(x_{(g,1)})=x_{(h_g \cdot g,1)}$ and $\phi(x_{(g,2)})=x_{(h_g \cdot g,2)}$.
 So all together, we obtain $\phi(x_{(g,i)})=x_{(h_g \cdot g,i)}$ for all $i=1,\ldots,8.$

 It remains to show that $h_{g_1}=h_{g_2}$ for all $g_1,g_2\in G.$ We obtain this by observing that the cycle
$(x_{(\sigma,1)}, x_{(\sigma,2)},\ldots,x_{(\sigma^\ell,1)}, x_{(\sigma^\ell,2)})$
is mapped onto the same cycle 
$$(x_{(h_\sigma\cdot \sigma ,1)}, x_{( h_\sigma\cdot \sigma,2)},\ldots,x_{(h_{\sigma^\ell}\cdot \sigma^\ell,1)}, x_{(h_{\sigma^{\ell}}\cdot \sigma^{\ell},2)}).$$
This is only the case if $h_{\sigma}=\cdots= h_{\sigma^{\ell}}.$ Hence,
$$\Aut(\Gamma_{C_\ell})=\{\phi_h : V(\Gamma_{C_\ell})\to  V(\Gamma_{C_\ell}), x_{(g,i)}\mapsto x_{(h\cdot g,i)}\mid  h\in G\}.$$
This allows us to construct a 3-edge-colouring $\kappa$ of $\Gamma_{C_\ell}$ that is $\Aut(\Gamma_{C_\ell})$-invariant via
\[
\kappa(e)=\begin{cases}
1, & \text{if } e=\{x_{(g,2)},x_{(g\cdot\sigma,1)}\},\{x_{(g,8)},x_{(g\cdot \sigma,6)}\},\{x_{(g,5)},x_{(g,7)}\},\{x_{(g,3)},x_{(g,4)}\}, \\
2, & \text{if } e=\{x_{(g,1)},x_{(g,3)}\},\{x_{(g,2)},x_{(g,4)}\},\{x_{(g,5)},x_{(g,8)}\},\{x_{(g,6)},x_{(g,7)}\}, \\
3, & \text{if } e=\{x_{(g,1)},x_{(g,2)}\},\{x_{(g,3)},x_{(g,6)}\},\{x_{(g,7)},x_{(g,8)}\}, \{x_{(g,4)},x_{(g,5)}\}. \\

\end{cases}
\]
The edge-colouring $\kappa$ is shown in Figure~\ref{fig:cn}, where we illustrate the colours $1,2,3$ of $\kappa$ with the colours red, green and blue, respectively.
\begin{figure}[H]
    \centering
    \begin{tikzpicture}[vertexBall, faceStyle=nolabels, scale=1.2]

\coordinate (V1) at (0,0);
\coordinate (V21) at (2.5,0);
\coordinate (V22) at (1.5,0);
\coordinate (V3) at (0,2);
\coordinate (V4) at (2,2);
\coordinate (V5) at (0,3);
\coordinate (V6) at (2,3);
\coordinate (V7) at (2,1);

\coordinate (V8) at (3.5,0);
\coordinate (V9) at (-1,0);

\coordinate (V52) at (3,3);
\coordinate (V62) at (-1,3);

\draw[red, very thick] (V7) -- (V22);
\draw[red, very thick] (V62) -- (V5);

\draw[red, very thick] (V52) -- (V6);

\draw[blue, very thick] (V5) -- (V6);

\draw[green, very thick] (V6) -- (V4);
\draw[blue, very thick] (V4) -- (V7);

\draw[red, very thick] (V3) -- (V4);
\draw[green, very thick] (V3) -- (V5);
\draw[blue, very thick] (V3) -- (V1);

\draw[green, very thick] (V7) -- (V21);

\draw[red, very thick] (V9) -- (V1);
\draw[green, very thick] (V22) -- (V1);
\draw[blue, very thick] (V22) -- (V21);
\draw[red, very thick] (V8) -- (V21);

\vertexLabelR{V1}{left}{$ $}
\vertexLabelR{V21}{left}{$ $}
\vertexLabelR{V22}{left}{$ $}
\vertexLabelR{V52}{left}{$ $}
\vertexLabelR{V62}{left}{$ $}
\vertexLabelR{V3}{left}{$ $}
\vertexLabelR{V4}{left}{$ $}
\vertexLabelR{V5}{left}{$ $}
\vertexLabelR{V6}{left}{$ $}
\vertexLabelR{V7}{left}{$ $}
\vertexLabelR{V8}{left}{$ $}
\vertexLabelR{V9}{left}{$ $}

\node at (0., 3.2) {$x_{(g,1)}$};
\node at (2., 3.2) {$x_{(g,2)}$};
\node at (3., 3.2) {$x_{(g\cdot\sigma,1)}$};
\node at (-1., 3.2) {$x_{(g\cdot\sigma^{(-1)},2)}$};
\node at (-.5, 2.) {$x_{(g,3)}$};
\node at (2.5, 2.) {$x_{(g,4)}$};
\node at (-1, .2) {$x_{(g\cdot \sigma^{(-1)},8)}$};

\node at (0, -.2) {$x_{(g,6)}$};
\node at (1.5, -.2) {$x_{(g,7)}$};
\node at (2.5, -.2) {$x_{(g,8)}$};
\node at (3.5, -.2) {$x_{(g\cdot\sigma,6)}$};
\node at (2.5, 1) {$x_{(g,5)}$};

\end{tikzpicture}
    \caption{A subgraph of the cubic graph $\Gamma_{C_\ell}$ equipped with an $\Aut(\Gamma_{C_\ell})$-invariant 3-edge-colouring}
    \label{fig:cn}
\end{figure}
\end{proof}

\subsection{Cubic graphs with non-cyclic automorphism groups }
\label{section:correction}
Here, we briefly illustrate the construction of a cubic graph with prescribed automorphism group that has been established in \cite{SurfacesWithAuto}. Note that this graph construction is inspired by Frucht's work in \cite{Frucht}. For a more precise definition of the cubic graphs that are obtained from modifying Frucht's cubic graph construction, we refer the reader to \cite{SurfacesWithAuto}.

In order to describe the mentioned construction, let $G$ be a finite group that is generated by a set $S=\{g_1,\ldots,g_n\}$, where we assume that $S$ does not contain the identity of $G$. 
If we define $N_3(S)$ as $2n+6$ if $n>2$ and $6$ if $n=2,$ then the first two authors of this work construct a cubic graph $\Gamma_{G,S}$ in \cite{SurfacesWithAuto} that has vertices of the form $V(\Gamma_{G,S})=\{x_{(i,g)}\mid i=1,\ldots,N_3(S), g\in G \}.$ Hence, the graph $\Gamma_{G,S}$ has exactly $N_3(S)\cdot \vert G\vert$ vertices. In order to fully construct $\Gamma_{G,S}$ the authors are inspired by Frucht and make use of quadratic forms to define the corresponding graphs. For simplicity, we illustrate a component of the graph $\Gamma_{G,S}$ for $n=2$ in Figure~\ref{frucht2} and for $n\geq 3$ in Figure~\ref{frucht3}.

\begin{figure}[H]
\begin{minipage}{.49\textwidth} 
    \centering
    \resizebox{!}{4.5cm}{	\begin{tikzpicture}[vertexBall, edgeDouble=nolabels, faceStyle=nolabels, scale=0.5]

	\coordinate (V3) at (8 , 0);
	\coordinate (V4) at (0 , 0);
	\coordinate (V5) at (8 , 3);
	\coordinate (V6) at (8 , -2);
	\coordinate (V7) at (8 , -4);
	\coordinate (V8) at (8 , -6);
	\coordinate (V9) at (10 , 3);
	\coordinate (V10) at (10 , -2);
	\coordinate (V11) at (10, -4);
	\coordinate (V12) at (10 , -6);

	\draw[very thick,blue] (V4) -- (V8);
	\draw[very thick,green] (V8) -- (V7);
	\draw[very thick,red] (V4) -- (V3);

	\draw[very thick,blue] (V5) -- (V3);
	\draw[very thick,green] (V4) -- (V5);
	\draw[very thick,red] (V5) -- (V9);

	\draw[very thick,blue] (V6) -- (V7);
	\draw[very thick,green] (V3) -- (V6);
	\draw[very thick,red] (V7) -- (V11);
	\draw[very thick,red] (V8) -- (V12);
	\draw[very thick,red] (V6) -- (V10);

	\vertexLabelR[]{V3}{left}{$ $}
	\vertexLabelR[]{V4}{left}{$ $}
	\vertexLabelR[]{V5}{left}{$ $}
	\vertexLabelR[]{V6}{left}{$ $}
	\vertexLabelR[]{V7}{left}{$ $}
	\vertexLabelR[]{V8}{left}{$ $}
	\vertexLabelR[]{V9}{left}{$ $}
	\vertexLabelR[]{V10}{left}{$ $}
	\vertexLabelR[]{V11}{left}{$ $}
	\vertexLabelR[]{V12}{left}{$ $}

	\def\x{0.5}
	\node at (8-2*\x , 0+\x) {$x_{(1,g)}$};
	\node at (0-2*\x , 0) {$x_{(2,g)}$};
	\node at (8 , 3+\x) {$x_{(3,g)}$};
	\node at (8-2*\x , -2) {$x_{(4,g)}$};
	\node at (8-2*\x , -4) {$x_{(5,g)}$};
	\node at (8 , -6-\x) {$x_{(6,g)}$};
	\node at (10+3*\x , 3) {$x_{(4,g\cdot g_1)}$};
	\node at (10+3*\x , -2) {$x_{(3,g\cdot g_1^{-1})}$};
	\node at (10+3*\x, -4) {$x_{(6,g\cdot g_2)}$};
	\node at (10+3*\x , -6) {$x_{(5,g\cdot g_2^{-1})}$};

	\end{tikzpicture}}
    \subcaption{}
\label{frucht2}
\end{minipage}
\begin{minipage}{.49\textwidth} 
    \centering
    \resizebox{!}{4.5cm}{	\begin{tikzpicture}[vertexBall, edgeDouble=nolabels, faceStyle=nolabels, scale=0.5]

	\coordinate (V2) at (3 , 0);
	\coordinate (V3) at (8 , 0);
	\coordinate (V4) at (0 , 0);
	\coordinate (V5) at (8 , 3);
	\coordinate (V6) at (8 , -2);
	\coordinate (V7) at (8 , -4);
	\coordinate (V8) at (8 , -6);
	\coordinate (V9) at (10 , 3);
	\coordinate (V10) at (10 , -2);
	\coordinate (V11) at (10, -4);
	\coordinate (V12) at (10 , -6);

	\coordinate (V13) at (3 , 1.5);
	\coordinate (V14) at (1.5 , 1.5);
	\coordinate (V15) at (5.5 , 3);

	\draw[very thick,red] (V13) -- (V15);
	\draw[very thick,blue] (V14) -- (V15);
	\draw[very thick,green] (V13) -- (V14);	

	\draw[very thick,red] (V14) -- (V4);
	\draw[very thick,blue] (V2) -- (V13);
	\draw[very thick,green] (V2) -- (V4);

	\draw[very thick,blue] (V4) -- (V8);
	\draw[very thick,green] (V8) -- (V7);
	\draw[very thick,red] (V2) -- (V3);

	\draw[very thick,blue] (V5) -- (V3);
	\draw[very thick,green] (V15) -- (V5);
	\draw[very thick,red] (V5) -- (V9);

	\draw[very thick,blue, dotted] (V6) -- (V7);
	\draw[very thick,green] (V3) -- (V6);
	\draw[very thick,red] (V7) -- (V11);
	\draw[very thick,red] (V8) -- (V12);
	\draw[very thick,red] (V6) -- (V10);

	
	\vertexLabelR[]{V2}{left}{$ $}
	\vertexLabelR[]{V3}{left}{$ $}
	\vertexLabelR[]{V4}{left}{$ $}
	\vertexLabelR[]{V5}{left}{$ $}
	\vertexLabelR[]{V6}{left}{$ $}
	\vertexLabelR[]{V7}{left}{$ $}
	\vertexLabelR[]{V8}{left}{$ $}
	\vertexLabelR[]{V9}{left}{$ $}
	\vertexLabelR[]{V10}{left}{$ $}
	\vertexLabelR[]{V11}{left}{$ $}
	\vertexLabelR[]{V12}{left}{$ $}
	\vertexLabelR[]{V13}{left}{$ $}
	\vertexLabelR[]{V14}{left}{$ $}
	\vertexLabelR[]{V15}{left}{$ $}

	\def\x{0.5}
	\node at (1.5-\x , 1.5+\x) {$x_{(1,g)}$};
	\node at (3+2*\x , 1.5-\x) {$x_{(2,g)}$};
	\node at (5.5 , 3+\x) {$x_{(3,g)}$};

	\node at (3 , 0-\x) {$x_{(4,g)}$};
	\node at (8-2*\x , 0+\x) {$x_{(5,g)}$};
	\node at (0-2*\x , 0) {$x_{(6,g)}$};
	\node at (8 , 3+\x) {$x_{(7,g)}$};
	\node at (8-2*\x , -2) {$x_{(8,g)}$};
	\node at (8-3*\x , -4+\x) {$x_{(2n+5,g)}$};
	\node at (8 , -6-\x) {$x_{(2n+6,g)}$};
	\node at (10+3*\x , 3) {$x_{(8,g\cdot g_1)}$};
	\node at (10+3*\x , -2) {$x_{(7,g\cdot g_1^{-1})}$};
	\node at (10+4*\x, -4) {$x_{(2n+6,g\cdot g_n)}$};
	\node at (10+4*\x , -6) {$x_{(2n+5,g\cdot g_n^{-1})}$};

	\end{tikzpicture}}
    \subcaption{}
\label{frucht3}
\end{minipage}
\caption{Component of  $\Gamma_{G,S}$ for two generators (a) and for more than two generators (b)}
\label{fig:modfrucht}
\end{figure}
The graph $\Gamma_{G,S}$ satisfies $\Aut(\Gamma_{G,S})\cong G$, as established in \cite[Theorem 3.5]{SurfacesWithAuto}. Furthermore, in the proof of \cite[Theorem 6.1]{SurfacesWithAuto} it is shown that  $\Gamma_{G,S}$ has an $\Aut(\Gamma_{G,S})$-invariant 3-edge-colouring. Since the graph $\Gamma_{G,S}$ is clearly not isomorphic to any circular ladder graph, we know that $\Gamma_{G,S}$ is prime. Hence, we obtain the following reformulation of the result established by the first two authors of this work. 
\begin{proposition}\label{prop:fruchtresult}
    Let $G$ be a finite non-cyclic group that is generated by a set $S$ and $\Gamma_{G,S}$ the cubic graph satisfying $\Aut(\Gamma_{G,S})\cong G$ constructed in \cite{SurfacesWithAuto}. Then $\Gamma_{G,S}$ is a prime graph with $\Aut(\Gamma_{G,S})\cong G$ that has an $\Aut(\Gamma_{G,S})$-invariant $3$-edge-colouring.
\end{proposition}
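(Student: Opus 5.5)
The statement is essentially a repackaging of results from \cite{SurfacesWithAuto} together with Proposition~\ref{prop:cubicprime}. The plan is to verify its three assertions separately: that $\Aut(\Gamma_{G,S})\cong G$, that there is an invariant $3$-edge-colouring, and that $\Gamma_{G,S}$ is prime.

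\textbf{Automorphism group.} The isomorphism $\Aut(\Gamma_{G,S})\cong G$ I take directly from \cite[Theorem~3.5]{SurfacesWithAuto}. One point needs checking: that theorem applies to every generating set $S$ of a non-cyclic $G$ not containing the identity. Since $G$ is non-cyclic, $n=|S|\geq 2$, so we are in one of the two cases $N_3(S)=6$ or $N_3(S)=2n+6$ depicted in Figure~\ref{fig:modfrucht}. I would also recall the explicit form of the automorphisms: every $\phi\in\Aut(\Gamma_{G,S})$ is of the form $x_{(i,g)}\mapsto x_{(i,h\cdot g)}$ for some $h\in G$. This description is what the cycle-counting argument in \cite{SurfacesWithAuto} yields, analogous to the proof of Proposition~\ref{prop:cyclicauto}.

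\textbf{Invariant colouring.} Given that description, I would take the colouring $\kappa$ from the proof of \cite[Theorem~6.1]{SurfacesWithAuto}, namely the one drawn in Figure~\ref{fig:modfrucht}. It assigns a colour to each edge depending only on its type, that is, on the index pair $\{i,j\}$ of its endpoints (together with which generator it uses, for the edges joining different $g$-components). I would first check that $\kappa$ is proper by inspecting one component in the figure. Each vertex $x_{(i,g)}$ receives three distinct colours, counting the outgoing edges to $x_{(\cdot,g\cdot g_k^{\pm1})}$, all of which are coloured red. Left multiplication $g\mapsto h\cdot g$ preserves edge types, so $\kappa$ is $\Aut(\Gamma_{G,S})$-invariant.

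\textbf{Primality.} Here I apply Proposition~\ref{prop:cubicprime}, so it suffices to show that $\Gamma_{G,S}$ is not a circular ladder graph. The cleanest route is to count short cycles or note girth data:
\begin{itemize}
\item In a circular ladder on $2m$ vertices with $m\geq 5$, every vertex lies on a $4$-cycle and there are no triangles.
\item In $\Gamma_{G,S}$ the vertices $x_{(1,g)},x_{(2,g)},x_{(3,g)}$ form a triangle when $n\geq 3$ (Figure~\ref{frucht3}), while circular ladders contain triangles only for $m=3$, i.e.\ on $6$ vertices, whereas $|V(\Gamma_{G,S})|\geq 2n+6\cdot|G|>6$.
\item For $n=2$, I would argue instead that the vertex $x_{(5,g)}$ (or similar) lies on no $4$-cycle, whereas every vertex of a circular ladder with $m\neq 4$ lies on two $4$-cycles and with $m=4$ on three.
\end{itemize}
Alternatively, I would invoke Corollary~\ref{cor:prime}, which needs a vertex on fewer than two $4$-cycles.

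The main obstacle is this last step in the case $n=2$: it requires an honest check, against the precise quadratic form in \cite{SurfacesWithAuto}, that some vertex avoids $4$-cycles. Small groups, where $g_1^2=1$ or $g_1g_2=g_2g_1$, could create unexpected short cycles through the inter-component edges. I would handle this by locating a vertex, such as $x_{(2,g)}$, whose neighbours are all in the same component and pairwise at distance at least three within it. A cycle of length $4$ through it would then have to use intra-component paths only, which Figure~\ref{frucht2} shows do not exist.
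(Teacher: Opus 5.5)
Your route is the paper's: it takes $\Aut(\Gamma_{G,S})\cong G$ from \cite[Theorem~3.5]{SurfacesWithAuto} and the invariant colouring from the proof of \cite[Theorem~6.1]{SurfacesWithAuto}. It gets primality from Proposition~\ref{prop:cubicprime} by remarking that $\Gamma_{G,S}$ is ``clearly'' not a circular ladder, and your triangle argument for $n\geq 3$ correctly makes that remark precise (the vertex count is $(2n+6)|G|$). For the colouring step you do not need the cycle-counting argument to get the explicit form of the automorphisms. The $|G|$ maps $x_{(i,g)}\mapsto x_{(i,h\cdot g)}$ are distinct automorphisms and $|\Aut(\Gamma_{G,S})|=|G|$, so they are all of them.

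The step that fails as written is your treatment of $n=2$.

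\emph{The vertex $x_{(5,g)}$.} It does lie on a $4$-cycle whenever $g_2$ is an involution, e.g.\ for $G=C_2\times C_2$. Using the edges $\{x_{(5,g)},x_{(6,g\cdot g_2)}\}$ and $\{x_{(6,g)},x_{(5,g\cdot g_2^{-1})}\}$ together with the internal edges, you get the cycle $(x_{(5,g)},x_{(6,g)},x_{(5,g\cdot g_2)},x_{(6,g\cdot g_2)})$.

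\emph{The vertex $x_{(2,g)}$.} It does not satisfy your ``pairwise distance at least three'' criterion, because its neighbours $x_{(1,g)}$ and $x_{(3,g)}$ are adjacent.

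\emph{The fix.} In Figure~\ref{frucht2} the vertices $x_{(1,g)},x_{(2,g)},x_{(3,g)}$ already form a triangle. So your triangle argument covers $n=2$ as well, since the graph has $6|G|\geq 24>6$ vertices.

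\emph{If you prefer the $4$-cycle route.} The vertex $x_{(2,g)}$ does work, but the right check is that no two of its neighbours share a neighbour other than $x_{(2,g)}$. The other neighbours of $x_{(1,g)}$, $x_{(3,g)}$ and $x_{(6,g)}$ are $\{x_{(3,g)},x_{(4,g)}\}$, $\{x_{(1,g)},x_{(4,g\cdot g_1)}\}$ and $\{x_{(5,g)},x_{(5,g\cdot g_2^{-1})}\}$. These sets are pairwise disjoint because $g_1$ is not the identity. So $x_{(2,g)}$ lies on no $4$-cycle, and Corollary~\ref{cor:prime} applies.
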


\subsection{Regular graphs with prescribed automorphism groups}\label{section:regular}
Finally, in this section, we consider the general case. We begin by showing that for $ d = 3,4,5 $, there exist $ d $-regular graphs with the desired properties, namely a prescribed automorphism group and an automorphism group-invariant $d$-edge-colouring. We then use graph products to extend the result to all $ d \geq 3 $.

The following lemma will be useful in the proofs of Proposition~\ref{theorem:d=5} and Theorem~\ref{theorem:autom_strongEmbedding}. In particular, it shows that automorphism group-invariant  $ d $-edge-colourings of a given $ d $-regular graph give rise to corresponding strong embeddings that are invariant under the same automorphism group.

\begin{lemma} \label{lemma:colouring_CDC}
    Let $\Gamma$ be a $d$-regular graph. If $\Gamma$ has an $\Aut(\Gamma)$-invariant $d$-edge-colouring $\kappa:~E(\Gamma)\to \{1,\ldots,d\}$, then $\Gamma$ admits a facial $\Aut(\Gamma)$-invariant cycle double cover. 
\end{lemma}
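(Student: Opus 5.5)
Given the $\Aut(\Gamma)$-invariant proper $d$-edge-colouring $\kappa$, I will define a rotation system whose local rotations depend only on colours, and take its facial walks as the cycle double cover.

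\textbf{Rotation system.} At every vertex $v$, each colour $1,\ldots,d$ occurs on exactly one incident edge, because $\kappa$ is proper and $\Gamma$ is $d$-regular. Set
\[
\rho_v:=(e_1,\ldots,e_d), \qquad \kappa(e_i)=i .
\]
To make the faces cycles rather than closed walks with self-crossings, I will use a cyclic colour order at all vertices and trace faces as in the standard construction with edge signatures. The cleanest choice is to give every edge a ``twisted'' signature, i.e.\ to work with the general (possibly nonorientable) rotation-system description. Then the face traversal alternates between the rotation and its inverse at consecutive vertices.

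\textbf{Facial walks.} With this choice, the face containing the corner $(e_i,v,e_{i+1})$ should turn out to be exactly the cycle formed by edges alternately coloured $i$ and $i+1$. Such a two-coloured subgraph is a disjoint union of even cycles, since each vertex has exactly one edge of each colour, so its components are genuine cycles. Concretely, I will take
\[
\mathcal{C}:=\{\text{components of the subgraph spanned by colours } i \text{ and } i+1 \mid i\in\mathbb{Z}/d\}.
\]
Each edge of colour $i$ lies in exactly two members of $\mathcal{C}$: one bicoloured $\{i-1,i\}$ and one bicoloured $\{i,i+1\}$. Since $d\geq 3$ these two pairs differ, so $\mathcal{C}$ is a cycle double cover.

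\textbf{Faciality.} I have to show that $\mathcal{C}$ is the face set of a 2-cell embedding, by checking that at each vertex the cycles of $\mathcal{C}$ through $v$ glue into a single disc neighbourhood. The cycles through $v$ use exactly the consecutive colour pairs $(i,i+1)$, and these form one cyclic sequence $e_1,e_2,\ldots,e_d,e_1$. Hence the link of $v$ in the complex obtained by attaching discs along the cycles of $\mathcal{C}$ is a single circle. Therefore the resulting 2-complex is a closed surface, the embedding is 2-cellular, and its faces are cycles, so it is strong.

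\textbf{Invariance.} Let $\phi\in\Aut(\Gamma)$. Since $\phi$ preserves colours, it maps each component spanned by colours $i$ and $i+1$ to another such component, so $\phi(\mathcal{C})=\mathcal{C}$.

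The main obstacle is the faciality step, i.e.\ phrasing the surface construction rigorously. I will argue via the link condition: a 2-complex built from the graph by attaching discs along a cycle double cover is a surface exactly when every vertex link is a single cycle. Here that link is the $d$-cycle of colours $1,2,\ldots,d$.
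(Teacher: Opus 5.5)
Your proposal is correct and follows the paper's proof. The paper fixes an arbitrary $d$-cycle $g\in S_d$; your choice $(1,2,\ldots,d)$ is the same up to relabelling colours. It then takes the bicoloured cycles for consecutive colour pairs as the cycle double cover, declares the local rotation at each vertex to be $(e_1,\ldots,e_d)$, and gets invariance from colour preservation. Your vertex-link argument just makes explicit the faciality step that the paper compresses into that declaration, including your remark that all edges must be twisted, since an unsigned rotation $(e_1,\ldots,e_d)$ at every vertex would trace faces using three or more colours.
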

\begin{proof}
 
Let $g\in S_d$ be a cycle of length $d$ and $g_i:=g^i$ with $1\leq i \leq d.$ 
Furthermore,  for $1\leq i \leq d$  let $\Gamma^i$ be the subgraph of $\Gamma$ that is induced by the edges $$\kappa^{-1}(1^{g_i})\cup \kappa^{-1}(1^{g_{i+1}}).$$ 
Since $\kappa$ is a proper edge-colouring of $\Gamma$, the graph $\Gamma^i$ is 2-regular. 
Hence, $\Gamma^i$ gives rise to a disjoint union of cycles $\gamma^i_1,\ldots,\gamma^i_{\ell_i}$ in $\Gamma$, where $\ell_i$ is a natural number. Since the edge-colouring is proper, every edge $e\in E(\Gamma)$ is contained in exactly two of the subgraphs $\Gamma^1,\ldots,\Gamma^d$ which means that $e$ is contained in a bi-coloured cycle with colours $(\kappa(e),\kappa(e)^g)$ and another bi-coloured cycle with colours $(\kappa(e),\kappa(e)^{g^{-1}}).$
Consequently, the union $$\mathcal{C}:=\bigcup_{i=1}^d \{\gamma^i_1,\ldots,\gamma^i_{\ell_i}\}$$ forms a cycle double cover of $\Gamma$.
Moreover, the cycle $g$ describes a local orientation at each vertex. Indeed, for each vertex $v\in V(\Gamma)$, the proper colouring $\kappa$ gives us a unique incident edge $e_i\in E(\Gamma)$ of colour $1^{g_i}$. We therefore declare the local rotation at $v$ to be $(e_1,e_2,\ldots,e_d)$.
Hence the facial walks are exactly the bi-coloured cycles in $\mathcal{C}$ constructed above.

As assumed, the edge-colouring $\kappa$ is $\Aut(\Gamma)$-invariant. This means that $\kappa(e)=\kappa(\phi(e))$ for every edge $e\in E(\Gamma)$ and every $\phi\in \Aut(\Gamma) $. Since a cycle $\gamma$ in the above cycle double covers consists of edges which are mapped onto the colours $1^{g_i},1^{g_{i+1}}$ under $\kappa$, we know that the same holds for $\phi(\gamma)$ for all $\phi\in\Aut(\Gamma)$. Hence, $\phi(\gamma)\in \mathcal{C}$ for all $\phi\in \Aut(\Gamma)$ which concludes the proof.
\end{proof}
We give the following remark.
\begin{remark}\label{rem:colouring}
In the above proof we have seen that any cycle $g\in S_d$ of length $d$ induces a facial cycle double cover of a $d$-regular graph admitting a $d$-edge-colouring. By examining the construction that is given in the proof of Lemma~\ref{lemma:colouring_CDC}, we see that $g$ and $g^{-1}$ construct the same cycle double cover. Even more, we know that two cycle double covers that are constructed by two cycles $g_1,g_2\in S_d$, are identical if and only if $g_1=g_2$ or $g_1=g_2^{-1}$. Thus, a $d$-colouring of a $d$-regular graph $\Gamma$ gives rise to exactly $\frac{(d-1)!}{2}$  facial cycle double covers that consist of bi-coloured cycles.
\end{remark}

Let us now focus on constructing a $4$-regular graph whose automorphism group realises a given finite group $G$.

\begin{proposition}\label{theorem:d=4}
        For every finite group $G$ there exists a $4$-regular graph $\Gamma^{(G,4)}$ that has an $\Aut(\Gamma^{(G,4)})$-invariant $4$-edge-colouring and satisfies $\Aut(\Gamma^{(G,4)})\cong G$.
\end{proposition}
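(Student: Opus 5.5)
We split into cases according to $G$, exactly as in the cubic setting. If $\vert G\vert\le 2$, Proposition~\ref{prop:atmost2} with $d=4$ already gives the required graph. Otherwise we start from a cubic prime graph $\Gamma^3$ with $\Aut(\Gamma^3)\cong G$ and an $\Aut(\Gamma^3)$-invariant $3$-edge-colouring $\kappa_3$:
\begin{itemize}
\item if $G\cong C_\ell$ with $\ell\ge 3$, take $\Gamma^3=\Gamma_{C_\ell}$ from Proposition~\ref{prop:cyclicauto};
\item if $G$ is non-cyclic, take $\Gamma^3=\Gamma_{G,S}$ from Proposition~\ref{prop:fruchtresult}.
\end{itemize}
In both cases the vertices are $x_{(g,i)}$ (or $x_{(i,g)}$), and $G$ acts regularly on each orbit by left multiplication. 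The natural first attempt $\Gamma^3\times K_2$ is $4$-regular and $4$-edge-colourable by Lemma~\ref{lemma:edgecolouring}. However, $\Aut(K_2)\cong C_2$, so it gives the wrong group $G\times C_2$. We therefore follow Sabidussi's idea of gluing $\Gamma^3$ to a \emph{non-isomorphic} second cubic layer along a perfect matching of rungs.

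Concretely, $\Gamma^{(G,4)}$ has vertex set $\{x_v,y_v\mid v\in V(\Gamma^3)\}$ and three kinds of edges.
\begin{itemize}
\item The $x$-layer is a copy of $\Gamma^3$ and keeps the colours $1,2,3$ from $\kappa_3$.
\item The $y$-layer copies the edges of colours $1$ and $2$ of $\Gamma^3$. Its colour-$3$ matching is replaced by a different perfect matching $M$, defined uniformly in $g$, for instance $y_{(g,i)}y_{(g\cdot s,j)}$ for a fixed generator $s$ and suitable indices. The matching $M$ must be disjoint from the $(1,2)$-edges, invariant under left multiplication by $G$, and must change the short-cycle statistics (numbers of $3$-, $4$- and $5$-cycles through the $y$-vertices).
\item The rungs $\{x_v,y_v\}$ receive colour $4$.
\end{itemize}
By construction, left multiplication by $h\in G$ is an automorphism preserving all four colours. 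This gives an embedding $G\hookrightarrow \Aut(\Gamma^{(G,4)})$ together with a colouring that is invariant under this copy of $G$.

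The remaining and main step is to show that every automorphism $\psi$ arises this way. I would argue by cycle counting, as in the proof of Proposition~\ref{prop:cyclicauto}, in three stages.
\begin{enumerate}
\item Show that the rungs are exactly the edges contained in a prescribed number of $4$-cycles. A rung lies in a $4$-cycle $x_u x_v y_v y_u$ precisely when $uv$ is an edge in both layers, which happens only for colours $1$ and $2$. Layer edges, in contrast, lie in the $4$-cycles internal to $\Gamma^3$. Hence $\psi$ permutes the rungs.
\item Deleting the rungs leaves the two cubic layers. Since $M$ was chosen so that the layers are not isomorphic (different numbers of short cycles), $\psi$ maps each layer to itself.
\item Restricted to the $x$-layer, $\psi$ is an automorphism of $\Gamma^3$, hence equals left multiplication by some $h\in G$. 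Because rungs are preserved, the action on the $y$-layer is then forced to be the same map.
\end{enumerate}
This yields $\Aut(\Gamma^{(G,4)})\cong G$. Since every automorphism is left multiplication, the $4$-edge-colouring is $\Aut(\Gamma^{(G,4)})$-invariant: $\kappa_3$ is $G$-invariant, $M$ and the copied $(1,2)$-edges are $G$-invariant, and the rungs form their own class.

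The main obstacle is the explicit choice of $M$. It has to make step 1 (rung recognition) and step 2 (non-isomorphic layers) work uniformly for both the cyclic family $\Gamma_{C_\ell}$ and the family $\Gamma_{G,S}$. I would first try pairing vertices within a single $G$-block, $y_{(g,i)}y_{(g,j)}$ for suitable indices $i,j$ chosen to create an extra triangle or $4$-cycle in the $y$-layer. I would then verify the cycle counts block by block. If this fails for small $\vert G\vert$, those groups can be handled by explicit examples, as in the appendix.
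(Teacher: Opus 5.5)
There is a genuine gap. The whole argument rests on the matching $M$, which you never construct, and the one concrete mechanism you give, recognising rungs by counting $4$-cycles, does not work as stated.

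A $4$-cycle through a rung $x_uy_u$ must use exactly two rungs, so it has the form $(x_u,y_u,y_v,x_v)$ with $uv$ an edge of both layers. Hence, unless $M$ reuses the colour-$3$ edge at $u$, the rung lies in exactly two $4$-cycles. A colour-$1$ or colour-$2$ edge of the $x$-layer, however, lies in one such mixed $4$-cycle \emph{plus} every $4$-cycle of $\Gamma^3$ through it. Take $\Gamma_{C_\ell}$: the edge $\{x_{(g,1)},x_{(g,3)}\}$ has colour $2$ and lies on the $4$-cycle $(x_{(g,1)},x_{(g,2)},x_{(g,4)},x_{(g,3)})$ and on no other $4$-cycle of $\Gamma_{C_\ell}$. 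So in your graph it lies in exactly two $4$-cycles, just like a rung. Your sentence ``layer edges, in contrast, lie in the $4$-cycles internal to $\Gamma^3$'' therefore does not separate rungs from layer edges.

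To repair step~1 you would need an invariant tailored to a specific $M$. Step~2 would need actual cycle counts for both families $\Gamma_{C_\ell}$ and $\Gamma_{G,S}$, and the latter is only sketched in this paper. Handling small $\vert G\vert$ by explicit examples does not replace this, since the difficulty is not confined to small groups.

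The paper avoids all of this with one extra idea. Instead of a copy of $\Gamma^3$, the $x$-layer is the complete truncation $\mathcal{T}(\Gamma^3)$, so every $x$-vertex lies in a triangle. We still have $\Aut(\mathcal{T}(\Gamma^3))\cong G$ by Corollary~\ref{prop:autiso2}, and the induced $3$-edge-colouring stays invariant by Proposition~\ref{prop:truncproper}. The $y$-layer is an explicit triangle-free cubic graph $Y$ on the labels $y_{(i,g,j)}$. For each $g$ it is the Hamiltonian cycle $E_1\cup E_2$ plus the antipodal chords $E_3$, i.e.\ a M\"obius ladder, and it need not resemble $\Gamma^3$ at all. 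Triangle membership then separates the two vertex classes immediately, and the rungs are exactly the edges between them. Your step~3 finishes the proof with no choice of $M$ and no cycle counting beyond triangles.
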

\begin{proof}
Let $G$ be an arbitrary finite group generated by a set $S$. Since the case $\vert G\vert \leq 2$ has been proven in Proposition~\ref{prop:atmost22}, we can assume $\vert G\vert >2.$
By Proposition~\ref{prop:fruchtresult} and Proposition~\ref{prop:cyclicauto} we know that there exists a cubic prime graph $\Gamma$ with $\Aut(\Gamma)\cong G$ that has an $\Aut(\Gamma)$-invariant edge-colouring $\kappa:E(\Gamma)\to \{1,2,3\}.$  
We recall that $\Gamma$ has vertices $x_{(i,g)}$ with $g\in G$ and $i=1,\ldots,\hat{N}_3(S)$, where $\hat{N}_3(S)$ is equal to (1) $8$ if $n =1$, (2) $6$ if $n=2$, and (3) $2n+6$ if $n>2$. For the definition of the corresponding edges $E(\Gamma)$ we refer the reader to~\Cref{subsection:cyclic,section:correction}. 
We exploit the description of $\Gamma$ given in the aforementioned sections and modify this cubic graph to construct a $4$-regular graph with the desired properties.

As a first step, we construct the complete truncation $\Gamma^{(G,3)}:=\mathcal{T}(\Gamma)$ and label the vertices by making use of our edge 3-colouring $\kappa$ as follows:
For each vertex $x_{(i,g)}\in V(\Gamma)$, where $i=1,\ldots,\hat{N}_3(S)$ and $g\in G,$ we obtain three vertices $x_{(i,g,j)},$ where $j=1,2,3,$ in the complete truncation. We enforce that two vertices  $x_{(i_1,g_1,j_1)}$ and $x_{(i_2,g_2,j_2)}$ in $V(\Gamma^{(G,3)})$
are connected by an edge, if (1) $i_1=i_2$, $g_1=g_2$ and $j_1\neq j_2$, or (2) $e:=\{x_{(i_1,g_1)},x_{(i_2,g_2)}\}\in E(\Gamma)$ and $\kappa(e)=j_1=j_2$, see \Cref{fig:t3}. In this and the following figures, we draw the colours $1,2,3$ of the different edge-colourings with the colours red, green and blue, respectively.
\begin{figure}[H]
    \centering
\begin{tikzpicture}[vertexBall, edgeDouble=nolabels, faceStyle=nolabels, scale=1.5]

\coordinate (V1) at (0+9 , 0);
\coordinate (V2) at (1+9 , 0);
\coordinate (V3) at (0.5+9 , 0.866025);
\coordinate (V4) at (1/2+9 , 1.866025);
\coordinate (V5) at (1.86603 +9, -0.5);
\coordinate (V6) at (-0.86603+9 , -0.5);

\draw[very thick,blue] (V1) -- node[edgeLabel] {$1$} (V3);
\draw[very thick,green] (V2) -- node[edgeLabel] {$2$} (V3);
\draw[very thick,red] (V1) -- node[edgeLabel] {$3$} (V2);
\draw[very thick,green] (V1) -- node[edgeLabel] {$4$} (V6);
\draw[very thick,red] (V3) -- node[edgeLabel] {$5$} (V4);
\draw[very thick,blue] (V2) -- node[edgeLabel] {$6$} (V5);

\vertexLabelR[]{V1}{left}{$ $}
\vertexLabelR[]{V2}{left}{$ $}
\vertexLabelR[]{V3}{left}{$ $}

\draw[-{To[scale=2]}] (7,0.433013) to (8,0.433013);

\coordinate (V7) at (0.5+5 , 0.433013);
\coordinate (V8) at (1/2+5 , 1.433013);
\coordinate (V9) at (1.36603+5 , -0.0669873);
\coordinate (V10) at (-0.36603+5 , -0.0669873);

\draw[very thick,red] (V7) -- (V8);
\draw[very thick,blue] (V7) -- (V9);
\draw[very thick,green] (V7) -- (V10);

\vertexLabelR[]{V7}{left}{$ $}

\node at (5.5, 0.033013) {$x_{(i,g)}$};

\node at (10.1, 0.9) {$x_{(i,g,1)}$};
\node at (10.5, 0.) {$x_{(i,g,3)}$};
\node at (8.5, 0.) {$x_{(i,g,2)}$};

\end{tikzpicture}
    \caption{The cubic graph $\Gamma$ with its 3-edge-colouring $\kappa$ (left) and the complete truncation $\Gamma^{(G,3)}$ with vertices labelled by exploiting $\kappa$ (right)}
    \label{fig:t3}
\end{figure}
It is easy to see that these incidences indeed give rise to the complete truncation $\Gamma^{(G,3)}$ of the graph $\Gamma$. 
Furthermore, by Proposition~\ref{prop:truncproper} we know that $\Gamma^{(G,3)}$  has an $\Aut(\Gamma^{(G,3)})$-invariant 3-edge-colouring $\kappa_3:E(\Gamma^{(G,3)})\to \{1,2,3\}.$
Next, we construct an auxiliary graph $Y$ that forms a cubic graph  with vertices of the form $y_{(i,g,j)}$, where $i=1,\ldots,\hat{N}_3(S), g\in G$ and $j=1,2,3$. The edges $E(Y)$ of this graph are given by $E(Y):=E_1\cup E_2 \cup E_3$ with 
\begin{align*}
&E_1:=\sum_{g\in G}\sum_{i=1}^{\hat{N}_3(S)/2}( y_{(2 i-1,g,1)}y_{(2i-1,g,2)}+y_{(2i-1,g,3)}y_{(2i,g,1)} + y_{(2i,g,2)}y_{(2i,g,3)}),\\
&E_2:=\sum_{g\in G}\sum_{i=1}^{\hat{N}_3(S)/2}( y_{(2 i,g,1)}y_{(2i,g,2)}+y_{(2i,g,3)}y_{(2i+1,g,1)} + y_{(2i-1,g,2)}y_{(2i-1,g,3)}),\\
&E_3:=\sum_{g\in G}\sum_{i=1}^{\hat{N}_3(S)/2}\sum_{j=1}^3 y_{(i,g,j)}y_{(\hat{N}_3(S)/2+i,g,j)},
\end{align*}
where subscripts are read modulo $\hat{N}_3(S).$ For every $g\in G$ we obtain a connected component of $Y$ containing $3\cdot \hat{N}_3(S)$ vertices as illustrated in Figure~\ref{fig:Y}.
Hence, the graph $Y$ is indeed a cubic graph satisfying $\vert V(Y)\vert = \vert V(\Gamma^{(G,3)})\vert $. Moreover, the partition $E(Y)=E_1\cup E_2\cup E_3$ yields a 3-edge-colouring $\kappa_Y$ of $Y$ by defining $\kappa_Y(e):=m$ for all $e\in E_m.$
\begin{figure}[H]
    \centering
    \begin{tikzpicture}[vertexBall, edgeDouble=nolabels, faceStyle=nolabels, scale=2.5]

\coordinate (V11) at (-1,  0.5);
\coordinate (V2)  at (-1,  0.0);
\coordinate (V3)  at (-1, -0.5);

\coordinate (V4)  at (-0.5, -1);
\coordinate (V5)  at ( 0.0, -1);
\coordinate (V61) at ( 0.5, -1);

\coordinate (V62) at ( 1, -0.5);
\coordinate (V7)  at ( 1,  0.0);
\coordinate (V8)  at ( 1,  0.5);

\coordinate (V9)  at ( 0.5,  1);
\coordinate (V10) at ( 0.0,  1);
\coordinate (V12) at (-0.5,  1);

\coordinate (V11c) at (-1.2,  0.5);
\coordinate (V61c) at ( 0.5, -1.2);
\coordinate (V62c) at ( 1.2, -0.5);
\coordinate (V12c) at (-0.5,  1.2);


\draw[thick,green] (V11) -- (V2);
\draw[thick,red] (V3) -- (V2);
\draw[thick,green] (V3) -- (V4);
\draw[thick,red] (V5) -- (V4);
\draw[thick,green] (V61) -- (V5);

\draw[thick,green] (V62) -- (V7);
\draw[thick,red] (V8) -- (V7);
\draw[thick,green] (V8) -- (V9);
\draw[thick,red] (V10) -- (V9);
\draw[thick,green] (V10) -- (V12);

\draw[thick,red] (V11) -- (V12);
\draw[thick,red] (V61) -- (V62);

\draw[thick,blue] (V11) -- (V62);
\draw[thick,blue] (V2) -- (V7);
\draw[thick,blue] (V11) -- (V62);
\draw[thick,blue] (V3) -- (V8);

\draw[thick,blue] (V12) -- (V61);
\draw[thick,blue] (V10) -- (V5);
\draw[thick,blue] (V9) -- (V4);

\vertexLabelR{V11}{left}{$ $}
\vertexLabelR{V2}{left}{$ $}
\vertexLabelR{V3}{left}{$ $}
\vertexLabelR{V4}{left}{$ $}
\vertexLabelR{V5}{left}{$ $}

\vertexLabelR{V61}{left}{$ $}
\vertexLabelR{V62}{left}{$ $}

\vertexLabelR{V7}{left}{$ $}
\vertexLabelR{V8}{left}{$ $}
\vertexLabelR{V10}{left}{$ $}
\vertexLabelR{V12}{left}{$ $}
\vertexLabelR{V9}{left}{$ $}

\node[vertexBall,shift={(-0.75,0)}] at (V11) {$y_{(1,g,3)}$};
\node[vertexBall,,shift={(-0.75,0)}] at (V2) {$y_{(1,g,2)}$}; \node[vertexBall,shift={(-0.75,0)}] at (V3) {$y_{(1,g,1)}$};
\node[vertexBall,shift={(0,-0.5)}] at (V4) {$y_{(4,g,3)}$}; \node[vertexBall,shift={(0,-0.5)}] at (V5) {$y_{(4,g,2)}$};

 \node[vertexBall,shift={(0,-0.5)}] at (V61) {$y_{(4,g,1)}$};
  \node[vertexBall,shift={(0.75,0)}] at (V62) {$y_{(3,g,3)}$};
\node[vertexBall,shift={(0.75,0)}] at (V7) {$y_{(3,g,2)}$};
    \node[vertexBall,shift={(0.75,0)}] at (V8) {$y_{(3,g,1)}$};
  \node[vertexBall,shift={(0,0.5)}] at (V10) {$y_{(2,g,2)}$};
    \node[vertexBall,shift={(0,0.5)}] at (V12) {$y_{(2,g,1)}$};
      \node[vertexBall,shift={(0,0.5)}] at (V9) {$y_{(2,g,3)}$};

\end{tikzpicture}
    \caption{A connected component of the cubic graph $Y$. Note that for simplicity, we illustrate this graph having $3\cdot 4$ vertices. In our construction this graph contains $3\cdot \hat{N}_3(S)$ vertices.}
    \label{fig:Y}
\end{figure}

Now, we can use $\Gamma^{(G,3)}$ and $Y$ to construct a 4-regular graph $\Gamma^{(G,4)}$ with vertices given by $V(\Gamma^{(G,4)}):=V(\Gamma^{(G,3)})\cup V(Y)$ and edges given by
$E(\Gamma^{(G,4)}):=E(\Gamma^{(G,3)})\cup E(Y)\cup E_4,$ where $E_4$ is the set of edges that corresponds to the quadratic form $$\sum_{g\in G}\sum_{i=1}^{\hat{N}_3(S)}\sum_{j=1}^3x_{(i,g,j)}y_{(i,g,j)}.$$
We observe that the left-regular action of $G$ induces automorphisms of  $\Gamma^{(G,4)}.$ 
In particular, for each $h\in G$ we obtain an automorphism $\phi_h$ of $\Gamma^{(G,4)}$ via $\phi_{h}(x_{(i,g,j)}):=x_{(i,h\cdot g,j)}$ and $\phi_{h}(y_{(i,g,j)}):=y_{(i,h\cdot g,j)}.$ Hence, $G\hookrightarrow \Aut(\Gamma^{(G,4)}).$
It remains to show that every automorphism $\phi\in \Aut(\Gamma^{(G,4)})$ satisfies $\phi=\phi_h$ for a $h\in G.$
This follows from the following fact: Since $\phi$ is an automorphism of $\Gamma^{(G,4)}$, it has to map a cycle of length $3$ onto another cycle of the same length. 
Note that since $\Gamma^{(G,3)}$ forms the complete truncation of a cubic graph, every vertex $x_{(i,g,j)}\in V(\Gamma^{(G,4)})$ is contained in a cycle of length $3$. Moreover, we see that the vertices $y_{(i,g,j)}\in V(\Gamma^{(G,4)})$ are not contained in cycles of length 3. Hence, $V(\Gamma^{(G,3)})$ and $V(Y)$ considered as subsets of $V(\Gamma^{(G,4)})$ satisfy $V(\Gamma^{(G,3)})^{\Aut(\Gamma^{(G,4)})}= V(\Gamma^{(G,3)})$ and $V(Y)^{\Aut(\Gamma^{(G,4)})}= V(Y)$. Even more, we have 
\begin{align*}
   & E(\Gamma^{(G,3)})^{\Aut(\Gamma^{(G,4)})}= E(\Gamma^{(G,3)}),\\
   &E(Y)^{\Aut(\Gamma^{(G,4)})}= E(Y), \\&
   E_4^{\Aut(\Gamma^{(G,4)})}=E_4.
\end{align*}
Thus, the restriction $\phi'$ of an automorphism $\phi\in \Aut(\Gamma^{(G,4)})$ to $V(\Gamma^{(G,3)})\subseteq V(\Gamma^{(G,4)} )$ gives us an automorphism of $\Gamma^{(G,3)}.$
If we combine this statement with the fact that (1) $\Aut(\Gamma^{(G,3)})\cong G $ and (2) the edge-colouring $\kappa_3$ is $\Aut(\Gamma^{(G,3)})$-invariant,
we obtain that $\phi'(x_{(i,g,j)})= x_{(i,h\cdot g,j)}$ for some $h\in G$ and all $i=1,\ldots,\hat{N}_3(S),\,g\in G$ and $j=1,2,3$. Since incident vertices have to be mapped onto incident vertices, we obtain $\phi(y_{(i,g,j)})=y_{(i,h\cdot g, j)}$ for all $i=1,\ldots,\hat{N}_3(S),\,g\in G$ and $j=1,2,3$. Hence, $\phi =\phi_h$ and $\Aut(\Gamma^{(G,4)})$ is therefore isomorphic to $G$. With these properties, we see that the colouring $\kappa_4:E(\Gamma^{(G,4)})\to \{1,\ldots,4\}$ defined by 
\[
\kappa_4(e)=\begin{cases}
\kappa_3(e), & \text{if } e\in E(\Gamma^{(G,3)}), \\
\kappa_Y(e), & \text{if } e\in E(Y), \\
4, & \text{if } e\in E_4 \\
\end{cases}
\]
forms a proper 4-edge-colouring of $\Gamma^{(G,4)}$ that is $\Aut(\Gamma^{(G,4)})$-invariant.
\end{proof}

In Proposition~\ref{theorem:d=5},
we will use the above construction of the $4$-regular graph $\Gamma^{(G,4)}$ for a given group $G$
to obtain a $5$-regular graph with the desired properties. To simplify the proof, we begin with the following remark.

\begin{remark}\label{rem:ms}
In Proposition~\ref{theorem:d=4} we have constructed a $4$-regular graph $\Gamma^{(G,4)}$ with a given finite group $G$ as its automorphism group. We have additionally proven that $\Gamma^{(G,4)}$ has an $\Aut(\Gamma^{(G,4)})$-invariant $4$-edge-colouring.
Recall that the graph $\Gamma^{(G,4)}$ has vertices $x_{(i,g,j)}$ and $y_{(i,g,j)}$ with $g\in G$, $i=1,\ldots,\hat{N}_3(S)$ and $j=1,2,3$. Note that $\hat{N}_3(S)$ is equal to $(1)$ $8$ if $\vert S\vert =1$, $(2)$ $6$ if $\vert S\vert =2$, and $(3)$ $2n+6$ if $\vert S\vert >2$. For a definition of the edges $E(\Gamma^{(G,4)})$ we refer the reader to the aforementioned proof.
In the following, we will make use of this $4$-regular graph to show that there exists a $5$-regular graph $\Gamma^{(G,5)}$ with an automorphism group isomorphic to $G$ and a corresponding $\Aut(\Gamma^{(G,5)})$-invariant $5$-edge-colouring.
For this, we relabel the vertices of the graph to obtain a simplified proof. In particular, a vertex $x_{(i,g,j)}$ with $g\in G$, $i=1,\ldots,\hat{N}_3(S)$ and $j=1,2,3$ is relabelled as $w_{(6(i-1)+j,g)}$ and  a vertex $y_{(i,g,j)}$ with $g\in G$, $i=1,\ldots,\hat{N}_3(S)$ and $j=1,2,3$ is relabelled as $w_{(6(i-1)+3+j,g)}.$
Hence, after relabelling the graph $\Gamma^{(G,4)}$ has vertices of the form $w_{(i,g)}$ with $i=1,\ldots,N_4(S)$ and $g\in G$, where $N_4(S)$ is equal to $(1)$ $6\cdot 8$ if $\vert S\vert=1$, $(2)$ $6\cdot 6$ if $\vert S\vert=2$, and $(3)$ $6\cdot(2n+6)$ if $n>2$. We change the edges of $\Gamma^{(G,4)}$ accordingly.
\end{remark}
Now, we are able to show that for every finite group $G$ there exists a $5$-regular graph whose automorphism group realises $G$.
\begin{proposition}\label{theorem:d=5}
        For every finite group $G$ there exists a $5$-regular graph $\Gamma^{(G,5)}$ with $\Aut(\Gamma^{(G,5)})\cong G$ that has an $\Aut(\Gamma^{(G,5)})$-invariant $5$-edge-colouring.
\end{proposition}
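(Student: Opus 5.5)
The plan is to mimic the proof of Proposition~\ref{theorem:d=4}: glue a second, auxiliary, regular graph onto the $4$-regular graph $\Gamma^{(G,4)}$ by a perfect matching. Concretely, I would use the relabelled version of $\Gamma^{(G,4)}$ from Remark~\ref{rem:ms}, with vertices $w_{(i,g)}$, $i=1,\ldots,N_4(S)$, $g\in G$. Its $\Aut(\Gamma^{(G,4)})$-invariant $4$-edge-colouring $\kappa_4$ comes from Proposition~\ref{theorem:d=4}. The case $|G|\le 2$ is already covered by Proposition~\ref{prop:atmost22}, so I assume $|G|>2$.

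\textbf{Step 1: the auxiliary graph $Z$.} I would build a $4$-regular graph $Z$ with vertices $z_{(i,g)}$ (same index set as the $w_{(i,g)}$), consisting of one component $Z_g$ per $g\in G$. The component $Z_g$ lives on $\{z_{(i,g)}\mid i=1,\ldots,N_4(S)\}$ and is defined by an explicit quadratic form in the style of the graph $Y$. I would require three properties of $Z$:
\begin{itemize}
\item[(i)] $Z$ carries an explicit proper $4$-edge-colouring $\kappa_Z$, given by a partition $E(Z)=E'_1\cup\cdots\cup E'_4$ into perfect matchings whose definition depends only on $i$ and not on $g$;
\item[(ii)] $Z$ is triangle-free, for instance a circulant on the indices $i$ with large jumps, or a product of two even cycles of length at least $6$ if convenient;
\item[(iii)] the vertices of $Z$ are separated from the $y$-type vertices of $\Gamma^{(G,4)}$ by some local cycle statistic, such as the number of $4$-cycles through a vertex or the number of $5$-cycles through an edge.
\end{itemize}
The graph $\Gamma^{(G,5)}$ then has vertex set $V(\Gamma^{(G,4)})\cup V(Z)$ and edge set $E(\Gamma^{(G,4)})\cup E(Z)\cup E_5$, where $E_5$ corresponds to $\sum_{g}\sum_i w_{(i,g)}z_{(i,g)}$. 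This graph is $5$-regular, and the maps $w_{(i,g)}\mapsto w_{(i,hg)}$, $z_{(i,g)}\mapsto z_{(i,hg)}$ define automorphisms $\phi_h$, which gives $G\hookrightarrow\Aut(\Gamma^{(G,5)})$.

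\textbf{Step 2: every automorphism is some $\phi_h$.} The $x$-type vertices (the old $\Gamma^{(G,3)}$ vertices) are exactly the vertices on triangles. Adding $E_5$ creates no new triangles, since $E_5$ is a matching between a graph and a triangle-free graph. So every $\phi\in\Aut(\Gamma^{(G,5)})$ preserves the $x$-vertices.

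Next I would show that $\phi$ preserves the $y$-vertices and the $z$-vertices separately. Both kinds of vertex lie on no triangle, so this is where property (iii) is needed: it lets us tell them apart by a cycle count. Once these vertex classes are invariant, the edge classes $E(\Gamma^{(G,4)})$, $E(Z)$ and $E_5$ are invariant as well. Hence $\phi$ restricts to an automorphism of $\Gamma^{(G,4)}$, which by Proposition~\ref{theorem:d=4} equals $\phi_h$ on the $w$-vertices for some $h\in G$. Because $E_5$ is a perfect matching respecting the labels, $\phi(z_{(i,g)})=z_{(i,hg)}$, and therefore $\phi=\phi_h$.

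\textbf{Step 3: the invariant colouring.} Define $\kappa_5$ to be $\kappa_4$ on $E(\Gamma^{(G,4)})$, $\kappa_Z$ on $E(Z)$, and $5$ on $E_5$. This is a proper $5$-edge-colouring. It is $\Aut(\Gamma^{(G,5)})$-invariant because every automorphism is some $\phi_h$: such a map preserves $\kappa_4$, preserves $\kappa_Z$ since $\kappa_Z$ depends only on the index $i$, and preserves $E_5$.

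\textbf{Main obstacle.} I expect the hard part to be Step 2, and specifically choosing $Z$ so that the $z$-vertices can never be swapped with the $y$-vertices, which are also triangle-free. My first attempt would be to count $4$-cycles through each vertex in the $y$ part of $\Gamma^{(G,4)}$ (coming from $Y$ and $E_4$) and to choose $Z$ of girth at least $5$. Then any $4$-cycle through a $z$-vertex would have to use $E_5$ edges, and I would check that such cycles are strictly fewer in number than those through a $y$-vertex. If that count fails to separate the two classes, I would instead attach $Z$ only to the $x$-vertices and modify the construction accordingly, so that the distinction follows directly from the triangle structure.
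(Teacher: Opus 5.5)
\textbf{There is a genuine gap: property (iii) is assumed but never realised or verified.} Your Steps 1 and 3 and the bookkeeping in Step 2 are fine, but the entire difficulty sits in (iii). Without it, $\Aut(\Gamma^{(G,5)})\cong G$ is unproven. So is the invariance of $\kappa_5$, since that rests on every automorphism being some $\phi_h$.

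In $\Gamma^{(G,4)}$ the $y$-vertices lie on no triangle: each component of $Y$ is a M\"obius ladder on $3\hat N_3(S)\ge 18$ vertices, and the matching $E_4$ creates no triangles. So with a triangle-free $Z$ the triangle test isolates only the $x$-vertices. Nothing you have shown stops an automorphism from mixing $y$- and $z$-vertices.

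Your suggested repair does not hold together:
\begin{itemize}
\item A $4$-regular circulant $C_n(a,b)$ always contains the $4$-cycle $(0,a,a+b,b)$, and $C_m\times C_n$ is full of $4$-cycles. So neither of your candidates has girth $\ge 5$.
\item A $y$-vertex lies on two $4$-cycles of $Y$ and on further ones through $E_4$, such as $(y_{(i,g,1)},y_{(i,g,2)},x_{(i,g,2)},x_{(i,g,1)})$. Whether $4$-cycle counts separate $y$ from $z$ is therefore a real computation, and you have not done it.
\item The fallback of attaching $Z$ only to the $x$-vertices leaves the $y$-vertices of degree $4$, so the graph is no longer $5$-regular.
\end{itemize}

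\textbf{The missing idea, as the paper uses it,} is to first replace $\Gamma^{(G,4)}$ by a $4$-regular graph $\Gamma'$ with $\Aut(\Gamma')\cong G$ in which \emph{every} vertex lies on a triangle.
\begin{enumerate}
\item Take the strong embedding of $\Gamma^{(G,4)}$ whose faces are the $(1,2)$-, $(2,3)$-, $(3,4)$- and $(1,4)$-coloured cycles (Lemma~\ref{lemma:colouring_CDC}).
\item Its complete truncation replaces every vertex by a $4$-cycle, and these are precisely the $4$-cycles of the truncation. Proposition~\ref{prop:autiso} then gives automorphism group $G$.
\item Join each of these $4$-cycles to a new hub vertex. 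Hubs lie on four triangles and rim vertices on two, which yields $\Aut(\Gamma')\cong G$ and an explicit invariant $4$-edge-colouring.
\item Only then attach a triangle-free $4$-regular $Z$ by a perfect matching. Per $g$, this $Z$ is the circulant with jumps $1$ and $5$ on $5N_4(S)$ vertices. Now lying on a triangle separates $V(\Gamma')$ from $V(Z)$ immediately.
\end{enumerate}

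Your own framework can also be rescued by dropping (ii) rather than strengthening it. Let $Z$ be a disjoint union of octahedra $K_{2,2,2}$; these are $4$-edge-colourable, and $6\mid N_4(S)$. Then $y$-, $x$- and $z$-vertices lie on $0$, $1$ and $4$ triangles respectively, and your Step 2 goes through.
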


\begin{proof}

Let $G$ be an arbitrary finite group generated by a set $S$. Again, because of Proposition~\ref{prop:atmost22}, we can assume $\vert G\vert >2.$
By Proposition~\ref{theorem:d=4}, $\Gamma^{(G,4)}$ is a 4-regular graph with $\Aut(\Gamma^{(G,4)})\cong G$ that has an $\Aut(\Gamma^{(G,4)})$-invariant 4-edge-colouring $\kappa:E(\Gamma^{(G,4)})\to \{1,\ldots,4\}.$ 
As a first step, we use the graph $\Gamma^{(G,4)}$ to construct another $4$-regular graph $\Gamma'$ with $\Aut(\Gamma') \cong G$ that admits an $\Aut(\Gamma')$-invariant $4$-colouring. We then use $\Gamma'$ to construct the graph $\Gamma^{(G,5)}$ with the desired properties, proceeding as follows:

With Remark~\ref{rem:ms}, we know that $\Gamma^{(G,4)}$ has vertices of the form $w_{(i,g)}$, where $g\in G$ and $i=1,\ldots, N_4(S)$. We refer the reader to the mentioned remark for a definition of $N_4(S)$.
Since $\kappa$ is $\Aut(\Gamma^{(G,4)})$-invariant, Lemma~\ref{lemma:colouring_CDC} implies that $\Gamma^{(G,4)}$ has a strong embedding $\beta$ satisfying $\Aut(\beta(\Gamma^{(G,4)}))\cong \Aut(\Gamma^{(G,4)})\cong G.$ In particular, we choose $\beta$ to be the strong embedding so that the corresponding facial cycle double cover consists of all $(1,2)$-, $(2,3)$-, $(3,4)$- and $(1,4)$-coloured cycles with respect to the colouring $\kappa.$
We use $\beta$ to construct the complete truncation $\Gamma:=\mathcal{T}(\Gamma^{(G,4)},\beta)$ and label the vertices by making use of the above colouring:
For each vertex $w_{(i,g)}\in V(\Gamma)$ with $i=1,\ldots,N_4(S)$ and $g\in G,$ we obtain four vertices $w_{(i,g,j)}$ for $j=1,\ldots,4$ in the complete truncation.
We enforce that two vertices  $w_{(i_1,g_1,j_1)}$ and $w_{(i_2,g_2,j_2)}$ in $\Gamma$
are connected by an edge, if 
\begin{enumerate}
    \item $i_1=i_2$, $g_1=g_2$ and $\{j_1,j_2\}\in \{\{1,2\},\{2,3\},\{3,4\},\{1,4\}\}$ or 
    \item $e:=\{w_{(i_1,g_1)},w_{(i_2,g_2)}\}\in E(\Gamma^{(G,4)})$ and $\kappa(e)=j_1=j_2$.
\end{enumerate}
We observe that these incidences indeed give rise to the complete truncation of the graph $\Gamma^{(G,4)}.$
The resulting graph $\Gamma$ is presented in Figure~\ref{fig:truncate4}. In this and the below figures, we illustrate the colours $1,2,3,4$ of the different edge-colourings with the colours red, green, blue and brown respectively.
\begin{figure}[H]
    \centering
    \begin{tikzpicture}[vertexBall, edgeDouble=nolabels, scale=2.5]

\coordinate (V1) at (0 , 0);
\coordinate (V2) at (1 , 0);
\coordinate (V3) at (0 , 1);
\coordinate (V4) at (-1 , 0);
\coordinate (V5) at (0, -1);

\draw[-,very thick,green] (V1) to (V2);
\draw[-,very thick,red] (V1) to (V3);
\draw[-,very thick,brown] (V1) to (V4);
\draw[-,very thick,blue] (V1) to (V5);

\vertexLabelR[]{V1}{left}{$ $}

\draw[-{To[scale=2]}] (1.3,0) to (1.8,0);

\coordinate (V12) at (0.5+3 , 0);
\coordinate (V22) at (0 +3, 0.5);
\coordinate (V32) at (-0.5+3 , 0);
\coordinate (V42) at (3, -0.5);

\coordinate (V13) at (1 +3, 0);
\coordinate (V23) at (3, 1);
\coordinate (V33) at (-1 +3, 0);
\coordinate (V43) at (3, -1);

\coordinate (V14) at (0.5+6 , 0);

\coordinate (V24) at (0 +6, 0.5);
\coordinate (V34) at (-0.5+6 , 0);
\coordinate (V44) at (6, -0.5);

\coordinate (V15) at (1 +6, 0);
\coordinate (V25) at (6, 1);
\coordinate (V35) at (-1 +6, 0);
\coordinate (V45) at (6, -1);

\coordinate (V16) at (6 , 0);

\draw[-,very thick] (V12) to (V13);
\draw[-,very thick] (V22) to (V23);
\draw[-,very thick] (V32) to (V33);
\draw[-,very thick] (V42) to (V43);

\draw[-,very thick] (V12) to (V22);
\draw[-,very thick] (V22) to (V32);
\draw[-,very thick] (V32) to (V42);
\draw[-,very thick] (V42) to (V12);

 \draw[very thick, gray, smooth ]
  plot coordinates {
    (0.1,1)
    (0.2,0.2)
    (1,0.1)
  };

\draw[very thick, gray, smooth ]
  plot coordinates {
    (-0.1,1)
    (-0.2,0.2)
    (-1,0.1)
  };

  \draw[very thick, gray, smooth ]
  plot coordinates {
    (-0.1,-1)
    (-0.2,-0.2)
    (-1,-0.1)
  };

  \draw[very thick,gray, smooth ]
  plot coordinates {
    (0.1,-1)
    (0.2,-0.2)
    (1,-0.1)
  };

\vertexLabelR[]{V12}{left}{$ $}
\vertexLabelR[]{V22}{left}{$ $}
\vertexLabelR[]{V32}{left}{$ $}
\vertexLabelR[]{V42}{left}{$ $}

 \draw[very thick, gray, smooth ]
  plot coordinates {
    (0.1+3,1)
    (0.2+3,0.5)
        (0.5+3,0.1+0.1)
    (1+3,0.1)
  };

\draw[very thick, gray, smooth ]
  plot coordinates {
    (-0.1+3,1)
    (-0.2+3,0.5)
    (-0.5+3,0.1+0.1)
    (-1+3,0.1)
  };

  \draw[very thick, gray, smooth ]
  plot coordinates {
    (-0.1+3,-1)
        (-0.2+3,-0.5)
    (-0.5+3,-0.2)
    (-1+3,-0.1)
  };

    \draw[very thick, gray, smooth ]
  plot coordinates {
    (0.1+3,-1)
    (0.2+3,-0.5)
    (0.5+3,-0.2)
    (1+3,-0.1)
  };

\draw[very thick, gray, smooth cycle ]
  plot coordinates {
    (0.35+3,0)
    (3,-0.35)
    (-0.35+3,0)
    (3,0.35)
  };

 \draw[very thick, gray, smooth ]
  plot coordinates {
    (0.1+3,1)
    (0.2+3,0.5)
        (0.5+3,0.1+0.1)
    (1+3,0.1)
  };

\draw[very thick, gray, smooth ]
  plot coordinates {
    (-0.1+3,1)
    (-0.2+3,0.5)
    (-0.5+3,0.1+0.1)
    (-1+3,0.1)
  };

  \draw[very thick, gray, smooth ]
  plot coordinates {
    (-0.1+3,-1)
        (-0.2+3,-0.5)
    (-0.5+3,-0.2)
    (-1+3,-0.1)
  };

    \draw[very thick, gray, smooth ]
  plot coordinates {
    (0.1+3,-1)
    (0.2+3,-0.5)
    (0.5+3,-0.2)
    (1+3,-0.1)
  };

      \node[vertexBall,shift={(0.5,0.2)}] at (V1) {$w_{(i,g)}$};

    \node[vertexBall,shift={(-0.2,1.3)}] at (V12) {$w_{(i,g,1)}$};

\node[vertexBall,shift={(-0.3,0.8)}] at (V32) {$w_{(i,g,4)}$};

\node[vertexBall,shift={(-1.1,0.)}] at (V42) {$w_{(i,g,3)}$};

\node[vertexBall,shift={(0.5,-0.7)}] at (V12) {$w_{(i,g,2)}$};

\end{tikzpicture}
    \caption{The complete truncation $\Gamma$ of the 4-regular graph $\Gamma^{(G,4)}$}
    \label{fig:truncate4}
\end{figure}
By Proposition~\ref{prop:autiso}, we know that $\Gamma$ satisfies $\Aut(\Gamma)\cong \Aut(\Gamma^{(G,4)})\cong G.$ 
Note that $\Gamma$ forms a cubic graph. Hence, in the following, we further modify $\Gamma$ to construct a 4-regular graph $\Gamma'$ that has an $\Aut(\Gamma')$-invariant edge-colouring with $\Aut(\Gamma')\cong G$. For this, we define the vertices of $\Gamma'$ via $V(\Gamma'):=V(\Gamma)\cup \{w_{(i,g,5)}\mid i=1,\ldots,N_4(S),\,g\in G\}$
and edges $E(\Gamma'):=E(\Gamma)\cup E'$,
where $E'$ are edges corresponding to the below quadratic form:
\begin{align*}
E':=\sum_{g\in G}\sum_{i=1}^{N_4(S)}\sum_{j=1}^4w_{(i,g,j)}w_{(i,g,5)}.
\end{align*}
A component of this graph is illustrated in Figure~\ref{fig:gamma5}.
Note that every automorphism $\phi\in \Aut(\Gamma)$ can be translated into an automorphism $\psi\in \Aut(\Gamma')$. Hence, $G\cong \Aut(\Gamma)\hookrightarrow \Aut(\Gamma').$
 Furthermore, a vertex $w_{(i,g,5)}$ is contained in four cycles of length 3, namely 
 \begin{align*}
 &(w_{(i,g,1)},w_{(i,g,2)},w_{(i,g,5)}), (w_{(i,g,2)},w_{(i,g,3)},w_{(i,g,5)}), \\&(w_{(i,g,3)},w_{(i,g,4)},w_{(i,g,5)}),
(w_{(i,g,1)},w_{(i,g,4)},w_{(i,g,5)}).
 \end{align*}
 
Since the complete truncation $\Gamma$ of $\Gamma^{(G,4)}$ contains no cycles of length 3, the vertices $w_{(i,g,j)}$ for $1\leq j\leq 4$ are contained in exactly two cycles of length $3$. Thus, an automorphism $\psi\in \Aut(\Gamma')$ has to permute the vertices $w_{(i,g,5)}$. 
Thus, $\psi$ can be translated into an automorphism $\phi\in \Aut(\Gamma)$ and we obtain $G\cong \Aut(\Gamma').$ More precisely, for every $h\in G$ we obtain exactly one $\phi_h\in \Aut(\Gamma')$ if we set $\phi_h(w_{(i,g,j)}):=w_{(i,h\cdot g,j )}$ for all $i=1,\ldots,N_4(S),\,g\in G$ and $j=1,\ldots,5$.
Hence, we can define an $\Aut(\Gamma')$-invariant 4-edge-colouring of $\Gamma'$ via
$\kappa':E(\Gamma')\to \{1,2,3,4\}$ defined by 
\[
\kappa'(e')=\begin{cases}
j, & \text{if } e'=\{w_{(i,g,j)},w_{(i',g',j)}\}, \\
1, & \text{if } e'=\{w_{(i,g,2)},w_{(i,g,3)}\},\{w_{(i,g,4)},w_{(i,g,5)}\}, \\
2, & \text{if } e'=\{w_{(i,g,3)},w_{(i,g,4)}\},\{w_{(i,g,1)},w_{(i,g,5)}\}, \\
3, & \text{if } e'=\{w_{(i,g,1)},w_{(i,g,4)}\}, \{w_{(i,g,2)},w_{(i,g,5)}\}, \\
4, & \text{if } e'=\{w_{(i,g,1)},w_{(i,g,2)}\},\{w_{(i,g,3)},w_{(i,g,5)}\}.\\
\end{cases}
\]
The section of graph $\Gamma'$ with its $4$-edge colouring $\kappa'$ is illustrated in Figure~\ref{fig:gamma5}.
\begin{figure}[H]
    \centering
    \begin{tikzpicture}[vertexBall, edgeDouble=nolabels, scale=3]

\coordinate (V1) at (0 , 0);
\coordinate (V2) at (1 , 0);
\coordinate (V3) at (0 , 1);
\coordinate (V4) at (-1 , 0);
\coordinate (V5) at (0, -1);

\coordinate (V12) at (0.5+3 , 0);
\coordinate (V22) at (0 +3, 0.5);
\coordinate (V32) at (-0.5+3 , 0);
\coordinate (V42) at (3, -0.5);

\coordinate (V13) at (1 +3, 0);
\coordinate (V23) at (3, 1);
\coordinate (V33) at (-1 +3, 0);
\coordinate (V43) at (3, -1);

\coordinate (V14) at (0.5+6 , 0);

\coordinate (V24) at (0 +6, 0.5);
\coordinate (V34) at (-0.5+6 , 0);
\coordinate (V44) at (6, -0.5);

\coordinate (V15) at (1 +6, 0);
\coordinate (V25) at (6, 1);
\coordinate (V35) at (-1 +6, 0);
\coordinate (V45) at (6, -1);

\coordinate (V16) at (6 , 0);

\draw[-,very thick,green] (V14) to (V15);
\draw[-,very thick,red] (V24) to (V25);
\draw[-,very thick,brown] (V34) to (V35);
\draw[-,very thick,blue] (V44) to (V45);

\draw[-,very thick,blue] (V16) to (V14);
\draw[-,very thick,green] (V16) to (V24);
\draw[-,very thick,red] (V16) to (V34);
\draw[-,very thick,brown] (V16) to (V44);

\draw[-,very thick,brown] (V14) to (V24);
\draw[-,very thick,red] (V14) to (V44);
\draw[-,very thick,blue] (V24) to (V34);
\draw[-,very thick,green] (V34) to (V44);

  \vertexLabelR[]{V14}{left}{$ $}
\vertexLabelR[]{V24}{left}{$ $}
\vertexLabelR[]{V34}{left}{$ $}
\vertexLabelR[]{V44}{left}{$ $}

  \vertexLabelR[]{V16}{left}{$ $}

\node[vertexBall,shift={(-0.75,1.5)}] at (V14) {$w_{(i,g,1)}$};

\node[vertexBall,shift={(-0.4,0.3)}] at (V34) {$w_{(i,g,4)}$};
\node[vertexBall,shift={(-0.7,0.)}] at (V44) {$w_{(i,g,3)}$};

\node[vertexBall,shift={(0.6,-0.3)}] at (V14) {$w_{(i,g,2)}$};

\node[vertexBall,shift={(-0.9,0.3)}] at (V14) {$w_{(i,g,5)}$};

\end{tikzpicture}
    \caption{A 4-edge-colouring of the 4-regular graph $\Gamma'$}
    \label{fig:gamma5}
\end{figure}
Now, we construct an auxiliary graph $Z$ that forms a 4-regular graph with vertices of the form $z_{(i,g,j)}$, where $i=1,\ldots,N_4(S),\, g\in G$ and $j=1,\ldots,5$. The edges $E(Z)$ of this graph are given by $E(Z):=E_1\cup E_2 \cup E_3\cup E_4,$ where

\begin{align*}
E_1:=&\sum_{g\in G}\sum_{i=1}^{N_4(S)/2}( z_{(2 i-1,g,1)}z_{(2i-1,g,2)}+z_{(2i-1,g,3)}z_{(2i-1,g,4)} + z_{(2i-1,g,5)}z_{(2i,g,1)})\\
+&\sum_{g\in G}\sum_{i=1}^{N_4(S)/2}( z_{(2 i,g,2)}z_{(2i,g,3)}+z_{(2i,g,4)}z_{(2i,g,5)}),\\
E_2:=&\sum_{g\in G}\sum_{i=1}^{N_4(S)/2}\sum_{j=1}^5z_{(2i-1,g,j)}z_{(2i,g,j)},\\ 
E_3:=&\sum_{g\in G}\sum_{i=1}^{N_4(S)/2}( z_{(2 i,g,1)}z_{(2i,g,2)}+z_{(2i,g,3)}z_{(2i,g,4)} + z_{(2i,g,5)}z_{(2i+1,g,1)})\\
+&\sum_{g\in G}\sum_{i=1}^{N_4(S)/2}( z_{(2 i-1,g,2)}z_{(2i-1,g,3)}+z_{(2i-1,g,4)}z_{(2i-1,g,5)}),\\
E_4:=&\sum_{g\in G}\sum_{i=1}^{N_4(S)/2}\sum_{j=1}^5z_{(2i,g,j)}z_{(2i+1,g,j)}.  
\end{align*}

For every $g\in G$ we obtain a connected component of $Z$ containing $5\cdot N_4(S)$ vertices as illustrated in Figure~\ref{fig:Z}.
\begin{figure}[H]
    \centering
    \begin{tikzpicture}[vertexBall, edgeDouble=nolabels, faceStyle=nolabels, scale=3.5]


\coordinate (R1) at (1,-2/3);
\coordinate (R2) at (1,-1/3);
\coordinate (R3) at (1,0);
\coordinate (R4) at (1,1/3);
\coordinate (R5) at (1,2/3);

\coordinate (T1) at (-2/3,1);
\coordinate (T2) at (-1/3,1);
\coordinate (T3) at (0,1);
\coordinate (T4) at (1/3,1);
\coordinate (T5) at (2/3,1);

\coordinate (L1) at (-1,2/3);
\coordinate (L2) at (-1,1/3);
\coordinate (L3) at (-1,0);
\coordinate (L4) at (-1,-1/3);
\coordinate (L5) at (-1,-2/3);

\coordinate (B1) at (2/3,-1);
\coordinate (B2) at (1/3,-1);
\coordinate (B3) at (0,-1);
\coordinate (B4) at (-1/3,-1);
\coordinate (B5) at (-2/3,-1);


\draw[thick,blue] (R1)--(R2);
\draw[thick,red]  (R2)--(R3);
\draw[thick,blue] (R3)--(R4);
\draw[thick,red]  (R4)--(R5);

\draw[thick,blue] (R5)--(T5);

\draw[thick,red]  (T5)--(T4);
\draw[thick,blue] (T4)--(T3);
\draw[thick,red]  (T3)--(T2);
\draw[thick,blue] (T2)--(T1);

\draw[thick,red]  (T1)--(L1);

\draw[thick,blue] (L1)--(L2);
\draw[thick,red]  (L2)--(L3);
\draw[thick,blue] (L3)--(L4);
\draw[thick,red]  (L4)--(L5);

\draw[thick,blue] (L5)--(B5);

\draw[thick,red]  (B5)--(B4);
\draw[thick,blue] (B4)--(B3);
\draw[thick,red]  (B3)--(B2);
\draw[thick,blue] (B2)--(B1);

\draw[thick,red]  (B1)--(R1);


\draw[thick,brown] (R5)--(T1);
\draw[thick,green] (R5)--(B1);
\draw[thick,brown] (L5)--(B1);
\draw[thick,green] (T1)--(L5);

\draw[thick,brown] (R4)--(T2);
\draw[thick,green] (R4)--(B2);
\draw[thick,brown] (L4)--(B2);
\draw[thick,green] (T2)--(L4);

\draw[thick,brown] (R3)--(T3);
\draw[thick,green] (R3)--(B3);
\draw[thick,brown] (L3)--(B3);
\draw[thick,green] (T3)--(L3);

\draw[thick,brown] (R2)--(T4);
\draw[thick,green] (R2)--(B4);
\draw[thick,brown] (L2)--(B4);
\draw[thick,green] (T4)--(L2);

\draw[thick,brown] (R1)--(T5);
\draw[thick,green] (R1)--(B5);
\draw[thick,brown] (L1)--(B5);
\draw[thick,green] (T5)--(L1);

\vertexLabelR{R1}{left}{$ $}
\vertexLabelR{R2}{left}{$ $}
\vertexLabelR{R3}{left}{$ $}
\vertexLabelR{R4}{left}{$ $}
\vertexLabelR{R5}{left}{$ $}
\vertexLabelR{B1}{left}{$ $}
\vertexLabelR{B2}{left}{$ $}
\vertexLabelR{B3}{left}{$ $}
\vertexLabelR{B4}{left}{$ $}
\vertexLabelR{B5}{left}{$ $}
\vertexLabelR{L1}{left}{$ $}
\vertexLabelR{L2}{left}{$ $}
\vertexLabelR{L3}{left}{$ $}
\vertexLabelR{L4}{left}{$ $}
\vertexLabelR{L5}{left}{$ $}
\vertexLabelR{T1}{left}{$ $}
\vertexLabelR{T2}{left}{$ $}
\vertexLabelR{T3}{left}{$ $}
\vertexLabelR{T4}{left}{$ $}
\vertexLabelR{T5}{left}{$ $}
\node[vertexBall,shift={(0.75,0)}] at (R1) {$z_{(3,g,5)}$};
\node[vertexBall,,shift={(0.75,0)}] at (R2) {$z_{(3,g,4)}$}; \node[vertexBall,shift={(0.75,0)}] at (R3) {$z_{(3,g,3)}$};
\node[vertexBall,shift={(0.75,0)}] at (R4) {$z_{(3,g,2)}$}; \node[vertexBall,shift={(0.75,0)}] at (R5) {$z_{(3,g,1)}$};

\node[vertexBall,shift={(0,0.5)}] at (T1) {$z_{(2,g,1)}$}; \node[vertexBall,shift={(0,0.5)}] at (T2) {$z_{(2,g,2)}$}; \node[vertexBall,shift={(0,0.5)}] at (T3) {$z_{(2,g,3)}$};
\node[vertexBall,shift={(0,0.5)}] at (T4) {$z_{(2,g,4)}$}; \node[vertexBall,shift={(0,0.55)}] at (T5) {$z_{(2,g,5)}$};

\node[vertexBall,shift={(-0.75,0)}] at (L1) {$z_{(1,g,5)}$}; \node[vertexBall,shift={(-0.75,0)}] at (L2) {$z_{(1,g,4)}$}; \node[vertexBall,shift={(-0.75,0)}] at (L3) {$z_{(1,g,3)}$};
\node[vertexBall,shift={(-0.75,0)}] at (L4) {$z_{(1,g,2)}$}; \node[vertexBall,shift={(-0.75,0)}] at (L5) {$z_{(1,g,1)}$};

\node[vertexBall,shift={(0,-0.5)}] at (B1) {$z_{(4,g,1)}$}; \node[vertexBall,shift={(0,-0.5)}] at (B2) {$z_{(4,g,2)}$}; \node[vertexBall,shift={(0,-0.5)}] at (B3) {$z_{(4,g,3)}$};
\node[vertexBall,shift={(0,-0.5)}] at (B4) {$z_{(4,g,4)}$}; \node[vertexBall,shift={(0,-0.5)}] at (B5) {$z_{(4,g,5)}$};

\end{tikzpicture}
    \caption{A connected component of the 4-regular graph $Z$. Note that for simplicity, we illustrate this graph having $4\cdot 5$ vertices. In our construction this graph contains $N_4(S)\cdot 5$ vertices.}
    \label{fig:Z}
\end{figure}
 Moreover, the partition $E(Z)=E_1\cup E_2\cup E_3\cup E_4$ yields a proper 4-edge-colouring $\kappa_Z$ of $Z$ by defining $\kappa_Z(e):=m$ for all $e\in E_{m}.$

The desired 5-regular graph $\Gamma^{(G,5)}$ is then constructed via vertices $V(\Gamma^{(G,5)})=V(\Gamma')\cup V(Z)$ and edges $E(\Gamma^{(G,5)})$ given by
$E(\Gamma')\cup E(Z)\cup E_5,$ where $E_5$ is the set of edges that corresponds to the quadratic form
\begin{align*}
\sum_{g\in G}\sum_{i=1}^{N_4(S)}\sum_{j=1}^5w_{(i,g,j)}z_{(i,g,j)}.
\end{align*}
Again, for each $h\in G$ we obtain an automorphism $\psi_h$ of $\Gamma^{(G,5)}$ via $\psi_h(w_{(i,g,j)}):=w_{(i,h\cdot g,j)}$. It remains to show that every automorphism $\psi\in \Aut(\Gamma^{(G,5)})$ satisfies $\psi=\psi_h$ for some $h\in G.$
This follows from the following fact: Since $\psi$ is an automorphism of $\Gamma^{(G,5)}$, it has to map a cycle of length $3$ onto another cycle of the same length. 
Note that every vertex $w_{(i,g,j)}\in V(\Gamma^{(G,5)})$ is contained in a cycle of length $3$ and the $z_{(i,g,j)}\in V(\Gamma^{(G,5)})$ are not contained in cycles of length 3. Hence, $V(\Gamma')$ and $V(Z)$ considered as subsets of $V(\Gamma^{(G,5)})$ satisfy $V(\Gamma')^{\Aut(\Gamma^{(G,5)})}= V(\Gamma')$ and $V(Z)^{\Aut(\Gamma^{(G,5)})}= V(Z)$. Even more, we have 
\begin{align*}
   & E(\Gamma')^{\Aut(\Gamma^{(G,5)})}= E(\Gamma'),\\
   &E(Z)^{\Aut(\Gamma^{(G,5)})}= E(Z), \\&
   E_5^{\Aut(\Gamma^{(G,5)})}=E_5.
\end{align*}
Thus, the restriction $\psi'$ of the automorphism $\psi$ to $V(\Gamma')\subseteq V(\Gamma^{(G,5)} )$ gives us an automorphism of $\Gamma'.$
Combining this statement with the fact that (1) $\Aut(\Gamma')\cong G $ and (2) the colouring $\kappa'$ is $\Aut(\Gamma')$-invariant leads to $\psi'(w_{(i,g,j)})= w_{(i,h\cdot g,j)}$ for some $h\in G$ and all $i=1,\ldots,N_4(S),\,g\in G$ and $j=1,\ldots,5$. Since incident vertices have to be mapped onto incident vertices, we obtain $\psi(z_{(i,g,j)})=z_{(i,h\cdot g, j)}$ for all $i=1,\ldots,N_4(S),\,g\in G$ and $j=1,\ldots,5$. Hence, $\psi =\psi_h$ and $\Aut(\Gamma^{(G,5)})$ is therefore isomorphic to $G$. 
Finally, we see that the colouring $\kappa_5:E(\Gamma^{(G,5)})\to \{1,\ldots,5\}$ defined by 
\[
\kappa_5(e')=\begin{cases}
\kappa'(e'), & \text{if } e'\in E(\Gamma')\\
\kappa_Z(e'), & \text{if } e'\in E(Z)\\
5, & \text{if } e'=\{w_{(i,g,j)},z_{(i,g,j)}\}\\
\end{cases}
\]
forms a 5-edge-colouring of $\Gamma^{(G,5)}$ that is $\Aut(\Gamma^{(G,5)})$-invariant. 
\end{proof}
\begin{remark}\label{rem:proof}
    Let $G$ be a finite group and $d=3,4,5$. In this section, we have established that there exists a $d$-regular graph $\Gamma$ satisfying $\Aut(\Gamma)\cong G$ that has an $\Aut(\Gamma)$-invariant $d$-edge-colouring. From our constructions in this section and the modification in \cite{SurfacesWithAuto}, we observe that $\Gamma$ can be chosen such that it contains at least one vertex that is not contained in a cycle of length $3$. 
\end{remark}
With the above proposition in place, we are now in a position to present our first main result.
\begin{theorem} \label{theorem:dregular}
    For every finite group $G$ and every $d\geq 3$ there exists a $d$-regular graph $\Gamma$  and a $d$-edge-colouring $\kappa$ of $\Gamma$ such that $\Aut(\Gamma)\cong G$ and $\kappa$ is $\Aut(\Gamma)$-invariant.
\end{theorem}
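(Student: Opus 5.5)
The plan follows the scheme of Proposition~\ref{prop:atmost22}. If $\vert G\vert\leq 2$, that proposition already gives the statement, so we assume $\vert G\vert>2$. We write $d=d'+3\ell$ with $d'\in\{3,4,5\}$ and $\ell\in\mathbb{N}_0$.

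\textbf{Base graph.} For $d'=3$ we take the cubic prime graph from Proposition~\ref{prop:cyclicauto} (if $G$ is cyclic) or from Proposition~\ref{prop:fruchtresult} (otherwise). For $d'=4,5$ we take $\Gamma^{(G,4)}$ or $\Gamma^{(G,5)}$ from Proposition~\ref{theorem:d=4} or Proposition~\ref{theorem:d=5}. Call the chosen graph $\Gamma'$. In each case $\Gamma'$ is $d'$-regular, $\Aut(\Gamma')\cong G$, and $\Gamma'$ carries an $\Aut(\Gamma')$-invariant $d'$-edge-colouring. If $\ell=0$ we are done.

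\textbf{Rigid cubic factors.} Otherwise, as in Proposition~\ref{prop:atmost22}, we take the rigid cubic prime graph $\hat\Gamma$ of Proposition~\ref{prop:atmost2} and set $\Gamma_i:=\mathcal{T}_i(\hat\Gamma)$ for $1\le i\le \ell$. By Proposition~\ref{prop:cubicprime}, Corollary~\ref{prop:autiso2} and Proposition~\ref{prop:truncproper}, each $\Gamma_i$ is a cubic prime graph with trivial automorphism group and an invariant $3$-edge-colouring. The $\Gamma_i$ are pairwise non-isomorphic, since their vertex counts $3^i\vert V(\hat\Gamma)\vert$ are distinct. We then define
\[
\Gamma:=\Gamma'\times\prod_{i=1}^{\ell}\Gamma_i .
\]
This graph is $d$-regular. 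Once relative primality is established, Sabidussi's theorem \cite[Theorem 3.1]{graphmult} gives $\Aut(\Gamma)\cong G\times\prod_i \Aut(\Gamma_i)\cong G$. Applying Lemma~\ref{lemma:edgecolouring} iteratively, always to (partial product, next factor), produces an $\Aut(\Gamma)$-invariant $d$-edge-colouring. At each stage the partial product must again be relatively prime to the next factor, and its own colouring must already be invariant, which is what the induction provides.

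\textbf{Main obstacle: relative primality.} I would use unique prime factorization of connected graphs under the Cartesian product. Two graphs are relatively prime if and only if they share no prime factor up to isomorphism. So it suffices to check two things.

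First, the cubic primes $\Gamma_i$ are pairwise distinct, which we have shown.

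Second, no $\Gamma_i$ is a prime factor of $\Gamma'$. For $d'=3$, the graph $\Gamma'$ is itself prime (Propositions~\ref{prop:cyclicauto}, \ref{prop:fruchtresult}). It differs from each $\Gamma_i$ because $\Aut(\Gamma_i)$ is trivial while $\vert G\vert>2$. For $d'=4,5$, a cubic factor would force a decomposition $\Gamma'\cong Z\times\Gamma_i$ with $Z$ being $1$- or $2$-regular. Using Remark~\ref{rem:proof}, $\Gamma'$ contains a vertex not lying on a triangle. By contrast, every vertex of $\Gamma_i$ lies on a triangle, because $\Gamma_i$ is a complete truncation of a cubic graph. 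Each vertex $(z,x)$ of $Z\times\Gamma_i$ lies on the triangle $\{z\}\times T$, where $T$ is a triangle through $x$ in $\Gamma_i$. Hence every vertex of $Z\times\Gamma_i$ lies on a triangle, contradicting the property of $\Gamma'$.

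Inductively, the partial product $\Gamma'\times\prod_{i<k}\Gamma_i$ has prime factors consisting of those of $\Gamma'$ together with $\Gamma_1,\dots,\Gamma_{k-1}$. None of these equals $\Gamma_k$, so each application of \cite[Theorem 3.1]{graphmult} and of Lemma~\ref{lemma:edgecolouring} is justified.
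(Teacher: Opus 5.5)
Your proposal is correct and follows the paper's proof essentially step by step. It uses the same reduction $d=d'+3\ell$, the same base graphs, the same rigid cubic factors $\mathcal{T}_i(\hat\Gamma)$, the same triangle argument via Remark~\ref{rem:proof} for relative primality, and then \cite[Theorem 3.1]{graphmult} together with Lemma~\ref{lemma:edgecolouring}. Your additions only make explicit what the paper leaves implicit: the pairwise non-isomorphism of the $\Gamma_i$ via vertex counts, the automorphism-group comparison when $d'=3$, and the inductive use of unique prime factorization.
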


\begin{proof}
Since the case $\vert G \vert \leq 2$ is covered by Proposition~\ref{prop:atmost22}, we can assume $\vert G\vert >2$.
Let therefore $d' \in \{3,4,5\}$ be a natural number such that $d' \equiv d \pmod{3}$. Hence, there exists a natural number $\ell\in \mathbb{N}_0$ such that $d = d' + 3\ell$. By our results in Section~\ref{sec:regular}, there exists a $d'$-regular graph $\Gamma'$ such that $\mathrm{Aut}(\Gamma') \cong G$ and $\Gamma'$ admits an $\mathrm{Aut}(\Gamma')$-invariant $d'$-edge-colouring. 
Furthermore, by Proposition~\ref{prop:atmost22} there exists a cubic prime graph $\hat{\Gamma}$ such that $|\mathrm{Aut}(\hat{\Gamma})| = 1$ and $\hat{\Gamma}$ admits an $\mathrm{Aut}(\hat{\Gamma})$-invariant 3-edge-colouring.

Thus, we define $\Gamma_i := \mathcal{T}_i(\hat{\Gamma})$ for $1 \leq i \leq \ell$. By Proposition~\ref{prop:truncproper}, we know that for all $1 \leq i \leq \ell$, the graph $\Gamma_i$ admits an $\mathrm{Aut}(\Gamma_i)$-invariant 3-edge-colouring, and satisfies $|\mathrm{Aut}(\Gamma_i)| = 1$. Thus, the graph
\[
\Gamma = \Gamma' \times \prod_{i=1}^\ell \Gamma_i
\]
is a $d$-regular graph.
Since the graphs $\Gamma_1,\ldots,\Gamma_\ell$ are prime and additionally $\Gamma'$ contains vertices that do not lie on cycles of length $3$ (see Remark~\ref{rem:proof}), $\Gamma'$ cannot be written as a product $\Gamma'=Z\times \Gamma_i$ for a graph $Z$ and a $1\leq i\leq \ell$. Hence, $\Gamma',\Gamma_1,\ldots,\Gamma_\ell$ are relatively prime and so $\Gamma$ satisfies $\mathrm{Aut}(\Gamma) \cong G$ by \cite[Theorem 3.1]{graphmult}. Furthermore, by Lemma~\ref{lemma:edgecolouring}, this graph admits an $\mathrm{Aut}(\Gamma)$-invariant $d$-edge-colouring. This concludes the proof.
\end{proof}

This statement together with Lemma~\ref{lemma:colouring_CDC} leads to the following result.

\begin{theorem} \label{theorem:autom_strongEmbedding}
        For every finite group $G$ and every $d\geq 3$ there exists a $d$-regular graph $\Gamma$ and a corresponding strong embedding $\beta$ of $\Gamma$ such that $$\Aut(\Gamma)\cong \Aut(\beta(\Gamma))\cong G.$$
\end{theorem}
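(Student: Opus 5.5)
The plan is to combine Theorem~\ref{theorem:dregular} with Lemma~\ref{lemma:colouring_CDC}. Fix a finite group $G$ and $d\geq 3$. By Theorem~\ref{theorem:dregular} there is a $d$-regular graph $\Gamma$ with $\Aut(\Gamma)\cong G$ and a $d$-edge-colouring $\kappa\colon E(\Gamma)\to\{1,\ldots,d\}$ that is $\Aut(\Gamma)$-invariant. Applying Lemma~\ref{lemma:colouring_CDC} to $(\Gamma,\kappa)$, we choose a $d$-cycle $g\in S_d$, say $g=(1,2,\ldots,d)$. This yields the facial cycle double cover $\mathcal{C}$ consisting of all bi-coloured cycles with colour pairs $(i,i^g)$. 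We let $\beta$ be the strong embedding realising $\mathcal{C}$: its local rotation at each vertex lists the incident edges in the colour order $1,1^g,1^{g^2},\ldots$, and its facial walks are exactly the cycles in $\mathcal{C}$, so $\beta$ is strong.

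Next we compare the two automorphism groups. By definition $\Aut(\beta(\Gamma))$ is the subgroup of $\Aut(\Gamma)$ preserving the set of facial walks $F(\beta(\Gamma))=\mathcal{C}$, so $\Aut(\beta(\Gamma))\leq\Aut(\Gamma)$. For the reverse inclusion, let $\phi\in\Aut(\Gamma)$ and $\gamma\in\mathcal{C}$ with colours $\{i,i^g\}$. Since $\kappa(\phi(e))=\kappa(e)$ for every edge, $\phi(\gamma)$ is again a cycle whose edges alternate between colours $i$ and $i^g$. It is therefore a connected component of the $2$-regular subgraph spanned by $\kappa^{-1}(i)\cup\kappa^{-1}(i^g)$, hence an element of $\mathcal{C}$. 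This is exactly the invariance statement of Lemma~\ref{lemma:colouring_CDC}. Thus $\phi$ permutes $\mathcal{C}$, so $\Aut(\beta(\Gamma))=\Aut(\Gamma)\cong G$.

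The only point needing care, and the expected main obstacle, is checking that the rotation system induced by $g$ really has the bi-coloured cycles of $\mathcal{C}$ as its faces, so that $\beta$ is a genuine strong embedding rather than just a cycle double cover. I would verify this by tracing the face-walk rule. Arriving at a vertex along an edge of colour $c$, the rotation sends us out along the edge of colour $c^{g}$ (or $c^{g^{-1}}$, depending on orientation). Arriving along colour $c^g$, the same rule sends us out along colour $c$. Hence each face traversal alternates between two fixed colours and closes up into a bi-coloured cycle. Bi-coloured cycles in a properly coloured simple graph have distinct vertices, so every facial walk is a cycle, and the embedding is $2$-cell by the standard rotation-system construction. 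Lemma~\ref{lemma:colouring_CDC} already asserts this facial property, so the proof mainly amounts to citing it together with Theorem~\ref{theorem:dregular}.
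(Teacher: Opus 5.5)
Your proof follows the paper's: Theorem~\ref{theorem:dregular} gives $(\Gamma,\kappa)$, and Lemma~\ref{lemma:colouring_CDC} turns the bi-coloured cycles for a $d$-cycle $g$ into an $\Aut(\Gamma)$-invariant facial cycle double cover. Since $\Aut(\beta(\Gamma))\leq\Aut(\Gamma)$ by definition, this invariance gives $\Aut(\beta(\Gamma))=\Aut(\Gamma)\cong G$. Your component argument for $\phi(\gamma)\in\mathcal{C}$ is exactly the invariance step of that lemma.

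One correction concerns your last paragraph, the step you flagged as the main obstacle: the face-tracing justification is wrong as written. Suppose every vertex carries the same cyclic colour order $(1,1^g,1^{g^2},\ldots)$ and you use the usual orientable face-tracing rule. Arriving along colour $c$ you leave along $c^g$. Arriving along $c^g$, the same rule sends you out along $c^{g^2}$, not back to $c$, so the faces are not bi-coloured.

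For example, take $K_4$ with its $3$-edge-colouring and rotation $(1,2,3)$ at every vertex. This gives the four triangles of the planar embedding ($\chi=2$). The three bi-coloured $4$-cycles are instead the faces of an embedding of $K_4$ in the projective plane ($\chi=1$).

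To obtain bi-coloured faces, the rule must alternate between $g$ and $g^{-1}$ at consecutive vertices. One way is to make every edge twisted in a general rotation system, so $\beta$ is in general non-orientable. Alternatively, when $\Gamma$ is bipartite, you can use $g$ on one side and $g^{-1}$ on the other.

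A cleaner justification avoids face tracing altogether. The cycles of $\mathcal{C}$ through a vertex $v$ have colour pairs $\{c,c^g\}$ for $c=1,\ldots,d$. The two with pairs $\{c,c^g\}$ and $\{c^g,c^{g^2}\}$ share the edge of colour $c^g$ at $v$, so the $d$ edges at $v$ are linked into a single $d$-cycle. Gluing a disc to every member of $\mathcal{C}$ therefore yields a closed surface, possibly non-orientable, on which $\Gamma$ is strongly embedded with face set $\mathcal{C}$.

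Since you ultimately cite Lemma~\ref{lemma:colouring_CDC}, whose conclusion is correct, your proof stands. However, the reason you give for the facial property should be replaced by one of these.
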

\begin{proof}
Let $d\geq 3$ be an arbitrary natural number.
By Theorem~\ref{theorem:dregular}, we know that there exists a $d$-regular graph $\Gamma$ and a corresponding $\Aut(\Gamma)$-invariant edge-colouring $\kappa$ of $\Gamma$ such that $\Aut(\Gamma)\cong G.$ With Lemma~\ref{lemma:colouring_CDC}, we can use the edge-colouring $\kappa$ to construct a facial $\Aut(\Gamma)$-invariant cycle double cover $\mathcal{C}$ of $\Gamma$. This cycle double cover $\mathcal{C}$ corresponds to a strong embedding $\beta$ of $\Gamma$ satisfying $\Aut(\Gamma)\cong \Aut(\beta(\Gamma))\cong G.$ This concludes the proof.
\end{proof}

\section{Regular graphs with prescribed automorphism group of unbounded strong genus } \label{sec:genus}

The aim of this section is to show that for every $d\geq 3$ there exists a sequence of $d$-regular graphs with a prescribed automorphism group that admit strong embeddings with the same automorphism group and whose genera tend to infinity.
Analogously to the previous proofs, we begin with the cases $d=3,4,5$.

\begin{proposition}\label{prop:infinite_3}
    For every finite group $G$ there exists a sequence $(\Gamma_\ell)_{\ell \in \mathbb{N}}$ of cubic prime graphs with corresponding strong embeddings $(\beta_\ell)_{\ell\in \mathbb{N}}$ such that the following conditions are satisfied:
    \begin{enumerate}
        \item $\Aut(\Gamma_\ell)\cong\Aut(\beta_\ell(\Gamma_\ell))\cong G$ for all $\ell\in \mathbb{N}$ and
        \item $\lim_{\ell\to \infty} g(\beta_\ell(\Gamma_\ell)) =\infty$.
    \end{enumerate}
\end{proposition}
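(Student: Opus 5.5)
The plan is to start from a single cubic prime graph $\Gamma_0$ with $\Aut(\Gamma_0)\cong G$ that carries an $\Aut(\Gamma_0)$-invariant $3$-edge-colouring $\kappa_0$, and then iterate complete truncation. For $|G|\le 2$, $\Gamma_0$ comes from Proposition~\ref{prop:atmost2}; for cyclic $G$ of order at least $3$, from Proposition~\ref{prop:cyclicauto}; and for non-cyclic $G$, from Proposition~\ref{prop:fruchtresult}. We set $\Gamma_\ell:=\mathcal{T}_\ell(\Gamma_0)$.

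By Corollary~\ref{prop:autiso2} and induction, $\Aut(\Gamma_\ell)\cong G$. By Proposition~\ref{prop:truncproper} and induction, each $\Gamma_\ell$ carries an $\Aut(\Gamma_\ell)$-invariant $3$-edge-colouring $\kappa_\ell$. Each $\Gamma_\ell$ is prime by Proposition~\ref{prop:cubicprime}: it contains triangles for $\ell\ge 1$, while circular ladders on at least $8$ vertices are triangle-free. The prism $K_3\times K_2$ on $6$ vertices has only $6$ vertices, whereas $|V(\Gamma_\ell)|=3^\ell|V(\Gamma_0)|$ is much larger, so it is excluded as well.

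Next we construct the embeddings. Applying Lemma~\ref{lemma:colouring_CDC} to $\kappa_\ell$ with $d=3$ (the only $3$-cycle is $(1,2,3)$ up to inverse) gives a facial $\Aut(\Gamma_\ell)$-invariant cycle double cover. Its facial cycles are exactly the bi-coloured cycles of $\kappa_\ell$, which define a strong embedding $\beta_\ell$. Invariance gives $\Aut(\beta_\ell(\Gamma_\ell))=\Aut(\Gamma_\ell)\cong G$, proving condition~(1).

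The main obstacle is the genus. We compute $\chi(\beta_\ell(\Gamma_\ell)) = |V_\ell|-|E_\ell|+|F_\ell| = -\tfrac{1}{2}|V_\ell| + |F_\ell|$, where $|V_\ell|=3^\ell|V_0|$. So it suffices to show that the number of bi-coloured cycles grows more slowly than $\tfrac12|V_\ell|$. I would track how the bi-coloured cycles transform under the colouring $\kappa'$ of Proposition~\ref{prop:truncproper}. Each triangle $w_{(v,e_1)},w_{(v,e_2)},w_{(v,e_3)}$ has edge colours $\kappa[e_i,e_j]$, which is the colour of the third edge at $v$. So at $w_{(v,e)}$ the incident edges are coloured $\kappa(e)$ (the inter-triangle edge) and the two other colours on the triangle edges.

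Now follow an $(a,b)$-coloured path in $\Gamma_{\ell+1}$. Entering a triangle along an $a$-edge at $w_{(v,e)}$ with $\kappa(e)=a$, it takes the $b$-coloured triangle edge, which joins $w_{(v,e)}$ to $w_{(v,e')}$ where $\kappa(e')=a$ is impossible, so the exit vertex is determined by $\kappa[e,e']=b$. This means $e'$ has the third colour $c$, and the path leaves along the $c$-coloured edge, which cannot be in an $(a,b)$-path. So the path stays within the triangle, using a $b$-edge and then an $a$-edge. I expect the $(a,b)$-bicoloured cycles to be exactly the triangles (each triangle has colours $1,2,3$, so it is not bicoloured) together with cycles through the inter-triangle edges. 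A careful case check should show that each $(a,b)$-cycle alternates between one inter-triangle edge of colour $a$ or $b$ and triangle edges, so that the faces correspond to the faces of $\Gamma_\ell$ and the vertices of $\Gamma_\ell$ (as triangles split into bi-coloured pieces).

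The expected outcome is $|F_{\ell+1}|\le |F_\ell|+c|V_\ell|$ with $c<\tfrac32$, for instance $c=1$, mirroring the Euler-preserving truncation in Proposition~\ref{prop:embedding_trunc}. This would give $\chi(\beta_\ell(\Gamma_\ell))\to-\infty$ and hence $g\to\infty$. If this recursion turns out to be awkward to verify, the fallback is a crude bound: every bi-coloured cycle that is not a triangle has length at least $6$, because $\kappa_\ell$ contains no bicoloured triangles. Hence $|F_\ell|\le 2|E_\ell|/6=\tfrac12|V_\ell|$, so $\chi\le 0$. To make this strict and growing, I would show that a positive proportion of the faces have length at least $12$, namely those that pass through two or more triangles, which forces $\chi(\beta_\ell)\le -\varepsilon|V_\ell|$.
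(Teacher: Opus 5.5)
Your setup is the same as the paper's. You iterate the complete truncation, take $\Aut(\Gamma_\ell)\cong G$ from Corollary~\ref{prop:autiso2}, the invariant colouring from Proposition~\ref{prop:truncproper}, primality from Proposition~\ref{prop:cubicprime}, and $\beta_\ell$ from the bi-coloured cycles via Lemma~\ref{lemma:colouring_CDC}. The gap is in the genus step, which you correctly identify as the crux but do not close.

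\textbf{The gap.} Your threshold is wrong. Since $|V_{\ell+1}|=3|V_\ell|$, you have $\chi_{\ell+1}-\chi_\ell=-|V_\ell|+(|F_{\ell+1}|-|F_\ell|)$. A bound $|F_{\ell+1}|\le|F_\ell|+c|V_\ell|$ therefore forces a decrease only when $c<1$, not $c<\tfrac32$. Your ``expected'' value $c=1$ gives merely $\chi_{\ell+1}\le\chi_\ell$. That is exactly the Euler-characteristic-preserving behaviour of Proposition~\ref{prop:embedding_trunc}, and it yields no divergence.

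The face structure you anticipate is also wrong. You predict faces of $\Gamma_{\ell+1}$ corresponding to faces \emph{and vertices} of $\Gamma_\ell$, but this contradicts your own remark that the triangles are not bi-coloured. In the bi-coloured embedding there are no faces attached to vertices.

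The fallback is not established either. A face running through exactly two triangles would have length $6$, not $12$. Showing that every face has length at least $12$ needs precisely the lifting argument that is missing.

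\textbf{How to close it.} Finish your local analysis. Let $e_b,e_c$ be the edges at $v$ of colours $b,c$. An $(a,b)$-path entering the triangle at $w_{(v,e)}$ with $\kappa(e)=a$ proceeds as follows:
\begin{itemize}
\item it takes the triangle edge of colour $\kappa[e,e_c]=b$ to $w_{(v,e_c)}$;
\item it then takes the triangle edge of colour $\kappa[e_c,e_b]=a$ to $w_{(v,e_b)}$;
\item it leaves along the $b$-coloured inter-triangle edge coming from $e_b$.
\end{itemize}
Hence an $(a,b)$-cycle $(v_1,\ldots,v_m)$ of $\Gamma_\ell$ lifts to a single $(a,b)$-cycle of length $3m$ in $\Gamma_{\ell+1}$, which passes through all three vertices of each triangle at $v_1,\ldots,v_m$.

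The $(a,b)$-cycles of $\Gamma_\ell$ partition $V(\Gamma_\ell)$, so these vertex-disjoint lifts cover $V(\Gamma_{\ell+1})$. They are therefore all the components of the $2$-regular $(a,b)$-subgraph of $\Gamma_{\ell+1}$. Summing over the three colour pairs gives $|F_{\ell+1}|=|F_\ell|$, so
\[
\chi(\beta_{\ell+1}(\Gamma_{\ell+1}))=\chi(\beta_\ell(\Gamma_\ell))-|V(\Gamma_\ell)|\to-\infty.
\]
This is exactly the paper's argument.
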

\begin{proof}
    Let $G$ be an arbitrary finite group. 
    By \Cref{prop:cyclicauto,prop:fruchtresult,prop:atmost2}, there exists a cubic prime graph $\Gamma$ with an $\Aut(\Gamma)$-invariant 3-colouring $\kappa:E(\Gamma)\rightarrow \{1,2,3\}$ and $\Aut(\Gamma)\cong G$.  According to Remark~\ref{rem:colouring}, the cycle $(1,2,3)\in S_3$ induces a facial $\Aut(\Gamma)$-invariant cycle double cover $\mathcal{C}$ of $\Gamma$ that is constructed by exploiting the  3-edge-colouring $\kappa$, see also Lemma~\ref{lemma:colouring_CDC}. This facial cycle double cover $\mathcal{C}$ defines a strong embedding $\beta$ of $\Gamma$.
    Our desired result follows, if we can show that $\Gamma$ can be exploited to construct another cubic prime graph $\Gamma'$ together with a strong embedding $\beta'$ satisfying $\Aut(\Gamma')\cong \Aut(\beta'(\Gamma'))\cong G$ and $\chi(\beta (\Gamma))>\chi(\beta' (\Gamma')).$ In particular, if we are able to achieve this construction, we can recursively build a sequence of cubic graphs with corresponding strong embeddings that satisfy the properties of the above statement. Here, we obtain this construction by defining $\Gamma'$ as $\Gamma':=\mathcal{T}(\Gamma,\beta)$, the complete truncation of $\Gamma$.
    
    By Proposition~\ref{prop:cubicprime} the graph $\Gamma'$ is prime and $\Aut(\Gamma')\cong\Aut(\Gamma)\cong G$ is satisfied by Corollary~\ref{prop:autiso2}.
    In order to conclude the result, we have to construct an $\Aut(\Gamma')$-invariant strong embedding $\beta'$ of $\Gamma'$ with a smaller Euler characteristic than $\beta(\Gamma)$. For this, the $\Aut(\Gamma)$-invariant 3-colouring $\kappa$ of $\Gamma$ is translated to a $\Aut(\Gamma')$-invariant 3-colouring $\kappa'$ of $\Gamma'$ as described in Proposition~\ref{prop:truncproper}.
    Again, we use the colouring $\kappa$ to label the vertices of $\Gamma':$ For each vertex $v_{i}\in V(\Gamma)$, we obtain three vertices $v_{(i,j)},$ where $j=1,2,3,$ in the complete truncation $\Gamma'$. Two vertices  $v_{(i_1,j_1)}$ and $v_{(i_2,j_2)}$ in $V(\Gamma')$ are connected by an edge, if (1) $i_1=i_2$ and $j_1\neq j_2$, or (2) $e:=\{v_{i_1},v_{i_2}\}\in E(\Gamma)$ and $\kappa(e)=j_1=j_2$.
    Consider a bi-coloured cycle $\gamma=(v_1,v_2,\dots,v_m)$ with colours $a,b\in\{1,2,3\}$ and even length in $\Gamma$. Suppose that the edge $\{v_1,v_2\}\in E(\Gamma)$ is coloured in $a$ and let $c$ be the unique integer in $\{1,2,3\}\setminus\{a,b\}$.
    Then the cycle $\gamma$ translates to the bi-coloured cycle $$(v_{(1,b)},v_{(1,c)},v_{(1,a)},v_{(2,a)},v_{(2,c)},v_{(2,b)},v_{(3,b)},\dots,v_{(m,b)})$$
    with colours $a$ and $b$ in $\Gamma'$, as illustrated in \Cref{fig:3reg_limit}.
    Translating all bi-coloured cycles of $\mathcal{C}$ in $\Gamma$ analogously to bi-coloured cycles in $\Gamma'$ results in a facial cycle double cover $\mathcal{C}'$ of $\Gamma'$ obtained by the $\Aut(\Gamma')$-invariant 3-edge-colouring $\kappa'.$ Hence, $\kappa'$ induces an $\Aut(\Gamma')$-invariant strong embedding $\beta'$ of $\Gamma'$.

    \begin{figure}[H]
        \centering
        \begin{tikzpicture}[vertexBall, scale=1.2]

\coordinate (V7) at (0.5, 0.433013);
\coordinate (V8) at (1/2, 1.433013);
\coordinate (V9) at (1.36603, -0.0669873);
\coordinate (V10) at (-0.36603, -0.0669873);

\draw[thick, gray, smooth ]
plot coordinates {
    (-0.86603+4.15 , -0.6)
    (4.15,0)
    (4.5,0.866025-0.3)
    (4.85,0)
    (1.86603 +3.85, -0.6)
  };      

\draw[-{To[scale=2]}] (2,0.5) to (3,0.5);

\draw[-,red,very thick] (V7) to (V8);
\draw[-,blue,very thick] (V7) to (V9);
\draw[-,green,very thick] (V7) to (V10);

\vertexLabelR[]{V7}{left}{$ $}

\node[vertexBall,shift={(0.5,0.25)}] at (V7) {$v_{i}$};

\begin{scope}[shift={(0.4,1.1)},rotate=240]
    \draw[thick,gray,  smooth ]
    plot coordinates {
        (1.36603 + 0.1, -0.0669873 + 0.2)
        (0.5 + 0.2, 0.433013 + 0.1)
        (1/2 + 0.2, 1.433013)
      };      
\end{scope}

\coordinate (V1) at (0+4, 0);
\coordinate (V2) at (1+4 , 0);
\coordinate (V3) at (0.5+4 , 0.866025);
\coordinate (V4) at (1/2+4 , 1.866025);
\coordinate (V5) at (1.86603 +4, -0.5);
\coordinate (V6) at (-0.86603+4 , -0.5);

\draw[-,blue,very thick] (V1) to (V3);
\draw[-,green,very thick] (V2) to (V3);
\draw[-,red,very thick] (V1) to (V2);
\draw[-,green,very thick] (V1) to (V6);
\draw[-,red,very thick] (V3) to (V4);
\draw[-,blue,very thick] (V2) to (V5);

\vertexLabelR[]{V1}{left}{$ $}
\vertexLabelR[]{V2}{left}{$ $}
\vertexLabelR[]{V3}{left}{$ $}

\node[vertexBall,shift={(-0.5,0.25)}] at (V1) {$v_{(i,a)}$};

\node[vertexBall,shift={(0.5,0.25)}] at (V2) {$v_{(i,b)}$};

\node[vertexBall,shift={(0.5,0.25)}] at (V3) {$v_{(i,c)}$};

\end{tikzpicture}
        \caption{Translating a bi-coloured cycle of $\Gamma$ to a bi-coloured cycle of $\mathcal{T}(\Gamma)$.}
        \label{fig:3reg_limit}
    \end{figure}
    
    It remains to show that the genus of $\beta'(\Gamma')$ is greater than the genus of $\beta(\Gamma)$. The Euler characteristic of $\beta'(\Gamma')$ can be computed based on the Euler characteristic of $\beta(\Gamma)$ as follows
    $$\chi(\beta'(\Gamma'))=3\vert V(\Gamma)\vert - (\vert E(\Gamma)\vert + 3\vert V(\Gamma)\vert) + \vert F(\beta(\Gamma))\vert =\chi(\beta(\Gamma))-\vert V(\Gamma)\vert<\chi(\beta(\Gamma)).$$
    Thus, we obtain in total $\chi(\beta (\Gamma))>\chi(\beta' (\Gamma'))$ and the result follows.
\end{proof}
Now, we turn to the case $d=4,5$. For this, we introduce the following remark: 
\begin{remark}\label{rem:rotedges}
Let $\Gamma$ be a $d$-regular graph and $\beta$ a strong embedding of $\Gamma.$  
In the following, we assume that the vertices of $\Gamma$ are given by the set $V(\Gamma)=\{1,\ldots,k\},$ where $k:=\vert V(\Gamma)\vert .$ 
We fix a notation for every edge $e=\{i,j\}\in E(\Gamma)$ as follows:
We know that for every vertex $i\in V(\Gamma)$ the strong embedding $\beta$ induces a local orientation $(e^i_1,\ldots,e^i_d)$ of the edges that are incident to $i$. Hence, for each vertex $i\in V(\Gamma)$, we choose a fixed orientation and consider this orientation as an ordered list, i.e., $[e^i_1,\ldots,e^i_d]$. Hence, for the edge $e$ that appears at the $m$-th position in the local rotation at vertex $i$, we introduce the notation $e=e[i,m]$. With this notation, we know that considering an edge $e[i,m]=\{i,j\}$ of a vertex $i$, as an edge that is contained in the local rotation at vertex $j$ gives us $e=e[i,m]=e[j,r]$ for a suitable $r\in \{1,\ldots,d\}$.
\end{remark}

\begin{proposition}\label{prop:infinite_45}
    For every finite group $G$ and $d=4,5$ there exists a sequence $(\Gamma_\ell)_{\ell \in \mathbb{N}}$ of $d$-regular graphs with corresponding strong embeddings $(\beta_\ell)_{\ell\in \mathbb{N}}$ such that the following conditions are satisfied:
    \begin{enumerate}
        \item $\Aut(\Gamma_\ell)\cong\Aut(\beta_\ell(\Gamma_\ell))\cong G$ for all $\ell\in \mathbb{N}$ and
        \item 
        $\lim_{\ell\to \infty} g(\beta_\ell(\Gamma_\ell)) =\infty$.
    \end{enumerate}
\end{proposition}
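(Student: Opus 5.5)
The plan is to reuse the construction of Proposition~\ref{prop:infinite_3} as an input. Then, for $d=4,5$, we run the constructions of Proposition~\ref{theorem:d=4} and Proposition~\ref{theorem:d=5} on that input, and finally bound the Euler characteristic by counting faces. The number of vertices will tend to infinity and the Euler characteristic will tend to $-\infty$.

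\emph{Cubic input sequence.} Fix $G$ with $\vert G\vert>2$; the case $\vert G\vert\le 2$ is handled the same way, starting from the graphs of Proposition~\ref{prop:atmost2} and using products as in Proposition~\ref{prop:atmost22}. Start from the cubic prime graph $\Gamma$ of Proposition~\ref{prop:cyclicauto} or Proposition~\ref{prop:fruchtresult}. Set $\Gamma_\ell:=\mathcal{T}_\ell(\Gamma)$. By Corollary~\ref{prop:autiso2}, Proposition~\ref{prop:truncproper} and Proposition~\ref{prop:cubicprime}, each $\Gamma_\ell$ is a cubic prime graph with $\Aut(\Gamma_\ell)\cong G$ and an $\Aut(\Gamma_\ell)$-invariant $3$-edge-colouring. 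Moreover $\vert V(\Gamma_\ell)\vert=3^\ell\vert V(\Gamma)\vert\to\infty$. The proof of Proposition~\ref{theorem:d=4} only used these properties of the cubic input graph, not its specific labelling by $\hat N_3(S)$. We will check that the auxiliary graph $Y$ can be defined for any number of vertices of the form $x_{(i,g)}$ in each $G$-block (the even-parity indexing is harmless after pairing). Feeding $\Gamma_\ell$ into that construction yields $4$-regular graphs $\Gamma^{(4)}_\ell$ with $\Aut(\Gamma^{(4)}_\ell)\cong G$ and invariant $4$-colourings $\kappa_4$. Feeding these into Proposition~\ref{theorem:d=5} yields $5$-regular graphs $\Gamma^{(5)}_\ell$ with the same properties. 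By Lemma~\ref{lemma:colouring_CDC}, the cycle $(1,2,\ldots,d)$ gives a strong embedding $\beta_\ell$ with $\Aut(\beta_\ell(\Gamma^{(d)}_\ell))\cong G$.

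\emph{Genus for $d=5$.} This is a pure counting step. Every facial cycle is bi-coloured, hence of even length at least $4$. Therefore $\vert F\vert\le 2\vert E\vert/4$, and
\[
\chi\le \vert V\vert-\tfrac{5}{2}\vert V\vert+\tfrac{5}{4}\vert V\vert=-\tfrac14\vert V\vert .
\]
The right-hand side tends to $-\infty$ because $\vert V(\Gamma^{(5)}_\ell)\vert\to\infty$.

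\emph{Genus for $d=4$ (main obstacle).} Here the same bound only gives $\chi\le 0$, so we need a linear number of faces of length at least $6$. The facial cycles are the $(1,2)$-, $(2,3)$-, $(3,4)$- and $(1,4)$-coloured cycles. Consider the $(1,2)$- and $(2,3)$-cycles lying in the truncated part $\Gamma^{(G,3)}$. Such a cycle uses two colours only, so it is not a triangle of $\mathcal{T}(\Gamma_\ell)$, since the triangles are rainbow under $\kappa_3$. By the remark after the definition of complete truncation, such a cycle therefore has length at least $6$. The edges of colours $1,2,3$ inside the $x$-part number $\tfrac32\vert V(\Gamma^{(G,3)})\vert$. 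Each of them lies in at least one of these long faces, and the long faces cover at least a constant fraction of all face-edge incidences. Writing $\sum_f \vert f\vert=2\vert E\vert$, with $\vert f\vert\ge 4$ always and $\vert f\vert\ge6$ on a set of faces whose total length is at least $c\vert V\vert$ for some fixed $c>0$, we obtain $\vert F\vert\le \tfrac12\vert E\vert-c'\vert V\vert$. This gives $\chi\le -c'\vert V\vert\to-\infty$.

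The delicate point is this bookkeeping. One has to make sure that colour $2$ is used in both relevant face classes within the $x$-part, so that the long faces really carry a positive proportion of the edges. If the constant does not come out cleanly, the fallback is to choose the cyclic order from Remark~\ref{rem:colouring} so that two of the four face classes lie entirely among colours $\{1,2,3\}$, and to repeat the count.
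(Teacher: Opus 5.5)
Your proposal is correct, but it takes a different route from the paper.

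\textbf{The paper's route.} The paper's proof is a recursive local surgery. It starts from any $d$-regular $\Gamma$ with an $\Aut(\Gamma)$-invariant strong embedding $\beta$ (Theorem~\ref{theorem:autom_strongEmbedding}). It truncates, splits every edge-type edge of $\mathcal{T}(\Gamma,\beta)$ into a $4$-cycle, and glues a $d$-regular gadget with $C_2\times C_2$ symmetry into each such $4$-cycle. The automorphism group is pinned down by counting triangles and $4$-cycles in the gadget. The new embedding $\beta'$ comes from a local modification of $\beta$, and the Euler characteristic drops by exactly $2\vert E(\Gamma)\vert$ at each step.

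\textbf{Your route.} You transplant the strategy the paper only uses for $d\ge 6$ (Theorem~\ref{thm:infinite}) to $d=4,5$. You grow the cubic input by iterated truncation, rerun Propositions~\ref{theorem:d=4} and~\ref{theorem:d=5}, take the colour-induced embedding of Lemma~\ref{lemma:colouring_CDC}, and bound $\chi$ by counting face lengths.
\begin{itemize}
\item For $d=5$ this is exactly Lemma~\ref{lemma:chi_bound_colour_embedding}.
\item For $d=4$ your count works. Put $n_x=\vert V(\Gamma^{(G,3)})\vert$, so $\vert V\vert=2n_x$ and $\vert E\vert=4n_x$. Colour $4$ is the only colour joining the two parts, so each $(1,2)$- or $(2,3)$-face lies entirely in the $x$-part or entirely in $Y$. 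Those in the $x$-part have total length $2n_x$ and length at least $6$. This gives $\vert F\vert\le \tfrac{2n_x}{6}+\tfrac{6n_x}{4}$, hence $\chi\le -n_x/6=-\vert V\vert/12$. No fallback is needed.
\end{itemize}

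\textbf{Trade-off.} Your route avoids new gadgets and any new automorphism-group verification, but yields only an asymptotic bound on $\chi$ and depends on invariant edge-colourings. The paper's route needs only an invariant strong embedding and gives an exact drop in $\chi$ at every step.

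\textbf{Two points to state explicitly.}
\begin{itemize}
\item Reusing Propositions~\ref{theorem:d=4} and~\ref{theorem:d=5} needs $G$ to act freely on $V(\mathcal{T}_\ell(\Gamma))$, so that labels $x_{(i,g)}$ with $\phi_h(x_{(i,g)})=x_{(i,hg)}$ exist. This is inherited from $\Gamma$, because the induced action on the vertices $w_{(v,e)}$ is free.
\item For $\vert G\vert\le 2$, your appeal to products as in Proposition~\ref{prop:atmost22} is vacuous when $d=4,5$. Instead, feed the appendix graphs $\Gamma_1$ and $\Gamma_2$ into the same pipeline; their automorphism groups act fixed-point-freely.
\end{itemize}
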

\begin{proof}

Let $G$ be an arbitrary finite group. 
By Theorem~\ref{theorem:autom_strongEmbedding}, there exists a $d$-regular graph $\Gamma$ and a strong embedding $\beta$ of $\Gamma$ such that $\Aut(\Gamma)\cong\Aut(\beta(\Gamma))\cong G.$ We assume that the vertices of $\Gamma$ are given by the set $V(\Gamma)=\{1,\ldots,k\},$ where $k:=\vert V(\Gamma)\vert .$ 
Our desired result follows, if we can show that $\Gamma$ can be exploited to construct another $d$-regular graph $\Gamma'$ together with a strong embedding $\beta'$ satisfying $\Aut(\Gamma')\cong \Aut(\beta'(\Gamma'))\cong G$ and $\chi(\beta (\Gamma))>\chi(\beta' (\Gamma')).$
Again, if we obtain the above construction, we can recursively build a sequence of $d$-regular graphs with corresponding strong embeddings that satisfy the properties of the stated proposition.
We achieve this construction by modifying $\Gamma$ in three steps and thus obtaining two auxiliary graphs $\Gamma_1$ and $\Gamma_2$ along the way. 

 We first compute the complete truncation $\Gamma_1=\mathcal{T}(\Gamma,\beta)$ of $\Gamma.$ Here, we choose the vertices of $\Gamma_1$ to be of the form $x_{(i,e)}$ where $i=1,\ldots,k$ and $e\in E(\Gamma)$ is an edge with $i\in e,$ see Figure~\ref{five}.
 
\begin{figure}[H]
    \centering
    \begin{subfigure}{.48\textwidth}
        \begin{tikzpicture}[vertexBall, edgeDouble=nolabels, scale=1.35]

\coordinate (V1) at (0 , 0);
\coordinate (V2) at (1 , 0);
\coordinate (V3) at (0 , 1);
\coordinate (V4) at (-1 , 0);
\coordinate (V5) at (0, -1);

\draw[-,very thick] (V1) to (V2);
\draw[-,very thick] (V1) to (V3);
\draw[-,very thick] (V1) to (V4);
\draw[-,very thick] (V1) to (V5);

\vertexLabelR[]{V1}{left}{$ $}

\draw[-{To[scale=2]}] (1.3,0) to (1.8,0);

\coordinate (V12) at (0.5+3 , 0);
\coordinate (V22) at (0 +3, 0.5);
\coordinate (V32) at (-0.5+3 , 0);
\coordinate (V42) at (3, -0.5);

\coordinate (V13) at (1 +3, 0);
\coordinate (V23) at (3, 1);
\coordinate (V33) at (-1 +3, 0);
\coordinate (V43) at (3, -1);

\draw[-,very thick] (V12) to (V13);
\draw[-,very thick] (V22) to (V23);
\draw[-,very thick] (V32) to (V33);
\draw[-,very thick] (V42) to (V43);

\draw[-,very thick] (V12) to (V22);
\draw[-,very thick] (V22) to (V32);
\draw[-,very thick] (V32) to (V42);
\draw[-,very thick] (V42) to (V12);

 \draw[very thick, gray, smooth ]
  plot coordinates {
    (0.1,1)
    (0.2,0.2)
    (1,0.1)
  };

\draw[very thick, gray, smooth ]
  plot coordinates {
    (-0.1,1)
    (-0.2,0.2)
    (-1,0.1)
  };

  \draw[very thick, gray, smooth ]
  plot coordinates {
    (-0.1,-1)
    (-0.2,-0.2)
    (-1,-0.1)
  };

  \draw[very thick, gray, smooth ]
  plot coordinates {
    (0.1,-1)
    (0.2,-0.2)
    (1,-0.1)
  };

\vertexLabelR[]{V12}{left}{$ $}
\vertexLabelR[]{V22}{left}{$ $}
\vertexLabelR[]{V32}{left}{$ $}
\vertexLabelR[]{V42}{left}{$ $}

\draw[very thick, gray, smooth cycle ]
  plot coordinates {
    (0.35+3,0)
    (3,-0.35)
    (-0.35+3,0)
    (3,0.35)
  };

 \draw[very thick, gray, smooth ]
  plot coordinates {
    (0.1+3,1)
    (0.2+3,0.5)
        (0.5+3,0.1+0.1)
    (1+3,0.1)
  };

\draw[very thick, gray, smooth ]
  plot coordinates {
    (-0.1+3,1)
    (-0.2+3,0.5)
    (-0.5+3,0.1+0.1)
    (-1+3,0.1)
  };

  \draw[very thick, gray, smooth ]
  plot coordinates {
    (-0.1+3,-1)
        (-0.2+3,-0.5)
    (-0.5+3,-0.2)
    (-1+3,-0.1)
  };

    \draw[very thick, gray, smooth ]
  plot coordinates {
    (0.1+3,-1)
    (0.2+3,-0.5)
    (0.5+3,-0.2)
    (1+3,-0.1)
  };

  \node[vertexBall,shift={(0.5,-0.5)}] at (V1) {$i$};

    \node[vertexBall,shift={(0.5,1.25)}] at (V1) {$e_1$};

\node[vertexBall,shift={(-0.5,-1.25)}] at (V1) {$e_3$};

\node[vertexBall,shift={(-1.25,0.5)}] at (V1) {$e_4$};

\node[vertexBall,shift={(1.25,-0.5)}] at (V1) {$e_2$};

\node[vertexBall,shift={(-1.4,-1)}] at (V12) {$x_{(i,e_3)}$};

\node[vertexBall,shift={(.1,.8)}] at (V12) {$x_{(i,e_1)}$};

\node[vertexBall,shift={(-1.7,.5)}] at (V12) {$x_{(i,e_4)}$};
\node[vertexBall,shift={(.5,-.5)}] at (V12) {$x_{(i,e_2)}$};
\end{tikzpicture}
        \subcaption{}
    \end{subfigure}
    \begin{subfigure}{.48\textwidth}
        \raisebox{15pt}{\begin{tikzpicture}[vertexBall, edgeDouble=nolabels, faceStyle=nolabels, scale=1.35]

\coordinate (V1) at (0 , 0);
\coordinate (V2) at (1 , 0);
\coordinate (V3) at (0.31, 0.95);
\coordinate (V4) at (-0.81, 0.59);
\coordinate (V5) at (-0.81, -0.59);
\coordinate (V6) at (0.31, -0.95);

\draw[-,very thick] (V1) to (V2);
\draw[-,very thick] (V1) to (V3);
\draw[-,very thick] (V1) to (V4);
\draw[-,very thick] (V1) to (V5);
\draw[-,very thick] (V1) to (V6);

\vertexLabelR[]{V1}{left}{$ $}

\draw[-{To[scale=2]}] (1.3,0) to (1.8,0);

\coordinate (V12) at (0.5*1 +3, 0);
\coordinate (V22) at (0.5*0.31+3, 0.5*0.95);
\coordinate (V32) at (0.5*-0.81+3, 0.5*0.59);
\coordinate (V42) at (0.5*-0.81+3, 0.5*-0.59);
\coordinate (V52) at (0.5*0.31+3, 0.5*-0.95);

\coordinate (V13) at (1 +3, 0);
\coordinate (V23) at (0.31+3, 0.95);
\coordinate (V33) at (-0.81+3, 0.59);
\coordinate (V43) at (-0.81+3, -0.59);
\coordinate (V53) at (0.31+3, -0.95);

\draw[-,very thick] (V12) to (V13);
\draw[-,very thick] (V22) to (V23);
\draw[-,very thick] (V32) to (V33);
\draw[-,very thick] (V42) to (V43);
\draw[-,very thick] (V52) to (V53);

\draw[-,very thick] (V12) to (V22);
\draw[-,very thick] (V22) to (V32);
\draw[-,very thick] (V32) to (V42);
\draw[-,very thick] (V42) to (V52);
\draw[-,very thick] (V52) to (V12);

 \draw[very thick, gray, smooth ]
  plot coordinates {
 (0.31+0.1, 0.95)
    (0.2,0.15)
    (1,0.1)
  };

   \draw[very thick, gray, smooth ]
  plot coordinates {
 (0.31+0.1, -0.95)
    (0.2,-0.15)
    (1,-0.1)
  };

     \draw[very thick, gray, smooth ]
  plot coordinates {
 (-0.81, -0.4)
    (-0.2,0)
    (-0.81, 0.4)
  };

       \draw[very thick, gray, smooth ]
  plot coordinates {
 (-0.7, 0.69)
    (-0.05,0.2)
 (0.31-0.1, 0.95)
  };

 \draw[very thick, gray, smooth ]
  plot coordinates {
 (-0.7, -0.69)
    (-0.05,-0.2)
 (0.31-0.1, -0.95)
  };

\vertexLabelR[]{V12}{left}{$ $}
\vertexLabelR[]{V22}{left}{$ $}
\vertexLabelR[]{V32}{left}{$ $}
\vertexLabelR[]{V42}{left}{$ $}
\vertexLabelR[]{V52}{left}{$ $}

    \draw[very thick, gray, smooth cycle ]
  plot coordinates {
(0.3*1 +3, 0)
(0.3*0.31+3, 0.3*0.95)
 (0.3*-0.81+3, 0.3*0.59)
(0.3*-0.81+3, 0.3*-0.59)
 (0.3*0.31+3, 0.3*-0.95)
  };

 \draw[very thick, gray , smooth ]
  plot coordinates {
      (1 +3, 0.1)
      (0.6*1 +3, 0.1)
(0.6*0.31+3+.1, 0.6*0.95)
(0.31+3+0.1, 0.95)
  };

   \draw[very thick,gray, smooth ]
  plot coordinates {
      (1 +3, -0.1)
      (0.6*1 +3, -0.1)
(0.6*0.31+3+.1, -0.6*0.95)
(0.31+3+0.1, -0.95)
  };

   \draw[very thick, gray, smooth ]
  plot coordinates {
 (-0.81+3, 0.49)
 (-0.81*0.7+3, 0.49*0.6) 
(-0.81*0.7+3, -0.49*0.6)
(-0.81+3, -0.49)
  };

     \draw[very thick, gray, smooth ]
  plot coordinates {
(0.31+3-0.1, 0.95)
(0.1+3, 0.6*0.95)
 (-0.4+3, 0.4)
 (-0.81+3+0.1, 0.49+0.15)
  };

     \draw[very thick, gray, smooth ]
  plot coordinates {
(0.31+3-0.1, -0.95)
(0.1+3, -0.6*0.95)
 (-0.4+3, -0.4)
 (-0.81+3+0.1, -0.49-0.15)
  };

  \node[vertexBall,shift={(-.1,.5)}] at (V1) {$i$};

    \node[vertexBall,shift={(0.75,1)}] at (V1) {$e_1$};

    \node[vertexBall,shift={(1,-.4)}] at (V1) {$e_2$};

        \node[vertexBall,shift={(0.,-1.1)}] at (V1) {$e_3$};
    \node[vertexBall,shift={(-1,-.2)}] at (V1) {$e_4$};
        \node[vertexBall,shift={(-0.5,.9)}] at (V1) {$e_5$};

    \node[vertexBall,shift={(0.75+4.25,1)}] at (V1) {$x_{(i,e_1)}$};

    \node[vertexBall,shift={(1+4.2,-.4)}] at (V1) {$x_{(i,e_2)}$};

        \node[vertexBall,shift={(0.+3.8,-1.)}] at (V1) {$x_{(i,e_3)}$};
    \node[vertexBall,shift={(-1+3.8,-.3)}] at (V1) {$x_{(i,e_4)}$};
        \node[vertexBall,shift={(-0.5+4.1,.9)}] at (V1) {$x_{(i,e_5)}$};

\end{tikzpicture}}
        \subcaption{}
    \end{subfigure}
    \caption{Truncating a vertex of degree 4 (a) and a vertex of degree 5 (b) in $\Gamma$}
    \label{five}
\end{figure}

By Proposition~\ref{prop:embedding_trunc}, $\Gamma_1$ has a strong embedding $\beta_1.$
If we follow the proof of the mentioned proposition and choose $\beta_1$ to be the embedding that arises from $\beta$ as indicated in the above figure, we know that $\chi(\beta (\Gamma))=\chi(\beta_1 (\Gamma_1))$. In this case, Proposition~\ref{prop:autiso} implies $\Aut(\Gamma_1)\cong \Aut(\beta(\Gamma))\cong G.$
Now, we further modify this graph $\Gamma_1$ to obtain another cubic graph $\Gamma_2.$ The idea behind this second modification is that every edge $\{x_{(i,e)},x_{(j,e)}\}\in E(\Gamma_1),$ where $e=\{i,j\}\in E(\Gamma),$ is split into a cycle of length $4$ in order to construct $\Gamma_2$. This is achieved as follows:

We define the vertices of $\Gamma_2$ to be given by $x_{(i,e,t)},$ where $i=1,\ldots,k,$  $i\in e\in E(\Gamma)$ and $t=1,2$. Furthermore, we construct the edges of $\Gamma_2$ by the following quadratic forms $Q_1$ and $Q_2$.
For the definition of $Q_1$, we define the cycle $g=(1,\ldots,d)\in S_d$ and use this permutation to translate the cycles of length $d$ in $\Gamma_1$ that correspond to vertices in $\Gamma$ into cycles of length $2\cdot d$ in the desired graph $\Gamma_2$. This is realised by 
\[
Q_1:=\sum_{i=1}^k \sum_{m=1}^{d}x_{(i,e[i,m],1)}x_{(i,e[i,m],2)}+x_{(i,e[i,m],2)}x_{(i,e[i,m^g],1)}
\]
and shown in \Cref{fig:circle}.
\begin{figure}[H]
    \centering
    \begin{subfigure}{.48\textwidth}
    \centering
    \begin{tikzpicture}[vertexBall, edgeDouble=nolabels, faceStyle=nolabels, scale=1]

\coordinate (V1) at (0:2);
\coordinate (V2) at (45:2);
\coordinate (V3) at (90:2);
\coordinate (V4) at (135:2);
\coordinate (V5) at (180:2);
\coordinate (V6) at (225:2);
\coordinate (V7) at (270:2);
\coordinate (V8) at (315:2);

\draw[-,very thick] (V1) -- (V2);
\draw[-,very thick] (V2) -- (V3);
\draw[-,very thick] (V3) -- (V4);
\draw[-,very thick] (V4) -- (V5);
\draw[-,very thick] (V5) -- (V6);
\draw[-,very thick] (V6) -- (V7);
\draw[-,very thick] (V7) -- (V8);
\draw[-,very thick] (V8) -- (V1);

\vertexLabelR[]{V1}{left}{$ $}
\vertexLabelR[]{V2}{left}{$ $}
\vertexLabelR[]{V3}{left}{$ $}
\vertexLabelR[]{V4}{left}{$ $}
\vertexLabelR[]{V5}{left}{$ $}
\vertexLabelR[]{V6}{left}{$ $}
\vertexLabelR[]{V7}{left}{$ $}
\vertexLabelR[]{V8}{left}{$ $}

\node[vertexBall,shift={(0.9,.)}] at (V1) {$x_{(i,e[i,1],1)}$};

\node[vertexBall,shift={(0.9,.)}] at (V2) {$x_{(i,e[i,1],2)}$};

\node[vertexBall,shift={(-0.,.3)}] at (V3) {$x_{(i,e[i,2],1)}$};

\node[vertexBall,shift={(-0.8,.)}] at (V4) {$x_{(i,e[i,2],2)}$};

\node[vertexBall,shift={(-0.8,.)}] at (V5) {$x_{(i,e[i,3],1)}$};

\node[vertexBall,shift={(-0.8,.)}] at (V6) {$x_{(i,e[i,3],2)}$};

\node[vertexBall,shift={(-0.,-.3)}] at (V7) {$x_{(i,e[i,4],1)}$};

\node[vertexBall,shift={(0.9,.)}] at (V8) {$x_{(i,e[i,4],2)}$};
\end{tikzpicture}
    \subcaption{}
    \end{subfigure}
    \begin{subfigure}{.48\textwidth}
    \centering
    \begin{tikzpicture}[vertexBall, edgeDouble=nolabels, faceStyle=nolabels, scale=1]

\coordinate (V1)  at (0:2);
\coordinate (V2)  at (36:2);
\coordinate (V3)  at (72:2);
\coordinate (V4)  at (108:2);
\coordinate (V5)  at (144:2);
\coordinate (V6)  at (180:2);
\coordinate (V7)  at (216:2);
\coordinate (V8)  at (252:2);
\coordinate (V9)  at (288:2);
\coordinate (V10) at (324:2);

\draw[-,very thick] (V1) -- (V2);
\draw[-,very thick] (V2) -- (V3);
\draw[-,very thick] (V3) -- (V4);
\draw[-,very thick] (V4) -- (V5);
\draw[-,very thick] (V5) -- (V6);
\draw[-,very thick] (V6) -- (V7);
\draw[-,very thick] (V7) -- (V8);
\draw[-,very thick] (V8) -- (V9);
\draw[-,very thick] (V9) -- (V10);
\draw[-,very thick] (V10) -- (V1);

\vertexLabelR[]{V1}{left}{$ $}
\vertexLabelR[]{V2}{left}{$ $}
\vertexLabelR[]{V3}{left}{$ $}
\vertexLabelR[]{V4}{left}{$ $}
\vertexLabelR[]{V5}{left}{$ $}
\vertexLabelR[]{V6}{left}{$ $}
\vertexLabelR[]{V7}{left}{$ $}
\vertexLabelR[]{V8}{left}{$ $}
\vertexLabelR[]{V9}{left}{$ $}
\vertexLabelR[]{V10}{left}{$ $}

\node[vertexBall,shift={(0.9,.)}] at (V1) {$x_{(i,e[i,1],1)}$};

\node[vertexBall,shift={(0.9,.)}] at (V2) {$x_{(i,e[i,1],2)}$};

\node[vertexBall,shift={(.2,.3)}] at (V3) {$x_{(i,e[i,2],1)}$};

\node[vertexBall,shift={(-.2,.3)}] at (V4) {$x_{(i,e[i,2],2)}$};

\node[vertexBall,shift={(-0.9,.)}] at (V5) {$x_{(i,e[i,3],1)}$};

\node[vertexBall,shift={(-0.9,.)}] at (V6) {$x_{(i,e[i,3],2)}$};

\node[vertexBall,shift={(-0.9,.)}] at (V7) {$x_{(i,e[i,4],1)}$};

\node[vertexBall,shift={(-.2,-.3)}] at (V8) {$x_{(i,e[i,4],2)}$};

\node[vertexBall,shift={(.2,-.3)}] at (V9) {$x_{(i,e[i,5],1)}$};

\node[vertexBall,shift={(0.9,.)}] at (V10) {$x_{(i,e[i,5],2)}$};
\end{tikzpicture}
    \subcaption{}
    \end{subfigure}
    \caption{Cycles of length $2\cdot d$ in $\Gamma_2$ that correspond to cycles of length $d$ in $\Gamma_1$ for $d=4$ (a) and $d=5$ (b)}
    \label{fig:circle}
\end{figure}

Next, we construct the quadratic form $Q_2$ which helps us to obtain cycles of length 4 that contain the four vertices $x_{(i,e,1)},x_{(i,e,2)},x_{(j,e,1)},x_{(j,e,2)}$ for every edge $e=\{i,j\}\in E(\Gamma)$. Note that in order to get a graph with the desired automorphism group and a desired strong embedding, we have to be careful in this modification step and have to take the strong embedding $\beta_1$ of $\Gamma_1$ into consideration. Thus, let $\mathcal{C}_1$ be the facial cycle double cover of $\Gamma_1$ that corresponds to the strong embedding $\beta_1$. 
Furthermore, for every edge $e=\{i,j\}\in E(\Gamma)$, let $\gamma_1,\gamma_2 \in \mathcal{C}_1$ be the cycles that contain the edge $\{x_{(i,e)},x_{(j,e)}\}$ in $E(\Gamma_1)$. Recall from Remark~\ref{rem:rotedges} that there exist suitable $m, r\in\{1,\dots,d\}$ such that $e=e[i,m]=e[j,r]$. Since $\gamma_1$ and $\gamma_2$ contain the above edge, we know that there exists an $h\in\{g,g^{-1}\}$ such that $\gamma_1$ contains the subsequence $$(x_{(i,e[i,m^{g^{-1}}])},x_{(i,e[i,m])}, x_{(j,e[j,r])},x_{(j,e[j,r^{h^{(-1)}}])}  )$$ and $\gamma_2$ contains the subsequence $$(x_{(i,e[i,m^{g}])},x_{(i,e[i,m])},x_{(j,e[j,{r}])},x_{(j,e[j,{r^{h}])}}  ).$$ In the following, we assume $h=g$ for simplicity. The reasoning for $h=g^{-1}$ follows by giving similar arguments. Hence, we obtain the remaining edges of $\Gamma_2$ via
\[
Q_2:=  \sum_{\{i,j\} \in E(\Gamma)}\sum_{t=1}^2 x_{(i,e[i,m],t)}x_{(j,e[j,r],t)}
\]

Hence, for each edge $e=\{i,j\}\in E(\Gamma)$ the sequence $(x_{(i,e,1)},x_{(i,e,2)},x_{(j,e,2)},x_{(j,e,1)})$ forms a cycle of length $4$ in $\Gamma_2$, see Figure~\ref{fig:edgecut5}.

\begin{figure}[H]
    \centering
    \begin{tikzpicture}[vertexBall, edgeDouble=nolabels, faceStyle=nolabels, scale=1.6]

\coordinate (V1) at (0+4-1 , 0);
\coordinate (V2) at (0+4-1 , 1);
\coordinate (V3) at (0.75+4-1 , 1);
\coordinate (V4) at (0.75+4-1 , 0);

\coordinate (V5) at (-0.5+4-1 , -0.5);
\coordinate (V6) at (0.5+0.75+4-1 , -0.5);

\coordinate (V7) at (-0.5+4-1 , 1.5);
\coordinate (V8) at (0.5+0.75+4-1 , 1.5);

\coordinate (V9) at (0.375+4-1 , 0.5);

\draw[-,very thick] (V1) to (V2);
\draw[-,very thick] (V2) to (V3);
\draw[-,very thick] (V3) to (V4);
\draw[-,very thick] (V1) to (V4);

\draw[-,very thick] (V1) to (V5);
\draw[-,very thick] (V4) to (V6);
\draw[-,very thick] (V2) to (V7);
\draw[-,very thick] (V3) to (V8);

\vertexLabelR[]{V1}{left}{$ $}
\vertexLabelR[]{V2}{left}{$ $}
\vertexLabelR[]{V3}{left}{$ $}
\vertexLabelR[]{V4}{left}{$ $}

\coordinate (V7) at (0.5 , 0);
\coordinate (V8) at (1/2 , 1);
\coordinate (V9) at (1. , -0.5);
\coordinate (V10) at (0 , -0.5);
\coordinate (V11) at (1 , 1.5 );
\coordinate (V12) at (0 , 1.5);

\coordinate (V1) at (0+4-1 , 0);
\coordinate (V2) at (0+4-1 , 1);
\coordinate (V3) at (0.75+4-1 , 1);
\coordinate (V4) at (0.75+4-1 , 0);

\coordinate (V5) at (-0.5+4-1 , -0.5);
\coordinate (V6) at (0.5+0.75+4-1 , -0.5);

\coordinate (V7) at (-0.5+4-1 , 1.5);
\coordinate (V8) at (0.5+0.75+4-1 , 1.5);

\coordinate (V9) at (0.375+4-1 , 0.5);

\draw[-,very thick] (V1) to (V2);
\draw[-,very thick] (V2) to (V3);
\draw[-,very thick] (V3) to (V4);
\draw[-,very thick] (V1) to (V4);

\draw[-,very thick] (V1) to (V5);
\draw[-,very thick] (V4) to (V6);
\draw[-,very thick] (V2) to (V7);
\draw[-,very thick] (V3) to (V8);

\vertexLabelR[]{V1}{left}{$ $}
\vertexLabelR[]{V2}{left}{$ $}
\vertexLabelR[]{V3}{left}{$ $}
\vertexLabelR[]{V4}{left}{$ $}

\vertexLabelR[]{V5}{left}{$ $}
\vertexLabelR[]{V6}{left}{$ $}
\vertexLabelR[]{V7}{left}{$ $}
\vertexLabelR[]{V8}{left}{$ $}
\draw[-{To[scale=2]}] (1.4,0.5) to (1.9,0.5);

\coordinate (V7) at (0.5 , 0);
\coordinate (V8) at (1/2 , 1);
\coordinate (V9) at (1. , -0.5);
\coordinate (V10) at (0 , -0.5);
\coordinate (V11) at (1 , 1.5 );
\coordinate (V12) at (0 , 1.5);

\draw[-,very thick] (V7) to (V8);
\draw[-,very thick] (V7) to (V9);
\draw[-,very thick] (V7) to (V10);
\draw[-,very thick] (V8) to (V11);
\draw[-,very thick] (V8) to (V12);

 \draw[very thick, gray, smooth ]
  plot coordinates {
    (1,1.3)
    (0.7,1)
    (0.7,0)
    (1,-0.3)
  };

   \draw[very thick, gray, smooth ]
  plot coordinates {
    (0,1.3)
    (0.3,1)
    (0.3,0)
    (0,-0.3)
  };

     \draw[very thick, gray, smooth ]
  plot coordinates {
    (0.2,1.5)
    (0.5,1.2)
    (0.8,1.5)
  };

     \draw[very thick, gray, smooth ]
  plot coordinates {
    (0.2,-.5)
    (0.5,-.2)
    (0.8,-.5)
  };
  
\vertexLabelR[]{V7}{left}{$ $}
\vertexLabelR[]{V8}{left}{$ $}
\vertexLabelR[]{V9}{left}{$ $}
\vertexLabelR[]{V10}{left}{$ $}
\vertexLabelR[]{V11}{left}{$ $}
\vertexLabelR[]{V12}{left}{$ $}

    \node[vertexBall,shift={(1.1,0)}] at (V7) {$x_{(i,e[i,m])}$};

    \node[vertexBall,shift={(-1,-0.2)}] at (V8) {$x_{(j,e[j,r])}$};

    \node[vertexBall,shift={(-1.9,0.8)}] at (V8) {$x_{(j,e[j,r^{h^{-1}}])}$};

    \node[vertexBall,shift={(-1.9,-2.4)}] at (V8) {$x_{(i,e[i,m^{g^{-1}}])}$};

    \node[vertexBall,shift={(1.7,0.8)}] at (V8) {$x_{(j,e[j,r^{h}])}$};

    \node[vertexBall,shift={(1.7,-2.4)}] at (V8) {$x_{(i,e[i,m^{g}])}$};

\draw[very thick, gray, smooth cycle ]
  plot coordinates {
    (4.2-1,0.2)
    (4.2-1,0.8)
    (4.55-1,0.8)
    (4.55-1,0.2)
  };

  \draw[very thick, gray, smooth ]
  plot coordinates {
    (-0.5-0.2+4-1 , -0.5)
    (-0.2+4-1,0)
    (-0.2+4-1,1)
    (-0.5-0.2 +4-1, 1.5)
  };

  \draw[very thick, gray, smooth  ]
  plot coordinates {
    (0.75+0.5+0.2 +4-1, -0.5)
    (0.75+.2+4-1,0)
    (0.75+.2+4-1,1)
    (0.5+0.75+0.2+4-1 , 1.5)
  };

  \draw[very thick, gray, smooth ]
  plot coordinates {
    (0.75+0.5-0.2 +4-1, 1.5)
    (0.75+4-1,1.2)
    (0+4-1,1.2)
    (-0.5+0.2+4-1 , 1.5)
  };

  \draw[very thick, gray, smooth ]
  plot coordinates {
    (0.75+0.5-0.2+4-1 , -0.5)
    (0.75+4-1,-.2)
    (0+4-1,-.2)
    (-0.5+0.2 +4-1, -.5)
  };  \draw[very thick, gray, smooth ]
  plot coordinates {
    (-0.5-0.2+4-1 , -0.5)
    (-0.2+4-1,0)
    (-0.2+4-1,1)
    (-0.5-0.2 +4-1, 1.5)
  };

  \draw[very thick, gray, smooth ]
  plot coordinates {
    (0.75+0.5+0.2 +4-1, -0.5)
    (0.75+.2+4-1,0)
    (0.75+.2+4-1,1)
    (0.5+0.75+0.2+4-1 , 1.5)
  };

  \draw[very thick, gray, smooth ]
  plot coordinates {
    (0.75+0.5-0.2 +4-1, 1.5)
    (0.75+4-1,1.2)
    (0+4-1,1.2)
    (-0.5+0.2+4-1 , 1.5)
  };

  \draw[very thick, gray, smooth ]
  plot coordinates {
    (0.75+0.5-0.2+4-1 , -0.5)
    (0.75+4-1,-.2)
    (0+4-1,-.2)
    (-0.5+0.2 +4-1, -.5)
  };

\begin{scope}[shift={(2,0)}]

\coordinate (V1) at (0+4 , 0);
\coordinate (V2) at (0+4 , 1);
\coordinate (V3) at (0.75+4 , 1);
\coordinate (V4) at (0.75+4 , 0);

\coordinate (V5) at (-0.5+4 , -0.5);
\coordinate (V6) at (0.5+0.75+4 , -0.5);

\coordinate (V7) at (-0.5+4 , 1.5);
\coordinate (V8) at (0.5+0.75+4 , 1.5);

\coordinate (V9) at (0.375+4 , 0.5);

    \node[vertexBall,shift={(-3.8,0.)}] at (V4) {$x_{(i,e,2)}$};

    \node[vertexBall,shift={(-3.8,1.5)}] at (V4) {$x_{(j,e,2)}$};

    \node[vertexBall,shift={(-6.9,0.)}] at (V4) {$x_{(i,e,1)}$};

    \node[vertexBall,shift={(-6.9,1.5)}] at (V4) {$x_{(j,e,1)}$};

\end{scope}
\end{tikzpicture}
\caption{Using an edge $e=e[i,m]=e[j,r]\in E(\Gamma)$ to translate an edge $\{x_{(i,e)},x_{(j,e)}\}\in E(\Gamma_1)$ into a $4$-cycle in the graph $\Gamma_2$  }
    \label{fig:edgecut5}
\end{figure}
If we construct a strong embedding $\beta_2$ of $\Gamma_2$ by modifying the embedding $\beta_1$ of $\Gamma_1$ as illustrated in \Cref{fig:edgecut5}, we obtain $\chi(\beta_2 (\Gamma_2))=\chi(\beta_1 (\Gamma_1))$.
Since the set $$E_1=\{\{x_{(i,e)},x_{(j,e)}\}\mid e=\{i,j\}\in E(\Gamma)\}\subseteq E(\Gamma_1)$$ satisfies $E_1^{\Aut(\Gamma_1)}=E_1,$ 
it follows that every automorphism of $\Gamma_1$ can be extended to an automorphism of $\Gamma_2$.
Hence, $\Aut(\Gamma_2)$ contains a subgroup $H$ that is isomorphic to $\Aut(\Gamma_1)\cong G.$
Finally, we construct the desired graph $\Gamma'$ for $d=4$ and $d=5$ by inserting a subgraph into the described cycles of length $4$, see Figures~\ref{fig:e4} and \ref{fig:5limit}. The goal is to further add edges to the graph $\Gamma_2$ such that (1) the resulting graph $\Gamma'$ is $d$-regular, (2) the introduced subgraph has a $C_2\times C_2$ symmetry and (3) $\Gamma'$ is constructed such that every automorphism $\Gamma'$ can be translated into an automorphism of the graph $\Gamma_2$ that lies in $H\leq \Aut(\Gamma_2).$
More precisely, we want to add further edges to the graph $\Gamma_2$ such that if $(x_{(i,e,1)},x_{(i,e,2)},x_{(j,e,2)},x_{(j,e,1)}),$ is a cycle of length $4$ in $\Gamma'$, then every automorphism $\psi'\in \Aut(\Gamma')$ satisfies $\psi'(\{x_{(i,e,1)},x_{(i,e,2)}\})=\{x_{(i',e',1)},x_{(i',e',2)}\}$ and $\psi'(\{x_{(j,e,1)},x_{(j,e,2)}\})=\{x_{(j',e',1)},x_{(j',e',2)}\},$ where $(x_{(i',e',1)},x_{(i',e',2)},x_{(j',e',2)},x_{(j',e',1)})$ is another cycle of length $4$ in $\Gamma'$ resulting from an edge $e'=\{i',j'\}\in E(\Gamma).$
Here, we achieve this by conducting a case distinction with respect to $d=4,5.$ 
\paragraph{The case $d=4$:}
We define the vertices of $\Gamma'$ to be given by $$V(\Gamma_2)\cup \{y_{(e,a)}\mid e \in E(\Gamma) \text{ and } a=1,\ldots,8\}.$$
Furthermore, the edges of $\Gamma'$ shall be given by $E(\Gamma_2)\cup E$, where $E$ are the edges corresponding to the quadratic form 
\begin{align*}
\sum_{e:=\{i,j\}\in E(\Gamma), i<j}&\big(x_{(i,e,1)}y_{(e,1)}+  x_{(i,e,2)}y_{(e,2)}  +y_{(e,1)}y_{(e,2)}
+y_{(e,2)}y_{(e,3)}\\
+&y_{(e,1)}y_{(e,3)}+y_{(e,1)}y_{(e,4)}+y_{(e,3)}y_{(e,4)}+y_{(e,2)}y_{(e,5)}\\
+&y_{(e,3)}y_{(e,5)}+y_{(e,5)}y_{(e,6)}
+y_{(e,4)}y_{(e,6)}+y_{(e,4)}y_{(e,7)}\\
+&y_{(e,5)}y_{(e,8)}+y_{(e,6)}y_{(e,7)}
+y_{(e,6)}y_{(e,8)}+y_{(e,7)}y_{(e,8)}\\
+&x_{(j,e,1)}y_{(e,7)}+x_{(j,e,2)}y_{(e, 8)}\big).
\end{align*}
This subgraph is illustrated in Figure~\ref{subfig:e4}.
As discussed above, the subgraph induced by $\{y_{(e,a)} \mid a=1,\ldots,8\}$ admits a $C_2 \times C_2$ symmetry. It therefore follows that every $\psi_2 \in H $ can be extended to an automorphism of the graph $\Gamma'.$ Hence, $G\cong H \hookrightarrow \Aut(\Gamma').$ Thus, it remains to show that the automorphism group is indeed isomorphic to $G$.
For this, we observe that each vertex $x_{(i,e,m)}$ is not contained in any cycle of length $3$, whereas each vertex $y_{(e,a)}$ lies in at least two such cycles. This distinction implies that $V(\Gamma_2)$ considered as a subset of $V(\Gamma')$ satisfies:
\[
V(\Gamma_2)^{\Aut(\Gamma')} = V(\Gamma_2).
\]
Now, let $e = \{i,j\} \in E(\Gamma)$ with $i < j$ be an edge. Then:
\begin{enumerate}
    \item $x_{(i,e,1)},x_{(i,e,2)}\in V(\Gamma_2)$ are contained in exactly two $4$-cycles, namely $(x_{(i,e,1)},x_{(i,e,2)},y_{(e,1)},y_{(e,2)})$ and $(x_{(i,e,1)},x_{(i,e,2)},x_{(j,e,2)},x_{(j,e,1)})$,
    \item $x_{(j,e,1)},x_{(j,e,2)}\in V(\Gamma_2)$ are contained in exactly two 4-cycles, namely $(x_{(j,e,1)},x_{(j,e,2)},y_{(e,8)},y_{(e,7)})$ and $(x_{(i,e,1)},x_{(i,e,2)},x_{(j,e,2)},x_{(j,e,1)})$.
\end{enumerate}
and no other pair of vertices in $\{x_{(i,e,1)}, x_{(i,e,2)}, x_{(j,e,1)}, x_{(j,e,2)}\}$ is contained in exactly two cycles of length $4.$
Thus, it follows that every edge $ \{x_{(i,e,1)}, x_{(i,e,2)}\} \in E(\Gamma')$ is mapped under any automorphism $\psi'$ of $\Gamma'$ to an edge of the form $\{x_{(i',e',1)}, x_{(i',e',2)}\}$. Hence, each automorphism $\psi'$ induces an automorphism $\psi_2 \in H$.
Therefore,
$\Aut(\Gamma') \cong H \cong G.$

If we modify the strong embedding $\beta_2$ as illustrated in Figure~\ref{subfig:4reg_limit_cycles} to get a strong embedding $\beta'$ of $\Gamma',$ we conclude $\Aut(\Gamma')\cong\Aut(\beta'(\Gamma'))\cong G.$
As illustrated in the described figure, if $\mathcal{C}_2$ is the cycle double cover corresponding to $\beta_2$, then the facial cycle double cover $\mathcal{C}'$ corresponding to $\beta'$ has $\vert \mathcal{C}_2\vert$ cycles that correspond to cycles in $\Gamma_2.$ Furthermore, each edge $e\in E(\Gamma)$ contributes $8$ additional vertices, $18$ additional edges and $8$ cycles to $\Gamma'$ compared to $\Gamma_2$.
\begin{align*}
    \chi(\beta'(\Gamma'))&=(\vert V(\Gamma_2)\vert + 8\cdot\vert E(\Gamma)\vert) - (\vert E(\Gamma_2)\vert + 18\cdot\vert E(\Gamma)\vert) + (\vert F(\beta_2(\Gamma_2))\vert +8\cdot\vert E(\Gamma)\vert)\\
    &=\chi(\beta_2(\Gamma_2))-2\cdot\vert E(\Gamma)\vert =\chi(\beta_1(\Gamma_1))-2\cdot\vert E(\Gamma)\vert =\chi(\beta(\Gamma))-2\cdot\vert E(\Gamma)\vert.
\end{align*}
\begin{figure}[H]
    \centering
    \begin{subfigure}{.48\textwidth}
        \centering
        \begin{tikzpicture}[vertexBall, edgeDouble=nolabels, faceStyle=nolabels, scale=3]

\coordinate (V1) at (0+4-1 , 0);
\coordinate (V2) at (0+4-1 , 1);
\coordinate (V3) at (0.75+4-1 , 1);
\coordinate (V4) at (0.75+4-1 , 0);

\coordinate (V5) at (-0.5+4-1 , -0.5);
\coordinate (V6) at (0.5+0.75+4-1 , -0.5);

\coordinate (V7) at (-0.5+4-1 , 1.5);
\coordinate (V8) at (0.5+0.75+4-1 , 1.5);

\coordinate (V9) at (0.375+4-1 , 0.5);

\coordinate (V7) at (0.5 , 0);
\coordinate (V8) at (1/2 , 1);
\coordinate (V9) at (1. , -0.5);
\coordinate (V10) at (0 , -0.5);
\coordinate (V11) at (1 , 1.5 );
\coordinate (V12) at (0 , 1.5);

\begin{scope}[shift={(2,0)}]

\coordinate (V1) at (0+4 , 0);
\coordinate (V2) at (0+4 , 1);
\coordinate (V3) at (0.75+4 , 1);
\coordinate (V4) at (0.75+4 , 0);

\coordinate (V5) at (-0.25+4 , -0.25);
\coordinate (V6) at (0.25+0.75+4 , -0.25);

\coordinate (V7) at (-0.25+4 , 1.25);
\coordinate (V8) at (0.25+0.75+4 , 1.25);

\coordinate (V9) at (0.375+4 , 0.5);

\draw[-,very thick] (V1) to (V2);
\draw[-,very thick] (V2) to (V3);
\draw[-,very thick] (V3) to (V4);
\draw[-,very thick] (V1) to (V4);

\draw[-,very thick] (V1) to (V5);
\draw[-,very thick] (V4) to (V6);
\draw[-,very thick] (V2) to (V7);
\draw[-,very thick] (V3) to (V8);

\draw[thick, blue, smooth ]
  plot coordinates {
    (-0.15-0.2+4 , -0.2)
    (-0.1+4,0.1)
    (-0.1+4,0.9)
    (-0.15-0.2+4 , 1.2)
  };
  \draw[thick, green!50!black, smooth ]
  plot coordinates {
    (0.15+0.75+0.2+4, -0.2)
    (0.75+.1+4,0.1)
    (0.75+.1+4,0.9)
    (0.15+0.75+0.2+4, 1.2)
  };

 \draw[thick, orange, smooth ]
  plot coordinates {
    (0.75+0.35-0.2+4 , 1.25)
    (0.7+4,1.1)
    (0.05+4,1.1)
    (-0.35+0.2 +4, 1.25)
  };

  \draw[thick, cyan, smooth ]
  plot coordinates {
    (0.75+0.35-0.2+4, -0.25)
    (0.7+4,-.1)
    (0.05+4,-.1)
    (-0.35+0.2 +4 , -.25)
  };

\coordinate (V12) at (0 , 0);
\coordinate (V22) at (0 , 1);
\coordinate (V32) at (0.75 , 1);
\coordinate (V42) at (0.75 , 0);

\coordinate (V52) at (-0.5 , -0.5);
\coordinate (V62) at (0.5+0.75 , -0.5);

\coordinate (V72) at (-0.5 , 1.5);
\coordinate (V82) at (0.5+0.75 , 1.5);

\coordinate (V92) at (0.375 , 0.5);

\coordinate (B1) at (0.375+4 , 0.25);
\coordinate (B1l) at (0.75*0.25+4 , 0.25);

\coordinate (B1r) at (0.75*0.75+4 , 0.25);
\coordinate (B12) at (0.375+4 , 0.75);

\coordinate (B12l) at (0.75*0.25+4 , 0.75);

\coordinate (B12r) at (0.75*0.75+4 , 0.75);

\coordinate (Bmr) at (0.75*0.75+4 , 0.5);

\coordinate (Bm1) at (0.75*0.5+4 , 0.625);

\coordinate (Bm2) at (0.75*0.5+4 , 0.375);

\coordinate (Bml) at (0.75*0.25+4, 0.5);

\draw[-,very thick] (Bm2) to (Bml);
\draw[-,very thick] (Bm2) to (Bmr);
\draw[-,very thick] (B1r) to (B1l);

\draw[-,very thick] (B12r) to (B12l);

\draw[-,very thick] (V1) to (B1l);

\draw[-,very thick] (V4) to (B1r);

\draw[-,very thick] (Bm2) to (B1r);
\draw[-,very thick] (Bm2) to (B1l);

\draw[-,very thick] (Bm1) to (B12r);
\draw[-,very thick] (Bm1) to (B12l);

\draw[-,very thick] (Bm1) to (Bml);
\draw[-,very thick] (Bm1) to (Bmr);

\draw[-,very thick] (V2) to (B12l);

\draw[-,very thick] (V3) to (B12r);

\draw[-,very thick] (Bmr) to (B12r);
\draw[-,very thick] (Bmr) to (B1r);

\draw[-,very thick] (Bml) to (B12l);
\draw[-,very thick] (Bml) to (B1l);

\vertexLabelR[]{B1l}{left}{$1$}
\vertexLabelR[]{B1r}{left}{$2$}
\vertexLabelR[]{B12l}{left}{$7$}
\vertexLabelR[]{B12r}{left}{$8$}
\vertexLabelR[]{Bmr}{left}{$5$}
\vertexLabelR[]{Bm1}{left}{$6$}
\vertexLabelR[]{Bm2}{left}{$3$}
\vertexLabelR[]{Bml}{left}{$4$}

\vertexLabelR[]{V1}{left}{$ $}
\vertexLabelR[]{V2}{left}{$ $}
\vertexLabelR[]{V3}{left}{$ $}
\vertexLabelR[]{V4}{left}{$ $}

\node[vertexBall,shift={(1.1,0.)}] at (V4) {$x_{(i,e,2)}$};

\node[vertexBall,shift={(1.1,3)}] at (V4) {$x_{(j,e,2)}$};

\node[vertexBall,shift={(-3.3,0.)}] at (V4) {$x_{(i,e,1)}$};

\node[vertexBall,shift={(-3.3,3)}] at (V4) {$x_{(j,e,1)}$};

\end{scope}
\end{tikzpicture} 
        \subcaption{}
        \label{subfig:e4}
    \end{subfigure}
    \begin{subfigure}{.48\textwidth}
        \centering
        \begin{tikzpicture}[vertexBall, edgeDouble=nolabels, faceStyle=nolabels, scale=3]

\coordinate (V1) at (0+4 , 0);
\coordinate (V2) at (0+4 , 1);
\coordinate (V3) at (0.75+4 , 1);
\coordinate (V4) at (0.75+4 , 0);

\coordinate (V5) at (-0.25+4 , -0.25);
\coordinate (V6) at (0.25+0.75+4 , -0.25);

\coordinate (V7) at (-0.25+4 , 1.25);
\coordinate (V8) at (0.25+0.75+4 , 1.25);

\coordinate (V9) at (0.375+4 , 0.5);

\draw[-,very thick] (V1) to (V2);
\draw[-,very thick] (V2) to (V3);
\draw[-,very thick] (V3) to (V4);
\draw[-,very thick] (V1) to (V4);

\draw[-,very thick] (V1) to (V5);
\draw[-,very thick] (V4) to (V6);
\draw[-,very thick] (V2) to (V7);
\draw[-,very thick] (V3) to (V8);

\draw[thick, blue, smooth ]
  plot coordinates {
    (-0.15-0.2+4 , -0.2)
    (-0.1+4,0.1)
    (-0.1+4,0.9)
    (-0.15-0.2+4 , 1.2)
  };
  \draw[thick, green!50!black, smooth ]
  plot coordinates {
    (0.15+0.75+0.2+4, -0.2)
    (0.75+.1+4,0.1)
    (0.75+.1+4,0.9)
    (0.15+0.75+0.2+4, 1.2)
  };

 \draw[thick, orange, smooth ]
  plot coordinates {
    (0.75+0.35-0.2+4 , 1.25)
    (0.7+4,1.1)
    (0.05+4,1.1)
    (-0.35+0.2 +4, 1.25)
  };

  \draw[thick, cyan, smooth ]
  plot coordinates {
    (0.75+0.35-0.2+4, -0.25)
    (0.7+4,-.1)
    (0.05+4,-.1)
    (-0.35+0.2 +4 , -.25)
  };

\coordinate (V12) at (0 , 0);
\coordinate (V22) at (0 , 1);
\coordinate (V32) at (0.75 , 1);
\coordinate (V42) at (0.75 , 0);

\coordinate (V52) at (-0.5 , -0.5);
\coordinate (V62) at (0.5+0.75 , -0.5);

\coordinate (V72) at (-0.5 , 1.5);
\coordinate (V82) at (0.5+0.75 , 1.5);

\coordinate (V92) at (0.375 , 0.5);

\coordinate (B1) at (0.375+4 , 0.25);
\coordinate (B1l) at (0.75*0.25+4 , 0.25);

\coordinate (B1r) at (0.75*0.75+4 , 0.25);
\coordinate (B12) at (0.375+4 , 0.75);

\coordinate (B12l) at (0.75*0.25+4 , 0.75);

\coordinate (B12r) at (0.75*0.75+4 , 0.75);

\coordinate (Bmr) at (0.75*0.75+4 , 0.5);

\coordinate (Bm1) at (0.75*0.5+4 , 0.625);

\coordinate (Bm2) at (0.75*0.5+4 , 0.375);

\coordinate (Bml) at (0.75*0.25+4, 0.5);

\draw[-,very thick] (Bm2) to (Bml);
\draw[-,very thick] (Bm2) to (Bmr);
\draw[-,very thick] (B1r) to (B1l);

\draw[-,very thick] (B12r) to (B12l);

\draw[-,very thick] (V1) to (B1l);

\draw[-,very thick] (V4) to (B1r);

\draw[-,very thick] (Bm2) to (B1r);
\draw[-,very thick] (Bm2) to (B1l);

\draw[-,very thick] (Bm1) to (B12r);
\draw[-,very thick] (Bm1) to (B12l);

\draw[-,very thick] (Bm1) to (Bml);
\draw[-,very thick] (Bm1) to (Bmr);

\draw[-,very thick] (V2) to (B12l);

\draw[-,very thick] (V3) to (B12r);

\draw[-,very thick] (Bmr) to (B12r);
\draw[-,very thick] (Bmr) to (B1r);

\draw[-,very thick] (Bml) to (B12l);
\draw[-,very thick] (Bml) to (B1l);

\vertexLabelR[]{B1l}{left}{$ $}
\vertexLabelR[]{B1r}{left}{$ $}
\vertexLabelR[]{B12l}{left}{$ $}
\vertexLabelR[]{B12r}{left}{$ $}
\vertexLabelR[]{Bmr}{left}{$ $}
\vertexLabelR[]{Bm1}{left}{$ $}
\vertexLabelR[]{Bm2}{left}{$ $}
\vertexLabelR[]{Bml}{left}{$ $}

\vertexLabelR[]{V1}{left}{$ $}
\vertexLabelR[]{V2}{left}{$ $}
\vertexLabelR[]{V3}{left}{$ $}
\vertexLabelR[]{V4}{left}{$ $}

\draw[red, thick,rounded corners] (4.1, 0.05) -- (4.65, 0.05) -- (4.6, 0.25) -- (4.6,0.75) -- (4.65,0.95) -- (4.1, 0.95) -- (4.15,0.75) -- (4.15,0.25) -- cycle;

\draw[Fuchsia, very thick,rounded corners] (4.05, 0.1) -- (4.2, 0.2) -- (4.6, 0.2) -- (4.7,0.1) -- (4.7,0.9) -- (4.6, 0.8) -- (4.15,0.8) -- (4.05,0.9) -- cycle;

\draw[gray, thick,rounded corners] (4.24, 0.27) -- (4.375, 0.34) -- (4.51,0.27) -- cycle;

\draw[gray, thick,rounded corners] (4.21, 0.28) -- (4.21, 0.47) -- (4.34,0.38) -- cycle;

\draw[gray, thick,rounded corners] (4.54, 0.28) -- (4.54, 0.47) -- (4.42,0.38) -- cycle;

\draw[gray, thick,rounded corners] (4.375, 0.4) -- (4.21, 0.5) -- (4.375, 0.6) -- (4.54, 0.5) -- cycle;

\draw[gray, thick,rounded corners] (4.21, 0.535) -- (4.21, 0.715) -- (4.34, 0.63) -- cycle;

\draw[gray, thick,rounded corners] (4.54, 0.535) -- (4.54, 0.715) -- (4.42, 0.63) -- cycle;

\draw[gray, thick,rounded corners] (4.24, 0.73) -- (4.375, 0.65) -- (4.51,0.73) -- cycle;
\end{tikzpicture} 
        \subcaption{}
        \label{subfig:4reg_limit_cycles}
    \end{subfigure}
    \caption{(a) The 4-regular graph $\Gamma'$, where each vertex $y_{(e,a)}$ is indicated by the label $a$ for better illustration. (b) The constructed $\Aut(\Gamma')$-invariant facial cycle double cover of $\Gamma'$.}
    \label{fig:e4}
\end{figure}

\paragraph{The case $d=5$:}
We define the vertices of $\Gamma'$ to be given by $$V(\Gamma_2)\cup \{y_{(e,a)}\mid e \in E(\Gamma)\text{ and } a=1,\ldots,10\}.$$ 
Additionally, we define the edges of $\Gamma'$ as $E(\Gamma'):=E(\Gamma_2)\cup E,$ where $E$ is the set of edges in $\Gamma'$ that corresponds to the quadratic form:

\begin{align*}
\sum_{e:=\{i,j\}\in E(\Gamma), i<j}&\big(x_{(i,e,1)}y_{(e,1)}+  x_{(i,e,2)}y_{(e,3)}+x_{(i,e,1)}y_{(e,2)}+  x_{(i,e,2)}y_{(e,2)} +y_{(e,1)}y_{(e,2)}
+y_{(e,2)}y_{(e,3)}\\
+&y_{(e,2)}y_{(e,4)}+y_{(e,1)}y_{(e,4)}+y_{(e,3)}y_{(e,4)}+y_{(e,1)}y_{(e,5)}\\
+&y_{(e,3)}y_{(e,6)}+y_{(e,1)}y_{(e,8)}
+y_{(e,4)}y_{(e,5)}+y_{(e,4)}y_{(e,6)}\\
+&y_{(e,3)}y_{(e,10)}+y_{(e,5)}y_{(e,6)}
+y_{(e,5)}y_{(e,7)}+y_{(e,6)}y_{(e,7)}\\
+&y_{(e,5)}y_{(e,8)}+y_{(e,7)}y_{(e,8)}+y_{(e,7)}y_{(e,9)}+y_{(e,7)}y_{(e,10)}\\
+&y_{(e,6)}y_{(e,10)}+y_{(e,8)}y_{(e,9)}+y_{(e,9)}y_{(e,10)}\\
+&x_{(j,e,1)}y_{(e,8)}+x_{(j,e,2)}y_{(e, 10)}+x_{(j,e,1)}y_{(e,9)}+x_{(j,e,2)}y_{(e, 9)}\big).
\end{align*}
This subgraph is illustrated in Figure~\ref{fig:5limit}.
Again, the subgraph induced by $\{y_{(e,a)} \mid a=1,\ldots,10\}$ admits a $C_2 \times C_2$ symmetry. It therefore follows that every $\psi_2 \in H $ can be extended to an automorphism of the graph $\Gamma'.$ Hence, $G \hookrightarrow \Aut(\Gamma').$ 
Next, observe that each vertex $x_{(i,e,t)}$ is contained in exactly two cycles of length $3$, whereas each vertex $y_{(e,a)}$ lies in at least four such cycles. This distinction implies that 
\[
V(\Gamma_2)^{\Aut(\Gamma')} = V(\Gamma_2).
\]
Now, let $e = \{i,j\} \in E(\Gamma)$ with $i < j$ be an edge. Then:
\begin{enumerate}
    \item the vertices $x_{(i,e,1)}$ and $x_{(i,e,2)}$ share a common neighbour, namely $y_{(e,2)}$, and
    \item the vertices $x_{(j,e,1)}$ and $x_{(j,e,2)}$ share a common neighbour, namely $y_{(e,9)}$,
\end{enumerate}
and no other pair of vertices in $\{x_{(i,e,1)}, x_{(i,e,2)}, x_{(j,e,1)}, x_{(j,e,2)}\}$ shares a common neighbour.
It follows that every edge $\{x_{(i,e,1)}, x_{(i,e,2)}\} \in E(\Gamma')$ is mapped under any automorphism $\psi'$ of $\Gamma'$ to an edge of the form $\{x_{(i',e',1)}, x_{(i',e',2)}\}$ as in the above case. We therefore obtain $\Aut(\Gamma') \cong \{\psi_\phi \mid \phi \in \Aut(\Gamma)\} \cong G.$

If we modify the strong embedding $\beta_2$ as illustrated in \Cref{subfig:5reg_limit_cycles} to get a strong embedding $\beta'$ of $\Gamma',$ we conclude $\Aut(\Gamma')\cong\Aut(\beta'(\Gamma'))\cong G.$
Furthermore, a closer look at the Euler characteristic gives us the following counts: If $\mathcal{C}_2$ is the cycle double cover corresponding to $\beta_2$, then the facial cycle double cover $\mathcal{C}'$ corresponding to $\beta'$ has $\vert \mathcal{C}_2\vert$ cycles that correspond to cycles in $\Gamma_2.$
Furthermore, each edge $e\in E(\Gamma)$ contributes $10$ additional vertices, $29$ additional edges and $17$ cycles to $\Gamma'$ compared to $\Gamma_2$.
\begin{align*}
    \chi(\beta'(\Gamma'))&=(\vert V(\Gamma_2)\vert + 10\cdot\vert E(\Gamma)\vert) - (\vert E(\Gamma_2)\vert + 29\cdot\vert E(\Gamma)\vert) + (\vert F(\beta_2(\Gamma_2))\vert+17\cdot\vert E(\Gamma)\vert)\\
    &=\chi(\beta_2(\Gamma_2))-2\cdot\vert E(\Gamma)\vert =\chi(\beta_1(\Gamma_1))-2\cdot\vert E(\Gamma)\vert =\chi(\beta(\Gamma))-2\cdot\vert E(\Gamma)\vert.
\end{align*}
Thus, the result follows.
\begin{figure}[H]
    \centering
    \begin{subfigure}{.48\textwidth}
        \centering
        \begin{tikzpicture}[vertexBall, edgeDouble=nolabels, faceStyle=nolabels, scale=4]

\coordinate (V1) at (0+4 , 0);
\coordinate (V2) at (0+4 , 1);
\coordinate (V3) at (0.75+4 , 1);
\coordinate (V4) at (0.75+4 , 0);

\coordinate (V5) at (-0.25+4 , -0.25);
\coordinate (V6) at (0.25+0.75+4 , -0.25);

\coordinate (V7) at (-0.25+4 , 1.25);
\coordinate (V8) at (0.25+0.75+4 , 1.25);

\coordinate (V9) at (0.375+4 , 0.5);

\draw[-,very thick] (V1) to (V2);
\draw[-,very thick] (V2) to (V3);
\draw[-,very thick] (V3) to (V4);
\draw[-,very thick] (V1) to (V4);

\draw[-,very thick] (V1) to (V5);
\draw[-,very thick] (V4) to (V6);
\draw[-,very thick] (V2) to (V7);
\draw[-,very thick] (V3) to (V8);

\draw[thick, blue, smooth ]
  plot coordinates {
    (-0.15-0.2+4 , -0.2)
    (-0.1+4,0.1)
    (-0.1+4,0.9)
    (-0.15-0.2+4 , 1.2)
  };
  \draw[thick, green!50!black, smooth ]
  plot coordinates {
    (0.15+0.75+0.2+4, -0.2)
    (0.75+.1+4,0.1)
    (0.75+.1+4,0.9)
    (0.15+0.75+0.2+4, 1.2)
  };

 \draw[thick, orange, smooth ]
  plot coordinates {
    (0.75+0.35-0.2+4 , 1.25)
    (0.7+4,1.1)
    (0.05+4,1.1)
    (-0.35+0.2 +4, 1.25)
  };

  \draw[thick, cyan, smooth ]
  plot coordinates {
    (0.75+0.35-0.2+4, -0.25)
    (0.7+4,-.1)
    (0.05+4,-.1)
    (-0.35+0.2 +4 , -.25)
  };

\coordinate (V12) at (0 , 0);
\coordinate (V22) at (0 , 1);
\coordinate (V32) at (0.75 , 1);
\coordinate (V42) at (0.75 , 0);

\coordinate (V52) at (-0.5 , -0.5);
\coordinate (V62) at (0.5+0.75 , -0.5);

\coordinate (V72) at (-0.5 , 1.5);
\coordinate (V82) at (0.5+0.75 , 1.5);

\coordinate (V92) at (0.375 , 0.5);

\coordinate (B1) at (0.375+4 , 0.25);
\coordinate (B1l) at (0.75*0.25+4 , 0.25);

\coordinate (B1r) at (0.75*0.75+4 , 0.25);
\coordinate (B12) at (0.375+4 , 0.75);

\coordinate (B12l) at (0.75*0.25+4 , 0.75);

\coordinate (B12r) at (0.75*0.75+4 , 0.75);

\coordinate (Bmr) at (0.75*0.625+4 , 0.5);

\coordinate (Bm1) at (0.75*0.5+4 , 0.625);

\coordinate (Bm2) at (0.75*0.5+4 , 0.375);

\coordinate (Bml) at (0.75*0.375+4 , 0.5);

\draw[-,very thick] (Bm1) to (Bml);
\draw[-,very thick] (Bm1) to (Bmr);
\draw[-,very thick] (Bm2) to (Bml);
\draw[-,very thick] (Bm2) to (Bmr);
\draw[-,very thick] (Bmr) to (Bml);
\draw[-,very thick] (B1r) to (B1l);
\draw[-,very thick] (B12r) to (B12l);

\draw[-,very thick] (B12l) to (B1l);
\draw[-,very thick] (B12r) to (B1r);

\draw[-,very thick] (V1) to (B1l);
\draw[-,very thick] (V1) to (B1);

\draw[-,very thick] (V4) to (B1);
\draw[-,very thick] (V4) to (B1r);

\draw[-,very thick] (Bm2) to (B1);

\draw[-,very thick] (Bm1) to (B12);

\draw[-,very thick] (V2) to (B12);
\draw[-,very thick] (V2) to (B12l);

\draw[-,very thick] (V3) to (B12);
\draw[-,very thick] (V3) to (B12r);

\draw[-,very thick] (Bmr) to (B12r);
\draw[-,very thick] (Bmr) to (B1r);

\draw[-,very thick] (Bml) to (B12l);
\draw[-,very thick] (Bml) to (B1l);

\draw[-,very thick] (Bm2) to (B1l);
\draw[-,very thick] (Bm2) to (B1r);
\draw[-,very thick] (Bm1) to (B12l);
\draw[-,very thick] (Bm1) to (B12r);

\vertexLabelR[]{B1}{left}{$ 2$}
\vertexLabelR[]{B12}{left}{$9$}
\vertexLabelR[]{B1l}{left}{$1$}
\vertexLabelR[]{B1r}{left}{$3$}
\vertexLabelR[]{B12l}{left}{$8$}
\vertexLabelR[]{B12r}{left}{$10$}
\vertexLabelR[]{Bmr}{left}{$6$}
\vertexLabelR[]{Bm1}{left}{$7$}
\vertexLabelR[]{Bm2}{left}{$4$}
\vertexLabelR[]{Bml}{left}{$5$}

\vertexLabelR[]{V1}{left}{$ $}
\vertexLabelR[]{V2}{left}{$ $}
\vertexLabelR[]{V3}{left}{$ $}
\vertexLabelR[]{V4}{left}{$ $}

\node[vertexBall,shift={(1.2,0)}] at (V4) {$x_{(i,e,2)}$};

\node[vertexBall,shift={(1.2,0)}] at (V3) {$x_{(j,e,2)}$};

\node[vertexBall,shift={(-1.2,0)}] at (V2) {$x_{(j,e,1)}$};

\node[vertexBall,shift={(-1.2,0)}] at (V1) {$x_{(i,e,1)}$};
\end{tikzpicture}  
        \subcaption{}
    \end{subfigure}
    \begin{subfigure}{.48\textwidth}
        \centering
        \begin{tikzpicture}[vertexBall, edgeDouble=nolabels, faceStyle=nolabels, scale=4]

\coordinate (V1) at (0+4 , 0);
\coordinate (V2) at (0+4 , 1);
\coordinate (V3) at (0.75+4 , 1);
\coordinate (V4) at (0.75+4 , 0);

\coordinate (V5) at (-0.25+4 , -0.25);
\coordinate (V6) at (0.25+0.75+4 , -0.25);

\coordinate (V7) at (-0.25+4 , 1.25);
\coordinate (V8) at (0.25+0.75+4 , 1.25);

\coordinate (V9) at (0.375+4 , 0.5);

\draw[-,very thick] (V1) to (V2);
\draw[-,very thick] (V2) to (V3);
\draw[-,very thick] (V3) to (V4);
\draw[-,very thick] (V1) to (V4);

\draw[-,very thick] (V1) to (V5);
\draw[-,very thick] (V4) to (V6);
\draw[-,very thick] (V2) to (V7);
\draw[-,very thick] (V3) to (V8);

\draw[thick, blue, smooth ]
  plot coordinates {
    (-0.1-0.2+4, -0.18)
    (-0.15+4.25,0.25)
    (-0.15+4.25,0.75)
    (-0.1-0.2+4, 1.2)
  };
  \draw[thick, green!50!black, smooth ]
  plot coordinates {
    (0.75+0.1+0.2+4 , -0.18)
    (0.75+.17+3.75,0.25)
    (0.75+.17+3.75,0.75)
    (0.25+0.63+0.2+4 , 1.2)
  };

 \draw[thick, orange, smooth ]
  plot coordinates {
    (0.75+0.35-0.2+4 , 1.25)
    (0.7+4,1.1)
    (0.05+4,1.1)
    (-0.35+0.2 +4, 1.25)
  };

  \draw[thick, cyan, smooth ]
  plot coordinates {
    (0.75+0.35-0.2+4, -0.25)
    (0.7+4,-.1)
    (0.05+4,-.1)
    (-0.35+0.2 +4 , -.25)
  };

\draw[red, thick,rounded corners] (3.95, -0.05) -- (4.8, -0.05) -- (4.8, 1.05) -- (3.95, 1.05) -- cycle;

\draw[Fuchsia, very thick,rounded corners] (4.05, 0.1) -- (4.1,0.02) -- (4.375, 0.2) -- (4.65,0.02) -- (4.7, 0.1) -- (4.7, 0.9) -- (4.65,0.98) -- (4.375,0.8) -- (4.1,0.98) -- (4.05,0.95) -- cycle;

\draw[gray, thick,rounded corners] (4.24, 0.27) -- (4.36, 0.34) -- (4.36,0.27) -- cycle;

\draw[gray, thick,rounded corners] (4.39, 0.27) -- (4.39, 0.34) -- (4.51,0.27) -- cycle;

\draw[gray, thick,rounded corners] (4.22, 0.29) -- (4.29, 0.47) -- (4.35,0.38) -- cycle;

\draw[gray, thick,rounded corners] (4.53, 0.29) -- (4.46, 0.47) -- (4.4,0.38) -- cycle;

\draw[gray, thick,rounded corners] (4.375, 0.4) -- (4.31, 0.48) -- (4.44, 0.48) -- cycle;

\draw[gray, thick,rounded corners] (4.31, 0.52) -- (4.375, 0.6) -- (4.44, 0.52) -- cycle;

\draw[gray, thick,rounded corners] (4.29, 0.535) -- (4.22, 0.71) -- (4.35, 0.62) -- cycle;

\draw[gray, thick,rounded corners] (4.46, 0.535) -- (4.53, 0.71) -- (4.4, 0.62) -- cycle;

\draw[gray, thick,rounded corners] (4.4, 0.73) -- (4.39, 0.65) -- (4.51,0.73) -- cycle;

\draw[gray, thick,rounded corners] (4.24, 0.73) -- (4.36, 0.65) -- (4.35,0.73) -- cycle;

\draw[gray, thick,rounded corners] (4.21, 0.35) -- (4.21, 0.65) -- (4.26,0.49) -- cycle;

\draw[gray, thick,rounded corners] (4.54, 0.35) -- (4.54, 0.65) -- (4.49,0.49) -- cycle;

\draw[gray, thick,rounded corners] (4.2, 0.23) -- (4.08, 0.08) -- (4.3,0.23) -- cycle;

\draw[gray, thick,rounded corners] (4.45, 0.23) -- (4.67, 0.08) -- (4.55,0.23) -- cycle;

\draw[gray, thick,rounded corners] (4.2, 0.77) -- (4.08, 0.93) -- (4.3,0.77) -- cycle;

\draw[gray, thick,rounded corners] (4.45, 0.77) -- (4.67, 0.93) -- (4.55,0.77) -- cycle;

\coordinate (V12) at (0 , 0);
\coordinate (V22) at (0 , 1);
\coordinate (V32) at (0.75 , 1);
\coordinate (V42) at (0.75 , 0);

\coordinate (V52) at (-0.5 , -0.5);
\coordinate (V62) at (0.5+0.75 , -0.5);

\coordinate (V72) at (-0.5 , 1.5);
\coordinate (V82) at (0.5+0.75 , 1.5);

\coordinate (V92) at (0.375 , 0.5);

\coordinate (B1) at (0.375+4 , 0.25);
\coordinate (B1l) at (0.75*0.25+4 , 0.25);

\coordinate (B1r) at (0.75*0.75+4 , 0.25);
\coordinate (B12) at (0.375+4 , 0.75);

\coordinate (B12l) at (0.75*0.25+4 , 0.75);

\coordinate (B12r) at (0.75*0.75+4 , 0.75);

\coordinate (Bmr) at (0.75*0.625+4 , 0.5);

\coordinate (Bm1) at (0.75*0.5+4 , 0.625);

\coordinate (Bm2) at (0.75*0.5+4 , 0.375);

\coordinate (Bml) at (0.75*0.375+4 , 0.5);

\draw[-,very thick] (Bm1) to (Bml);
\draw[-,very thick] (Bm1) to (Bmr);
\draw[-,very thick] (Bm2) to (Bml);
\draw[-,very thick] (Bm2) to (Bmr);
\draw[-,very thick] (Bmr) to (Bml);
\draw[-,very thick] (B1r) to (B1l);
\draw[-,very thick] (B12r) to (B12l);

\draw[-,very thick] (B12l) to (B1l);
\draw[-,very thick] (B12r) to (B1r);

\draw[-,very thick] (V1) to (B1l);
\draw[-,very thick] (V1) to (B1);

\draw[-,very thick] (V4) to (B1);
\draw[-,very thick] (V4) to (B1r);

\draw[-,very thick] (Bm2) to (B1);

\draw[-,very thick] (Bm1) to (B12);

\draw[-,very thick] (V2) to (B12);
\draw[-,very thick] (V2) to (B12l);

\draw[-,very thick] (V3) to (B12);
\draw[-,very thick] (V3) to (B12r);

\draw[-,very thick] (Bmr) to (B12r);
\draw[-,very thick] (Bmr) to (B1r);

\draw[-,very thick] (Bml) to (B12l);
\draw[-,very thick] (Bml) to (B1l);

\draw[-,very thick] (Bm2) to (B1l);
\draw[-,very thick] (Bm2) to (B1r);
\draw[-,very thick] (Bm1) to (B12l);
\draw[-,very thick] (Bm1) to (B12r);

\vertexLabelR[]{B1}{left}{$ $}
\vertexLabelR[]{B12}{left}{$ $}
\vertexLabelR[]{B1l}{left}{$ $}
\vertexLabelR[]{B1r}{left}{$ $}
\vertexLabelR[]{B12l}{left}{$ $}
\vertexLabelR[]{B12r}{left}{$ $}
\vertexLabelR[]{Bmr}{left}{$ $}
\vertexLabelR[]{Bm1}{left}{$ $}
\vertexLabelR[]{Bm2}{left}{$ $}
\vertexLabelR[]{Bml}{left}{$ $}

\vertexLabelR[]{V1}{left}{$ $}
\vertexLabelR[]{V2}{left}{$ $}
\vertexLabelR[]{V3}{left}{$ $}
\vertexLabelR[]{V4}{left}{$ $}
\end{tikzpicture}  
        \subcaption{}
        \label{subfig:5reg_limit_cycles}
    \end{subfigure}
    \caption{(a) The 5-regular graph $\Gamma'$, where each vertex $y_{(e,a)}$ is indicated by the label $a$ for better illustration. (b) The constructed $\Aut(\Gamma')$-invariant facial cycle double cover of $\Gamma'$.}
    \label{fig:5limit}
\end{figure}
\end{proof}

To extend the desired sequence of $d$-regular graphs for $d\geq 6$, we need the following lemma.

\begin{lemma}\label{lemma:chi_bound_colour_embedding}
Let $\Gamma$ be a $d$-regular graph with a $d$-edge-colouring $\kappa$. Further, let $\beta$ be the strong embedding of $\Gamma$ whose facial cycles are the
bicoloured cycles with respect to $\kappa$. Then
$$
\chi(\beta(\Gamma))\leq \left(1-\frac{d}{4}\right)|V(\Gamma)|.
$$
\end{lemma}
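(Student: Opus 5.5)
We compute the Euler characteristic directly, $\chi(\beta(\Gamma)) = |V(\Gamma)|-|E(\Gamma)|+|F(\beta(\Gamma))|$, and bound each term. Since $\Gamma$ is $d$-regular, $|E(\Gamma)| = \tfrac{d}{2}|V(\Gamma)|$. The whole task therefore reduces to the face bound $|F(\beta(\Gamma))|\le \tfrac{d}{4}|V(\Gamma)|$, because then
\[
\chi(\beta(\Gamma)) \le |V(\Gamma)| - \tfrac{d}{2}|V(\Gamma)| + \tfrac{d}{4}|V(\Gamma)| = \left(1-\tfrac{d}{4}\right)|V(\Gamma)|.
$$

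To count the faces, we use the structure from the proof of Lemma~\ref{lemma:colouring_CDC}. Fix the $d$-cycle $g\in S_d$ defining the rotation. The facial cycles are then exactly the components of the $2$-regular subgraphs $\Gamma^i$ spanned by the colour classes $\kappa^{-1}(1^{g_i})\cup\kappa^{-1}(1^{g_{i+1}})$, for $i=1,\ldots,d$. Each such face is a bicoloured cycle. Its colours alternate along the cycle, so it has even length. Since $\Gamma$ is simple, no cycle has length $2$, so every face has length at least $4$.

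Every edge lies in exactly two faces. Double counting edge--face incidences therefore gives
\[
4|F(\beta(\Gamma))| \le \sum_{f} |f| = 2|E(\Gamma)| = d|V(\Gamma)|,
\]
which is the required bound $|F(\beta(\Gamma))|\le \tfrac d4 |V(\Gamma)|$.

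The only point needing care is that no face has length $2$ and that the facial walks really are the bicoloured cycles. The latter is exactly what Lemma~\ref{lemma:colouring_CDC} and Remark~\ref{rem:colouring} provide. Under the paper's hypotheses (simple graphs, proper colouring) this is immediate, so I do not expect a genuine obstacle. The argument does not even depend on which $d$-cycle $g$ is chosen.
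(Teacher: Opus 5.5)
Your proof is correct and follows essentially the same route as the paper. Both arguments note that every bicoloured face has length at least $4$, double count edge--face incidences to get $|F(\beta(\Gamma))|\le \tfrac12|E(\Gamma)| = \tfrac d4|V(\Gamma)|$, and substitute this into $\chi(\beta(\Gamma))=|V(\Gamma)|-|E(\Gamma)|+|F(\beta(\Gamma))|$. Your explicit justification that the alternating colours force even length, and hence length at least $4$ in a simple graph, spells out the paper's one-line remark without changing the argument.
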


\begin{proof}
Let $F(\beta(\Gamma))$ denote the set of facial cycles of the embedding
$\beta$. Since the facial cycles are bi-coloured and the edge-colouring is
proper, each facial cycle has length at least $4$. Hence, by counting edge-face
incidences, we obtain
$$
4|F(\beta(\Gamma))|\leq 2|E(\Gamma)|.
$$
Thus,
$$
|F(\beta(\Gamma))|\leq \frac{1}{2}|E(\Gamma)|.
$$
Since $\Gamma$ is $d$-regular, we have
$$
|E(\Gamma)|=\frac{d}{2}|V(\Gamma)|.
$$
Therefore,
$$
\begin{aligned}
\chi(\beta(\Gamma))
&=|V(\Gamma)|-|E(\Gamma)|+|F(\beta(\Gamma))| \\
&\leq |V(\Gamma)|-|E(\Gamma)|+\frac{1}{2}|E(\Gamma)| \\
&=|V(\Gamma)|-\frac{1}{2}|E(\Gamma)| \\
&=|V(\Gamma)|-\frac{d}{4}|V(\Gamma)| \\
&=\left(1-\frac{d}{4}\right)|V(\Gamma)|.
\end{aligned}
$$
This proves the claim.
\end{proof}

\begin{theorem}\label{thm:infinite}
    For every finite group $G$ and every $d\geq 3$ there exists a sequence of $d$-regular graphs $(\Gamma_\ell)_{\ell \in \mathbb{N}}$ with corresponding strong embeddings $(\beta_\ell)_{\ell\in \mathbb{N}}$ such that 
    \begin{enumerate}
        \item $\Aut(\Gamma_\ell)\cong\Aut(\beta_\ell(\Gamma_\ell))\cong G$ for all $\ell\in \mathbb{N},$
        \item $\lim_{\ell\to \infty} g(\beta_\ell(\Gamma_\ell)) =\infty.$
    \end{enumerate}
\end{theorem}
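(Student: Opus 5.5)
For $d\in\{3,4,5\}$ the statement is exactly Proposition~\ref{prop:infinite_3} and Proposition~\ref{prop:infinite_45}, so it remains to treat $d\geq 6$. I follow the product strategy of Theorem~\ref{theorem:dregular}. Write $d=d'+3m$ with $d'\in\{3,4,5\}$, $d'\equiv d \pmod 3$ and $m\geq 1$.

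The factors are as follows. Fix a cubic prime graph $\hat\Gamma$ with trivial automorphism group and an invariant $3$-edge-colouring, as in Proposition~\ref{prop:atmost22}. Put $\Gamma_i:=\mathcal{T}_i(\hat\Gamma)$ for $1\le i\le m$. The cubic factors stay fixed; only the $d'$-regular factor is varied.

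For the varying factor I need a sequence of $d'$-regular graphs $\Gamma'_\ell$ with $\Aut(\Gamma'_\ell)\cong G$, each carrying an $\Aut(\Gamma'_\ell)$-invariant $d'$-edge-colouring, and with $|V(\Gamma'_\ell)|\to\infty$. The cleanest choice is to take the graphs produced in the proofs of Propositions~\ref{prop:infinite_3} and~\ref{prop:infinite_45}, which strictly decrease $\chi$ and hence strictly increase the vertex count.

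\begin{itemize}
\item For $d'=3$, Proposition~\ref{prop:truncproper} shows that iterated truncation preserves an invariant $3$-colouring.
\item For $d'=4,5$, I must check that the gadget graphs of Proposition~\ref{prop:infinite_45} admit invariant $d'$-colourings. This is the step I expect to be the main obstacle.
\item A fallback that avoids it is to keep the original $\Gamma'$ of Theorem~\ref{theorem:dregular} fixed and instead let the cubic trivial-automorphism factor grow. For example, replace $\Gamma_m$ by $\mathcal{T}_{m+\ell}(\hat\Gamma)$, which is still prime, has trivial automorphism group and an invariant $3$-colouring, and has unboundedly many vertices.
\end{itemize}
I would use this fallback, since every ingredient is already established.

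Now set $\Gamma_\ell:=\Gamma'\times\Gamma_1\times\cdots\times\Gamma_{m-1}\times\mathcal{T}_{m+\ell}(\hat\Gamma)$. This graph is $d$-regular.
\begin{itemize}
\item Relative primality holds exactly as in the proof of Theorem~\ref{theorem:dregular}. The cubic factors are pairwise non-isomorphic primes, since their vertex counts differ. Also, $\Gamma'$ cannot have a cubic factor, because $\Gamma'$ has a vertex off triangles (Remark~\ref{rem:proof}) while every vertex of a complete truncation lies on a triangle.
\item Hence \cite[Theorem 3.1]{graphmult} gives $\Aut(\Gamma_\ell)\cong G$.
\item Lemma~\ref{lemma:edgecolouring} gives an invariant $d$-edge-colouring.
\item Lemma~\ref{lemma:colouring_CDC} then yields a strong embedding $\beta_\ell$ whose facial cycles are the bicoloured cycles. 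This gives $\Aut(\beta_\ell(\Gamma_\ell))\cong\Aut(\Gamma_\ell)\cong G$, since the cycle double cover is invariant.
\end{itemize}

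For the genus, Lemma~\ref{lemma:chi_bound_colour_embedding} gives
\[
\chi(\beta_\ell(\Gamma_\ell))\le\left(1-\frac d4\right)|V(\Gamma_\ell)|.
\]
Since $d\ge 6$, the coefficient $1-\tfrac d4$ is negative. Moreover $|V(\Gamma_\ell)|\ge|V(\mathcal{T}_{m+\ell}(\hat\Gamma))|=3^{m+\ell}|V(\hat\Gamma)|\to\infty$. Therefore $\chi\to-\infty$, and so the genus tends to infinity, whether the surface is orientable or not.
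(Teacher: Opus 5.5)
Your fallback construction is correct and is essentially the paper's proof. The paper takes the $(d-3)$-regular graph from Theorem~\ref{theorem:dregular}, which unpacks to your $\Gamma'\times\Gamma_1\times\cdots\times\Gamma_{m-1}$, multiplies it by a single growing iterated truncation of a trivial-automorphism cubic prime, and then applies Lemmas~\ref{lemma:edgecolouring}, \ref{lemma:colouring_CDC} and \ref{lemma:chi_bound_colour_embedding} just as you do. Your indexing $\mathcal{T}_{m+\ell}(\hat\Gamma)$ makes relative primality with the fixed cubic factors automatic, whereas the paper gets it by choosing the growing factor so that it has more vertices than $\Gamma'$.
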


\begin{proof}
We prove this statement by utilising graph products and the $\ell$-fold truncation operator of cubic graphs. Since the above statement has been proven for $d=3,4,5$ in Propositions~\ref{prop:infinite_3} and \ref{prop:infinite_45}, we can assume that $d\geq6$. 

By Theorem~\ref{theorem:dregular} we know that there exists a $(d{-}3)$-regular graph $\Gamma'$ with $\Aut(\Gamma')$-invariant $(d-3)$-edge-colouring $\kappa'$  such that $\Aut(\Gamma')\cong G$. By the same argument, there exists a cubic graph $\Gamma_1'$  with $\Aut(\Gamma_1')$-invariant $3$-edge-colouring $\kappa_1'$ satisfying $\vert \Aut(\Gamma_1')\vert =1$.
Since the graph $\Gamma_1'$ has trivial automorphism group, $\Gamma_1'$ is prime by Proposition~\ref{prop:cubicprime}. This implies that $\Gamma_{\ell+1}':=\mathcal{T}_{\ell}(\Gamma_1')$ is prime for all $\ell\geq 1.$ Furthermore, with Proposition~\ref{prop:truncproper}, the colouring $\kappa_1'$ induces an $\Aut(\Gamma_\ell')$-invariant $3$-edge-colouring $\kappa_\ell'$ of $\Gamma_\ell'$. Note that applying truncation to $\Gamma_1'$ again results in a cubic prime graph with trivial automorphism. Hence, we can choose $\Gamma_1$ to be a graph such that $\vert V(\Gamma_\ell')\vert > \vert V(\Gamma')\vert$ for all $\ell \in \mathbb{N}$. Since $\Gamma'$ cannot be a prime factor of $\Gamma'_{\ell}$, we deduce that $\Gamma'$ and $\Gamma_\ell'$ are relatively prime for all $\ell\in \mathbb{N}$.
Thus, $\Gamma_\ell:=\Gamma'\times \Gamma_\ell'$ is a $d$-regular graph that has an $\Aut(\Gamma_\ell)$-invariant $d$-edge-colouring $\kappa_\ell$ and satisfies  $\Aut(\Gamma_\ell)\cong\Aut(\Gamma')\cong G$, see Lemma~\ref{lemma:edgecolouring}. 
By Lemma~\ref{lemma:colouring_CDC}, for every $\ell\in \mathbb{N}$ the bi-colouring cycles of $\Gamma_\ell$ with respect to $\kappa_\ell$ give rise to a strong embedding $\beta_\ell$ of $\Gamma_\ell$ satisfying $$\Aut(\Gamma_\ell)\cong \Aut(\beta_\ell(\Gamma_\ell))\cong G.$$ By our construction we know that $\vert V(\Gamma_{\ell})\vert<\vert V(\Gamma_{\ell+1})\vert$ for all $\ell \in \mathbb{N}$. Thus, Lemma~\ref{lemma:chi_bound_colour_embedding} implies
\[
\lim_{\ell\to \infty} g(\beta_\ell(\Gamma_\ell)) =\infty.
\]
\end{proof}

\section*{Appendix}
Here, we present examples of regular graphs whose automorphism groups have order at most $2$, together with corresponding automorphism group-invariant edge-colourings. These graphs help us to establish Proposition~\ref{prop:atmost2}. More precisely, for $(d=3,4,5)$ we give a $d$-regular graph with trivial automorphism group and a $d$-regular graph with automorphism group of order two.
Since every colour class in a proper $d$-edge-colouring forms a perfect matching, we denote the colour classes by $M_i$, where $i = 1, \ldots, d$. We furthermore indicate whether the presented graphs are prime. The data in this section was verified using Magma~\cite{Bosma} and GAP~\cite{GAP4}; the corresponding implementations are available in~\cite{AkpanyaKregularSymmetricGraphs}. We also implemented the construction of $d$-regular graphs with prescribed automorphism group in both systems and tested it for $3 \leq d \leq 9$ on groups from the SmallGroups Library~\cite{SmallGroupPaper,MR1935567} of order at most $120$. In every tested case, the constructed graph had automorphism group isomorphic to the prescribed group $G$.

\subsection*{Examples}

\begin{table}[H]
\centering
\begin{tabular}{|l|l|}
\hline
$V(\Gamma_1)$ 
& $ \{1,\ldots,14\} $ \\
\hline
$E(\Gamma_1)$ 
& $\{\{1,3\}, \{1,10\}, \{1,12\}, \{2,3\}, \{2,9\}, \{2,13\}, \{3,11\},$ \\
& \,$\{4,5\}, \{4,6\}, \{4,8\}, \{5,7\}, \{5,9\}, \{6,7\}, \{6,10\},\{7,14\},$\\
&$\{8,9\}, \{8,11\}, \{10,11\}, \{12,13\}, \{12,14\}, \{13,14\}\}$ \\
\hline
$M_1$& $\{\{ 1, 3 \}, \{ 2, 9 \}, \{ 4, 5 \}, \{ 6, 10 \}, \{ 7, 14 \}, \{ 8, 11 \}, \{ 12, 13 \}\}$\\
\hline
$M_2$& $\{\{ 1, 10 \}, \{ 2, 13 \}, \{ 3, 11 \}, \{ 4, 8 \}, \{ 5, 9 \}, \{ 6, 7 \}, \{ 12, 14 \}\}$\\
\hline
$M_3$& $\{\{ 1, 12 \}, \{ 2, 3 \}, \{ 4, 6 \}, \{ 5, 7 \}, \{ 8, 9 \}, \{ 10, 11 \}, \{ 13, 14 \}\}$\\
\hline
$\Aut(\Gamma_1)$ & $\{()\}$\\
\hline
prime & This graph is prime by Proposition~\ref{prop:cubicprime}.\\
\hline
\end{tabular}
\caption{A cubic prime graph $\Gamma_1$ with trivial automorphism group }
\label{tab:3reg}
\end{table}

\begin{table}[H]
\centering
\begin{tabular}{|l|l|}
\hline
$V(\Gamma_2)$ 
& $ \{1,\ldots,16\} $ \\
\hline
$E(\Gamma_2)$ 
& $\{\{ 1, 3 \}, \{ 1, 9 \}, \{ 1, 11 \}, \{ 2, 3 \}, \{ 2, 8 \}, \{ 2, 12 \},\{ 3, 10 \}, \{ 4, 5 \},\{ 4, 6 \},\{ 4, 7 \},$\\
&$ \{ 5, 8 \}, \{ 5, 15 \}, \{ 6, 9 \}, \{ 6, 14 \}, \{ 7, 8 \}, \{ 7, 10 \},\{ 9, 10 \}, \{ 11, 12 \},$\\
    & $\{ 11, 13 \}, \{ 12, 13 \}, \{ 13, 16 \}, \{ 14, 15 \}, \{ 14, 16 \}, \{ 15, 16 \}\}$ \\
    \hline 
    $M_1$ &$\{\{ 1, 11 \}, \{ 2, 3 \}, \{ 4, 6 \}, \{ 5, 15 \}, \{ 7, 8 \}, \{ 9, 10 \}, \{ 12, 13 \}, \{ 14, 16 \}\}$\\
    \hline 
  $M_2$ &$\{\{ 1, 3 \}, \{ 2, 8 \}, \{ 4, 5 \}, \{ 6, 9 \}, \{ 7, 10 \}, \{ 11, 12 \}, \{ 13, 16 \}, \{ 14, 15 \}\}$ \\
  \hline 
  $M_3$ &$\{\{ 1, 9 \}, \{ 2, 12 \}, \{ 3, 10 \}, \{ 4, 7 \}, \{ 5, 8 \}, \{ 6, 14 \}, \{ 11, 13 \}, \{ 15, 16  \}\}$\\
  \hline 
$\Aut(\Gamma_2)$ & $\langle(1,5)(2,6)(3,4)(7,10)(8,9)(11,15)(12,14)(13,16)\rangle$\\
\hline 
prime & This graph is prime by Proposition~\ref{prop:cubicprime}.\\
\hline
\end{tabular}
\caption{A cubic prime graph $\Gamma_2$ with  automorphism group of order 2}
\end{table}

\begin{table}[H]
\centering
\begin{tabular}{|l|l|}
\hline
$V(\Gamma_3)$ 
& $ \{1,\ldots,32\} $ \\
\hline
$E(\Gamma_3)$ 
&$\{\{ 1, 2 \},\{ 1, 3 \},\{ 1, 12 \},\{ 1, 17 \},\{ 2, 4 \},\{ 2, 15 \},\{ 2, 18 \},\{ 3, 4\},\{ 3, 14 \},\{ 3, 19 \},$\\
&$\{ 4, 16 \},\{ 4, 20 \},\{ 5, 6 \},\{ 5, 7 \},\{ 5, 9 \},\{ 5, 21 \},\{  6, 8 \},\{ 6, 10 \},\{ 6, 22 \},\{ 7, 8 \},$\\
  &$\{ 7, 11 \},\{ 7, 23 \},\{ 8, 12 \},\{ 8, 24 \},\{ 9, 10 \},\{ 9, 13 \},\{ 9, 25 \},\{ 10, 15 \},\{ 10, 26 \},$\\
  &$\{ 11, 12 \},\{ 11, 14 \},\{11, 27 \},\{ 12, 28 \},\{ 13, 14 \},\{ 13, 16 \},\{ 13, 29 \},\{ 14, 30 \},$\\
  &$\{ 15, 16 \},\{ 15, 31 \},\{ 16, 32 \},\{ 17, 18 \},\{ 17, 25 \},\{ 17, 32 \},\{ 18, 19 \},\{ 18, 26 \},$\\
  &$\{ 19, 20 \},\{ 19, 27 \},\{ 20, 21 \},\{ 20, 28 \},\{ 21, 22 \},\{ 21, 29 \},\{ 22, 23 \},\{ 22, 30 \},$\\
  &$\{ 23, 24 \},\{ 23, 31 \},\{ 24, 25 \},\{ 24, 32 \},\{ 25, 26 \},\{ 26, 27 \},\{ 27, 28 \},\{ 28, 29 \},$\\
  &$\{ 29, 30 \},\{ 30, 31 \},\{ 31, 32 \}\}$
\\
\hline
$M_1$ & $\{\{ 1, 2 \},\{ 5, 6 \},\{ 7, 11 \},\{ 8, 12 \},\{ 9, 13 \},\{ 10, 15 \},\{ 3, 14 \},\{ 4, 16 \},\{17,18\},$\\
&$\{19,20\},\{21,22\},\{23,24\},\{25,26\},\{27,28\},\{29,30\},\{31,32\}\}$\\
\hline
$M_2$& $\{\{ 1, 3 \},\{ 2, 4 \},\{ 7, 8 \},\{ 5, 9 \},\{ 6, 10 \},\{ 11, 12 \},\{ 13, 14 \},\{ 15, 16 \},\{17,32\},$\\
&$\{18,19\},\{20,21\},\{22,23\},\{24,25\},\{26,27\},\{28,29\},\{30,31\}\}$\\
\hline
$M_3$& $\{\{ 3, 4 \},\{ 5, 7 \},\{ 6, 8 \},\{ 9, 10 \},\{ 11, 14 \},\{ 1, 12 \},\{ 2, 15 \},\{ 13, 16 \},\{17,25\},$\\
&$\{18,26\},\{19,27\},\{20,28\},\{21,29\},\{22,30\},\{23,31\},\{24,32\}\}$\\
\hline
$M_4$& $\{\{1,17\}, \{2,18\}, \{3,19\}, \{4,20\}, \{5,21\}, \{6,7\}, \{8,24\}, \{9,25\},\{10,26\},$\\
&$ \{11,13\}, \{12,15\}, \{14,16\}, \{22,27\}, \{23,30\}, \{28,31\}, \{29,32\}\}$\\
\hline
$\Aut(\Gamma_3)$ & $\{()\}$\\
\hline
Prime & This graph is prime by Corollary~\ref{cor:prime}, since the vertex $1$ is contained in\\
&exactly two cycles of length 4, namely $(1,2,4,3)$ and $(1,2,18,17)$\\
\hline
\end{tabular}
\caption{A 4-regular prime graph $\Gamma_3$ with trivial automorphism group }\label{tab:4reg}
\end{table}

\begin{table}[H]
\centering
\begin{tabular}{|l|l|}
\hline
$V(\Gamma_4)$ 
& $ \{1,\ldots,28\} $ \\
\hline
$E(\Gamma_4)$ 
& $\{ \{ 1, 3 \}, \{ 1, 10 \}, \{ 1, 12 \}, \{ 1, 15 \}, \{ 2, 3 \}, \{ 2, 9 \}, \{ 2, 13 \}, \{ 2, 16 \},\{ 3, 11 \}, \{ 3, 17 \},$\\
& $ \{ 4, 5 \},\{ 4, 6 \}, \{ 4, 8 \}, \{ 4, 18 \}, \{ 5, 7 \},\{ 5, 9 \}, \{ 5, 19 \}, \{ 6, 7 \}, \{ 6, 10 \}, \{ 6, 20 \}, $\\
&$\{ 7, 14 \}, \{ 7, 21 \}, \{ 8, 9 \},\{ 8, 11 \}, \{ 8, 22 \}, \{ 9, 23 \}, \{ 10, 11 \}, \{ 10, 24 \}, \{ 11, 25 \}, \{ 12, 13 \},$\\
&$\{ 12, 14 \},\{ 12, 26 \}, \{ 13, 14 \}, \{ 13, 27 \}, \{ 14, 28 \}, \{ 15, 17 \}, \{ 15, 24 \}, \{ 15, 26 \}, \{ 16, 17 \},\{ 16, 23 \},$\\
&$\{ 16, 27 \}, \{ 17, 25 \}, \{ 18, 19 \}, \{ 18, 20 \}, \{ 18, 22 \}, \{ 19, 21 \},\{ 19, 23 \}, \{ 20, 21 \}, \{ 20, 24 \}, \{ 21, 28 \},$\\
&$ \{ 22, 23 \}, \{ 22, 25 \}, \{ 24, 25 \}, \{ 26, 27 \}, \{ 26, 28 \}, \{ 27, 28 \} \}$\\
\hline
$M_1$& $\{\{ 1, 3 \}, \{ 2, 9 \}, \{ 4, 5 \}, \{ 6, 10 \}, \{ 7, 14 \}, \{ 8, 11 \}, \{ 12, 13 \},$ \\
&$\{ 15, 17 \}, \{ 16, 23 \}, \{ 18, 19 \}, \{ 20, 24 \}, \{ 21, 28 \}, \{ 22, 25 \}, \{ 26, 27 \}\}$\\
\hline
$M_2$& $\{\{ 1, 10 \}, \{ 2, 13 \}, \{ 3, 11 \}, \{ 4, 8 \}, \{ 5, 9 \}, \{ 6, 7 \}, \{ 12, 14 \},$\\
&$\{ 15, 24 \}, \{ 16, 27 \}, \{ 17, 25 \}, \{ 18, 22 \}, \{ 19, 23 \}, \{ 20, 21 \}, \{ 26, 28 \}\}$\\
\hline
$M_3$& $\{\{ 1, 12 \}, \{ 2, 3 \}, \{ 4, 6 \}, \{ 5, 7 \}, \{ 8, 9 \}, \{ 10, 11 \}, \{ 13, 14 \},$\\
&$ \{ 15, 26 \}, \{ 16, 17 \}, \{ 18, 20 \}, \{ 19, 21 \}, \{ 22, 23 \}, \{ 24, 25 \}, \{ 27, 28 \}\}$\\
\hline
$M_4$& $\{\{ 1, 15 \}, \{ 2, 16 \}, \{ 3, 17 \}, \{ 4, 18 \}, \{ 5, 19 \}, \{ 6, 20 \}, \{ 7, 21 \},$\\
&$\{ 8, 22 \},\{ 9, 23 \},\{ 10, 24 \},\{ 11, 25 \},\{ 12, 26 \},\{ 13, 27 \},\{ 14, 28 \}\}$\\
\hline
$\Aut(\Gamma_4)$ & $\langle (1, 15 )( 2, 16 )( 3, 17 )( 4, 18 )( 5, 19 )( 6, 20 )( 7, 21 )( 8, 22 )( 9, 23 )$\\
&$(10, 24 )( 11, 25 )( 12, 26 )( 13, 27 )( 14, 28 ) \rangle$\\
\hline
prime & This graph is not prime, since it is the product of $K_2$ and\\
&the cubic graph $\Gamma_1$ presented in Table~\ref{tab:3reg}.\\
\hline
\end{tabular}
\caption{A 4-regular graph $\Gamma_4$ with automorphism group of order 2 } 
\end{table}

\begin{table}[H]
\centering
\begin{tabular}{|l|l|}
\hline
$V(\Gamma_5)$ 
& $ \{1,\ldots,64\} $ \\
\hline
$E(\Gamma_5)$ 
&    $\{\{ 1, 2 \}, \{ 1, 3 \}, \{ 1, 12 \}, \{ 1, 17 \}, \{ 1, 33 \}, \{ 2, 4 \}, \{ 2, 15 \}, \{ 2, 18 \}, \{ 2, 34 \}, \{ 3, 4 \},$\\
  &$\{3, 14 \}, \{ 3, 19 \}, \{ 3, 35 \}, \{ 4, 16 \}, \{ 4, 20 \}, \{ 4, 36 \}, \{ 5, 6 \}, \{ 5, 7 \}, \{ 5, 9 \}, \{ 5, 21 \},$\\
  &$\{ 5, 37 \}, \{ 6, 8 \}, \{ 6, 10 \}, \{ 6, 22 \}, \{ 6, 38 \}, \{ 7, 8 \}, \{ 7, 11 \}, \{ 7, 23 \}, \{ 7, 39 \}, \{ 8, 12 \},$\\ 
  & $\{ 8, 24 \}, \{ 8, 40 \}, \{ 9, 10 \}, \{ 9, 13 \}, \{ 9, 25 \}, \{ 9, 41 \}, \{ 10, 15 \}, \{ 10, 26 \}, \{ 10, 42 \},$\\
  &$ \{ 11, 12 \}, \{ 11, 14 \}, \{ 11, 27 \}, \{ 11, 43 \}, \{ 12, 28 \}, \{ 12, 44 \}, \{ 13, 14 \}, \{ 13, 16 \},$\\
  &$ \{ 13, 29 \}, \{ 13, 45 \}, \{ 14, 30 \}, \{ 14, 46 \}, \{ 15, 16 \}, \{ 15, 31 \}, \{ 15, 47 \}, \{ 16, 32 \},$\\
  &$ \{ 16, 48 \}, \{ 17, 18 \}, \{ 17, 25 \}, \{ 17, 32 \}, \{ 17, 49 \}, \{ 18, 19 \}, \{ 18, 26 \}, \{ 18, 50 \},$  \\
  &$ \{ 19, 20 \}, \{ 19, 27 \}, \{ 19, 51 \}, \{ 20, 21 \}, \{ 20, 28 \}, \{ 20, 52 \}, \{ 21, 22 \}, \{ 21, 29 \},$\\
  &$ \{ 21, 53 \}, \{ 22, 23 \}, \{ 22, 30 \}, \{ 22, 54 \}, \{ 23, 24 \}, \{ 23, 31 \}, \{ 23, 55 \}, \{ 24, 25 \},$\\
  &$ \{ 24, 32 \}, \{ 24, 56 \}, \{ 25, 26 \}, \{ 25, 57 \}, \{ 26, 27 \}, \{ 26, 58 \}, \{ 27, 28 \}, \{ 27, 59 \},$\\
  &$ \{ 28, 29 \}, \{ 28, 60 \}, \{ 29, 30 \}, \{ 29, 61 \}, \{ 30, 31 \}, \{ 30, 62 \}, \{ 31, 32 \}, \{ 31, 63 \},$\\
  &$ \{ 32, 64 \}, \{ 33, 34 \}, \{ 33, 37 \}, \{ 33, 61 \}, \{ 33, 64 \}, \{ 34, 35 \}, \{ 34, 38 \}, \{ 34, 62 \},$\\
  &$ \{ 35, 36 \}, \{ 35, 39 \}, \{ 35, 63 \}, \{ 36, 37 \}, \{ 36, 40 \}, \{ 36, 64 \}, \{ 37, 38 \}, \{ 37, 41 \},$\\
  &$ \{ 38, 39 \}, \{ 38, 42 \}, \{ 39, 40 \}, \{ 39, 43 \}, \{ 40, 41 \}, \{ 40, 44 \}, \{ 41, 42 \}, \{ 41, 45 \},$\\
  &$ \{ 42, 43 \}, \{ 42, 46 \}, \{ 43, 44 \}, \{ 43, 47 \}, \{ 44, 45 \}, \{ 44, 48 \}, \{ 45, 46 \}, \{ 45, 49 \},$\\
  &$ \{ 46, 47 \}, \{ 46, 50 \}, \{ 47, 48 \}, \{ 47, 51 \}, \{ 48, 49 \}, \{ 48, 52 \}, \{ 49, 50 \}, \{ 49, 53 \},$\\
  &$ \{ 50, 51 \}, \{ 50, 54 \}, \{ 51, 52 \}, \{ 51, 55 \}, \{ 52, 53 \}, \{ 52, 56 \}, \{ 53, 54 \}, \{ 53, 57 \},$\\
  &$ \{ 54, 55 \}, \{ 54, 58 \}, \{ 55, 56 \}, \{ 55, 59 \}, \{ 56, 57 \}, \{ 56, 60 \}, \{ 57, 58 \}, \{ 57, 61 \},$\\
  &$ \{ 58, 59 \}, \{ 58, 62 \}, \{ 59, 60 \}, \{ 59, 63 \}, \{ 60, 61 \}, \{ 60, 64 \}, \{ 61, 62 \}, \{ 62, 63 \}, \{ 63, 64\}\}$\\
\hline
    $M_1$ &$\{\{ 1, 2 \},\{ 5, 6 \},\{ 7, 11 \},\{ 8, 12 \},\{ 9, 13 \},\{ 10, 15 \},\{ 3, 14 \},\{ 4, 16 \},\{17,18\},$\\
&$\{19,20\},\{21,22\},\{23,24\},\{25,26\},\{27,28\},\{29,30\},\{31,32\},\{ 33, 34 \}, $\\
&$\{ 35, 36 \}, \{ 37, 38 \}, \{ 39, 40 \}, \{ 41, 42 \}, \{ 43, 44 \}, \{ 45, 46 \}, \{ 47, 48 \}, \{ 49, 50 \},$\\
&$ \{ 51, 52 \}, \{ 53, 54 \}, \{ 55, 56 \}, \{ 57, 58 \}, \{ 59, 60 \}, \{ 61, 62 \}, \{ 63, 64 \}\}$\\
\hline 
$M_2$&$\{\{ 1, 3 \},\{ 2, 4 \},\{ 7, 8 \},\{ 5, 9 \},\{ 6, 10 \},\{ 11, 12 \},\{ 13, 14 \},\{ 15, 16 \},\{17,32\}, $\\
&$\{18,19\},\{20,21\},\{22,23\},\{24,25\},\{26,27\},\{28,29\},\{30,31\}, \{ 34, 35 \},$\\
&$\{ 36, 37 \}, \{ 38, 39 \}, \{ 40, 41 \}, \{ 42, 43 \}, \{ 44, 45 \}, \{ 46, 47 \}, \{ 48, 49 \}, \{ 50, 51 \},$\\
&$ \{ 52, 53 \}, \{ 54, 55 \}, \{ 56, 57 \}, \{ 58, 59 \}, \{ 60, 61 \}, \{ 62, 63 \}, \{ 64, 33 \}\}$\\
\hline
$M_3$ & $\{\{ 3, 4 \},\{ 5, 7 \},\{ 6, 8 \},\{ 9, 10 \},\{ 11, 14 \},\{ 1, 12 \},\{ 2, 15 \},\{ 13, 16 \},\{17,25\},$\\
&$\{18,26\},\{19,27\},\{20,28\},\{21,29\},\{22,30\},\{23,31\},\{24,32\},\{ 33, 37 \},$\\
&$ \{ 41, 45 \}, \{ 49, 53 \}, \{ 57, 61\}, \{ 34, 38 \}, \{ 42, 46 \}, \{ 50, 54 \}, \{ 58, 62 \},\{35, 39 \}, $\\
&$\{ 43, 47 \}, \{ 51, 55 \}, \{ 59, 63 \},\{ 36, 40 \}, \{ 44, 48 \}, \{ 52, 56 \}, \{ 60, 64 \}\}$\\
\hline 
$M_4$& $\{\{1,17\}, \{2,18\}, \{3,19\}, \{4,20\}, \{5,21\}, \{6,7\}, \{8,24\}, \{9,25\},\{10,26\},$\\
&$ \{11,13\}, \{12,15\}, \{14,16\}, \{22,27\}, \{23,30\}, \{28,31\}, \{29,32\}, \{ 37, 41 \},$\\
&$ \{ 45, 49 \}, \{ 53, 57 \}, \{ 61, 33 \},\{ 38, 42 \}, \{ 46, 50 \}, \{ 54, 58 \}, \{ 62, 34 \},\{ 39, 43 \},$\\
&$ \{ 47, 51 \}, \{ 55, 59 \}, \{ 63, 35 \},\{ 40, 44 \}, \{ 48, 52 \}, \{ 56, 60 \}, \{ 64, 36 \}\}$\\
\hline
$M_5$ &$\{\{ 1, 33 \}, \{ 2, 34 \}, \{ 3, 35 \}, \{ 4, 36 \}, \{ 5, 37 \}, \{ 6, 38 \}, \{ 7, 39 \}, \{ 8, 40 \}, \{ 9, 41 \},$\\
&$ \{ 10, 42 \}, \{ 11, 43 \}, \{ 12, 44 \}, \{ 13, 45 \}, \{ 14, 46 \}, \{ 15, 47 \}, \{ 16, 48 \}, \{ 17, 49 \},$\\
&$ \{ 18, 50 \}, \{ 19, 51 \}, \{ 20, 52 \}, \{ 21, 53 \}, \{ 22, 54 \}, \{ 23, 55 \}, \{ 24, 56 \}, \{ 25, 57 \},$\\
&$ \{ 26, 58 \}, \{ 27, 59 \}, \{ 28, 60 \}, \{ 29, 61 \}, \{ 30, 62 \}, \{ 31, 63 \}, \{ 32, 64 \}\}$\\
\hline
$\Aut(\Gamma_5)$ & $\{()\}$\\
\hline 
Prime & This graph is prime by Corollary~\ref{cor:prime}, since the vertex $1$ is contained in\\
&exactly three cycles of length 4, namely $(1,2,34,33),(1,2,18,17)$ and $(1,3,4,2).$\\
\hline
\end{tabular}
\caption{A 5-regular prime graph $\Gamma_5$ with trivial automorphism group  }
\end{table}

\begin{table}[H]
\centering
\begin{tabular}{|l|l|}
\hline
$V(\Gamma_6)$ 
& $ \{1,\ldots,64\} $ \\
\hline
$E(\Gamma_6)$ 
&$\{\{ 1, 2\},\{ 1, 3\},\{ 1, 12\},\{ 1, 17\},\{ 2, 4\},\{ 2, 15\}, 
 \{ 2, 18\},\{ 3, 4\},\{ 3, 14\},$\\
& $ \{ 3, 19\},\{ 4, 16\},\{ 4, 20\}, 
 \{ 5, 6\},\{ 5, 7\},\{ 5, 9\},\{ 5, 21\},\{ 6, 8\},\{ 6, 10\},$\\
 & $\{ 6, 22\}, 
 \{ 7, 8\},\{ 7, 11\},\{ 7, 23\},\{ 8, 12\},\{ 8, 24\},\{ 9, 10\}, 
 \{ 9, 13\},\{ 9, 25\},$\\
 &$\{ 10, 15\},\{ 10, 26\},\{ 11, 12\},\{ 11, 14\}, 
 \{ 11, 27\},\{ 12, 28\},\{ 13, 14\},\{ 13, 16\},$\\
 &$\{ 13, 29\},\{ 14, 30\}, 
 \{ 15, 16\},\{ 15, 31\},\{ 16, 32\},\{ 17, 18\},\{ 17, 25\},\{ 17, 32\}, $\\
 &$\{ 18, 19\},\{ 18, 26\},\{ 19, 20\},\{ 19, 27\},\{ 20, 21\},\{ 20, 28\}, 
 \{ 21, 22\},\{ 21, 29\},$\\
 &$\{ 22, 23\},\{ 22, 30\},\{ 23, 24\},\{ 23, 31\}, 
 \{ 24, 25\},\{ 24, 32\},\{ 25, 26\},\{ 26, 27\},$\\
 &$\{ 27, 28\},\{ 28, 29\}, 
 \{ 29, 30\},\{ 30, 31\},\{ 31, 32\},\{ 33, 34\},\{ 33, 35\},\{ 33, 44\}, $\\
 &$\{ 33, 49\},\{ 34, 36\},\{ 34, 47\},\{ 34, 50\},\{ 35, 36\},\{ 35, 46\}, 
 \{ 35, 51\},\{ 36, 48\},$\\
 &$\{ 36, 52\},\{ 37, 38\},\{ 37, 39\},\{ 37, 41 \}, 
 \{ 37, 53\},\{ 38, 40\},\{ 38, 42\},\{ 38, 54\},$\\
 &$\{ 39, 40\},\{ 39, 43\}, 
 \{ 39, 55\},\{ 40, 44\},\{ 40, 56\},\{ 41, 42\},\{ 41, 45\},\{ 41, 57\}, $\\
 &$\{ 42, 47\},\{ 42, 58\},\{ 43, 44\},\{ 43, 46\},\{ 43, 59\},\{ 44, 60\}, 
 \{ 45, 46\},\{ 45, 48\},$\\
 &$\{ 45, 61\},\{ 46, 62\},\{ 47, 48\},\{ 47, 63\}, 
 \{ 48, 64\},\{ 49, 50\},\{ 49, 57\},\{ 49, 64\},$\\
 &$\{ 50, 51\},\{ 50, 58\}, 
 \{ 51, 52\},\{ 51, 59\},\{ 52, 53\},\{ 52, 60\},\{ 53, 54\},\{ 53, 61\}, $\\
 &$
 \{ 54, 55\},\{ 54, 62\},\{ 55, 56\},\{ 55, 63\},\{ 56, 57\},\{ 56, 64\}, 
 \{ 57, 58\},\{ 58, 59\},$\\
 &$\{ 59, 60\},\{ 60, 61\},\{ 61, 62\},\{ 62, 63\}, 
 \{ 63, 64\},\{ 1, 33\},\{ 2, 34\},\{ 3, 35\},$\\
 &$\{ 4, 36\},\{ 5, 37\}, 
 \{ 6, 38\},\{ 7, 39\},\{ 8, 40\},\{ 9, 41\},\{ 10, 42\},\{ 11, 43\}, 
 \{ 12, 44\}, $\\
 &$\{ 13, 45\},\{ 14, 46\},\{ 15, 47\},\{ 16, 48\},\{ 17, 49\}, 
 \{ 18, 50\},\{ 19, 51\},\{ 20, 52\},$\\
& $\{ 21, 53\},\{ 22, 54\},\{ 23, 55\}, 
 \{ 24, 56\},\{ 25, 57\},\{ 26, 58\},\{ 27, 59\},\{ 28, 60\},$\\
 &$\{ 29, 61\}, 
 \{ 30, 62\},\{ 31, 63\},\{ 32, 64\}\}$\\
\hline
$M_1$ & $\{\{ 1, 2 \},\{ 5, 6 \},\{ 7, 11 \},\{ 8, 12 \},\{ 9, 13 \},\{ 10, 15 \},\{ 3, 14 \},\{ 4, 16 \},\{17,18\},$\\
&$\{19,20\},\{21,22\},\{23,24\},\{25,26\},\{27,28\},\{29,30\},\{31,32\},\{ 33, 34\},$\\
&$\{ 37, 38\},\{ 39, 43\},\{ 40, 44\},\{ 41, 45\},\{ 42, 47\}, 
 \{ 35, 46\},\{ 36, 48\},\{ 49, 50\},$\\
 & $\{ 51, 52\},\{ 53, 54\},\{ 55, 56\}, 
 \{ 57, 58\},\{ 59, 60\},\{ 61, 62\},\{ 63, 64\}\}$\\
\hline
$M_2$ & $\{\{ 1, 3 \},\{ 2, 4 \},\{ 7, 8 \},\{ 5, 9 \},\{ 6, 10 \},\{ 11, 12 \},\{ 13, 14 \},\{ 15, 16 \},\{17,32\},$\\
&$\{18,19\},\{20,21\},\{22,23\},\{24,25\},\{26,27\},\{28,29\},\{30,31\},\{ 33, 35 \},$\\
&$\{ 34, 36 \},\{ 39, 40 \},\{ 37, 41 \},\{ 38, 42 \},\{ 43, 44 \},\{ 45, 46 \},\{ 47, 48 \},$\\
&$\{ 49, 64 \},\{ 50, 51 \},\{ 52, 53 \},\{ 54, 55 \},\{ 56, 57 \},\{ 58, 59 \},\{ 60, 61 \},\{ 62, 63 \}\}$\\
\hline
$M_3$& $\{\{ 1, 12 \},\{ 2, 15 \},\{ 3, 4 \},\{ 5, 7 \},\{ 6, 8 \},\{ 9, 10 \},\{ 11, 14 \},\{ 13, 16 \},\{17,25\},$\\
&$\{18,26\},\{19,27\},\{20,28\},\{21,29\},\{22,30\},\{23,31\},\{24,32\},\{ 35, 36\},$\\
&$\{ 37, 39\},\{ 38, 40\},\{ 41, 42\},\{ 43, 46\},\{ 33, 44\}, 
 \{ 34, 47\},\{ 45, 48\},\{ 49, 57\},$\\
 &$\{ 50, 58\},\{ 51, 59\},\{ 52, 60\}, 
 \{ 53, 61\},\{ 54, 62\},\{ 55, 63\},\{ 56, 64\}\}$\\
 \hline
$M_4$& $\{\{1,17\}, \{2,18\}, \{3,19\}, \{4,20\}, \{5,21\}, \{6,7\}, \{8,24\}, \{9,25\},\{10,26\},$\\
&$ \{11,13\}, \{12,15\}, \{14,16\}, \{22,27\}, \{23,30\}, \{28,31\}, \{29,32\},\{ 33, 49\},$\\
&$\{ 34, 50\},\{ 35, 51\},\{ 36, 52\},\{ 37, 53\},\{ 38, 39\}, 
 \{ 40, 56\},\{ 41, 57\},\{ 42, 58\},$\\
 &$\{ 43, 45\},\{ 44, 47\},\{ 46, 48\}, 
 \{ 54, 59\},\{ 55, 62\},\{ 60, 63\},\{ 61, 64\}\}$\\
\hline
$M_5$ &$\{\{ 1, 33 \}, \{ 2, 34 \}, \{ 3, 35 \}, \{ 4, 36 \}, \{ 5, 37 \}, \{ 6, 38 \}, \{ 7, 39 \}, \{ 8, 40 \}, \{ 9, 41 \},$\\
&$ \{ 10, 42 \}, \{ 11, 43 \}, \{ 12, 44 \}, \{ 13, 45 \}, \{ 14, 46 \}, \{ 15, 47 \}, \{ 16, 48 \}, \{ 17, 49 \},$\\
&$ \{ 18, 50 \}, \{ 19, 51 \}, \{ 20, 52 \}, \{ 21, 53 \}, \{ 22, 54 \}, \{ 23, 55 \}, \{ 24, 56 \}, \{ 25, 57 \},$\\
&$ \{ 26, 58 \}, \{ 27, 59 \}, \{ 28, 60 \}, \{ 29, 61 \}, \{ 30, 62 \}, \{ 31, 63 \}, \{ 32, 64 \}\}$\\

\hline
$\Aut(\Gamma_6)$& $\langle (1,33)(2,34)(3,35)(4,36)(5,37)(6,38)(7,39)(8,40)(9,41)(10,42)$\\
&$ (11,43)(12,44)(13,45)(14,46)(15,47)(16,48)(17,49)(18,50)(19,51)$\\
&$(20,52)
  (21,53)(22,54)(23,55)(24,56)(25,57)(26,58)(27,59)(28,60)$\\
  &$(29,61)(30,62)
  (31,63)(32,64)\rangle$\\
\hline
Prime & This graph is not prime, since it is the product of $K_2$ and\\
&the 4-regular graph $\Gamma_3$ presented in Table~\ref{tab:4reg}.\\
\hline
\end{tabular}
\caption{A 5-regular graph $\Gamma_6$ with automorphism group of order 2}
\end{table}

\section*{Acknowledgements}
The authors would like to thank Marco Barbieri for valuable discussions and for drawing our attention to some relevant literature.
M.\ Weiß gratefully acknowledges the funding by the Deutsche Forschungsgemeinschaft (DFG, German Research Foundation) in the framework of the Collaborative Research Centre CRC/TRR 280 “Design Strategies for Material-Minimized Carbon Reinforced Concrete Structures – Principles of a New Approach to Construction” (project ID 417002380).
Furthermore, R.\ Akpanya was supported by a grant from the Simons Foundation (SFI-MPS-Infrastructure-00008650, JV). Tom Goertzen was supported by the Australian Government through the Australian Research Council's Discovery Projects funding scheme (project DP230102982).

\bibliographystyle{plain}
\bibliography{main}

@article{Sabidussi, 
title={Graphs with Given Group and Given Graph-Theoretical Properties}, 
volume={9}, 
DOI={10.4153/CJM-1957-060-7}, 
journal={Canadian Journal of Mathematics}, 
author={Sabidussi, Gert}, 
year={1957}, 
pages={515–525}}

@article {SmallGroupPaper,
    AUTHOR = {Besche, Hans Ulrich and Eick, Bettina and O'Brien, E. A.},
     TITLE = {The groups of order at most 2000},
   JOURNAL = {Electron. Res. Announc. Amer. Math. Soc.},
  FJOURNAL = {Electronic Research Announcements of the American Mathematical
              Society},
    VOLUME = {7},
      YEAR = {2001},
     PAGES = {1--4},
      ISSN = {1079-6762},
   MRCLASS = {20D60},
  MRNUMBER = {1826989},
       DOI = {10.1090/S1079-6762-01-00087-7},
       URL = {https://doi.org/10.1090/S1079-6762-01-00087-7},
}

@article{SurfacesWithAuto,
author = {Akpanya, Reymond and Goertzen, Tom},
year = {2025},
month = {07},
pages = {},
title = {Simplicial surfaces with given automorphism group},
volume = {62},
journal = {Journal of Algebraic Combinatorics},
doi = {10.1007/s10801-025-01424-4}
}

@article{Frucht,
title={Graphs of Degree Three with a Given Abstract Group},
volume={1},
DOI={10.4153/CJM-1949-033-6},
number={4}, 
journal={Canadian Journal of Mathematics}, 
author={Frucht, Robert}, 
year={1949}, 
pages={365–378}}

@article {MR1557026,
    AUTHOR = {Frucht, R.},
     TITLE = {Herstellung von {G}raphen mit vorgegebener abstrakter
              {G}ruppe},
   JOURNAL = {Compositio Math.},
  FJOURNAL = {Compositio Mathematica},
    VOLUME = {6},
      YEAR = {1939},
     PAGES = {239--250},
      ISSN = {0010-437X,1570-5846},
   MRCLASS = {99-04},
  MRNUMBER = {1557026},
       URL = {http://www.numdam.org/item?id=CM_1939__6__239_0},
}

@article {MR4866595,
    AUTHOR = {Ba\v{s}i\'{c}, Nino and Fowler, Patrick W.},
     TITLE = {Nut graphs with a given automorphism group},
   JOURNAL = {J. Algebraic Combin.},
  FJOURNAL = {Journal of Algebraic Combinatorics. An International Journal},
    VOLUME = {61},
      YEAR = {2025},
    NUMBER = {2},
     PAGES = {Paper No. 17, 12},
      ISSN = {0925-9899,1572-9192},
   MRCLASS = {05C25 (05C50)},
  MRNUMBER = {4866595},
MRREVIEWER = {Wenwen\ Fan},
       DOI = {10.1007/s10801-025-01389-4},
       URL = {https://doi.org/10.1007/s10801-025-01389-4},
}

@article {MR1211265,
    AUTHOR = {\v{S}ir\'{a}\v{n}, Jozef and \v{S}koviera, Martin},
     TITLE = {Orientable and nonorientable maps with given automorphism
              groups},
   JOURNAL = {Australas. J. Combin.},
  FJOURNAL = {The Australasian Journal of Combinatorics},
    VOLUME = {7},
      YEAR = {1993},
     PAGES = {47--53},
      ISSN = {1034-4942},
   MRCLASS = {05C10},
  MRNUMBER = {1211265},
MRREVIEWER = {T. R. S. Walsh},
}

@article {graphmult,
    AUTHOR = {Sabidussi, Gert},
     TITLE = {Graph multiplication},
   JOURNAL = {Math. Z.},
  FJOURNAL = {Mathematische Zeitschrift},
    VOLUME = {72},
      YEAR = {1959/60},
     PAGES = {446--457},
      ISSN = {0025-5874,1432-1823},
   MRCLASS = {05.40},
  MRNUMBER = {209177},
MRREVIEWER = {Solomon\ Marcus},
       DOI = {10.1007/BF01162967},
       URL = {https://doi.org/10.1007/BF01162967},
}

@incollection {seymour,
    AUTHOR = {Seymour, P. D.},
     TITLE = {Sums of circuits},
 BOOKTITLE = {Graph theory and related topics ({P}roc. {C}onf., {U}niv.
              {W}aterloo, {W}aterloo, {O}nt., 1977)},
     PAGES = {341--355},
 PUBLISHER = {Academic Press, New York-London},
      YEAR = {1979},
      ISBN = {0-12-114350-3},
   MRCLASS = {05C38 (90B10)},
  MRNUMBER = {538060},
MRREVIEWER = {Peter\ Brucker},
}

@article{szekeres, title={Polyhedral decompositions of cubic graphs},
volume={8}, 
DOI={10.1017/S0004972700042660}, 
number={3}, 
journal={Bulletin of the Australian Mathematical Society}, 
publisher={Cambridge University Press}, 
author={Szekeres, G.}, 
year={1973}, 
pages={367–387}
}

@book {GraphsOnSurfaces,
    AUTHOR = {Mohar, Bojan and Thomassen, Carsten},
     TITLE = {Graphs on surfaces},
    SERIES = {Johns Hopkins Studies in the Mathematical Sciences},
 PUBLISHER = {Johns Hopkins University Press, Baltimore, MD},
      YEAR = {2001},
     PAGES = {xii+291},
      ISBN = {0-8018-6689-8},
   MRCLASS = {05C10 (57M15)},
  MRNUMBER = {1844449},
MRREVIEWER = {Arthur\ T.\ White},
}

@book {TopologicalGraphTheory,
    AUTHOR = {Gross, Jonathan L. and Tucker, Thomas W.},
     TITLE = {Topological graph theory},
 PUBLISHER = {Dover Publications, Inc., Mineola, NY},
      YEAR = {2001},
     PAGES = {xvi+361},
      ISBN = {0-486-41741-7},
   MRCLASS = {05C10 (20F65 57M15)},
  MRNUMBER = {1855951},
}

@article {Bosma,
    AUTHOR = {Bosma, Wieb and Cannon, John and Playoust, Catherine},
     TITLE = {The {M}agma algebra system. {I}. {T}he user language},
      NOTE = {Computational algebra and number theory (London, 1993)},
   JOURNAL = {J. Symbolic Comput.},
  FJOURNAL = {Journal of Symbolic Computation},
    VOLUME = {24},
      YEAR = {1997},
    NUMBER = {3-4},
     PAGES = {235--265},
      ISSN = {0747-7171,1095-855X},
   MRCLASS = {68Q40},
  MRNUMBER = {1484478},
       DOI = {10.1006/jsco.1996.0125},
       URL = {https://doi.org/10.1006/jsco.1996.0125},
}

@article {Cori,
    AUTHOR = {Cori, Robert and Mach\`i, Antonio},
     TITLE = {Construction of maps with prescribed automorphism group},
   JOURNAL = {Theoret. Comput. Sci.},
  FJOURNAL = {Theoretical Computer Science},
    VOLUME = {21},
      YEAR = {1982},
    NUMBER = {1},
     PAGES = {91--98},
      ISSN = {0304-3975,1879-2294},
   MRCLASS = {05C25 (20B25)},
  MRNUMBER = {672104},
MRREVIEWER = {Cheryl\ E.\ Praeger},
       DOI = {10.1016/0304-3975(82)90090-1},
       URL = {https://doi.org/10.1016/0304-3975(82)90090-1},
}

@article {FruchtHow,
    AUTHOR = {Frucht, Roberto W.},
     TITLE = {How {I} became interested in graphs and groups},
   JOURNAL = {J. Graph Theory},
  FJOURNAL = {Journal of Graph Theory},
    VOLUME = {6},
      YEAR = {1982},
    NUMBER = {2},
     PAGES = {101--104},
      ISSN = {0364-9024,1097-0118},
   MRCLASS = {01A70 (05-03)},
  MRNUMBER = {655194},
       DOI = {10.1002/jgt.3190060203},
       URL = {https://doi.org/10.1002/jgt.3190060203},
}

@book {babaiBook,
    AUTHOR = {Lov\'asz, L\'aszl\'o},
     TITLE = {Combinatorial problems and exercises},
   EDITION = {Second},
 PUBLISHER = {North-Holland Publishing Co., Amsterdam},
      YEAR = {1993},
     PAGES = {635},
      ISBN = {0-444-81504-X},
   MRCLASS = {05-01},
  MRNUMBER = {1265492},
  NOTE      = {p. 426},
}

@article {MR406855,
    AUTHOR = {Babai, L\'aszl\'o},
     TITLE = {On the minimum order of graphs with given group},
   JOURNAL = {Canad. Math. Bull.},
  FJOURNAL = {Canadian Mathematical Bulletin. Bulletin Canadien de
              Math\'ematiques},
    VOLUME = {17},
      YEAR = {1974},
    NUMBER = {4},
     PAGES = {467--470},
      ISSN = {0008-4395,1496-4287},
   MRCLASS = {05C25},
  MRNUMBER = {406855},
MRREVIEWER = {Brian\ Alspach},
       DOI = {10.4153/CMB-1974-082-9},
       URL = {https://doi.org/10.4153/CMB-1974-082-9},
}

@article {MR316298,
    AUTHOR = {Babai, L.},
     TITLE = {Groups of graphs on given surfaces},
   JOURNAL = {Acta Math. Acad. Sci. Hungar.},
  FJOURNAL = {Acta Mathematica. Academiae Scientiarum Hungaricae},
    VOLUME = {24},
      YEAR = {1973},
     PAGES = {215--221},
      ISSN = {0001-5954,1588-2632},
   MRCLASS = {05C25},
  MRNUMBER = {316298},
MRREVIEWER = {L.\ V.\ Quintas},
       DOI = {10.1007/BF01894629},
       URL = {https://doi.org/10.1007/BF01894629},
}

@misc{AkpanyaKregularSymmetricGraphs,
  author       = {Akpanya, Reymond},
  title        = {{K-Regular Graphs with Prescribed Automorphism Groups: Magma and GAP Implementations}},
  year         = {2026},
  howpublished = {\url{https://github.com/reymondakpanya/KregularSymmetricGraphs}},
  note         = {GitHub repository}
}

@manual{GAP4,
    organization = "The GAP~Group",
    title        = "{GAP -- Groups, Algorithms, and Programming,
                    Version 4.16.0}",
    year         = 2026,
    url          = "\url{https://www.gap-system.org}",
    }

@article {MR1935567,
    AUTHOR = {Besche, Hans Ulrich and Eick, Bettina and O'Brien, E. A.},
     TITLE = {A millennium project: constructing small groups},
   JOURNAL = {Internat. J. Algebra Comput.},
  FJOURNAL = {International Journal of Algebra and Computation},
    VOLUME = {12},
      YEAR = {2002},
    NUMBER = {5},
     PAGES = {623--644},
      ISSN = {0218-1967,1793-6500},
   MRCLASS = {20D99},
  MRNUMBER = {1935567},
MRREVIEWER = {Robert\ M.\ Guralnick},
       DOI = {10.1142/S0218196702001115},
       URL = {https://doi.org/10.1142/S0218196702001115},
}

@book {MR1788124,
    AUTHOR = {Imrich, Wilfried and Klav\v{z}ar, Sandi},
     TITLE = {Product graphs},
    SERIES = {Wiley-Interscience Series in Discrete Mathematics and
              Optimization},
      NOTE = {Structure and recognition,
              With a foreword by Peter Winkler},
 PUBLISHER = {Wiley-Interscience, New York},
      YEAR = {2000},
     PAGES = {xvi+358},
      ISBN = {0-471-37039-8},
   MRCLASS = {05-01 (05C75 05C85 05C90 68R10)},
  MRNUMBER = {1788124},
MRREVIEWER = {Pranava\ K.\ Jha},
}

\end{document}